\newtheorem{thm}{Theorem}[section]
\newtheorem{lemm}[thm]{Lemma}
\newtheorem{cor}[thm]{Corollary}
\newtheorem{prop}[thm]{Proposition}
\newtheorem{ques}[thm]{Question}
\newtheorem{prob}[thm]{Problem}
\theoremstyle{definition}
\newtheorem{defi}[thm]{Definition}
\newtheorem{rem}[thm]{Remark}
\theoremstyle{remark}
\newtheorem{example}{Example}
\newcommand*\rel@kern[1]{\kern#1\dimexpr\macc@kerna}
\newcommand*\widebar[1]{%
  \begingroup
  \def\mathaccent##1##2{%
    \rel@kern{0.8}%
    \overline{\rel@kern{-0.8}\macc@nucleus\rel@kern{0.2}}%
    \rel@kern{-0.2}%
  }%
  \macc@depth\@ne
  \let\math@bgroup\@empty \let\math@egroup\macc@set@skewchar
  \mathsurround\z@ \frozen@everymath{\mathgroup\macc@group\relax}%
  \macc@set@skewchar\relax
  \let\mathaccentV\macc@nested@a
  \macc@nested@a\relax111{#1}%
  \endgroup
}
\DeclareMathOperator{\diam}{diam}
\DeclareMathOperator{\dist}{dist}
\DeclareMathOperator{\Int}{int}
\DeclareMathOperator{\Mod}{mod}
\DeclareMathOperator{\loc}{loc}
\DeclareMathOperator{\aplim}{\mathrm{ap}\lim}
\newcommand{\N}{\mathbb{N}}
\newcommand{\D}{\mathbb{D}}
\newcommand{\R}{\mathbb{R}}
\newcommand{\C}{\mathbb{C}}
\newcommand{\Z}{\mathbb{Z}}
\newcommand{\br}[1]{\overline{#1}}
\newcommand{\Rom}[1]{\expandafter\@slowromancap\romannumeral #1@}
\numberwithin{equation}{section}
\numberwithin{example}{section}
\numberwithin{figure}{section}
\begin{document}

\title[Polyhedral approximation and uniformization]{Polyhedral approximation and uniformization for non-length surfaces}

\author{Dimitrios Ntalampekos}
\author{Matthew Romney}
\address{Mathematics Department, Stony Brook University, Stony Brook NY, 11794, USA.}
\email{dimitrios.ntalampekos@stonybrook.edu}
\email{matthew.romney@stonybrook.edu}

\date{\today}
\thanks{The first author is partially supported by NSF Grant DMS-2000096}
\subjclass[2020]{Primary 53C45, 30F10. Secondary 30C65, 53A05}
\keywords{Length space, Gromov--Hausdorff convergence, metric surface, reciprocal, triangle, uniformization, quasiconformal mapping}

\maketitle

\begin{abstract}
We prove that any metric surface (that is, metric space homeomorphic to a $2$-manifold with boundary) with locally finite Hausdorff $2$-measure is the Gromov--Hausdorff limit of polyhedral surfaces with controlled geometry.  We use this result, together with the classical uniformization theorem, to prove that any metric surface homeomorphic to the $2$-sphere with finite Hausdorff $2$-measure admits a weakly quasiconformal parametrization by the Riemann sphere, answering a question of Rajala--Wenger. These results have been previously established by the authors under the assumption that the metric surface carries a length metric. As a corollary, we obtain new proofs of the uniformization theorems of Bonk--Kleiner for quasispheres and of Rajala for reciprocal surfaces. Another corollary is a simplification of the definition of a reciprocal surface, which answers a question of Rajala concerning minimal hypotheses under which a metric surface is quasiconformally equivalent to a  Euclidean domain.
\end{abstract}

\setcounter{tocdepth}{1}
\tableofcontents

\section{Introduction}
We say that a \textit{metric surface} is a metric space homeomorphic to a $2$-manifold, possibly with non-empty boundary. In our paper \cite{NR:21}, we give results on two related problems for metric surfaces: first, approximating a given surface by polyhedral surfaces with controlled geometry, and, second, finding a uniformizing map from a corresponding constant curvature surface. The results in \cite{NR:21} are proved assuming that the metric on the surface is a length metric with locally finite Hausdorff $2$-measure. The length metric assumption means that the distance between two points is the infimum of the lengths of paths connecting them; this is a natural condition built into the definition of many standard classes of spaces, such as Riemannian or Finsler manifolds. However, an arbitrary metric surface need not resemble a length space. For example, one can construct a surface as a subset of some ambient space, such as $\mathbb{R}^3$, and equip it with the metric inherited from this ambient space. Such a surface may have points that are inaccessible by rectifiable curves.

The purpose of this paper is to extend the results in \cite{NR:21} to the case of non-length metric surfaces. Instead, we assume merely that the Hausdorff $2$-measure on the surface is locally finite. This is the minimal assumption needed to use conformal modulus methods, as we do in our approach to the topic of uniformization. Our main result is a generalization of Rajala's uniformization theorem \cite{Raj:17}, one of the major recent works on the topic. Rajala's theorem (\Cref{thm:rajala} below) states that a metric surface of locally finite Hausdorff $2$-measure homeomorphic to the plane is quasiconformally equivalent to the Euclidean plane if and only if it satisfies a geometric condition called \textit{reciprocity} (see \Cref{sec:reciprocal_surfaces}). Our generalization gives the existence of a \textit{weakly quasiconformal} parametrization even without the reciprocity assumption. In the case of reciprocal surfaces, such a parametrization is a quasiconformal homeomorphism, and hence we obtain a new proof of Rajala's theorem by a different method. In fact, our approach improves his result by showing that one of the two conditions in the original definition of {reciprocity} is implied by the other and hence is redundant.

The overall scheme of the present paper follows that of our paper \cite{NR:21}. However, there are significant differences in the details, since many of the original arguments rely on the length metric assumption and no longer apply. On the other hand, removing the requirement of a length metric simplifies some steps of the argument, leading to various technical improvements over \cite{NR:21}. 

\smallskip

\subsection{Polyhedral approximation}

In our first result, we extend our polyhedral approximation theorem (Theorem 1.1 in \cite{NR:21}) to cover the setting of non-length surfaces as well.

\begin{thm} \label{thm:extended_polyhedral_approximation}
Let $X$ be a metric surface of locally finite Hausdorff $2$-measure. There exists a sequence of polyhedral surfaces $\{(X_n,d_{X_n})\}_{n=1}^\infty$ each homeomorphic to $X$, where $d_{X_n}$ is a metric that is locally isometric to the polyhedral metric on $X_n$, such that the following properties hold for an absolute constant $K \geq 1$.  
\begin{enumerate}[label=\normalfont(\arabic*)]
    \item \label{item:main_1} There exists an approximately isometric sequence of maps $f_n \colon X_n \to X$, $n \in \mathbb{N}$. Moreover, for each $n\in \N$, the  map $f_n$ is a proper topological embedding.
    \item \label{item:main_2} For each {compact} set $A \subset X$, \[\limsup_{n \to \infty} \mathcal{H}^2(f_n^{-1}(A)) \leq K \mathcal{H}^2(A).\] 
    \item\label{item:main_3} There exists an approximately isometric sequence of retractions $R_n \colon X\to f_n(X_n)$, $n\in \N$. 
\end{enumerate} 
If $X$ is a length space, then we may take $d_{X_n}$ to be the polyhedral metric on $X_n$.
\end{thm}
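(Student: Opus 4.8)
The plan is to follow the scheme of \cite{NR:21}: for each $n$, decompose $X$ into small \emph{good} topological triangles, replace each by its Euclidean comparison triangle, and glue the comparison triangles together into a polyhedral surface $X_n$. Since every requirement in the statement is either local or phrased on compact sets, I would first fix an exhaustion $X=\bigcup_k V_k$ by relatively compact open sets with $\overline{V_k}\subset V_{k+1}$, reduce to producing, for each $n$ and $k$, a triangulation of mesh $<1/n$ of a neighborhood of $\overline{V_k}$ whose comparison triangles have total area at most $K\,\mathcal{H}^2$ of a slightly larger compact set, and then recover the full statement by a diagonal argument. What must be controlled throughout is that the constant $K$ stays bounded as the mesh shrinks and as $k\to\infty$; the rest is bookkeeping.

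Call a closed Jordan region $T\subset X$ with three marked boundary points $a,b,c$ a \emph{$K$-good triangle} if there is a homeomorphism $\varphi_T$ from the Euclidean comparison triangle $\widetilde T$ with side lengths $d(a,b)$, $d(b,c)$, $d(c,a)$ onto $T$ satisfying $\mathcal{H}^2(\varphi_T(E))\ge K\inv|E|$ for every Borel set $E\subseteq\widetilde T$, where $|\cdot|$ is Lebesgue measure. The heart of the proof is a covering lemma: for $\mathcal{H}^2$-a.e.\ $x\in X$ and arbitrarily small $r>0$ there is a $K$-good triangle $T\ni x$ with $\diam T\le Cr$. Granting it, a Vitali-type selection yields a countable family of $K$-good triangles with pairwise disjoint interiors covering $\mathcal{H}^2$-a.e.\ point of $X$ and with $\sum_i\diam(T_i)^2\lesssim\mathcal{H}^2(X)$; one subdivides and matches these along their boundaries, and triangulates the remaining $\mathcal{H}^2$-null set by auxiliary triangles of total comparison area tending to $0$ (chosen from an efficient cover), to obtain an honest triangulation $\mathcal T_n$ whose $1$-skeleton one arranges to be $\mathcal{H}^2$-null. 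Constructing $\varphi_T$ itself splits into: (i) a \emph{filling estimate}, $\mathcal{H}^2(T)\gtrsim\Area(\widetilde T)$ and the same on subpieces of $T$, which I would derive from the coarea (Eilenberg) inequality applied to the distance functions $x\mapsto d(x,a)$, $d(x,b)$, $d(x,c)$ on the disk $T$ — their level sets separate $T$ and therefore carry enough $\mathcal{H}^1$-measure that integrating bounds $\mathcal{H}^2(T)$ from below — combined with a choice of the vertices $a,b,c$ dictated by the local density of the Radon measure $\mathcal{H}^2$, which keeps $T$ ``round'' enough for this bound to dominate $\Area(\widetilde T)$; and (ii) a measure-transport argument on the disk $T$ turning this mass comparison into the required set-wise bound. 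Step (i) is exactly where the length-metric, geodesic-triangle arguments of \cite{NR:21} break down and must be replaced, and obtaining it with a \emph{universal} constant — together with matching the selected good triangles into a triangulation with uniform constants — is the step I expect to be the main obstacle.

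Given $\mathcal T_n$, glue the comparison triangles $\{\widetilde T:T\in\mathcal T_n\}$ along shared edges by the evident isometries to form a polyhedral surface $X_n$, homeomorphic to $X$; let $d_{P_n}$ be its polyhedral length metric and define $f_n\colon X_n\to X$ by gluing the maps $\varphi_T$, a proper topological embedding whose image $f_n(X_n)$ is the triangulated set. Since every $\widetilde T$ has diameter equal to its longest side, at most $\diam T<1/n$, the map $f_n$ distorts distances between points of a common simplex by $O(1/n)$ while fixing them exactly on vertices. To make $\{f_n\}$ approximately isometric at all scales while keeping the metric locally polyhedral, I would equip $X_n$ not with $d_{P_n}$ but with
\[
d_{X_n}(p,q):=\inf\Bigl\{\,\sum_{i=0}^{N-1}\min\bigl(d_{P_n}(p_i,p_{i+1}),\,d_X(f_n(p_i),f_n(p_{i+1}))\bigr)\ :\ p=p_0,\dots,p_N=q\,\Bigr\},
\]
and verify that $d_{X_n}$ is a metric with $d_{X_n}\le d_{P_n}$ and $d_{X_n}=d_{P_n}$ on a neighborhood of each point — so $d_{X_n}$ is locally isometric to the polyhedral metric — and that $\sup|d_{X_n}(p,q)-d_X(f_n(p),f_n(q))|\to0$ on compacta as $n\to\infty$; this gives \ref{item:main_1}. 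Let $R_n\colon X\to f_n(X_n)$ be the identity on $f_n(X_n)$ and send every remaining point to a nearest vertex of $\mathcal T_n$; as vertices are $1/n$-dense, $R_n$ displaces points by $O(1/n)$, so $\{R_n\}$ is approximately isometric, giving \ref{item:main_3}. Finally, for compact $A\subset X$, using that $f_n$ is an embedding, that the $T_i$ tile $f_n(X_n)$ with $\mathcal{H}^2$-null $1$-skeleton, and that the non-good triangles have total area $o(1)$,
\[
\mathcal{H}^2\bigl(f_n\inv(A)\bigr)=\sum_i\bigl|\varphi_{T_i}\inv(A\cap T_i)\bigr|\le K\sum_{i \text{ good}}\mathcal{H}^2(A\cap T_i)+o(1)\le K\,\mathcal{H}^2(A)+o(1),
\]
and letting $n\to\infty$ yields \ref{item:main_2}. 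If $X$ is a length space, the polyhedral metric $d_{P_n}$ already makes $\{f_n\}$ approximately isometric — nearby points of $X$ being joined by short paths that the fine triangulation tracks with negligible error — so one may instead take $d_{X_n}=d_{P_n}$, recovering the situation of \cite{NR:21}.
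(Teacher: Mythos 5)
Your overall scheme (triangulate finely, replace triangles by flat models, glue, correct the metric at large scales) is the paper's, but three of your steps have genuine gaps, and the first is the technical heart of the theorem. The filling estimate $\mathcal H^2(T)\gtrsim\Area(\widetilde T)$ with a universal constant — which you correctly identify as the main obstacle but do not prove — cannot be obtained the way you suggest for triangles with arbitrary boundary arcs: the level sets of $x\mapsto d(x,a)$ do separate $T$, but without any geodesic structure on the edges the separating continua can be arbitrarily short (think of a long thin slit-shaped $T$ whose comparison triangle is fat), so integrating the coarea inequality gives no lower bound comparable to $\Area(\widetilde T)$. The paper resolves this by working with the extended length metric $\bar d$: the edges are taken to be $\bar d$-geodesics, the boundary $(\partial T,\bar d)$ is bi-Lipschitz embedded in the plane, and the crucial lower bound on $\mathcal H^2$ of small pieces of $T$ comes from the fact that a geodesic edge forces the separating level sets to have length at least $s_i$, combined with the Eriksson-Bique--Poggi-Corradini coarea lemma for $N^{1,1}$ functions (the relevant function $\bar d(x_i,\cdot)$ is not $d$-Lipschitz, so the ordinary coarea inequality does not apply). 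Relatedly, your Vitali selection of good triangles at $\mathcal H^2$-a.e.\ point, followed by "subdividing and matching" them into an honest triangulation, conceals the entire content of the triangulation theorem: disjoint-interior triangles covering a.e.\ point do not tile, and making them into a locally finite cover by non-overlapping triangular disks is exactly what Section 4 (adapting \cite{CR:21}, again via $\bar d$-geodesics and a convexification step) accomplishes.

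Two further steps fail as written. First, your $R_n$ (identity on $f_n(X_n)$, nearest vertex elsewhere) is not continuous, hence not a retraction; condition \ref{item:main_3} is precisely the point that a small-displacement set map is not enough (see Example \ref{example:crack}), and the paper must construct $R_n$ by foliating collar quadrilaterals near each boundary component. Second, your modified metric $d_{X_n}$, defined as a chain infimum of $\min(d_{P_n},d_X\circ f_n)$, is neither shown to be a metric nor locally isometric to $d_{P_n}$: since $\varphi_T$ carries no pointwise Lipschitz control, one can have $d_X(f_n(p),f_n(q))<d_{P_n}(p,q)$ for $p,q$ arbitrarily close in a single triangle, so $d_{X_n}<d_{P_n}$ at all scales there; and the approximate-isometry estimate for $f_n$ requires the one-sided comparison $d_{P_n}(x,y)\geq d(f_n(x),f_n(y))$ on boundaries of triangles, which your comparison-triangle construction does not supply. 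The paper sidesteps both problems by gluing auxiliary segments only between the (discrete, $\varepsilon_n$-separated) vertices, and by building the flat models so that the boundary map is length-preserving for $\bar d$ and satisfies $d_S(x,y)\geq\bar d(\varphi(x),\varphi(y))\geq d(\varphi(x),\varphi(y))$. Finally, reducing to a compact exhaustion makes each $X_n$ homeomorphic to a compact piece rather than to $X$, which breaks \ref{item:main_1} for noncompact surfaces; the paper instead triangulates all of a subsurface $\widetilde X_n\subset X$ homeomorphic to $X$ with polygonal boundary.
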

In particular, \ref{item:main_1} implies that the sequence $X_n$ converges to $X$ in the Gromov--Hausdorff sense. The last statement of the theorem regarding the length space case follows from our polyhedral approximation result in \cite[Thm.\ 1.1]{NR:21}. Condition \ref{item:main_3} compensates for the lack of a length metric and is used in order to prove the uniformization result in Theorem \ref{thm:one-sided_qc}. See Example \ref{example:crack} for some undesirable phenomena that may occur without condition \ref{item:main_3}.

A toy version of this theorem is the analogous $1$-dimensional statement for metric Jordan curves. Namely, if $X$ is a rectifiable metric Jordan curve one can construct an approximation of $X$ by polygonal Jordan curves $(X_n,d_{X_n})$ that converge to $X_n$ in the Gromov--Hausdorff sense and the length of each arc in $X$ is the limit of the lengths of corresponding arcs in $X_n$ as $n\to\infty$; in particular, \ref{item:main_2} holds with equality and with $K=1$. To do so, consider a sufficiently dense set $\{x_1,\dots,x_n\}$ in $X$, in cyclic order, and for each $i\in \{1,\dots,n\}$ consider a line segment $L_i$ of length equal to the diameter of the arc from $x_{i-1}$ to $x_i$ in $X$, where $x_0=x_n$. Then glue metrically the endpoints of each $L_i$ to the points $x_{i-1}$, $x_i$. The resulting space $X_n$ is a polygonal Jordan curve with the desired properties. 

Unlike the $1$-dimensional setting, in the case of surfaces we cannot guarantee the convergence of the Hausdorff $2$-measure in \ref{item:main_2}. Indeed, if $X$ is the unit square in $\R^2$ with the $\ell^\infty$ metric,  then any sequence of polyhedral surfaces $X_n$ satisfying the conclusions of the theorem will necessarily satisfy
$$ \liminf_{n\to\infty} \mathcal H^2(X_n) \geq \frac{4}{\pi} \mathcal H^2(X).$$
We provide the details in Example \ref{example:area}. Based on this example, it is reasonable to conjecture that the optimal constant $K$ in Theorem \ref{thm:extended_polyhedral_approximation} is $4/\pi$.

We briefly outline the proof of \Cref{thm:extended_polyhedral_approximation}. The assumption of locally finite Hausdorff $2$-measure implies that the surface $(X,d)$ contains a large supply of rectifiable curves. Although the metric $d$ does not induce a length metric on $X$ in general, it induces an extended length metric $\bar d$, which is allowed to take the value infinity. In particular, $X$ contains many geodesics with respect to the extended length metric $\bar{d}$, which we refer to as $\bar{d}$-geodesics. The triangulation result in \cite{CR:21}, which was the main technical ingredient for proving the polyhedral approximation theorem in \cite{NR:21}, adapts to give a covering of $X$ by non-overlapping triangular disks whose edges are $\bar{d}$-geodesics. By replacing each triangular disk with a corresponding polyhedral disk, we are able to define the polyhedral surfaces $X_n$ appearing in \Cref{thm:extended_polyhedral_approximation}. If $(X,d)$ is not a length space, then we cannot give $X_n$ the usual polyhedral metric while satisfying \ref{item:main_1} in \Cref{thm:extended_polyhedral_approximation}. However, this is easily fixed by modifying the polyhedral metric on $X_n$ on large scales to match the original metric $d$.

We remark that the use of the induced extended length metric $\bar{d}$ is essential for constructing the surfaces $X_n$. However, the conclusions in \Cref{thm:extended_polyhedral_approximation} relate to the original metric $d$ rather than $\bar{d}$. This is important, since $\bar{d}$ may be poorly behaved in various respects. Most immediately, $\bar{d}(x,y)$ may be infinite for many pairs of points $x,y \in X$, or at least very large relative to $d(x,y)$. Moreover, $(X,\bar{d})$ may fail to have locally finite Hausdorff $2$-measure, even when $\bar{d}$ exists as a metric (rather than just an extended metric). Thus, one cannot reduce the general case to the case of a length metric and appeal directly to the results of \cite{NR:21}. One theme in this paper is the relation between the metric $d$ and the extended length metric $\bar{d}$, which is discussed in detail in \Cref{sec:extended_metric}.

\smallskip

\subsection{Uniformization of metric surfaces}
The \textit{uniformization problem} asks one to determine which metric spaces admit a geometrically well-behaved parametrization by a canonical model space such as the $n$-sphere or $n$-dimensional Euclidean space. Most commonly, we are interested in parametrizations by quasiconformal maps or their generalizations; roughly speaking, these are maps that infinitesimally distort relative distance within some prescribed amount.

Our main result in this direction is concerned with the two-dimensional case of the uniformization problem under the minimal geometric assumption of merely locally finite area.

\begin{thm} \label{thm:one-sided_qc}
Let $X$ be a metric surface of locally finite Hausdorff 2-measure homeomorphic to $\widehat {\mathbb C}$, $\br{\mathbb D}$, or $\C$. Then there is a continuous, surjective, proper, and cell-like map $h\colon \Omega \to X$, where $\Omega$ is $\widehat {\mathbb C}$, $\br{\mathbb D}$, or $\D$ or $\C$, respectively, satisfying the modulus inequality
\begin{equation} \label{equ:qc_inequality}
    \Mod \Gamma \leq \frac{4}{\pi} \Mod h(\Gamma)
\end{equation}
for every path family $\Gamma$ in $\Omega$. 
\end{thm}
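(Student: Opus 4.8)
The plan is to construct $h$ as a locally uniform limit of conformal parametrizations of the polyhedral surfaces provided by \Cref{thm:extended_polyhedral_approximation}, following the overall scheme of \cite{NR:21}. First I would apply \Cref{thm:extended_polyhedral_approximation} to obtain polyhedral surfaces $(X_n,d_{X_n})$ homeomorphic to $X$, approximately isometric proper embeddings $f_n\colon X_n\to X$ satisfying the area bound \ref{item:main_2}, and approximately isometric retractions $R_n\colon X\to f_n(X_n)$ as in \ref{item:main_3}. Away from its finitely many cone points, $X_n$ is a flat Riemannian surface, so $d_{X_n}$ induces a conformal structure on $X_n$; by the classical uniformization theorem there is a conformal homeomorphism $g_n\colon\Omega\to X_n$, where $\Omega$ is $\widehat{\mathbb C}$, $\br{\mathbb D}$, or, in the planar case, one of $\mathbb D$, $\mathbb C$. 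In the planar case I would pass to a subsequence along which the conformal type of $X_n$ is constant, so that a single model $\Omega$ serves for all $n$. Since conformality is a local property and $d_{X_n}$ is locally isometric to the polyhedral metric, $g_n$ is $1$-quasiconformal as a map into $(X_n,d_{X_n})$; moreover $\mathcal H^2_{d_{X_n}}$ agrees with the polyhedral area, because Hausdorff measure is determined by the metric at small scales. Consequently $g_n$ preserves conformal modulus:
\[
  \Mod\Gamma=\Mod_{X_n}g_n(\Gamma)\qquad\text{for every path family }\Gamma\text{ in }\Omega,
\]
where the right-hand side is computed with respect to $d_{X_n}$ and $\mathcal H^2_{d_{X_n}}$.

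Next I would normalize and extract a limit. Set $h_n:=f_n\circ g_n\colon\Omega\to X$. Precomposing $g_n$ with conformal automorphisms of $\Omega$, I would normalize so that three marked points of $X$ — chosen with the help of the retractions $R_n$ so that the normalization does not degenerate — are the images of three fixed points of $\Omega$; this makes $\{h_n\}$ a uniformly proper sequence. The conformality of $g_n$, the area bounds on the $X_n$ coming from \ref{item:main_2}, and the local finiteness of $\mathcal H^2$ on $X$ together yield a uniform modulus-of-continuity estimate for $\{h_n\}$; by the Arzel\`a--Ascoli theorem a subsequence converges locally uniformly to a continuous map $h\colon\Omega\to X$. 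Surjectivity would be deduced from the retractions: given $x\in X$, write $R_n(x)=h_n(\omega_n)$ with $\omega_n$ confined to a fixed compact subset of $\Omega$ by the normalization and uniform properness, pass to a convergent subsequence $\omega_n\to\omega$, and use $d(R_n(x),x)\to0$ to conclude $h(\omega)=x$. Properness of $h$ follows from that of the $f_n$ together with the uniform properness of $\{h_n\}$. Finally, each fiber $h^{-1}(x)$ is realized as a nested intersection of topological closed disks — approximating it from outside by $g_n$-images of small metric balls in $X_n$, using that the $g_n$ are homeomorphisms — hence is a cell-like continuum, so $h$ is cell-like. Condition \ref{item:main_3} is essential at this stage: without the retractions the sets $f_n(X_n)$ need not exhaust $X$ in a controlled fashion (cf.\ \Cref{example:crack}), and surjectivity of the limit could fail.

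It then remains to prove the modulus inequality \eqref{equ:qc_inequality}. Fix a path family $\Gamma$ in $\Omega$ and a density $\rho$ admissible for $h(\Gamma)$ with $\int_X\rho^2\,d\mathcal H^2$ within $\varepsilon$ of $\Mod h(\Gamma)$. Transporting $\rho$ to $X_n$ along the approximately isometric maps $f_n$, and enlarging it slightly to absorb the approximation errors and the locally uniform convergence $h_n\to h$, produces densities on $X_n$ admissible for $g_n(\Gamma)$. Their $\mathcal H^2_{d_{X_n}}$-energy must then be compared with $\int_X\rho^2\,d\mathcal H^2$. Here the constant $4/\pi$ enters: rather than invoking the global area bound \ref{item:main_2} with its unspecified constant $K$, one uses a sharper, pointwise comparison of the Euclidean area of the polyhedral cells of $X_n$ with the Hausdorff $2$-measure of their images in $X$, whose sharp constant for a normed tangent plane is $4/\pi$, attained by the $\ell^\infty$ norm and consistent with \Cref{example:area}; integrating this comparison with a covering argument bounds the energy by $\tfrac4\pi\int_X\rho^2\,d\mathcal H^2+o(1)$. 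Combining with the conformal invariance $\Mod\Gamma=\Mod_{X_n}g_n(\Gamma)$ and letting $n\to\infty$ and then $\varepsilon\to0$ gives $\Mod\Gamma\le\tfrac4\pi\Mod h(\Gamma)$.

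I expect the main obstacle to be this last step. The exchange of limits in the admissibility argument is delicate: the curves $h_n\circ\gamma$ need not converge in length to $h\circ\gamma$, so the argument must be carried out at the level of $2$-modulus — using its lower semicontinuity and weak-$L^2$ compactness of suitably truncated families of admissible densities — rather than curve by curve, and this must be done while tracking the sharp constant $4/\pi$ rather than the crude $K$ of \Cref{thm:extended_polyhedral_approximation}. A secondary difficulty is the planar non-compact case, where one must rule out the limit collapsing an end of $X$ and correctly decide whether $\Omega=\mathbb D$ or $\Omega=\mathbb C$ occurs, e.g.\ by means of an exhaustion of $X$ by Jordan domains.
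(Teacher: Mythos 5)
Your overall scheme for the compact case---uniformize the polyhedral approximations $X_n$ conformally, normalize, extract a locally uniform limit, and verify surjectivity, properness, cell-likeness, and the modulus inequality in the limit---matches the paper's argument (Theorem \ref{theorem:uniformization_long} and its proof). However, there is a genuine gap in your route to the constant $4/\pi$. You propose to obtain it from ``a sharper, pointwise comparison of the Euclidean area of the polyhedral cells of $X_n$ with the Hausdorff $2$-measure of their images in $X$,'' i.e.\ by improving the constant $K$ in \Cref{thm:extended_polyhedral_approximation}~\ref{item:main_2} to $4/\pi+o(1)$. No such improvement is available: the constant produced by the triangle-filling argument is on the order of $64\cdot 30\pi$, and the paper explicitly leaves the statement that the optimal constant in \Cref{thm:extended_polyhedral_approximation} is $4/\pi$ as a \emph{conjecture} motivated by \Cref{example:area}. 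The limit construction therefore only yields a weakly $K$-quasiconformal map for a large, unspecified $K$. The sharp constant is obtained afterwards by an entirely different mechanism (\Cref{thm:minimize_dilatation}): one uses the approximate metric differentiability of the limit map $h$, takes the John ellipses of the unit balls of the approximate metric derivatives $N_x$, solves the corresponding Beltrami equation to change the conformal structure on the domain, and then John's theorem (a symmetric convex body whose John ellipse is a disk of radius $r$ is contained in a square of area $4r^2$) gives $\widetilde L_z^2\leq(4/\pi)\widetilde J_z$ a.e., hence weak $(4/\pi)$-quasiconformality. Without this post-processing step your argument proves the theorem only with some unspecified constant in place of $4/\pi$.

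A secondary divergence: for the non-compact planar case you propose to uniformize each entire $X_n$ at once, fix the conformal type along a subsequence, and control collapsing of ends. The paper instead routes this case through \Cref{thm:uniformization_global}: it covers $X$ by a locally finite family of Jordan domains, applies the compact-case result on each, shows the transition maps of the local parametrizations converge to conformal maps (Montel plus Hurwitz), glues the charts into a Riemann surface $Z$, and only then invokes classical uniformization of $Z$. This sidesteps the normalization and end-collapse issues you flag as a difficulty, and it is also what makes the boundary case $\br{\D}$ and the determination of $\Omega\in\{\D,\C\}$ automatic (either model is permitted by the statement, so nothing needs to be ``decided''). Your direct approach is not obviously unworkable, but the convergence of global uniformizing maps of non-compact $X_n$ is delicate and is not what the paper carries out.
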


Here $\D$ denotes the open unit disk in the complex plane $\C$, and $\widehat\C$ is the Riemann sphere equipped with the spherical distance. A map is \textit{cell-like} if the preimage of each point is a continuum that is contractible in all small neighborhoods; see Section \ref{sec:topological}. A continuous, surjective, proper, and cell-like map $h\colon X\to Y$ between metric surfaces of locally finite Hausdorff $2$-measure is called \textit{weakly quasiconformal} if there exists $K\geq 1$ such that 
\begin{align}\label{ineq:qc}
    \Mod \Gamma\leq K\Mod h(\Gamma)
\end{align}
for every path family $\Gamma$ in $\Omega$; here $\Mod$ refers to $2$-modulus. For surjective maps between homeomorphic compact surfaces the topological assumption of cell-likeness is equivalent to the weaker requirement that $h$ is a \textit{monotone} map; that is, the preimage of each point is a continuum. The topological assumptions on a weakly quasiconformal map $h$ ensure that it is very close to being a homeomorphism; for example, if $X,Y$ have no boundary or if $X,Y$ are compact and homeomorphic to each other, then $h$ is the uniform limit of homeomorphisms. See Section \ref{sec:topological} for more details. It was shown in \cite[Thm.\ 1.4]{NR:21} that \eqref{ineq:qc} may be replaced with the equivalent statement that $h$ lies in the Sobolev space $N^{1,2}_{\loc}(X, Y)$ and
$$g_h(x)^2 \leq K J_h(x)$$
for a.e.\ $x\in X$, where $g_h$ denotes the minimal weak upper gradient of $h$ and $J_h$ is the Jacobian of the map $h$, i.e., the Radon--Nikodym derivative of the measure $\mathcal H^2\circ h$ with respect to $\mathcal H^2$.

\Cref{thm:one-sided_qc} seems to be essentially the strongest result possible for the non-fractal case of the uniformization problem for surfaces, i.e., where the Hausdorff $2$-measure is locally finite. It verifies a conjecture of Rajala and Wenger found as Question 1.3 in \cite{IR:20}. \Cref{thm:one-sided_qc} with the additional assumption that the metric on $X$ is a length metric was recently proved by the authors in \cite{NR:21}, as well as independently by Meier--Wenger \cite{MW:21}. 

In contrast with the classical uniformization theorem,  the map $h$ of Theorem \ref{thm:one-sided_qc}  may fail to be unique up to precomposition by conformal maps. In fact, \cite[Prop.\ 1.5]{NR:21} gives an example of a metric surface $X$ of locally finite Hausdorff $2$-measure admitting weakly quasiconformal parametrizations both by $\D$ and $\C$. 

\Cref{thm:one-sided_qc} deals with the case of simply connected surfaces whose boundary is uncomplicated. Using the classical uniformization theorem, we also establish the following general uniformization result for all metric surfaces of locally finite Hausdorff $2$-measure.

\begin{thm}\label{thm:uniformization_global}
Let $X$ be a metric surface of locally finite Hausdorff $2$-measure. Then there exists a complete Riemannian surface of constant curvature $(Z,g)$ that is homeomorphic to $X$ and a weakly $(4/\pi)$-quasiconformal map from $Z$ onto $X$.    
\end{thm}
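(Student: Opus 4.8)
The plan is to derive the theorem from \Cref{thm:one-sided_qc} by uniformizing the universal cover of $X$ and descending the parametrization along the deck group, with the classical uniformization theorem entering to identify the quotient as a complete constant-curvature surface. First I would dispose of the three topological types already handled by \Cref{thm:one-sided_qc}: if $X$ is homeomorphic to $\widehat{\C}$, $\br{\D}$, or $\C$, take the map $h_0\colon\Omega\to X$ provided there, with $\Omega\in\{\widehat{\C},\br{\D},\D,\C\}$, and put on $\Omega$ a complete constant-curvature Riemannian metric in its conformal class --- the round metric on $\widehat{\C}$, the Euclidean metric on $\C$, the hyperbolic metric on $\D$, and the metric of a closed hemisphere of $\widehat{\C}$ transported along a conformal homeomorphism in the case $\Omega=\br{\D}$. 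Since conformal homeomorphisms preserve $2$-modulus, $h_0$ stays weakly $(4/\pi)$-quasiconformal, and this settles these cases with $Z=\Omega$.

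For a general metric surface $X$, I would first reduce to the case $\partial X=\emptyset$: if $\partial X\neq\emptyset$, replace $X$ by its metric double $DX=X\cup_{\partial X}X$, which is again a metric surface of locally finite Hausdorff $2$-measure, has empty boundary, and carries an orientation-reversing isometric involution $\sigma$ with $\operatorname{Fix}(\sigma)=\partial X$. So assume $\partial X=\emptyset$. Let $p\colon\widehat X\to X$ be the universal cover, with $\widehat X$ carrying the metric $\widehat d$ for which $p$ is a local isometry; then $(\widehat X,\widehat d)$ has locally finite Hausdorff $2$-measure, is homeomorphic to $\widehat{\C}$ (when $X\cong\widehat{\C}$ or $X\cong\mathbb{RP}^2$) or to $\C$ (in all other cases), and the deck group $\Gamma\cong\pi_1(X)$ acts on it by isometries, freely and properly discontinuously, with quotient $X$. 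Applying \Cref{thm:one-sided_qc} to $\widehat X$ already gives a weakly $(4/\pi)$-quasiconformal map $\widehat h\colon\widehat\Omega\to\widehat X$ with $\widehat\Omega\in\{\widehat{\C},\D,\C\}$; the point, however, is to obtain one that is moreover \emph{$\Gamma$-equivariant}, i.e.\ fits into a homomorphism $\rho\colon\Gamma\to\operatorname{Isom}(\widehat\Omega)$ --- the isometry group of the constant-curvature metric, equivalently its group of conformal and anti-conformal automorphisms --- with $\gamma\circ\widehat h=\widehat h\circ\rho(\gamma)$ for every $\gamma\in\Gamma$.

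To produce the equivariant $\widehat h$ I would revisit the proof of \Cref{thm:one-sided_qc}. The polyhedral approximation $X_n\to X$ of \Cref{thm:extended_polyhedral_approximation} lifts to a $\Gamma$-equivariant polyhedral approximation $\widehat X_n\to\widehat X$ of the cover, through the lift $\widehat f_n$ of $f_n$ (which for large $n$ is a homotopy equivalence onto its image, by \ref{item:main_3}); each $X_n$, being a polyhedral surface homeomorphic to $X$, is conformally $\Omega_n/\Gamma$ for some $\Omega_n\in\{\widehat{\C},\C,\D\}$ with $\Gamma$ realized by isometries of the constant-curvature metric on $\Omega_n$, by the classical uniformization theorem; the lifted uniformizing maps $\Omega_n\to\widehat X_n$ are conformal --- hence $2$-modulus preserving, the cone points of $X_n$ being a discrete set --- and $\Gamma$-equivariant, so composing with $\widehat f_n$ and passing to an equivariant limit as $n\to\infty$, as in the proof of \Cref{thm:one-sided_qc} (where the constant $4/\pi$ arises), yields the desired equivariant $\widehat h$, the model $\widehat\Omega$ being automatically of the correct conformal type (for instance $\D$ when $X$ is closed of genus at least $2$). (Alternatively, one would establish directly that weakly quasiconformal parametrizations of $\widehat X$ are unique up to $\operatorname{Isom}(\widehat\Omega)$, which lets the deck action be conjugated through $\widehat h$.) Granting this, $\rho$ is injective and $\rho(\Gamma)$ acts freely and properly discontinuously, so by the classical uniformization theorem $Z:=\widehat\Omega/\rho(\Gamma)$ with the induced metric is a complete Riemannian surface of constant curvature, and $\widehat h$ descends to a continuous, proper, surjective, cell-like map $h\colon Z\to X$ --- a uniform limit of homeomorphisms, so in particular $Z$ is homeomorphic to $X$. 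Since $p$ and $\widehat\Omega\to Z$ are local isometries, the Sobolev/upper-gradient characterization of weak quasiconformality recalled after \Cref{thm:one-sided_qc} transfers the inequality $g_{\widehat h}^2\le(4/\pi)J_{\widehat h}$ from $\widehat h$ down to $h$, so $h$ is weakly $(4/\pi)$-quasiconformal. Finally, if $\partial X\neq\emptyset$ I would run the above for $DX$, recover $\sigma$ as an anti-conformal isometric involution $\widetilde\sigma$ of the resulting surface $Z'$ (by the same equivariance, applied to the group generated by $\sigma$), note that $\operatorname{Fix}(\widetilde\sigma)$ is a disjoint union of complete geodesics separating $Z'$ into two halves interchanged by $\widetilde\sigma$, and take $Z$ to be the closure of the half lying over the chosen copy of $X$; then $Z$ is complete of constant curvature with totally geodesic boundary, homeomorphic to $X$, and the restriction to $Z$ of the map for $DX$ is weakly $(4/\pi)$-quasiconformal.

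The hard part will be the equivariance, i.e.\ the descent step. Because $\widehat h$ is only cell-like --- not a homeomorphism --- and because weakly quasiconformal parametrizations need not be unique in general (see \cite[Prop.\ 1.5]{NR:21}), one cannot push the deck action through $\widehat h$ naively, and one has to run the uniformization of the cover in a way that remembers the $\Gamma$-action throughout --- or else prove the requisite uniqueness, which in turn must use the abundance of isometries of $\widehat X$ coming from $\Gamma$ (this is also what forces the conformal type of $\widehat\Omega$, e.g.\ rules out a parametrization of the universal cover of a closed genus-$\ge 2$ surface by $\C$). By comparison, the remaining points are routine: that $\widehat d$ is well defined and keeps Hausdorff $2$-measure locally finite, that the metric double across a possibly badly-behaved boundary is again a metric surface of locally finite Hausdorff $2$-measure, and the low-dimensional-topology fact that a cell-like surjection between surfaces is a near-homeomorphism.
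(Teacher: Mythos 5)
Your route (uniformize the universal cover, make the parametrization equivariant under the deck group, and descend) is genuinely different from the paper's, which never touches $\pi_1(X)$: the paper covers $X$ by a locally finite family of topological disks $D_k$, uniformizes the corresponding disks $\br{D_k^n}$ in the polyhedral approximations $X_n$, passes to limits chart by chart, shows via Montel's and Hurwitz's theorems that the transition maps subconverge to conformal maps, glues the limiting charts into a Riemann surface $Z$ with a cell-like locally weakly quasiconformal map onto $X$, and only then invokes classical (Riemannian) uniformization of $Z$; the boundary case is treated by boundary charts and Schwarz reflection rather than by doubling. However, your proposal has two genuine gaps. First, it is circular as stated: at this point only the compact cases ($\widehat\C$, $\br\D$) of \Cref{thm:one-sided_qc} are available --- the $\C$ and $\D$ cases are \emph{deduced from} \Cref{thm:uniformization_global} in the paper --- and the universal cover $\widehat X$ is typically a non-compact surface of infinite Hausdorff $2$-measure. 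Neither the statement nor the proof of \Cref{theorem:uniformization_long} applies there: the normalization of the sequence $h_n$, the equicontinuity argument, and \Cref{lemma:modulus_bound} all require compactness and finite total area. "Passing to a limit as in the proof of \Cref{thm:one-sided_qc}" on the non-compact cover is exactly the local-to-global difficulty that \Cref{thm:uniformization_global} is designed to resolve, so it cannot be assumed.

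Second, the descent step --- which you correctly identify as the crux --- is not actually carried out. Even granting an equivariant uniformization $\Omega_n\to\widehat X_n$ of each approximating surface, you need the conformal types $\Omega_n$ to stabilize and the representations $\rho_n\colon\Gamma\to\operatorname{Isom}(\Omega_n)$ to subconverge without degenerating (no pinching of short geodesics, no escape to the boundary of moduli space); Gromov--Hausdorff convergence of $X_n$ to $X$ gives none of this for free, and for open surfaces the conformal type of $X_n$ can genuinely vary with $n$. Your proposed fallback --- uniqueness of weakly quasiconformal parametrizations up to isometry of the model, which would let you conjugate the deck action through $\widehat h$ --- is false in general, as shown by \cite[Prop.\ 1.5]{NR:21}, which you yourself cite: a single surface can admit weakly quasiconformal parametrizations by both $\D$ and $\C$. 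So the central step of the argument rests on an unproved compactness assertion and a known-to-fail uniqueness assertion. The peripheral reductions (the metric double, the lifted metric on $\widehat X$, the near-homeomorphism property of cell-like maps) are fine, but without the equivariant limit the proof does not go through.
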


The value $4/\pi$ is sharp in both Theorems \ref{thm:one-sided_qc} and \ref{thm:uniformization_global}, as was observed in \cite[Example 2.2]{Raj:17}. A version of this theorem has been proved by Ikonen \cite{Iko:19} for reciprocal surfaces (see \Cref{sec:reciprocal_surfaces}) without boundary, using local quasiconformal coordinates in order to construct isothermal coordinates. This approach cannot be employed in our case, since metric surfaces do not have local quasiconformal coordinates, but only weakly quasiconformal parametrizations that are generally not homeomorphic, as provided by Theorem \ref{thm:one-sided_qc}. We note that our theorem also covers surfaces with boundary.  

\smallskip

\subsection{Previous uniformization results}
If the space $X$ in \Cref{thm:one-sided_qc} satisfies additional good geometric assumptions, then the uniformizing map $h$ can also be shown to satisfy stronger properties. In particular, as an application, we are able to deduce from \Cref{thm:one-sided_qc} two of the main uniformization theorems in the existing literature. The first is the Bonk--Kleiner theorem characterizing Ahlfors $2$-regular quasispheres \cite{BK:02}.

\begin{thm}[Bonk--Kleiner] \label{thm:bonk_kleiner}
Let $X$ be a metric space homeomorphic to $\widehat {\mathbb C}$ that is Ahlfors $2$-regular. Then there is a quasisymmetric homeomorphism from $X$ to $\widehat {\mathbb C}$ if and only if $X$ is linearly locally connected. 
\end{thm}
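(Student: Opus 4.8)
The necessity of linear local connectivity is classical and independent of the Ahlfors regularity assumption: quasisymmetric homeomorphisms preserve linear local connectivity, and $\widehat{\mathbb C}$ is linearly locally connected, so any space quasisymmetrically equivalent to it is as well. For the converse, assume that $X$ is Ahlfors $2$-regular and linearly locally connected. Since $X$ is compact, Ahlfors regularity gives $\mathcal H^2(X) < \infty$, so \Cref{thm:one-sided_qc} applied with $\Omega = \widehat{\mathbb C}$ produces a weakly $(4/\pi)$-quasiconformal map $h \colon \widehat{\mathbb C} \to X$; in particular $h$ is a continuous surjection with $\Mod \Gamma \le \tfrac4\pi \Mod h(\Gamma)$ for every path family $\Gamma$ in $\widehat{\mathbb C}$. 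The plan is to upgrade $h$ in three stages: (i) $h$ is injective, hence a homeomorphism; (ii) $h$ is quasiconformal in the two-sided sense; (iii) $h$ is quasisymmetric.

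For (i), I would use the standard fact that in any Ahlfors $2$-regular space the modulus of the family of curves joining $\overline B(x,r)$ to $X \setminus B(x,R)$ is at most $C\big(\log(R/r)\big)^{-1}$, witnessed by the test function $y \mapsto \big(\max\{d(y,x),r\}\log(R/r)\big)^{-1}$; letting $r \to 0$, the modulus of the family of curves through a fixed point is zero. Suppose $h^{-1}(p) = E$ were a nondegenerate continuum for some $p \in X$. Choose a nondegenerate continuum $F$ inside the nonempty open set $\widehat{\mathbb C}\setminus E$ (for instance a small round circle); then $h(F)$ is compact and misses $p$, so $h(F)\subset X\setminus B(p,\varepsilon)$ for some $\varepsilon>0$. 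Every curve joining $E$ to $F$ is sent by $h$ to a curve running from $p$ into $X\setminus B(p,\varepsilon)$, hence into the family of curves meeting both $\overline B(p,r)$ and $X\setminus B(p,\varepsilon)$ for every $r>0$, so the image family has modulus zero. On the other hand, since $\widehat{\mathbb C}$ is a Loewner space and $E,F$ are disjoint nondegenerate continua, the family of curves joining them has positive modulus, contradicting the modulus inequality. Thus every fiber of $h$ is a single point, and a continuous bijection from the compact space $\widehat{\mathbb C}$ onto the Hausdorff space $X$ is a homeomorphism.

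For (ii), the key point --- and the only place where linear local connectivity enters essentially --- is that an Ahlfors $2$-regular, linearly locally connected metric sphere is reciprocal in the sense of Rajala \cite{Raj:17}. The vanishing of the modulus of thin rings around points is the estimate from (i); the remaining requirement, a uniform upper bound for the product of the moduli of the two families of curves joining opposite sides of a quadrilateral, is obtained from Ahlfors regularity together with linear local connectivity. (This is essentially the observation through which Rajala's theorem recovers the quasiconformal uniformization of Bonk--Kleiner \cite{BK:02}, and could alternatively be imported as a known fact.) Once $X$ is known to be reciprocal, the weakly quasiconformal map $h$ is automatically a quasiconformal homeomorphism --- this is precisely the reciprocal case of our results, i.e.\ the new proof of Rajala's theorem obtained here from \Cref{thm:one-sided_qc} --- so in addition to $\Mod \Gamma \le \tfrac4\pi \Mod h(\Gamma)$ we also have $\Mod h(\Gamma)\le C\Mod\Gamma$ for a uniform constant $C$.

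Finally, for (iii): $h$ is a quasiconformal homeomorphism from the Ahlfors $2$-regular, $2$-Loewner sphere $\widehat{\mathbb C}$ onto the Ahlfors $2$-regular, linearly locally connected sphere $X$, so by the theorem of Heinonen and Koskela that a quasiconformal homeomorphism out of a Loewner space onto a linearly locally connected Ahlfors regular space is quasisymmetric, $h$ is quasisymmetric, and $h^{-1}\colon X\to\widehat{\mathbb C}$ is the desired map. I expect the main obstacle to be stage (ii): establishing reciprocity of $X$ (equivalently, producing the two-sided modulus bound) is where linear local connectivity has to be genuinely exploited, whereas stage (i) needs only Ahlfors regularity and stage (iii) is a black box from the existing quasiconformal-versus-quasisymmetric theory.
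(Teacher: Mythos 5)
Your argument is correct, but it takes a longer route than the paper, which disposes of the sufficiency direction in a single step: the weakly quasiconformal map $h$ produced by \Cref{thm:one-sided_qc} is fed directly into \cite[Thm.\ 2.5]{LW:20} (following \cite[Sect.\ 6.2]{NR:21}), a result which upgrades a monotone surjection satisfying the one-sided modulus inequality onto an Ahlfors $2$-regular, linearly locally connected sphere to a quasisymmetric homeomorphism all at once. Your three-stage factorization---injectivity from the vanishing ring modulus plus the Loewner lower bound, then reciprocity and the Meier--Wenger upgrade (\Cref{proposition:meier:wenger}) to two-sided quasiconformality, then the Heinonen--Koskela quasiconformal-to-quasisymmetric theorem---is a legitimate alternative: it makes visible which hypothesis powers which stage and reuses machinery already deployed in Section \ref{sec:reciprocal}, at the cost of invoking two additional black boxes that the single citation already packages. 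One correction to your commentary, though it does not affect validity: reciprocity in your stage (ii) needs only the upper mass bound $\mathcal H^2(B(x,r))\leq Cr^2$ (this is Rajala's Theorem 1.6 in \cite{Raj:17}) and no connectivity hypothesis at all, so linear local connectivity is not exploited where you claim. It enters essentially only in stage (iii), where the Heinonen--Koskela theorem requires the target to be linearly locally connected---as some hypothesis must, since linear local connectivity is a quasisymmetric invariant and hence a genuine obstruction on the target side.
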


It is the case that any metric space $X$ as in \Cref{thm:bonk_kleiner} is quasiconvex and hence bi-Lipschitz equivalent to a surface with a length metric that is also Ahlfors $2$-regular and linearly locally connected; see \cite{Sem:96} or \cite{Wil:10} for a proof. Thus \Cref{thm:bonk_kleiner} can also be derived from the weaker version of \Cref{thm:one-sided_qc} as given in \cite{MW:21} or \cite{NR:21}. However, our approach allows one to avoid this technical point regarding quasiconvexity. That Theorem \ref{thm:bonk_kleiner} follows from Theorem \ref{thm:one-sided_qc} is an immediate consequence the considerations in \cite[Sect.\ 6.2]{NR:21}, which are based on classical results that allow the upgrade of quasiconformal maps to quasisymmetric maps; see \cite[Thm.\ 2.5]{LW:20}.

The second uniformization theorem, due to Rajala \cite{Raj:17}, characterizes the situation when there is a quasiconformal homeomorphism from $X$ to $\widehat{\C}$. 

\begin{thm}[Rajala] \label{thm:rajala}
Let $X$ be a metric surface of locally finite Hausdorff 2-measure homeomorphic to $\widehat {\mathbb C}$, $\br{\mathbb D}$, or $\C$. Then there is a quasiconformal map $h\colon \Omega \to X$, where $\Omega$ is $\widehat {\mathbb C}$, $\br{\mathbb D}$, or $\D$ or $\C$, respectively, if and only if $X$ is locally reciprocal. 
\end{thm}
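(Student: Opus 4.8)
The plan is to derive Rajala's theorem from Theorem~\ref{thm:one-sided_qc} together with the improved redundancy statement for reciprocity mentioned in the introduction. First I would recall that the reciprocity condition consists (in Rajala's original formulation) of two modulus inequalities controlling the modulus of curve families connecting opposite sides of quadrilaterals, and that one of these — the lower bound — is the one we claim is implied by locally finite Hausdorff $2$-measure alone (this is the redundancy improvement; it should be established in the section on reciprocal surfaces, so I may invoke it). Thus "locally reciprocal" reduces to the single remaining upper-bound-type condition on the modulus of curve families, which is precisely what rules out the degenerate phenomena (cusps, collapsing) seen in Rajala's sharpness example.

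For the forward direction (existence of a quasiconformal homeomorphism $\Rightarrow$ locally reciprocal), I would argue that reciprocity is a quasiconformal invariant: the Euclidean plane, disk, and sphere are reciprocal, and the defining modulus inequalities transform under a quasiconformal homeomorphism with only a multiplicative loss, so pulling back through $h\inv$ shows $X$ is locally reciprocal. This direction is essentially already in \cite{Raj:17} and is the easy half.

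For the converse, I would start from the weakly quasiconformal map $h\colon \Omega \to X$ supplied by Theorem~\ref{thm:one-sided_qc}, which satisfies $\Mod\Gamma \le \tfrac{4}{\pi}\Mod h(\Gamma)$ for every path family $\Gamma$ in $\Omega$. The goal is to upgrade $h$ from a cell-like (monotone) map to a genuine homeomorphism, using the reciprocity hypothesis. The key point is that the remaining reciprocity condition forces a reverse modulus inequality, $\Mod h(\Gamma) \le C\,\Mod\Gamma$, on $X$; combined with the inequality from Theorem~\ref{thm:one-sided_qc}, this makes $h$ a quasiconformal map in the sense of the two-sided modulus inequality \eqref{ineq:qc} holding in both directions. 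It then remains to see that a weakly quasiconformal map satisfying a two-sided modulus bound between surfaces of locally finite Hausdorff $2$-measure cannot collapse any nondegenerate continuum: if $h\inv(x)$ were a nondegenerate continuum, one could produce a family of curves in $\Omega$ surrounding or crossing it with positive modulus whose image has zero modulus, contradicting the lower bound. Hence $h$ is injective, and being a continuous proper bijection between manifolds it is a homeomorphism, and with the two-sided modulus control it is quasiconformal.

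The main obstacle I expect is precisely this last upgrading step: showing that the reciprocity hypothesis genuinely yields the reverse modulus inequality for the specific map $h$ produced by Theorem~\ref{thm:one-sided_qc}, and then converting that analytic control into the topological conclusion that no continuum is crushed. One has to be careful that $h$ is only known to be cell-like, so the curve families used to detect a collapsed continuum must be chosen to be compatible with the modulus estimates on both sides (e.g.\ using that $h$ is in $N^{1,2}_{\loc}$ with the Jacobian bound, so that curves and their images behave measurably as needed), and one must handle the boundary cases ($\br{\mathbb D}$, $\C$) and the normalization of the uniformizing domain $\Omega$. Once injectivity is in hand, the rest is soft.
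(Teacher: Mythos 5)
Your overall strategy coincides with the paper's: necessity via quasiconformal invariance of reciprocity, sufficiency by taking the weakly quasiconformal map $h$ from \Cref{thm:one-sided_qc} and upgrading it using reciprocity (the paper delegates this upgrade to \cite[Sect.\ 3]{MW:21}). However, the upgrade as you describe it has the logic reversed, and the first step contains a genuine gap. You propose to first extract the reverse modulus inequality $\Mod h(\Gamma)\leq C\Mod\Gamma$ for the cell-like map $h$ and only then deduce injectivity from a ``two-sided'' bound. But the tool that converts upper reciprocity into a reverse modulus inequality (\Cref{proposition:meier:wenger}, i.e.\ \cite[Prop.\ 3.3]{MW:21}) applies to weakly quasiconformal \emph{homeomorphisms}; there is no mechanism on offer for proving it for a map that may collapse continua, and you yourself flag this as the unresolved obstacle. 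Moreover, the reverse inequality is not what detects collapsing: if $E=h^{-1}(x_0)$ is a nondegenerate continuum and $F$ is a continuum at positive distance, then $\Mod\Gamma(E,F;\Omega)>0$ while every curve of $h(\Gamma(E,F;\Omega))$ emanates from the single point $x_0$, so condition \eqref{ireciprocality:3} gives $\Mod h(\Gamma(E,F;\Omega))=0$; this contradicts the \emph{forward} inequality \eqref{equ:qc_inequality}, not any reverse bound. The correct order is therefore: (i) use \eqref{ireciprocality:3} (which is part of the hypothesis ``locally reciprocal,'' so no redundancy result is needed here) together with \eqref{equ:qc_inequality} to show $h$ is injective, hence a homeomorphism; (ii) then apply upper reciprocity via \Cref{proposition:meier:wenger} to the homeomorphism to obtain quasiconformality, with the usual local-to-global step via the locality of quasiconformality.

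A secondary slip: you identify the lower bound in \eqref{ireciprocality:12} as ``the redundancy improvement'' of this paper. That bound is indeed automatic (by \cite{RR:19}, \cite{EBC:21}, or \Cref{thm:one-sided_qc}), but the paper's new redundancy statement (\Cref{theorem:reciprocal}) is that the \emph{point condition} \eqref{ireciprocality:3} follows from the upper bound in \eqref{ireciprocality:12}. If you discard \eqref{ireciprocality:3} from your working hypotheses, as your phrase ``the single remaining upper-bound-type condition'' suggests, then the injectivity argument above is no longer available and you would instead need the substantially harder quadrilateral construction of \Cref{theorem:interior}, which produces quadrilaterals with $\Mod\Gamma(Q)\cdot\Mod\Gamma^*(Q)\to\infty$ from a collapsed continuum. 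For \Cref{thm:rajala} itself there is no need to invoke any redundancy: just use the full definition of reciprocity.
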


Here, quasiconformality is defined using the so-called geometric definition, which requires the conformal modulus of curve families to be quasipreserved. See \Cref{sec:qc_maps} below.  We give the definition of reciprocity below in the next subsection. The necessity of the theorem follows from the fact that reciprocity is invariant under quasiconformal maps and all Riemannian surfaces are reciprocal. The sufficiency follows from \Cref{thm:one-sided_qc} together with the observation of Meier--Wenger \cite{MW:21} that the local reciprocity of a space allows one to upgrade weak quasiconformality of a local parametrization to quasiconformality. We refer the reader to Section 3 of \cite{MW:21} for the argument.  Analogously, under the reciprocity assumption, we obtain a strengthening of Theorem \ref{thm:uniformization_global} that has been proved by Ikonen \cite{Iko:19} for surfaces without boundary. 

\begin{thm}\label{thm:unif_global_reciprocal}
Let $X$ be a metric surface of locally finite Hausdorff $2$-measure. Then there exists a complete Riemannian surface of constant curvature $(Z,g)$ that is homeomorphic to $X$ and a quasiconformal map from $Z$ onto $X$ if and only if $X$ is locally reciprocal.   
\end{thm}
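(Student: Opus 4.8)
We argue the two implications separately; the first is routine and the second is where the work lies.

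\emph{Necessity.} Suppose $f\colon Z\to X$ is a quasiconformal homeomorphism, where $(Z,g)$ is a complete Riemannian surface of constant curvature. Every point of $Z$ has a neighborhood that is bi-Lipschitz, hence quasiconformally, equivalent to a Euclidean disk or half-disk, and Euclidean domains (with or without boundary) are reciprocal; thus $Z$ is locally reciprocal. Since reciprocity is preserved by quasiconformal homeomorphisms \cite{Raj:17}, $X=f(Z)$ is locally reciprocal as well.

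\emph{Sufficiency.} Assume $X$ is locally reciprocal. By \Cref{thm:uniformization_global} there is a complete Riemannian surface of constant curvature $(Z,g)$ homeomorphic to $X$ together with a weakly $(4/\pi)$-quasiconformal map $h\colon Z\to X$, and the plan is to show that local reciprocity upgrades $h$ to a quasiconformal homeomorphism, exactly as \Cref{thm:rajala} is deduced from \Cref{thm:one-sided_qc} via the Meier--Wenger argument \cite[Section 3]{MW:21}. The point is that that argument is local in nature, which is what permits the passage from the simply connected setting to an arbitrary Riemannian surface $Z$. Concretely, fix $z_0\in Z$; since $h$ is proper the fiber $E=h\inv(h(z_0))$ is a compact continuum, and since $X$ is locally reciprocal we may choose a reciprocal open set $U\subset X$ containing $h(z_0)$ and then a coordinate disk $D\subset h\inv(U)$ about $z_0$. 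Because the modulus of a path family supported in an open set is unchanged by passing to the ambient surface, $h$ restricts to a weakly $(4/\pi)$-quasiconformal map of $D$ into the reciprocal surface $U$, and the Meier--Wenger estimates apply to this restriction to show that $h$ is a local homeomorphism near $z_0$. Being also cell-like, hence monotone, $h$ has connected fibers; a local homeomorphism with connected fibers has singleton fibers, so $h$ is injective, and a proper continuous bijection between surfaces is a homeomorphism. Finally, the local reciprocity of $X$ also supplies the reverse modulus inequality by the standard relationship between reciprocity and quasiconformality, applied locally, so that $h$ satisfies the two-sided modulus estimate and is genuinely quasiconformal.

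The main obstacle is the claim that local reciprocity forces the cell-like fibers of $h$ to be points: a non-degenerate fiber would support a path family of positive modulus in $Z$ whose image under $h$ consists of curves confined to an arbitrarily small neighborhood of a single point of $X$, and one must derive a contradiction with the modulus inequality \eqref{ineq:qc} from the reciprocity of $X$ near that point (in particular, from the vanishing of the modulus of curves through a point). This is precisely what the Meier--Wenger upgrading accomplishes, and the only additional observation needed is that their reasoning, together with all the modulus estimates on which it rests, is local, so that it applies verbatim in the presence of nontrivial topology and of boundary. An alternative route to the sufficiency, valid for surfaces without boundary, is Ikonen's argument \cite{Iko:19}: local reciprocity together with \Cref{thm:rajala} furnishes local quasiconformal coordinates, hence a conformal structure on $X$, to which the classical uniformization theorem can be applied directly; the advantage of the present approach is that it simultaneously handles surfaces with boundary.
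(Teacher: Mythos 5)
Your proposal is correct and follows the same route the paper takes: the paper derives necessity from the quasiconformal invariance of reciprocity together with the reciprocity of Riemannian surfaces, and sufficiency by applying Theorem \ref{thm:uniformization_global} to obtain a weakly $(4/\pi)$-quasiconformal map $h\colon Z\to X$ and then invoking the local Meier--Wenger upgrading \cite[Sect.\ 3]{MW:21} under the local reciprocity hypothesis. Your additional remarks on why the locality of that argument permits nontrivial topology and boundary are consistent with the paper's (brief) treatment.
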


The results of this paper concern the non-fractal case of the uniformization problem, meaning that the Hausdorff $2$-measure on the surface is locally finite. We also hope that the ideas of this paper can give insight into the uniformization of fractal surfaces. We briefly discuss this topic of current interest. One motivation comes from Cannon's conjecture in geometric group theory, which predicts that every hyperbolic group having the $2$-sphere as its boundary at infinity is a Kleinian group. This is equivalent to the statement that every such $2$-sphere, equipped with some visual metric, is a quasisphere. A seminal work on fractal uniformization is Cannon's combinatorial Riemann mapping theorem \cite{Can:94}, which gives an abstract procedure for uniformizing $2$-spheres. The Bonk--Kleiner paper contains, in addition to \Cref{thm:bonk_kleiner}, a characterization of quasispheres without any restriction. Both of these results, however, seem difficult to apply in practice.  Various constructions have been studied in detail, including Meyer's snowspheres \cite{Mey:10} as well as spheres arising from expanding Thurston maps \cite{BM:17}. See also \cite{RRR:19} for an approach to fractal uniformization by adapting conformal modulus methods.

\smallskip

\subsection{Reciprocal surfaces}  \label{sec:reciprocal_surfaces}
As mentioned above, a further consequence of \Cref{thm:one-sided_qc} is a simplification of Rajala's definition of reciprocal surface.

Let $X$ be a metric surface of locally finite Hausdorff $2$-measure. For a subset $G \subset X$ and disjoint subsets $E,F\subset G$, we define $\Gamma(E,F;G)$ to be the family of curves in $G$ joining $E$ and $F$. A \textit{(topological) quadrilateral} in $X$ is a closed Jordan region $Q$ together with a partition of $\partial Q$ into four non-overlapping edges $\zeta_1,\zeta_2,\zeta_3,\zeta_4\subset \partial Q$ enumerated in cyclic order. When we refer to a quadrilateral $Q$, it will be implicitly understood that there exists such a marking on its boundary. We define $\Gamma(Q)=\Gamma(\zeta_1,\zeta_3;Q)$  and $\Gamma^*(Q) =\Gamma(\zeta_2,\zeta_4;Q)$. We state the definition of reciprocity as originally given in \cite{Raj:17}.
\begin{defi}
A metric surface $X$ is \textit{reciprocal} if there exist constants $\kappa, \kappa' \geq 1$ such that
\begin{align}\label{ireciprocality:12}
    \kappa^{-1}\leq \Mod \Gamma(Q) \cdot \Mod\Gamma^*(Q) \leq \kappa' \quad \textrm{for each quadrilateral $Q\subset X$}
\end{align}
and 
\begin{align}\label{ireciprocality:3}
        &\lim_{r\to 0} \Mod \Gamma( \br B(a,r), X\setminus B(a,R);X )=0 \quad \textrm{for each ball $B(a,R)$.} 
\end{align}
A metric surface $X$ is \textit{locally reciprocal} if each point has an open neighborhood that is reciprocal. 
\end{defi}

Roughly speaking, a surface is reciprocal if the modulus of non-constant curves passing through a point is zero and the modulus of curves joining two opposite sides of every quadrilateral is reciprocal, up to a uniform multiplicative constant, to the modulus of curves joining the other two opposite sides. Note that condition \eqref{ireciprocality:3} can be regarded as a pointwise condition on the surface. In the definition of local reciprocity we do not require that the constants are uniform throughout the surface $X$; however, this is a consequence of a result of Rajala \cite[Thm.\ 1.5]{Raj:17}.

It was shown in \cite{RR:19} by Rajala and the second-named author that the lower bound in \eqref{ireciprocality:12} always holds for some universal constant $\kappa^{-1}$ independent of the surface $X$. Later, Poggi-Corradini and Eriksson-Bique \cite{EBC:21} found the sharp value of this constant to be $\kappa^{-1} = \pi^2/16$; this also follows from Theorem \ref{thm:one-sided_qc}. The sharpness is seen by taking $X$ to be the plane with the $\ell^\infty$ metric. See Example 2.2 in \cite{Raj:17} for details. 

As a consequence of Theorem \ref{thm:one-sided_qc},  we can further improve Rajala's uniformization theorem by showing that \eqref{ireciprocality:3} follows from the upper bound in \eqref{ireciprocality:12}. Thus condition \eqref{ireciprocality:3} is redundant. This answers affirmatively a question of Rajala \cite[Question 17.4]{Raj:17}.  We say that a metric surface is \textit{upper reciprocal} if there exists $\kappa'\geq 1$ such that the upper bound in \eqref{ireciprocality:12} holds for each quadrilateral.

\begin{thm}\label{theorem:reciprocal}
A metric surface of locally finite Hausdorff $2$-measure is reciprocal if and only if it is upper reciprocal. 
\end{thm}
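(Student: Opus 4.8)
The implication that a reciprocal surface is upper reciprocal is immediate from the definitions, so the content is the converse: assuming $X$ upper reciprocal, I would prove it is reciprocal. By \cite{RR:19} the lower bound in \eqref{ireciprocality:12} holds on any metric surface of locally finite Hausdorff $2$-measure, so the only thing to establish is \eqref{ireciprocality:3}, a statement about a fixed point $a\in X$ and arbitrarily small radii. Since $X$ is locally homeomorphic to $\D$ or $\br\D$ and has locally finite Hausdorff $2$-measure, I can pass to a neighborhood $U$ of $a$ homeomorphic to $\D$, $\C$, or $\br\D$ with $\mathcal{H}^2(\br U)<\infty$: the surface $U$ is again upper reciprocal, because $\Mod\Gamma(Q)$ and $\Mod\Gamma^*(Q)$ for a quadrilateral $Q\subset U$ are intrinsic; and, fixing $R_0>0$ with $\br B(a,2R_0)\subset U$, every curve joining $\br B(a,r)$ to $X\setminus B(a,R)$ contains a subcurve joining $\br B(a,r)$ to $X\setminus B(a,R_0)$ inside the closed annulus $\br B(a,R_0)\setminus B(a,r)\subset U$, so by basic properties of the modulus, \eqref{ireciprocality:3} for $X$ at $a$ follows from the same statement for $U$. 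Thus I may assume $X=U$.

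Next I would apply \Cref{thm:one-sided_qc} to obtain a weakly $(4/\pi)$-quasiconformal map $h\colon\Omega\to X$ with $\Omega\in\{\D,\C,\br\D\}$, so $\Mod\Gamma\le\frac4\pi\Mod h(\Gamma)$ for every path family $\Gamma$ in $\Omega$. The core claim is that upper reciprocity forces $h$ to be injective. This would rest on the following lemma, which I expect to be the hard part: \emph{if $X$ is upper reciprocal, then $\Mod\Gamma(\{a\},F;X)=0$ for every $a\in X$ and every compact $F\subset X\setminus\{a\}$.} The idea is that a positive value of $\Mod\Gamma(\{a\},F;X)$ witnesses a ``thick'' point --- one through which there is a nondegenerate family of uniformly short curves --- and that one can thicken such curves into quadrilaterals $Q$ with $\Mod\Gamma(Q)\cdot\Mod\Gamma^*(Q)$ arbitrarily large, contradicting the upper bound in \eqref{ireciprocality:12}; this is precisely the mechanism by which collapsing a line segment in $\R^2$ to a point destroys upper reciprocity. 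Making this rigorous requires choosing quadrilaterals ``transverse'' to the short curves through $a$, and this is where the bulk of the work lies.

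Granting the lemma, suppose $h$ is not injective, so that $E:=h^{-1}(a_0)$ is a nondegenerate continuum for some $a_0\in X$. Enclose $E$ by a Jordan curve $J\subset\Omega$ bounding a region that contains $E$, and let $\mathcal A$ be the closed ring between $E$ and $J$. Because $E$ is nondegenerate, $\Mod\Gamma(E,J;\mathcal A)>0$. Since $h$ maps every curve of $\Gamma(E,J;\mathcal A)$ to a curve from $h(E)=\{a_0\}$ to the compact set $h(J)\subset X\setminus\{a_0\}$, the modulus inequality gives
\[
0<\tfrac{\pi}{4}\,\Mod\Gamma(E,J;\mathcal A)\le\Mod\Gamma\bigl(\{a_0\},h(J);X\bigr),
\]
contradicting the lemma. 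Hence $h$ is injective; being a continuous proper surjection onto a surface homeomorphic to $\Omega$, it is a homeomorphism.

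It then remains to upgrade $h$ to a quasiconformal homeomorphism. For an arbitrary quadrilateral $Q\subset X$, set $Q'=h^{-1}(Q)$, a quadrilateral in $\Omega$ with the induced marking; since $h$ is a homeomorphism, $h(\Gamma(Q'))=\Gamma(Q)$ and $h(\Gamma^*(Q'))=\Gamma^*(Q)$, so the modulus inequality yields $\Mod\Gamma(Q')\le\frac4\pi\Mod\Gamma(Q)$ and $\Mod\Gamma^*(Q')\le\frac4\pi\Mod\Gamma^*(Q)$. Combining these with the identity $\Mod\Gamma(Q')\cdot\Mod\Gamma^*(Q')=1$ --- valid since $Q'$ is conformally equivalent to a Euclidean rectangle --- and with the upper bound $\Mod\Gamma(Q)\cdot\Mod\Gamma^*(Q)\le\kappa'$, I obtain $\Mod\Gamma(Q)\le\frac{4\kappa'}{\pi}\Mod\Gamma(Q')$, and symmetrically for $\Gamma^*$. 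Thus $h$ quasipreserves the modulus of all quadrilateral families, i.e.\ $h$ is quasiconformal. Consequently $X$ is quasiconformally equivalent to the Riemannian surface $\Omega$; since Riemannian surfaces are reciprocal and reciprocity is invariant under quasiconformal homeomorphisms, $X$ is reciprocal, and in particular \eqref{ireciprocality:3} holds at $a$. As $a\in X$ was arbitrary, $X$ is reciprocal.
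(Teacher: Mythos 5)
Your overall architecture -- obtain a weakly $(4/\pi)$-quasiconformal parametrization $h\colon\Omega\to X$ from \Cref{thm:one-sided_qc}, show that upper reciprocity forces $h$ to be injective, upgrade $h$ to a quasiconformal homeomorphism via the reciprocity identity $\Mod\Gamma(Q')\cdot\Mod\Gamma^*(Q')=1$ for planar quadrilaterals, and conclude by quasiconformal invariance of reciprocity -- is exactly the paper's strategy. The localization, the injectivity-from-the-lemma deduction, and the final upgrade (which is \cite[Prop.\ 3.3]{MW:21}, cited in the paper as Proposition \ref{proposition:meier:wenger}) are all fine as written.

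The genuine gap is your ``key lemma'': that upper reciprocity implies $\Mod\Gamma(\{a\},F;X)=0$ for every $a$ and every compact $F\not\ni a$. You do not prove it; you only offer the heuristic that a positive through-modulus can be ``thickened into quadrilaterals transverse to the short curves,'' and you acknowledge that this is where the bulk of the work lies. That acknowledgment is accurate: on a general metric surface with no a priori coordinates there is no evident way to produce the transverse quadrilaterals, and this intrinsic statement is never established in the paper (nor, to my knowledge, elsewhere) by such a direct argument. The paper's actual mechanism is different and crucially uses the parametrization itself rather than an intrinsic property of the point $a$: assuming some fiber $h^{-1}(x_0)$ is a nondegenerate continuum, one first localizes (Lemma \ref{lemma:topological}) by cutting along a level set of $d(\cdot,x_0)$ and uniformizing the complementary annulus $U\setminus h^{-1}(x_0)$ conformally onto a round annulus $A(0;1-\delta,1)$ with $\delta\to 0$; this produces a weakly quasiconformal map $f\colon\br\D\to\br V$ with $f^{-1}(x_0)=\br B(0,1-\delta)$. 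Explicit Euclidean estimates on the thin annulus $\br\D\setminus\br B(0,1-\delta)$, combined with Lemma \ref{lemma:extension} (which lets one restrict admissible functions to the complement of the collapsed set), then yield quadrilaterals $Q(\delta)\subset X$ with both $\Mod\Gamma(Q(\delta))$ and $\Mod\Gamma^*(Q(\delta))$ tending to infinity, contradicting upper reciprocity (Lemma \ref{theorem:interior}). A separate quadrilateral construction (Lemma \ref{lemma:boundary_homeo}) is needed for fibers on $\partial X$, a case your single lemma would also have to cover. So your proposal is not wrong in outline, but the step you defer is the entire content of the theorem, and the route you sketch for it is not the one that works; the workable route replaces your intrinsic lemma by a statement about fibers of $h$, normalized in the disk where the quadrilaterals can be written down explicitly.
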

Combining this with Theorem \ref{thm:unif_global_reciprocal}, we see that in order to verify the reciprocity of a surface we only need to check upper reciprocity locally. The idea of the proof of \Cref{theorem:reciprocal} is to show that upper reciprocity by itself is sufficient to promote the weakly quasiconformal parametrization $h$ given by \Cref{thm:one-sided_qc} to a homeomorphism.  In \cite{MW:21} (applying Theorem 3.6 in \cite{LW:20}) and also in \cite[Thm.\ 7.4]{NR:21}, this is accomplished by applying condition \eqref{ireciprocality:3} in the definition of reciprocity instead. Once $h$ is shown to be a homeomorphism, the upper reciprocity condition can be used to upgrade it to a quasiconformal map, as was shown by Meier--Wenger \cite[Sect.\ 3]{MW:21}. Our task then is, assuming a non-homeomorphic weakly quasiconformal parametrization, to find a sequence of quadrilaterals $Q$ such that the product of the moduli of  $\Gamma(Q)$ and $\Gamma^*(Q)$ are unbounded. The proof of Theorem \ref{theorem:reciprocal} would be simpler if we knew that condition \eqref{ireciprocality:3} can only fail on a totally disconnected set. However, we show in Example \ref{example:continuum} that this is not the case. 

\begin{prop}\label{prop:continuum}
There exists a metric surface of locally finite Hausdorff $2$-measure such that \eqref{ireciprocality:3} fails at all points in a non-degenerate continuum.
\end{prop}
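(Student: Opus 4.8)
The plan is to build the surface explicitly by modifying a Euclidean square along a fat Cantor-type set. Start with the unit square $X_0 = [0,1]^2$ with the Euclidean metric, and let $E \subset [0,1]$ be a closed set of positive Lebesgue measure whose complement is a countable union of disjoint open intervals $(a_i, b_i)$, chosen so that $E$ itself contains a nondegenerate interval $I_0$; concretely one can take $E = I_0 \cup C$ where $C$ is a fat Cantor set disjoint from $I_0$, or more simply arrange that $E$ is the union of a nondegenerate interval with a positive-measure set whose closure contains points that are not accessible by short curves. The horizontal segment $\Sigma = I_0 \times \{1/2\}$ (or better, $E \times \{1/2\}$) will be the continuum along which reciprocity fails. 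I would then "crack" the surface along the slit $E \times \{1/2\}$: replace the Euclidean metric on the square by a metric $d$ in which vertical passage across the slit is penalized, e.g. set $d$ to be the length metric of a weight $\rho(x,y)$ that collapses to zero on the slit in the horizontal direction but forces any path crossing the line $\{y = 1/2\}$ at a point of $E$ to have a definite length cost. A clean way to realize this: take the (extended) length metric $\bar d$ obtained from the Euclidean square by deleting the segment $E\times\{1/2\}$ and then metrically completing, which glues the slit from above and below into two separate arcs over $E$ but keeps them identified over the complementary intervals; then let $d$ be a metric on the same underlying topological square that is bi-Lipschitz to Euclidean away from the slit and agrees with $\bar d$ near the slit. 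The key point is that $\mathcal H^2$ remains comparable to Lebesgue measure, so $X$ has finite (hence locally finite) Hausdorff $2$-measure, and $X$ is still homeomorphic to a disk since $E\times\{1/2\}$ is a closed subset of an arc.

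Next I would verify that \eqref{ireciprocality:3} fails at every point $a \in \Sigma$, where $\Sigma = E \times \{1/2\}$ is the desired continuum. Fix $a = (t, 1/2)$ with $t$ in the interior of $I_0$ (and then handle boundary accumulation points of $E$ by a limiting argument, or simply take $\Sigma$ to be a nondegenerate subinterval of $I_0$). For small $r$ and a fixed $R$, consider the curve family $\Gamma(\bar B(a,r), X \setminus B(a,R); X)$. Because the slit has zero "horizontal width" but the vertical direction is genuinely penalized only for paths that actually cross it, there is a large family of curves that run essentially parallel to the slit — along the line $y = 1/2 \pm \varepsilon$ — from near $a$ out to distance $R$; these have bounded length and sweep out a region of definite area, so by the standard lower bound for modulus via the length–area method, $\Mod \Gamma(\bar B(a,r), X\setminus B(a,R);X)$ stays bounded below by a positive constant independent of $r$. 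This is exactly the failure of \eqref{ireciprocality:3}. I would compute this lower bound by exhibiting an explicit subfamily (horizontal-ish curves fanning out from $a$ within a fixed annular sector) and applying Fuglede/length–area, noting that on this surface such curves have length comparable to their Euclidean length since they do not cross the slit.

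The main obstacle I anticipate is the tension in the metric construction: I need the slit to be "invisible" to curves running alongside it (so that short curves fanning out near $a$ exist, forcing the modulus to stay positive — this is the failure of \eqref{ireciprocality:3}), while simultaneously keeping $\mathcal H^2$ locally finite and keeping the surface genuinely non-length in the relevant way, and also ensuring that the failure genuinely occurs on a whole continuum rather than a single point or a totally disconnected set. The subtlety is that if the penalty for crossing the slit is too weak, the surface is just bi-Lipschitz to Euclidean and \eqref{ireciprocality:3} holds everywhere; if it is too strong or too singular, one may lose local finiteness of $\mathcal H^2$ or destroy the topology. I would resolve this by using a metric of the form $d(x,y) = \inf \int \rho\, ds$ over paths, with $\rho \equiv 1$ except that path-integrals are replaced by a modified functional charging a fixed amount $c(z)>0$ each time the path transversally crosses $E\times\{1/2\}$ at a point $z$, with $c$ chosen continuous and bounded; equivalently, work with the quotient/gluing description above, which automatically produces a bona fide metric surface with $\mathcal H^2 \approx \mathcal L^2$. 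Checking that this $d$ is indeed a metric inducing the correct topology, that $\mathcal H^2$ is locally finite, and that it is comparable to Euclidean distance away from $\Sigma$ is routine; the one genuinely delicate estimate is the uniform lower bound on $\Mod \Gamma(\bar B(a,r), X\setminus B(a,R);X)$ as $r \to 0$, uniform over $a$ in the continuum, which is where the bulk of the work lies.
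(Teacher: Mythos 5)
Your construction does not work, and the obstacle is exactly the one you flag at the end but do not resolve: the claimed lower bound on $\Mod \Gamma(\br B(a,r), X\setminus B(a,R);X)$ is false. Having a large supply of short curves ``fanning out'' from $a$ and sweeping out definite area does not keep this modulus bounded away from zero --- the identical family of curves exists in the plain Euclidean square, where $\Mod \Gamma(\br B(a,r), X\setminus B(a,R);X)\asymp 1/\log(R/r)\to 0$. The length--area method only gives a positive lower bound when the curves stay \emph{spread out} all the way down to $\br B(a,r)$, which fails for any family emanating from an ordinary point. Worse, your slit-and-penalize operation moves in the wrong direction altogether: your metric $d$ dominates the Euclidean metric (crossing the slit only makes curves longer) while $\mathcal H^2_d\approx\mathcal L^2$, so the Euclidean annulus function $\rho(z)=(|z-a|\log(R/r))^{-1}$ remains admissible and has comparable energy. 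Hence $\Mod_d\Gamma(\br B(a,r),X\setminus B(a,R);X)\lesssim 1/\log(R/r)\to 0$, i.e.\ \eqref{ireciprocality:3} \emph{holds} at every point of your surface. (There is also a topological problem: separating the two sides of the slit over an interval $I_0\subset E$ while identifying them over the complementary intervals does not yield a space homeomorphic to the square.)

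To make \eqref{ireciprocality:3} fail you must make points metrically \emph{fat}, i.e.\ pass to a quotient that collapses nondegenerate continua, not to a larger metric. The paper's construction attaches an ``H-shape'' $H(J)$ (the interval $J$ plus two vertical segments of length $2\eta(J)$) to each complementary interval $J$ of a fat Cantor set $C\subset[0,1]$ and collapses each $H(J)$ to a point via the quotient pseudometric; positivity of $\mathcal H^1(C\cap I)$ on every interval meeting $C$ is what guarantees the pseudometric is a genuine metric and the quotient is homeomorphic to $\C$ by Moore's theorem. Failure of \eqref{ireciprocality:3} is then immediate at images of H-shapes, but the entire difficulty is at the points of $C$ itself, which are not collapsed from anything: there one chooses $\eta(J_n)=1/n$ so that the moduli $M_n\asymp n^{-1}2^{n}$ of the rectangles separating consecutive H-shapes satisfy $\sum_n M_n^{-1/2}<\infty$, which allows near-extremal curves in these rectangles to be concatenated into curves joining $\br B_d(h(x_0),r)$ to the complement of a fixed ball with uniformly controlled $\rho$-length, yielding a modulus lower bound independent of $r$. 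This quantitative summability condition (contrast with the divergence condition $\sum M_n^{-1}=\infty$ used in the opposite direction in the preceding example) is the heart of the proof and has no counterpart in your proposal.
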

Whether such a space exists had been asked by the second-named author as Question 5.6 in \cite{RRR:19}, and this construction provides an affirmative answer.

Moreover, combining Theorem \ref{theorem:reciprocal} with the uniformization result of Theorem \ref{thm:one-sided_qc} or Theorem \ref{thm:rajala} and with a result of Ikonen \cite{Iko:21}, we also show that in order to obtain reciprocity at points of $\partial X$ it suffices to verify condition \eqref{ireciprocality:3} rather than upper reciprocity.

\begin{thm}\label{theorem:reciprocal:interior}
A metric surface $X$ of locally finite Hausdorff $2$-measure is reciprocal if and only if $\Int(X)$ is upper reciprocal and \eqref{ireciprocality:3} holds at each point of $\partial X$.
\end{thm}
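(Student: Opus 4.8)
We first dispose of the forward implication. If $X$ is reciprocal, then the upper bound in \eqref{ireciprocality:12} holds for every quadrilateral in $X$, in particular for every quadrilateral contained in $\Int(X)$; since $\Mod\Gamma(Q)$ and $\Mod\Gamma^*(Q)$ depend only on $Q$ and not on the ambient surface, this shows that $\Int(X)$ is upper reciprocal. Moreover \eqref{ireciprocality:3} holds at every point of $X$, hence in particular at every point of $\partial X$.

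For the converse, the plan is to prove that $X$ is \emph{locally} reciprocal --- each point of $X$ admits a reciprocal open neighborhood --- and then to invoke the local-to-global theorem of Rajala \cite[Thm.\ 1.5]{Raj:17} to upgrade this to reciprocity of $X$ with uniform constants. Fix $a\in X$. If $a\in\Int(X)$, choose an open neighborhood $U$ of $a$ with $U\subset\Int(X)$ and $U$ homeomorphic to $\D$. Every quadrilateral in $U$ is a quadrilateral in $\Int(X)$, so upper reciprocity of $\Int(X)$ forces the upper bound in \eqref{ireciprocality:12} for it; hence $U$ is upper reciprocal, and \Cref{theorem:reciprocal} shows that $U$ is reciprocal. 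This handles the interior points.

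The substance is the case $a\in\partial X$. Since $\partial X$ is a topological $1$-manifold, I would choose a ``half-disk'' neighborhood $V$ of $a$, carrying the subspace metric: $V$ is a compact set homeomorphic to $\br{\mathbb D}$, $V\cap\partial X$ is an arc of $\partial X$ with $a$ in its interior, and the complementary arc of $\partial V$ --- a crosscut --- lies, apart from its two endpoints, in $\Int(X)$. As in the interior case, $\Int(V)\subset\Int(X)$ is upper reciprocal, hence reciprocal by \Cref{theorem:reciprocal}, so by \Cref{thm:rajala} there is a quasiconformal homeomorphism from $\D$ onto $\Int(V)$ --- with model domain $\D$ rather than $\C$ since the completion of $\Int(V)$ is the compact set $V$, possibly after shrinking $V$. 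Next I would check that \eqref{ireciprocality:3} holds at every point $b\in\partial V$. If $b\in\partial X$, then for each $R$ and each $r<R$ the family $\Gamma(\br B(b,r)\cap V,\,V\setminus B(b,R);\,V)$ is a subfamily of $\Gamma(\br B(b,r),\,X\setminus B(b,R);\,X)$, whose modulus tends to $0$ by hypothesis. If $b$ lies on the crosscut then $b\in\Int(X)$, and for $R$ so small that $\br B(b,R)\subset\Int(X)$ one truncates each curve of the preceding family at its first exit from $B(b,R)$, obtaining a subcurve belonging to $\Gamma(\br B(b,r),\,\Int(X)\setminus B(b,R);\,\Int(X))$, whose modulus tends to $0$ since $\Int(X)$ is reciprocal; monotonicity of modulus in $R$ then covers all values of $R$. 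With \eqref{ireciprocality:3} available on all of $\partial V$, the result of Ikonen \cite{Iko:21} on the boundary behavior of quasiconformal parametrizations of metric Jordan domains shows that the parametrization above extends to a quasiconformal homeomorphism $\br{\mathbb D}\to V$. Since $\br{\mathbb D}$ is reciprocal and reciprocity is preserved under quasiconformal homeomorphisms --- both \eqref{ireciprocality:12} and \eqref{ireciprocality:3} transfer --- $V$ is reciprocal. Thus $a$ has a reciprocal neighborhood, and $X$ is locally reciprocal.

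The main obstacle is the boundary case, whose crucial external ingredient is the precise form of Ikonen's theorem: one must match the hypothesis ``$\Int(V)$ reciprocal and \eqref{ireciprocality:3} valid on $\partial V$'' with the exact condition under which a quasiconformal parametrization of the interior of a metric Jordan domain extends to a quasiconformal homeomorphism of the closed disk, and verify that the neighborhood $V$ above meets its hypotheses, including the determination that $\Int(V)$ has the quasiconformal type of $\D$. By contrast, the modulus comparisons used to transport \eqref{ireciprocality:3} from $X$ and from $\Int(X)$ to $\partial V$ are standard (if a little delicate near the crosscut), as are the applications of \Cref{theorem:reciprocal}, \Cref{thm:rajala}, the quasiconformal invariance of reciprocity, and the local-to-global principle of \cite{Raj:17}.
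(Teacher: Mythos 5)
Your argument is, at its core, the paper's own argument in localized form: the decisive mechanism in both is to produce a quasiconformal homeomorphism from $\D$ onto the interior of a Jordan neighborhood, verify \eqref{ireciprocality:3} at every point of its boundary (splitting into points of $\partial X$, handled by hypothesis, and points of $\Int(X)$, handled by the reciprocity of $\Int(X)$ coming from \Cref{theorem:reciprocal} plus a truncation/subcurve argument), invoke Ikonen's extension result \cite[Prop.\ 2.1]{Iko:21}, and transfer reciprocity from $\br\D$ by quasiconformal invariance. The packaging differs: the paper shows directly that \emph{every} closed Jordan region $Y\subset X$ is quasiconformally equivalent to $\br\D$, whereas you establish local reciprocity at each point and then appeal to a local-to-global principle. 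Your modulus computations transporting \eqref{ireciprocality:3} to $\partial V$ are correct. Two caveats on the final step: \cite[Thm.\ 1.5]{Raj:17} concerns the uniformity of the reciprocity constants, so for a general surface with boundary the clean way to pass from local reciprocity to reciprocity is via \Cref{thm:unif_global_reciprocal} together with quasiconformal invariance; and since your reciprocal neighborhood $V$ is closed, you should note that reciprocity passes to the topological interior of $V$ in $X$ (the upper bound in \eqref{ireciprocality:12} and condition \eqref{ireciprocality:3} are inherited by subsurfaces, and the lower bound is universal).

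The one genuine gap is the type determination. Applying \Cref{thm:rajala} to $\Int(V)\cong\C$ yields a quasiconformal homeomorphism from $\D$ \emph{or} $\C$ onto $\Int(V)$, and Ikonen's result needs the domain to be $\D$. Your justification --- ``the completion of $\Int(V)$ is compact'' --- is the right intuition but is not a proof; one must argue, e.g., that if $g\colon\C\to\Int(V)$ were quasiconformal, then every curve joining the continuum $g(\br\D)$ to $\partial V$ inside $V$ would contain a subcurve in $g\bigl(\Gamma(\br\D,\C\setminus B(0,n);\C)\bigr)$, forcing $\Mod\Gamma(g(\br\D),\partial V;V)=0$ and contradicting \cite[Prop.\ 3.5]{Raj:17}. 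This is fillable but not automatic. The paper sidesteps it entirely: \Cref{corollary:interior} produces, from \Cref{thm:one-sided_qc}, a weakly quasiconformal map $\br\D\to V$ whose restriction to $\D$ is already a quasiconformal homeomorphism onto $\Int(V)$, so no type problem arises. Substituting \Cref{corollary:interior} for your use of \Cref{thm:rajala} closes the gap and brings your proof essentially in line with the paper's.
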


We remark that without requiring any condition on $\partial X$, the reciprocity of $\Int(X)$ does not imply the reciprocity of $X$ in general; this was observed in \cite[Sect.\ 1.1]{Iko:21}. 

Conversely, it is natural to ask whether upper reciprocity is implied by condition \eqref{ireciprocality:3}. We show in Example \ref{example:rec3} that this is not the case. 

\begin{prop}\label{prop:example:rec3}
There exists a metric surface $X$ of locally finite Hausdorff $2$-measure such that \eqref{ireciprocality:3} holds at each point, but $X$ is not reciprocal. Moreover, $X$ can be written as the union of two reciprocal surfaces.  
\end{prop}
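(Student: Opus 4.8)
The plan is to build $X$ directly as a union $X=A\cup B$ of two reciprocal surfaces meeting along a common boundary arc $I$, with the metric deliberately degenerated along $I$ so that a sequence of quadrilaterals $Q_n$, each having two opposite edges in $A$ and two in $B$, satisfies $\Mod\Gamma(Q_n)\cdot\Mod\Gamma^*(Q_n)\to\infty$. By \Cref{theorem:reciprocal} this alone forces $X$ to be non-reciprocal, since the lower bound in \eqref{ireciprocality:12} holds universally by \cite{RR:19} and \eqref{ireciprocality:3} will hold by construction; equivalently, $X$ fails upper reciprocity. Two design constraints steer everything. First, the degeneracy producing the $Q_n$ must genuinely \emph{straddle} $I$: each $Q_n$ must use both sides, so that it is not a quadrilateral of $A$ nor of $B$, and hence witnesses the non-reciprocity of $X$ without contradicting the reciprocity of either piece. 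Second, the $Q_n$ must be spread \emph{along} $I$ at a sequence of scales $r_n\downarrow 0$ rather than clustering at a single point, since a point at which arbitrarily bad quadrilaterals accumulate is precisely a point at which \eqref{ireciprocality:3} tends to fail (cf.\ \Cref{prop:continuum}). Arranging instead that $X$ satisfies a uniform upper mass bound $\mathcal H^2(B(x,\rho))\le C\rho^2$, the standard annulus test function $\rho_0=\tfrac1{\log(R/r)}\,\dist(\cdot,a)^{-1}\mathbf 1_{\{r\le \dist(\cdot,a)\le R\}}$ then gives $\Mod\Gamma(\br B(a,r),X\setminus B(a,R);X)\lesssim 1/\log(R/r)\to 0$ at every point, which is \eqref{ireciprocality:3}.

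For the construction, take $A$ and $B$ to be two copies of a fixed Euclidean model — a closed half-disk, say, with $I$ the boundary diameter — each of which is reciprocal. Fix disjoint subarcs $I_n\subset I$ with $\diam I_n=r_n$ and $\sum r_n<\diam I$, and over each $I_n$ replace half-neighborhoods of $I_n$ in $A$ and in $B$ by a \emph{defect cell} $E_n$: a compact surface of diameter $\asymp r_n$, a rescaled parameter-$\varepsilon_n$ copy of a single model cell, which interpolates between the two sides and carries a genuinely non-Euclidean metric. The model cell is itself built recursively — one nests, over shorter and shorter subarcs, further defects at scales $r_n,r_n/2,r_n/4,\dots$, each a scaled copy of an elementary degeneracy of the type that makes the $\ell^\infty$-square beat the Euclidean one (cf.\ \cite[Example 2.2]{Raj:17}) — the recursion being tuned so that: (i) a quadrilateral $Q_n$ roughly filling $E_n$, with $\zeta_1,\zeta_3\subset A$ and $\zeta_2,\zeta_4\subset B$, crosses all nested defects and so accumulates $\Mod\Gamma(Q_n)\cdot\Mod\Gamma^*(Q_n)\ge\lambda_n$ with $\lambda_n\to\infty$; (ii) $E_n$ is Ahlfors $2$-regular at all scales $\le r_n$, uniformly in $n$, and contributes at most $Cr_n^2$ to $\mathcal H^2$; and (iii) each half $E_n\cap A$ and $E_n\cap B$, in its inherited metric, is reciprocal with constants independent of $n$ — so that the non-Euclidean behavior is invisible on either side and appears only in the joined cell. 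Setting $A'=A\cup\bigcup_n(E_n\cap A)$ and $B'=B\cup\bigcup_n(E_n\cap B)$ yields $X=A'\cup B'$, and one verifies, using (iii) together with the Euclidean case and the mass bound at the remaining (countably many, tame) points, that $A'$ and $B'$ are reciprocal with a single pair of constants.

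The proof then divides into: (a) $X$ is a metric surface of locally finite $\mathcal H^2$ — immediate, since it is a gluing of locally finite pieces and $\sum\mathcal H^2(E_n)\le C\sum r_n^2<\infty$; (b) \eqref{ireciprocality:3} holds at every point — from the uniform upper mass bound and the annulus estimate above, the case of a point of some $I_n$ or of an accumulation point of the $I_n$ reducing to the uniform scale-$\le r_n$ regularity of the $E_n$; and (c) $X$ is not reciprocal while $X=A'\cup B'$ with $A',B'$ reciprocal — from the modulus estimate for $Q_n$ and the uniform control of the halves. The main obstacle is unambiguously the design of the model defect cell: one must manufacture a single bounded metric surface in which a \emph{fixed} quadrilateral has an arbitrarily large modulus product, which forces the degeneracy to be compounded across infinitely many scales — a single anisotropic, collapsing, or thin-column feature only gains a bounded factor — while keeping the cell Ahlfors $2$-regular and, most delicately, keeping each of its two sides reciprocal so that the obstruction is localized entirely to the interface. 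Producing a clean, checkable such cell, and then carrying out the modulus lower bound for the $Q_n$ and the reciprocity verification for $A'$ and $B'$, is where the real work of the proof lies.
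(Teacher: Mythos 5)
There are two genuine gaps here, and the second is fatal to the approach as designed. First, the central object of your construction --- the ``model defect cell'' --- is never actually produced; you explicitly defer it as ``where the real work of the proof lies.'' Without it there is no example. Second, and more seriously, your design constraints are mutually inconsistent: you propose to verify \eqref{ireciprocality:3} at every point from a uniform upper mass bound $\mathcal H^2(B(x,\rho))\le C\rho^2$, while simultaneously requiring the defect cells to be Ahlfors $2$-regular and yet to contain quadrilaterals with arbitrarily large modulus product. By Rajala's theorem \cite[Thm.\ 1.6]{Raj:17}, the uniform upper bound $\mathcal H^2(B(x,r))\le Cr^2$ already implies that the surface is \emph{reciprocal}; equivalently (via Theorem \ref{theorem:reciprocal}), no surface with that bound can fail upper reciprocity. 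So the degeneracy needed to defeat upper reciprocity necessarily destroys the upper mass bound at some points, and at exactly those points \eqref{ireciprocality:3} must be verified by a mechanism other than the annulus test function you describe. Your conditions (i) and (ii) cannot both hold.

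For contrast, the paper sidesteps this by taking the Ahlfors--Beurling Cantor set $E\subset\C$ of positive area, each of whose points is surrounded by nested complementary annuli $A_n$ with $\sum_n 1/M_n=\infty$, and collapsing the metric on $E$ via the length weight $\chi_{\C\setminus E}$. Non-reciprocity comes from the positive area of the degenerate set (as in \cite[Example 2.1]{Raj:17}), while \eqref{ireciprocality:3} at points of $E$ is obtained not from a mass bound but by summing suitably weighted admissible functions over the serial annuli and using the divergence of $\sum 1/M_n$. The decomposition into two reciprocal surfaces then costs nothing: one cuts along a Jordan curve containing $E$ (tameness of planar Cantor sets) and applies Theorem \ref{theorem:reciprocal:interior} to each side, whose interior is locally isometric to a planar domain. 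If you want to salvage your interface-gluing picture, you would have to abandon the uniform mass bound and import an annulus structure of this kind at the bad points.
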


\smallskip

\subsection{Minimal surfaces}
As a corollary to Theorem \ref{thm:one-sided_qc}, we obtain a result on the existence of \textit{minimal disks} or \textit{solutions to Plateau's problem} in metric spaces. This topic has been studied in great depth by Lytchak--Wenger and collaborators in \cite{FW:21,GW:20,LW:17,LW:17b,LW:18a}, and we direct the reader to these references for definitions. The following corollary was established in \cite{NR:21} for length surfaces and in fact was derived from the version Theorem \ref{thm:one-sided_qc} for length surfaces. Its proof remains unchanged under the more general setting.

\begin{cor}\label{cor:plateau}
Let $X$ be a metric surface of finite Hausdorff $2$-measure homeomorphic to a topological closed disk and let $\Gamma=\partial X$. Then Plateau's problem for $\Gamma$ has a solution.
\end{cor}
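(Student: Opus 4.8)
The plan is to derive the corollary by feeding the parametrization provided by \Cref{thm:one-sided_qc} into the existence theory for Plateau's problem in metric spaces of Lytchak and Wenger \cite{LW:17}. Recall that, in that framework, one associates to a Jordan curve $\Gamma$ in a metric space $X$ the class of Sobolev maps $u \in N^{1,2}(\D, X)$ whose trace is a weakly monotone parametrization of $\Gamma$, and a \emph{solution of Plateau's problem for $\Gamma$} is a member of this class of minimal Reshetnyak energy (equivalently, after reparametrization, of minimal parametrized area); the main theorem of \cite{LW:17} asserts that such a solution exists as soon as $X$ is proper and this class is nonempty. Since $X$ is homeomorphic to a closed disk it is compact, hence proper, and $\Gamma = \partial X$ is a Jordan curve in $X$; so the whole matter reduces to exhibiting a single finite-energy Sobolev disk in $X$ whose boundary trace parametrizes $\Gamma$ monotonically.

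Such a disk comes directly from \Cref{thm:one-sided_qc}, applied with $\Omega = \br{\mathbb D}$: it furnishes a continuous, surjective, proper, cell-like map $h \colon \br{\D} \to X$ satisfying $\Mod \Gamma' \le \tfrac{4}{\pi}\Mod h(\Gamma')$ for every path family $\Gamma'$ in $\D$. By the $N^{1,2}$-reformulation recorded in the introduction (from \cite[Thm.\ 1.4]{NR:21}), $h$ lies in $N^{1,2}_{\loc}(\D, X)$ and $g_h^2 \le \tfrac{4}{\pi}J_h$ almost everywhere. As $J_h$ is by definition the Radon--Nikodym derivative of $\mathcal{H}^2 \circ h$ with respect to $\mathcal{H}^2$, we have $\int_\D J_h \, d\mathcal{H}^2 \le \mathcal{H}^2(h(\D)) \le \mathcal{H}^2(X) < \infty$, and hence $\int_\D g_h^2 \, d\mathcal{H}^2 \le \tfrac{4}{\pi}\mathcal{H}^2(X) < \infty$; since $X$ is bounded, this upgrades $h$ to a map in $N^{1,2}(\D, X)$ of finite energy. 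For the boundary trace, $h$ restricts to a continuous surjection $S^1 \to \partial X = \Gamma$ (cell-like maps between these compact surfaces respect the boundary; see \Cref{sec:topological}), which, by continuity of $h$ on $\br{\D}$, is exactly the Sobolev trace of $h$; and the $h|_{S^1}$-preimage of a point of $\Gamma$ coincides with its full preimage under $h$, a cell-like continuum, hence connected, so $h|_{S^1}$ is a monotone surjection onto $\Gamma$, i.e., a weakly monotone parametrization. Thus $h$ lies in the Plateau class, which is therefore nonempty, and \cite{LW:17} produces the desired solution (which, as part of that theory, is moreover continuous and, after reparametrization, quasiconformal).

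I do not anticipate a genuine obstacle: the substantive content is \Cref{thm:one-sided_qc} itself, which we may assume, and what remains is a short reduction. The one place that calls for care is the step upgrading the \emph{local} Sobolev regularity and the infinitesimal modulus inequality to a \emph{globally} finite Reshetnyak energy, which is exactly where the hypothesis $\mathcal{H}^2(X) < \infty$ enters, through the bound $\int_\D J_h \, d\mathcal{H}^2 \le \mathcal{H}^2(X)$ afforded by the definition of $J_h$ as a Radon--Nikodym derivative. One should also make sure to invoke the version of the Lytchak--Wenger existence theorem that only requires the Plateau class to be nonempty, rather than a version positing rectifiability of $\Gamma$, since $\partial X$ need not be a rectifiable curve.
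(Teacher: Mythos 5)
Your proposal is correct and is essentially the paper's argument: the paper defers to \cite{NR:21}, where the corollary is obtained exactly this way, by showing that the weakly quasiconformal parametrization $h\colon \br{\D}\to X$ from \Cref{thm:one-sided_qc} lies in the Plateau class (finite energy via $g_h^2\le \tfrac{4}{\pi}J_h$ and $\int_{\D}J_h\,d\mathcal H^2\le \mathcal H^2(X)<\infty$, boundary trace weakly monotone) and then invoking the Lytchak--Wenger existence theorem for proper spaces with nonempty Plateau class. One small fix: your claim that $h|_{S^1}$-preimages coincide with full $h$-preimages is not justified (interior points may map to $\partial X$); instead, note that by \Cref{theorem:monotone_approximation} $h$ is a uniform limit of homeomorphisms $\br{\D}\to X$, whose restrictions to $S^1$ converge uniformly to $h|_{S^1}$, which is precisely the definition of a weakly monotone parametrization of $\Gamma$.
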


We pose the following natural question.

\begin{ques}
Does Plateau's problem have a solution for every metric Jordan curve $\Gamma$ of finite Hausdorff $2$-measure?
\end{ques}
This was established for metric Jordan curves of finite length by Lytchak--Wenger \cite[Cor.\ 1.5]{LW:17}. In order to answer the question, by Corollary \ref{cor:plateau} it suffices to solve the following problem.
\begin{prob} 
Let $\Gamma$ be a metric Jordan curve of finite Hausdorff $2$-measure.  Then there exists a metric space $X$ homeomorphic to a topological closed disk with $\mathcal H^2(X) \leq K (\diam(\partial X)^2+\mathcal H^2(\partial X))$ for an absolute constant $K>0$  such that $\Gamma$ embeds isometrically into $\partial X$.
\end{prob}

\smallskip

\subsection{Outline} 
Section \ref{sec:preliminaries} contains the required preliminaries. In Section \ref{sec:extended_metric} we introduce the extended length metric $\bar d$ associated to a metric surface $(X,d)$ and we study its relation to $d$. The main result in this section is Theorem \ref{thm:triangle_area}, which provides an estimate for the area of triangles in $X$ whose edges are $\br d$-geodesics. In Section \ref{sec:triangulations}, we first sketch an argument for triangulating metric surfaces of locally finite area, by modifying slightly the argument of \cite{CR:21} for triangulating length surfaces. Then, we use the triangulation result to prove the polyhedral approximation theorem, Theorem \ref{thm:extended_polyhedral_approximation}.  This completes the first half of the paper, which can be read independently of the later sections.

In Section \ref{sec:uniformization}, we establish the uniformization result of Theorem \ref{thm:one-sided_qc} in the case of compact surfaces. In this section we also include several further required preliminaries in Gromov--Hausdorff convergence, weakly quasiconformal maps, and classical uniformization theory. Section \ref{sec:global} establishes the global uniformization result of Theorem \ref{thm:uniformization_global}, based on the local results of Section \ref{sec:uniformization}. Moreover, we show how to obtain the minimal value  $4/\pi$ of the quasiconformal dilatation. This section also contains several topological preliminaries related to cell-like and monotone maps between surfaces. The results of Sections \ref{sec:uniformization} and \ref{sec:global} rely only on the statement of the polyhedral approximation theorem (Theorem \ref{thm:extended_polyhedral_approximation}) and can be read independently of the previous sections.

In Section \ref{sec:reciprocal} we prove Theorem \ref{theorem:reciprocal} and Theorem \ref{theorem:reciprocal:interior}. This section relies only on Theorem \ref{thm:one-sided_qc} and can also be read independently. Finally, in Section \ref{sec:examples} we present four examples. The first two examples are related to the polyhedral approximation theorem and show the importance of the retractions in conclusion \ref{item:main_3} of Theorem \ref{thm:extended_polyhedral_approximation}, as well as the sharpness of conclusion \ref{item:main_2}, in the sense that the areas of the approximating surfaces need not converge to the area of the limiting surface. The last two examples of the section justify Proposition \ref{prop:example:rec3} and Proposition \ref{prop:continuum}, respectively.

\bigskip

\section{Preliminaries} \label{sec:preliminaries}

\subsection{Metric spaces}

We refer the reader to \cite{BH:99} and \cite{BBI:01} for the basics of metric geometry. Let $X$ be a set. A function $d \colon X \times X \to [0,\infty)$ is a \textit{metric} on $X$ if it is symmetric, satisfies the triangle inequality, and has the property that $d(x,y) = 0$ if and only if $x=y$. A function $d$ satisfying the same properties but potentially taking the value $\infty$ is called an \textit{extended metric}. 

Let $\gamma \colon I \to X$ be a curve in a metric space $X$, where $I$ is an interval. The length of $\gamma$ with respect to $d$, denoted by $\ell_d(\gamma)$, is defined as 
\[ \ell_d(\gamma) = \sup \sum_{i=1}^n d(\gamma(t_{i-1}),\gamma(t_i)), \]
the supremum taken over all finite increasing sequences $t_0 < t_1 < \cdots < t_n$ in $I$. If the metric is clear from the context, we may write $\ell(\gamma)$ in place of $\ell_d(\gamma)$. A curve is \textit{rectifiable} if it has finite length. A metric $d$ on $X$ is a \textit{length metric} if $d(x,y) = \inf \ell_d(\gamma)$ for all $x,y \in X$, the infimum taken over all curves $\gamma$ from $x$ to $y$. 

A \textit{Jordan curve} (resp.\ \textit{Jordan arc}) in $X$ is an embedding of the unit circle $\mathbb S^1$ (resp.\ the unit interval $[0,1]$) into $X$. We also use the alternative terminology simple curve and simple arc, respectively. The \textit{trace} of a path $\gamma\colon I\to X$ is the set $\gamma(I)$ and is denoted by $|\gamma|$.

We use the notation $B_d(x,r)$ for the open ball $\{y\in X: d(x,y)<r\}$, $\br B_d(x,r)$ for the closed ball, and $S_d(x,r)$ for the sphere $\{y\in X: d(x,y)=r\}$. Again, the subscript $d$ may be dropped if the metric is clear from the context. 

For any metric space $X$ and $s>0$, the \textit{Hausdorff $s$-measure} of a set $A \subset X$ is defined by
\[\mathcal{H}^s(A) = \lim_{\delta\to 0} \mathcal H^s_{\delta}(A),\]
where
\[\mathcal H^s_\delta(A) =\inf \left\{ \sum_{j=1}^\infty C(s) \diam(A_j)^s\right\} \]
and the infimum is taken over all collections of sets $\{A_j\}_{j=1}^\infty$ such that $A \subset \bigcup_{j=1}^\infty A_j$ and $\diam(A_j) < \delta$ for each $j$. Here $C(s)$ is a positive normalization constant, chosen so that the Hausdorff $n$-measure coincides with Lebesgue measure in $\R^n$. The quantity $\mathcal H^s_{\delta}(A)$ is called the \textit{$\delta$-Hausdorff $s$-content of $A$}. If we need to emphasize the metric $d$ being used for the Hausdorff $s$-measure, we write $\mathcal{H}_{d}^s$ instead of $\mathcal{H}^s$. 

A map $f\colon X \to Y$ between metric spaces is \textit{bi-Lipschitz} if there exists $L\geq 1$ such that
\[L^{-1}d_X(x,y) \leq d_Y(f(x),f(y)) \leq L d_X(x,y)\]
for all $x,y \in X$. In this case, we say that $f$ is \textit{$L$-bi-Lipschitz}. A map $f\colon X \to Y$ is  \textit{Lipschitz} if the right inequality holds for all $x,y \in X$. In this case, we say that $f$ is \textit{$L$-Lipschitz}.

We use $\partial X$ to denote the boundary of a manifold $X$ and $\Int(X)$ to denote its interior. Throughout this paper, unless otherwise specified, the terms \textit{boundary} and \textit{interior} refer to manifold boundary and interior rather than topological boundary and interior. 

\smallskip

\subsection{Convergence}\label{sec:convergence}
Let $X$ be a metric space and let $E\subset X$ and $\varepsilon>0$. We denote by $N_{\varepsilon}(E)$ the open $\varepsilon$-neighborhood of $E$. We say that $E$ is \textit{$\varepsilon$-dense} (in $X$) if for each $x\in X$ we have $d(x,E)<\varepsilon$ or equivalently $N_{\varepsilon}(E)=X$. A map $f \colon X \to Y$ (not necessarily continuous) between metric spaces is an \textit{$\varepsilon$-isometry} if $f(X)$ is $\varepsilon$-dense in $Y$ and $|d_X(x,y) - d_Y(f(x),f(y))| < \varepsilon$ for each $x,y \in X$.

We define the \textit{Hausdorff distance} of two sets $E,F\subset X$ to be the {infimal value} $r>0$ such that $E\subset N_r(F)$ and $F\subset N_r(E)$. We denote the Hausdorff distance by $d_H(E,F)$. A sequence of sets $E_n\subset X$ \textit{converges in the Hausdorff sense} to a set $E\subset X$ if $d_H(E_n,E)\to 0$ as $n\to\infty$.

{The \textit{Gromov--Hausdorff distance} between two metric spaces $X,Y$ is defined as the infimal value $r>0$ such that there is a metric space $Z$ with subsets $\widetilde{X}, \widetilde{Y} \subset Z$ such that $X$ and $Y$ are isometric to $\widetilde{X}$ and $\widetilde{Y}$, respectively, and $d_H(\widetilde{X}, \widetilde{Y}) < r$. This is denoted by $d_{GH}(X,Y)$. We say that a sequence of metric spaces $X_n$ \textit{converges in the Gromov--Hausdorff sense} to a metric space $X$ if $d_{GH}(X_n,X) \to 0$ as $n \to \infty$. By \cite[Cor.\ 7.3.28]{BBI:01}, this is equivalent to the property that} there exists a sequence of $\varepsilon_n$-isometries $f_n\colon X_n\to X$, where $\varepsilon_n>0$ and $\varepsilon_n\to 0$ as $n\to\infty$. In this case, we say that $f_n$, $n\in \N$, is an \textit{approximately isometric sequence}.

\smallskip

\subsection{Metric Sobolev spaces}
Let $X$ be a metric space and $\Gamma$ be a family of curves in $X$. A Borel function $\rho\colon X \to [0,\infty]$ is \textit{admissible} for the path family $\Gamma$ if $\int_{\gamma}\rho\, ds\geq 1$
for all locally rectifiable paths $\gamma\in \Gamma$. We define the \textit{$2$-modulus} of $\Gamma$ as 
$$\Mod \Gamma = \inf_\rho \int_X \rho^2 \, d\mathcal H^2,$$
where the infimum is taken over all admissible functions $\rho$ for $\Gamma$. By convention, $\Mod \Gamma = \infty$ if there are no admissible functions for $\Gamma$. Observe that we consider $X$ to be equipped with the Hausdorff 2-measure. This definition may be generalized by allowing for an exponent different from $2$ or a different measure, though this generality is not needed for this paper.

Let $h\colon X\to Y$ be a map between metric spaces. We say that a Borel function $g\colon X\to [0,\infty]$ is an \textit{upper gradient} of $h$ if 
\begin{align}\label{ineq:upper_gradient}
    d_Y(h(a),h(b)) \leq \int_{\gamma} g \, ds
\end{align}
for all $a,b\in X$ and every locally rectifiable path $\gamma$ in $X$ joining $a$ and $b$. This is called the \textit{upper gradient inequality}. If, instead the above inequality holds for all curves $\gamma$ outside a curve family of $2$-modulus zero, then we say that $g$ is a \textit{weak upper gradient} of $h$. In this case, there exists a curve family $\Gamma_0$ with $\Mod \Gamma_0=0$ such that all paths outside $\Gamma_0$ and all subpaths of such paths satisfy the upper gradient inequality.

{We equip the space $X$ with the Hausdorff $2$-measure $\mathcal{H}^2$.} Let $L^p(X)$ denote the space of $p$-integrable Borel functions from $X$ to the extended real line $\widehat{\mathbb{R}}$, where two functions are identified if they agree $\mathcal{H}^2$-almost everywhere. The Sobolev space $N^{1,p}(X,Y)$ is defined as the space of Borel maps $h \colon X \to Y$ with a weak upper gradient $g$ in $L^p(X)$ such that the function $x \mapsto d_Y(y,h(x))$ is in $L^p(X)$ for some $y \in Y$, again where two functions are identified if they agree almost everywhere. If $Y=\R$, we simply write $N^{1,p}(X)$. The spaces $L_{\loc}^p(X)$ and $N_{\loc}^{1,p}(X, Y)$ are defined in the obvious manner. See the monograph \cite{HKST:15} for background on metric Sobolev spaces.

\bigskip

\section{The extended length metric} \label{sec:extended_metric}

In this section, we develop the properties of the extended length metric, which are all new ingredients in this paper. The main result in this section, \Cref{thm:triangle_area}, gives an estimate on the area enclosed by a triangle whose edges are $\bar{d}$-geodesics. This estimate plays the same role as the Besicovitch inequality in \cite{NR:21}. As in \cite{NR:21}, this estimate is based on bi-Lipschitz embedding an arbitrary triangle into the plane. The innovation here is that, although the bi-Lipschitz embedding is with respect to the extended length metric $\bar{d}$, our area estimate uses the Hausdorff $2$-measure derived from the original metric $d$. 

\smallskip

\subsection{Definition and basic properties}

Let $(X,d)$ be a metric space. One can obtain an extended length metric $\bar{d}\colon X\times X\to [0,\infty]$ by defining
\[\bar{d}(x,y) = \inf \ell_d(\gamma),\]
where the infimum is taken over all curves from $x$ to $y$. One can check that $\bar{d}$ is indeed an extended length metric. It may be that there are no rectifiable curves between two given points $x,y \in X$, in which case we have $\bar{d}(x,y) = \infty$. 

If $X$ is a surface with locally finite Hausdorff $2$-measure, then there is a totally disconnected set $E \subset X$ such that any two points $x,y \in X \setminus E$ can be joined by a rectifiable curve. In particular, $\bar{d}(x,y)<\infty$ for such $x,y$. The proof of this statement is a simple application of the co-area inequality, as noted by Rajala in \cite[Sect. 3]{Raj:17}. Hence the co-area inequality connects the assumption of locally finite Hausdorff $2$-measure to the geometry of the metric space. We state a $2$-dimensional version of the co-area inequality sufficient for our purposes. See \cite{EH:21} for a full proof of the general co-area inequality. 

\begin{lemm}[Co-area inequality]
Let $X$ be a metric space and $L>0$. For any $L$-Lipschitz function $f \colon X \to \mathbb{R}$ and any Borel function $g \colon X \to [0,\infty]$,
\begin{align} \label{equ:coarea}
   \int_{\mathbb{R}} \int_{f^{-1}(t)} g(s)\, d\mathcal{H}^1(s)\,dt \leq \frac{4L}{\pi} \int_X g(x)\, d\mathcal{H}^2(x). 
\end{align}
\label{lemm:coarea}
\end{lemm}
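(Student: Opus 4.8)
The statement to prove is the co-area inequality (Lemma~\ref{lemm:coarea}): for an $L$-Lipschitz function $f\colon X\to\R$ and a Borel function $g\colon X\to[0,\infty]$,
\[
\int_{\R}\int_{f^{-1}(t)}g\,d\mathcal H^1\,dt\ \le\ \frac{4L}{\pi}\int_X g\,d\mathcal H^2.
\]
The plan is to reduce the statement to the classical eikonal/co-area inequality for Hausdorff measures. First I would reduce to the case $L=1$ by rescaling $f$: replacing $f$ with $f/L$ changes the fibers $f^{-1}(t)$ only by a reparametrization of the variable $t$, and a change of variables $t\mapsto Lt$ in the outer integral produces exactly the factor $L$. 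So it suffices to prove the inequality with constant $4/\pi$ for a $1$-Lipschitz $f$.

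Next I would dispose of the measure-theoretic preliminaries. By the monotone convergence theorem it suffices to treat bounded $g$ supported on a set of finite $\mathcal H^2$-measure, and then, by approximating from above by simple functions and using monotone convergence again, it suffices to treat $g=\mathbf 1_A$ for a Borel set $A\subset X$ with $\mathcal H^2(A)<\infty$. For such $g$ the claim becomes
\[
\int_{\R}\mathcal H^1\big(A\cap f^{-1}(t)\big)\,dt\ \le\ \frac{4}{\pi}\,\mathcal H^2(A),
\]
and I would verify the measurability of $t\mapsto\mathcal H^1(A\cap f^{-1}(t))$ along the way (this follows from the structure of the Hausdorff-content approximation below, or one can cite \cite{EH:21} for it).

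The core estimate I would prove by working at the level of $\delta$-coverings. Fix $\delta>0$ and a cover $\{A_j\}_{j=1}^\infty$ of $A$ with $\diam A_j<\delta$; we may assume each $A_j\subset A$. Because $f$ is $1$-Lipschitz, $f(A_j)$ is an interval of length at most $\diam A_j$, and for each $t$ the slice $A_j\cap f^{-1}(t)$ is nonempty only when $t\in f(A_j)$, in which case it has diameter at most $\diam A_j$. Hence, with the normalizing constants built into the definition of $\mathcal H^s$ (so that $C(1)=1$ and $C(2)=\pi/4$, matching Lebesgue measure on $\R$ and $\R^2$), one gets the pointwise-in-$t$ bound for the Hausdorff content
\[
\mathcal H^1_\delta\big(A\cap f^{-1}(t)\big)\ \le\ \sum_{j:\,t\in f(A_j)}\diam A_j,
\]
and integrating in $t$ and using $\int_\R \mathbf 1_{f(A_j)}(t)\,dt=|f(A_j)|\le\diam A_j$ together with Fubini yields
\[
\int_\R \mathcal H^1_\delta\big(A\cap f^{-1}(t)\big)\,dt\ \le\ \sum_{j=1}^\infty (\diam A_j)^2\ =\ \frac{4}{\pi}\sum_{j=1}^\infty C(2)(\diam A_j)^2 .
\]
Taking the infimum over all such covers gives $\int_\R\mathcal H^1_\delta(A\cap f^{-1}(t))\,dt\le\frac{4}{\pi}\mathcal H^2_\delta(A)\le\frac{4}{\pi}\mathcal H^2(A)$, and then letting $\delta\to 0$ and invoking the monotone convergence theorem (the contents $\mathcal H^1_\delta$ increase to $\mathcal H^1$ as $\delta\downarrow 0$) finishes the bounded-Borel-set case; undoing the reductions gives the general statement.

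The main obstacle is really the bookkeeping around the sharp constant $4/\pi$ and the measurability of the slice function, rather than any deep idea: one must be careful that the normalization $C(2)=\pi/4$ is what converts $\sum(\diam A_j)^2$ into $\frac4\pi\mathcal H^2$, and that the interchange of $\int_\R$ with the infimum over covers is done at the level of the $\delta$-content (where the estimate is a clean Fubini argument) before passing to the limit $\delta\to 0$. If one prefers to avoid reproving the measurability of $t\mapsto\mathcal H^1(A\cap f^{-1}(t))$, it can simply be quoted from the general co-area inequality in \cite{EH:21}, of which this is the special case $m=1$, $n=2$ with $f$ Lipschitz into $\R$; but since only this two-dimensional version is needed, the self-contained covering argument above is the cleanest route.
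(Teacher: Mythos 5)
The paper does not prove this lemma at all: it is stated as a quoted fact with the reference \cite{EH:21} for the full proof, so there is no internal argument to compare against. Your self-contained proof is the standard covering argument for the Eilenberg (co-area) inequality, which is essentially what the cited reference carries out in greater generality, and it is correct: the rescaling to $L=1$, the slicewise content bound $\mathcal{H}^1_\delta(A\cap f^{-1}(t))\le\sum_{j:\,t\in f(A_j)}\diam A_j$, the integration in $t$ using $\mathcal L^1(f(A_j))\le\diam A_j$, and the identification $C(2)=\pi/4$ (so that $\sum_j(\diam A_j)^2=\tfrac4\pi\sum_j C(2)(\diam A_j)^2$) all fit together to give exactly the constant $4L/\pi$. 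Two small points of hygiene: the reduction to simple functions should approximate $g$ from \emph{below} (monotone convergence), not from above; and the measurability of $t\mapsto\mathcal H^1_\delta(A\cap f^{-1}(t))$ and of $f(A_j)$, which you rightly flag, is the one genuinely delicate ingredient --- it can either be quoted from \cite{EH:21}, or sidestepped entirely by replacing $f(A_j)$ with the closed interval $[\inf f(A_j),\sup f(A_j)]$ and running the whole argument with upper integrals, using Fatou's lemma for measurable majorants to pass to the limit $\delta\to0$. With those caveats your route is a legitimate, elementary substitute for the citation; what the citation buys is precisely a careful treatment of the measurability issue and the general $(s,m)$ version, neither of which is needed here.
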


Given that $X$ has locally finite Hausdorff $2$-measure, a consequence of \Cref{lemm:coarea} is that for each point $x \in X$, almost every metric sphere $S(x,r)$ has finite Hausdorff $1$-measure and hence {contains} rectifiable curves. This guarantees an abundance of rectifiable curves in $X$.

Throughout this entire section, distances are measured with respect to the original metric $d$ of $X$, unless otherwise indicated. For example, $B(x,r)$ denotes the ball $\{y\in X: d(x,y)<r\}$ and $B_{\bar{d}}(x,r)=\{y\in X: \bar{d}(x,y)<r\}$. 

Recall that a \textit{geodesic} between two points $x,y \in X$ is a curve $\gamma\colon [a,b] \to X$ of finite length such that $\gamma(a) = x$, $\gamma(b) = y$ and $\ell(\gamma) = d(x,y)$. On a metric space $(X,d)$, our main interest will be in geodesics with respect to the induced length metric $\bar{d}$. We will refer to such a curve as a \textit{$\bar{d}$-geodesic}. Note that a $\bar{d}$-geodesic may fail to exist between two given points, either because no curve of finite length connecting $x$ and $y$ exists or because there is no curve having minimal length. Moreover, a $\bar{d}$-geodesic between a given pair of points is not necessarily unique, as is the case for geodesics in general. However, as observed, the assumption of locally finite Hausdorff $2$-measure is enough to guarantee that a surface carries many rectifiable curves. In particular, it is shown in \cite[Prop.\ 3.5]{Raj:17} that for any two disjoint continua the family of curves connecting them has positive modulus. 

In general, $\bar{d}$ may fail to be continuous with respect to the metric $d$. However, a consequence of the Arzel\`a--Ascoli theorem is that $\bar{d}$ is lower semi-continuous with respect to $d$ in the following sense.

\begin{lemm} \label{lemm:lsc}
Let $(X,d)$ be a metric surface of locally finite Hausdorff $2$-measure. For each $x_0\in X$ the function $u\colon X\to [0,\infty]$ defined by $u(x)=\bar{d}(x,x_0)$ is lower semi-continuous. Moreover, for each $r>0$ the  set $B_{\bar{d}}(x_0,r)$ is connected and $\br B_{\bar{d}}(x_0,r)$ is the closure of $B_{\bar{d}}(x_0,r)$. In particular, $\br B_{\bar{d}}(x_0,r)$ is also connected. 
\end{lemm}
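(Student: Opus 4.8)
The plan is to establish the lower semi-continuity of $u$ first and then to deduce the remaining three assertions from it; recall that lower semi-continuity of $u$ is equivalent to the statement that each sublevel set $\{u\le r\}=\br B_{\bar d}(x_0,r)$ is closed in $(X,d)$. So the whole lemma comes down to that one statement plus elementary properties of the length functional.

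Granting lower semi-continuity, I would argue the three consequences as follows. For connectedness of $B_{\bar d}(x_0,r)$: given $y\in B_{\bar d}(x_0,r)$, pick a curve $\gamma$ from $y$ to $x_0$ with $\ell_d(\gamma)<r$; since the subcurve of $\gamma$ from any $z\in|\gamma|$ to $x_0$ has length at most $\ell_d(\gamma)$, we get $\bar d(z,x_0)<r$, so $|\gamma|\subset B_{\bar d}(x_0,r)$, and hence every point of $B_{\bar d}(x_0,r)$ is joined to $x_0$ within the set, so it is path-connected. For the identity $\br B_{\bar d}(x_0,r)=\overline{B_{\bar d}(x_0,r)}$ (closure in $(X,d)$): the inclusion $\supseteq$ is immediate from lower semi-continuity, since a $d$-limit $y$ of points $y_n$ with $\bar d(y_n,x_0)<r$ satisfies $\bar d(y,x_0)\le\liminf_n\bar d(y_n,x_0)\le r$. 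For $\subseteq$ it suffices to treat a point $y$ with $\bar d(y,x_0)=r$ (here $r>0$): for small $\delta>0$ choose a curve $\gamma_\delta$ from $y$ to $x_0$ with $\ell_d(\gamma_\delta)<r+\delta^2$ and let $z_\delta$ be its point at arclength $\delta$ from $y$ (which exists, as $\delta<r\le\ell_d(\gamma_\delta)$); then $d(z_\delta,y)\le\delta$, while for $\delta<1$ we have $\bar d(z_\delta,x_0)\le\ell_d(\gamma_\delta)-\delta<r$, so $z_\delta\in B_{\bar d}(x_0,r)$ and $z_\delta\to y$ in $d$. Finally, $\br B_{\bar d}(x_0,r)$ is the closure of the connected set $B_{\bar d}(x_0,r)$, hence connected.

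It remains to prove lower semi-continuity of $u$ at a point $x$. Suppose $x_n\to x$ in $(X,d)$ and set $L=\liminf_n\bar d(x_n,x_0)$; I need $\bar d(x,x_0)\le L$. If $L=\infty$ or $x=x_0$ this is trivial, so assume $L<\infty$ and $x\neq x_0$. After passing to a subsequence I may assume $\bar d(x_n,x_0)\to L$; choose curves $\gamma_n$ from $x_n$ to $x_0$ with $\ell_d(\gamma_n)\to L$, parametrized proportionally to arclength on $[0,1]$, so that $\gamma_n(0)=x_n$, $\gamma_n(1)=x_0$, and $\{\gamma_n\}$ is uniformly Lipschitz, hence equicontinuous. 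All the traces $|\gamma_n|$ pass through the fixed point $x_0$ and have uniformly bounded length. By the Arzel\`a--Ascoli theorem a subsequence converges uniformly to a curve $\gamma$ from $x$ to $x_0$, and by lower semi-continuity of length under uniform convergence $\ell_d(\gamma)\le\liminf_n\ell_d(\gamma_n)=L$; hence $\bar d(x,x_0)\le\ell_d(\gamma)\le L$. (An alternative is to extract a Hausdorff-convergent subsequence of the continua $|\gamma_n|$ and use Blaschke's selection theorem together with the lower semi-continuity of $\mathcal{H}^1$ under Hausdorff convergence of continua, noting $\bar d(x,x_0)\le\mathcal{H}^1(\lim|\gamma_n|)\le L$.)

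The hard part will be the compactness input for the Arzel\`a--Ascoli step: I need the traces $|\gamma_n|$, which have uniformly bounded length and all contain $x_0$, to lie in a single compact subset of $X$. This is where the locally compact manifold structure of $X$ must be used, and where some care is needed, since $X$ is not assumed proper. I also expect it to be essential to run the limiting argument globally, in one step: extracting a limit only for the part of each $\gamma_n$ lying inside a small ball around $x$ would reduce the problem to lower semi-continuity of $u$ at the limit of the exit points on the bounding sphere, which is circular.
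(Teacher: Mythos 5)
Your treatment of the three consequences (connectedness of $B_{\bar d}(x_0,r)$ and the identification of $\br B_{\bar{d}}(x_0,r)$ with the closure of the open ball) is correct and matches the paper. The gap is in the lower semi-continuity itself: you have isolated the compactness input needed for your global Arzel\`a--Ascoli step but not supplied it, and in fact it is false in general. A metric surface of locally finite Hausdorff $2$-measure need not be proper or complete (e.g.\ the open unit disk with the Euclidean metric), so the traces $|\gamma_n|$, despite having uniformly bounded length and all containing $x_0$, need not lie in any compact subset of $X$; the same objection defeats the Blaschke-selection alternative. There is no way to repair the global extraction without further hypotheses.

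Moreover, the local approach you dismiss as circular is exactly what the paper does, and the circularity is avoided by a point you have not exploited: the separating set can be chosen \emph{rectifiable}. Using the co-area inequality and the separation lemma (Lemma \ref{lemma:separation}, which rests on \cite[Thm.\ 1.5]{Nt:monotone}), one finds a rectifiable Jordan curve or arc $E$, contained in a level set of $y\mapsto d(x,y)$, that separates $x$ from $x_0$ and bounds a compact neighborhood $U$ of $x$. Parametrizing $\gamma_n$ by arclength from $x_0$ and letting $y_n=\gamma_n(t_n)$ be the last intersection with $E$, the tails $\eta_n=\gamma_n|_{[t_n,\ell(\gamma_n)]}$ lie in the compact set $U$, so Arzel\`a--Ascoli applies to them and yields a limit path $\eta$ from a point $\eta(t)\in E$ to $x$ with $\ell(\eta)\le M-t$. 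The step you feared would require lower semi-continuity at the exit points is instead handled by travelling along $E$ itself: since $E$ is rectifiable and $y_n\to\eta(t)$ on $E$, there are subarcs $\zeta_n$ of $E$ from $y_n$ to $\eta(t)$ with $\ell(\zeta_n)\to 0$. Concatenating $\gamma_n|_{[0,t_n]}$, $\zeta_n$, and $\eta$ gives a curve from $x_0$ to $x$ of length at most $t_n+\ell(\zeta_n)+M-t$, whence $\bar d(x_0,x)\le M$. Without this rectifiable ``bridge'' your argument does not close.
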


All topological notions in the statement refer to the original topology of $X$ induced by $d$. The proof relies on the following separation lemma.

\begin{lemm}\label{lemma:separation}
Let $X$ be a metric surface that is homeomorphic to a topological closed disk. Let $f\colon X\to [0,\infty)$ be a continuous function such that $f^{-1}(0)$ is non-empty and $f^{-1}(t)$ has finite Hausdorff $1$-measure for a.e.\ $t>0$. Then for each component $A$ of $f^{-1}(0)$, each $x_0\in \partial X\setminus A$, and for a.e.\ $r\in (0,f(x_0))$ there exists a continuum $E\subset f^{-1}(r)$ with $\mathcal H^1(E)<\infty$ that separates $A$ from $x_0$. Moreover, there exists a closed Jordan region $U\subset X\setminus \{x_0\}$ containing a neighborhood of $A$ such that $\partial U$ coincides with $E$ or is the union of $E$ and an arc of $\partial X$. 
\end{lemm}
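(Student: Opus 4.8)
\emph{Proof plan.}
The plan is to realize the separating set as a piece of the topological boundary of a sublevel set of $f$, and then to upgrade it to a Jordan curve, or to a crosscut of $X$, using the local connectedness that finiteness of $\mathcal{H}^1$ on level sets forces; throughout I identify $X$ with the closed disk $\overline{\mathbb{D}}$. First I would fix the scale: since $\mathcal{H}^1(f^{-1}(t))<\infty$ for a.e.\ $t>0$ by hypothesis, it suffices to treat an arbitrary such value $r\in(0,f(x_0))$. Consider the open set $V=f^{-1}([0,r))$, which contains $A$ because $A\subset f^{-1}(0)$ and $r>0$. Let $U_0$ be the connected component of $V$ containing the connected set $A$; since $X$ is a manifold and hence locally connected, $U_0$ is open. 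As $\overline{U_0}\subset\overline{V}\subset f^{-1}([0,r])$ while $f(x_0)>r$, we have $x_0\notin\overline{U_0}$. Moreover $\partial U_0\subset f^{-1}(r)$: if a point $y\in\partial U_0$ had $f(y)<r$, then the component of the open set $V$ containing $y$ would be an open neighborhood of $y$ meeting $U_0$, hence would equal $U_0$, forcing $y\in U_0$.

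Next I would \emph{fill} $U_0$ so that its complement becomes connected. Let $P_0$ be the connected component of the open set $X\setminus\overline{U_0}$ that contains $x_0$, and set $U_1=X\setminus P_0$; this is closed, contains the open neighborhood $U_0$ of $A$, and omits $x_0$. Here $P_0$ is a complementary component of the continuum $\overline{U_0}$, so by a standard fact of plane topology its boundary $\partial P_0$ is again a continuum; the same argument as above gives $\partial P_0\subset\partial U_0\subset f^{-1}(r)$, whence $\mathcal{H}^1(\partial P_0)<\infty$, and therefore $\partial P_0$ is a \emph{Peano} (locally connected) continuum, since any continuum of finite $\mathcal{H}^1$-measure is locally connected. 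Finally, $\partial P_0$ separates $A$ from $x_0$ in $X$: it is disjoint from both the open set $U_0\supset A$ and the component $P_0\ni x_0$, and these lie in distinct components of $X\setminus\partial P_0$.

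The last step is to replace $\partial P_0$ by an honest Jordan curve or crosscut. I would invoke the classical fact that a Peano continuum in $\overline{\mathbb{D}}$ that separates a connected set $A$ from a point $x_0\in\partial\mathbb{D}$, and is disjoint from both, must contain either a Jordan curve lying in $\Int X$ or a crosscut of $X$ (a Jordan arc with endpoints on $\partial X$ and interior in $\Int X$) that already separates $A$ from $x_0$. Applying this with $K=\partial P_0\subset f^{-1}(r)$, let $E$ be the resulting curve. The Jordan curve theorem on $\overline{\mathbb{D}}$ then produces two complementary regions of $E$; since $x_0\in\partial X$ does not lie on $E$ and lies on the side opposite $A$ (so in the Jordan-curve case $A$ lies on the disk side, not the annular side), the closure $U$ of the side containing $A$ is a closed Jordan region with $x_0\notin U$. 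One verifies $U_0\subset U$ — as $U_0$ is connected, contains $A$, and is disjoint from $E\subset f^{-1}(r)$ — so $U$ contains the neighborhood $U_0$ of $A$; that $E\subset f^{-1}(r)$ has finite $\mathcal{H}^1$-measure; and that $\partial U$ equals $E$ when $E\subset\Int X$ and equals $E$ together with an arc of $\partial X$ when $E$ is a crosscut.

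The main obstacle is this final topological extraction: passing from the a priori complicated Peano continuum $\partial P_0$ to a genuine Jordan curve or crosscut that still separates, and checking that the $A$-side is a closed Jordan region of exactly the stated form — this is where the dichotomy $\partial U=E$ versus $\partial U=E\cup(\text{arc of }\partial X)$ appears, according to whether the extracted curve meets $\partial X$. Everything else is routine point-set bookkeeping, valid for any $r$ with $\mathcal{H}^1(f^{-1}(r))<\infty$, which is why the conclusion holds for a.e.\ $r\in(0,f(x_0))$.
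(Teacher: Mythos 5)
Your argument is correct in outline but follows a genuinely different route from the paper's. The paper's proof invokes the structure theorem \cite[Thm.\ 1.5]{Nt:monotone}: for a.e.\ $r$, \emph{every} component of $f^{-1}(r)$ is a point, a rectifiable Jordan curve, or a rectifiable Jordan arc; it then takes $E$ to be a component of $f^{-1}(r)$ that separates $A$ from $x_0$ (using the classical fact that a compact separator contains a connected separator) and, in the arc case, trims it so that only its endpoints lie on $\partial X$. You instead build the separator by hand: the component $U_0$ of the sublevel set $f^{-1}([0,r))$ containing $A$, filled by removing the complementary component $P_0$ containing $x_0$, has $\partial P_0\subset f^{-1}(r)$; connectedness of $\partial P_0$ holds, finiteness of $\mathcal H^1$ makes it a Peano continuum, and the classical separation theorem for locally connected continua (a Peano continuum separating two points contains a simple closed curve, here a curve or crosscut, that still separates) finishes the job. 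Your route avoids the cited structure theorem entirely and uses only $\mathcal H^1(f^{-1}(r))<\infty$, which is exactly the a.e.\ hypothesis — a gain in self-containedness. The price is concentrated in two places. First, the ``standard fact'' that $\partial P_0$ is a continuum is a spherical statement that does not apply verbatim in the bordered disk; the clean justification is unicoherence of $\overline{\mathbb D}$ applied to the decomposition $X=\overline{P_0}\cup(X\setminus P_0)$ into two closed connected sets with intersection $\partial P_0$. Second, your final extraction step is itself a nontrivial classical theorem (Whyburn-style cyclic element theory), and its bordered version — producing \emph{either} a Jordan curve in $\Int(X)$ \emph{or} an honest crosscut, rather than a Jordan curve containing an arc of $\partial X$ — requires the same kind of trimming the paper performs on its Jordan arc; doubling $X$ across $\partial X$ and symmetrizing is the cleanest way to obtain it. With those two points made precise, the proof is complete.
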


This lemma can be applied to Lipschitz functions on metric spaces of finite Hausdorff $2$-measure, since the co-area inequality of Lemma \ref{lemm:coarea} implies that a.e.\ level set of a Lipschitz function has finite Hausdorff $1$-measure.

\begin{proof}
Let $\delta_0=f(x_0)$. We apply \cite[Thm.\ 1.5]{Nt:monotone}, which implies that for a.e.\ $r\in (0,\delta_0)$,  each component of $f^{-1}(r)$ is either a point or a rectifiable Jordan curve or a rectifiable Jordan arc. We fix such $r$ and let $E$ be a component of $f^{-1}(r)$ that separates $A$ from $x_0$; such a component exists since $f^{-1}(r)$ separates $A$ from $x_0$.  If $E$ is a Jordan curve, then it necessarily bounds a closed Jordan region $U\subset X\setminus \{x_0\}$ containing $A$ in its interior. If $E$ is a Jordan arc, then by passing to a subarc we may assume that only the endpoints of $E$ lie in $\partial X$ and $E$ still separates $A$ from $x_0$. The arc $E$, together with an arc of $\partial X$ bound a closed Jordan region $U\subset X\setminus \{x_0\}$ that contains a neighborhood of $A$.
\end{proof}

\begin{proof}[Proof of Lemma \ref{lemm:lsc}]
Let $x_n\in X$, $n\in \N$, be a sequence converging to a point $x\in X$. Our goal is to show that $u(x)\leq \liminf_{n\to\infty} u(x_n)$. By passing to a subsequence, suppose that $\lim_{n\to\infty}u(x_n)$ exists. The statement is trivial if $x=x_0$ or if $\lim_{n\to\infty}u(x_n)=\infty$, so suppose that $x\neq x_0$ and $\lim_{n\to\infty}u(x_n)=M$ for some $M\in (0,\infty)$. Let $\gamma_n$ be a sequence of paths joining $x_0$ to $x_n$ such that $\lim_{n\to\infty}\ell(\gamma_n)=M$.

Suppose first that $x\in \Int(X)$. Consider a closed Jordan region $Y\subset \Int(X)\setminus \{x_0\}$ containing $x$ in its interior and define the Lipschitz function $f(y)=d(x,y)$ on $Y$.  Let $y_0\in \partial Y$ be arbitrary and $r_0=\dist(x,\partial Y)$. By Lemma \ref{lemma:separation}, there exists $r<r_0$ and a rectifiable Jordan curve $E\subset f^{-1}(r)\subset Y$ that bounds a closed Jordan region $U\subset Y\setminus \{y_0\}$ containing $x$.  If $x\in \partial X$, we consider a closed Jordan region $Y\subset X\setminus \{x_0\}$ such that $Y\cap \partial X$ is a non-degenerate Jordan arc containing $x$ in its interior. Define $r_0=\dist(x,\partial Y \cap \Int(X))>0$. Again, we let $f(y)=d(x,y)$ on $Y$ and fix a point $y_0\in \partial Y\cap \Int(X)$. By Lemma \ref{lemma:separation}, there exists $r<r_0$ and a rectifiable Jordan arc $E\subset f^{-1}(r)$ with endpoints on $\partial Y$ that separates $x$ from $y_0$. The endpoints of $E$ necessarily lie on $\partial Y\cap \partial X$ since $r<r_0$. The arc $E$ together with an arc of $\partial X$ bound a closed Jordan region $U\subset Y$ that contains an open neighborhood of $x$. 

Summarizing, in both cases there exists a continuum $E$ that is a rectifiable Jordan curve or a Jordan arc and separates $x$ from $x_0$. Moreover, the curve $E$ is the topological boundary of a compact neighborhood $U$ of $x$.

For each $n\in \N$, we parametrize the curve $\gamma_n\colon [0,\ell(\gamma_n)]\to X$ by arclength so that $\gamma_n(0)=x_0$ and $\gamma_n(\ell(\gamma_n))=x_n$. If $n$ is sufficiently large so that $x_n\in U$, then $\gamma_n$ intersects $E$ at a point $y_n= \gamma_n(t_n)$; we assume that $t_n$ is the largest possible parameter with this property, so that the trace of $\eta_n\coloneqq \gamma_n|_{[t_n,\ell(\gamma_n)]}$ is contained in $U$. By the Arzel\`a--Ascoli theorem \cite[Thm.\ 2.5.14]{BBI:01} and the compactness of $U$, after passing to a subsequence, we may assume that the paths $\eta_n$ converge uniformly (in an appropriate sense since the domains are variable) to a path $\eta \colon [t,M] \to U$, where $t$ is an accumulation point of $t_n$. Moreover, $\ell(\eta)\leq M-t$. We note that $\eta(t)\in E$ and $y_n=\eta_n(t_n) \to \eta(t)$ as $n\to\infty$. If we parametrize $E$ injectively by arclength, then we see that there exists a subpath $\zeta_n$ of $E$ that connects $y_n$ to $\eta(t)$ with $\ell(\zeta_n)\to 0$ as $n\to\infty$. We now concatenate  $\gamma_n|_{[0,t_n]}$ with $\zeta_n$ and with $\eta$ to obtain a path $\gamma$ that connects $x_0$ to $x$ and satisfies
\begin{align*}
    \bar{d}(x_0,x)\leq \ell(\gamma)= t_n + \ell(\zeta_n) + \ell(\eta)\leq t_n+\ell(\zeta_n)+M-t
\end{align*}
By letting $n\to\infty$ we see that $\bar{d}(x_0,x) \leq M$. This completes the proof of the lower semi-continuity.

For the second part of the lemma, note that for each $r>0$, if $\bar{d}(x,x_0)<r$, then there exists a rectifiable curve $\gamma$ connecting $x_0$ to $x$ with $\ell(\gamma)<r$. It is trivial that for each point $y\in |\gamma|$ we have $\bar{d}(y,x_0)<r$, so $|\gamma|\subset B_{\bar{d}}(x_0,r)$ and the open ball $B_{\bar{d}}(x_0,r)$ is connected. By the lower semi-continuity of $u$, the closed ball $\br B_{\bar{d}}(x_0,r)=\{y\in X: \bar d(x_0,y)\leq r\}$ is a closed set that contains $B_{\bar{d}}(x_0,r)$, so it also contains its closure. Conversely,  if $\bar{d}(x,x_0)\leq r$, then there exists a sequence of curves $\gamma_n$ connecting $x_0$ to $x$ with $\ell(\gamma_n)< r+1/n$. It follows that there exists a sequence of subcurves $\eta_n$ of $\gamma_n$ connecting $x_0$ to points $y_n$ with $\ell(\eta_n)<r$ and $d(x,y_n)<1/n$.  Therefore, $y_n\in B_{\bar{d}}(x_0,r)$ and $y_n\to x$ as $n\to\infty$, which shows that $x$ lies in the closure of $B_{\bar{d}}(x_0,r)$, as desired. 
\end{proof}

\smallskip

\subsection{Basic area estimate}

The use of $\bar{d}$-geodesics allows us to extend various standard geometric notions for length spaces to the non-length space case. We define a \textit{triangular disk} to be a closed Jordan domain in a metric surface whose boundary consists of three $\bar{d}$-geodesics as defined above. Similarly, a \textit{polygonal disk} is a closed Jordan domain with piecewise $\bar{d}$-geodesic boundary. Each boundary $\bar{d}$-geodesic is called an \textit{edge}, and each endpoint is called a \textit{vertex}. Where there is no room for confusion, we will sometimes refer to triangular and polygonal disks as simply \textit{triangles} and \textit{polygons}. The next theorem is the main result of this section and gives a statement about the Hausdorff $2$-measure of an arbitrary triangular disk. 

\begin{thm} \label{thm:triangle_area}
Let $(X,d)$ be a metric surface of locally finite Hausdorff $2$-measure and let $T$ be a triangular disk in $X$. If  $\Omega\subset \R^2$ is a closed Jordan region such that for some $L>0$ there exists an $L$-Lipschitz map $f\colon (\partial T,\bar{d})\to \partial \Omega$ of non-zero topological degree, then 
$$\mathcal H^2(\Omega)\leq K L^2 \mathcal H^2(T)$$
for some uniform constant $K>0$.
\end{thm}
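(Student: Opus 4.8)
The first move is a normalization: post-composing $f$ with the homothety $z \mapsto L^{-1}z$ of $\R^2$ turns $\Omega$ into a Jordan region of area $L^{-2}\mathcal H^2(\Omega)$ and $f$ into a $1$-Lipschitz map $(\partial T, \bar d) \to \partial(L^{-1}\Omega)$ of the same topological degree, so it suffices to prove the estimate $\mathcal H^2(\Omega) \le K \mathcal H^2(T)$ in the case $L = 1$; here, and below, Hausdorff measures on $X$ are computed with respect to the original metric $d$. Next I would pass from the hypothesis on $\partial T$ to a statement about all of $T$: extend $f$ to a continuous map $F \colon T \to \R^2$ (possible since $\R^2$ is contractible). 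Because $\partial\Omega$ is a Jordan curve and $f = F|_{\partial T}$ has non-zero degree onto it, the winding number of $f$ about any point of $\Int \Omega$ is non-zero, so $\Int \Omega \subseteq F(T)$ and hence $\mathcal H^2(\Omega) \le \mathcal H^2(F(T))$. Thus the whole problem reduces to constructing one continuous extension $F$ of $f$ with $\mathcal H^2(F(T)) \le K \mathcal H^2(T)$.

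To build $F$ I would follow the scheme of \cite{NR:21} and extend $f$, coordinate by coordinate, by the McShane formula with respect to $\bar d$, obtaining a map $F \colon T \to \R^2$ that is $\sqrt 2$-Lipschitz for $\bar d$; the lower semicontinuity of $\bar d$ (Lemma~\ref{lemm:lsc}) is what allows one to arrange that this extension is also continuous with respect to $d$, so that the degree argument above applies. At this stage one immediately gets $\mathcal H^2(F(T)) \le 2\,\mathcal H^2_{\bar d}(T)$, but this is the wrong estimate: $(T,\bar d)$ may carry far more — indeed possibly infinite — two-dimensional Hausdorff measure than $(T,d)$. The real content of the theorem, and the point at which the non-length setting genuinely departs from \cite{NR:21}, is to replace $\mathcal H^2_{\bar d}$ by $\mathcal H^2_d$ in this bound.

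The tool for this replacement is the co-area inequality (Lemma~\ref{lemm:coarea}), together with one elementary but decisive observation: the $d$-length and the $\bar d$-length of any curve coincide, since every subarc of a given curve is itself a competitor in the definition of $\bar d$; consequently a rectifiable Jordan arc or Jordan curve has the same $\mathcal H^1$-measure whether it is computed with $d$ or with $\bar d$. One then slices. Fixing a side $\sigma$ of $\partial T$ and setting $u = d(\cdot,\sigma)$, which is $1$-Lipschitz with respect to $d$, Lemma~\ref{lemm:coarea} applied to $u$ with weight $\mathbf 1_T$ gives that for a.e.\ $t$ the level set $L_t = u^{-1}(t) \cap T$ has finite $\mathcal H^1_d$-measure with $\int_{\mathbb R} \mathcal H^1_d(L_t)\,dt \le \tfrac 4\pi \mathcal H^2_d(T)$, and the structure theory for level sets of Lipschitz functions on metric surfaces (Lemma~\ref{lemma:separation} and the results it cites) shows that for a.e.\ $t$ the set $L_t$ is a finite union of rectifiable Jordan arcs and curves, whence $\mathcal H^1_{\bar d}(L_t) = \mathcal H^1_d(L_t)$. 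Since $F$ is $\sqrt 2$-Lipschitz for $\bar d$, this yields $\mathcal H^1(F(L_t)) \le \sqrt 2\,\mathcal H^1_d(L_t)$ for a.e.\ $t$. Finally, using the nested exhaustion of $T$ by the Jordan subregions $\{u \le t\}$ furnished by Lemma~\ref{lemma:separation}, together with the degree considerations of the first paragraph, one arranges $F$ so that it carries this exhaustion onto a corresponding exhaustion of $F(T)$; then $F(T)$ is swept out by the curves $F(L_t)$, and an application of the plane co-area formula gives $\mathcal H^2(F(T)) \le C\int_{\mathbb R} \mathcal H^1(F(L_t))\,dt \le \tfrac{4\sqrt 2\,C}{\pi}\mathcal H^2_d(T)$, completing the proof.

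The crux of the argument — and the step I expect to be the main obstacle — is this last slicing: the only Lipschitz control available on the extension $F$ is for $\bar d$, whereas both the co-area inequality and the conclusion are tied to $d$, and the two metrics can disagree drastically on $T$. The slicing threads this needle precisely because the slicing function $u$ is $d$-Lipschitz, so that the co-area inequality applies, while the $\bar d$-Lipschitz bound on $F$ transfers to the slices at no cost, the length of a curve being independent of whether $d$ or $\bar d$ is used. Turning the phrase ``one arranges $F$ so that it carries the exhaustion of $T$ onto an exhaustion of $F(T)$'' into a rigorous construction — i.e.\ producing a genuine continuous extension of $f$ compatible with the sublevel sets of $u$ — is the technically delicate part of the proof.
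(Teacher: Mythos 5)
Your overall strategy---extend $f$ continuously to $F\colon T\to\R^2$, deduce $\Int\Omega\subset F(T)$ by degree, and then bound $\mathcal H^2(F(T))$ by slicing---breaks down at two points, and both are essential. First, the extension itself. The McShane inf-convolution of $f$ with respect to $\bar d$ is not a usable object here: $\bar d(x,\partial T)$ equals $+\infty$ at every point of $T$ that is not accessible from $\partial T$ by rectifiable curves (such points can form a dense set), so the formula does not even produce finite values everywhere; and where it is finite, $\bar d$ is merely lower semicontinuous with respect to $d$ (\Cref{lemm:lsc}), so the resulting map need not be continuous for the actual topology of $T$. Without a genuinely continuous extension the degree argument giving $\Int\Omega\subset F(T)$ has no content. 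This is precisely why the paper never extends $f$ into the interior of $T$: the only extension it performs is the McShane--Whitney extension of $f\circ F^{-1}$ \emph{inside the plane}, from $\partial\Omega$ to $\R^2$, where Lipschitz extension is unproblematic.

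Second, the concluding ``plane co-area formula'' step is not a valid inequality. For an arbitrary one-parameter family of compact sets $E_t\subset\R^2$ with $\bigcup_t E_t=F(T)$ there is no bound of the form $\mathcal H^2\bigl(\bigcup_t E_t\bigr)\leq C\int\mathcal H^1(E_t)\,dt$: taking each $E_t$ to be a single point tracing a space-filling curve gives a union of positive area with $\int\mathcal H^1(E_t)\,dt=0$. To run a co-area argument on the image you would need the sets $F(L_t)$ to be level sets of a Lipschitz function $v$ on $\Omega$ whose gradient is bounded \emph{below}; for a merely Lipschitz $v$ the co-area formula yields $\int\mathcal H^1(v^{-1}(t))\,dt\leq \operatorname{Lip}(v)\,\mathcal H^2(\Omega)$, which is the opposite of what you need. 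The paper's actual proof avoids both obstructions by arguing in the opposite direction: it covers $\Omega$ by the balls $\overline B(F(x),r_x)$ with $r_x=\bar d(x,\widehat I(x))$, extracts a disjoint $5r$-subfamily so that $\mathcal H^2(\Omega)\lesssim\sum r_{x_i}^2$, observes via the $4$-bi-Lipschitz property that the corresponding balls $\overline B_{\bar d}(x_i,r_{x_i}/4)$ are disjoint along $\partial T$, and then uses \Cref{lemm:EBPC} (the Eriksson-Bique--Poggi-Corradini co-area lemma for $N^{1,1}$ functions, applied to truncations of $\bar d(x_i,\cdot)$) together with \Cref{lemma:separation} to produce disjoint regions $U_i\subset T$ with $\mathcal H^2_d(U_i)\gtrsim r_{x_i}^2$. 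Summing gives $\sum r_{x_i}^2\lesssim\mathcal H^2_d(T)$, which is the estimate your slicing was meant to supply. Your observation that $d$-length and $\bar d$-length of curves coincide is correct and is indeed the mechanism by which the paper transfers information between the two metrics, but it enters through lower bounds on lengths of separating arcs inside $T$, not through an upper bound on the area of an image.
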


For the proof we will need to employ a co-area type result for functions $f \colon X \to \mathbb{R}$ contained in the Sobolev space $N^{1,p}(X)$ for some $p \geq 1$ but not necessarily Lipschitz. We are not aware of any generalization of \Cref{lemm:coarea} for $N^{1,p}(X)$ functions that covers our situation; presumably, the $L$ in \eqref{equ:coarea} would be replaced by the minimal weak upper gradient of $f$ inside the integral on the right-hand side. However, a weaker recent result of Eriksson-Bique--Poggi-Corradini \cite{EBC:21} turns out to be sufficient. We state a restricted version that covers our situation.

\begin{lemm} \label{lemm:EBPC}
Let $X$ be a compact metric space of finite Hausdorff $2$-measure and $E,F \subset X$ be disjoint continua. Suppose that $u\colon X\to \R$ is a function in $N^{1,1}(X)$ with  $E\subset u^{-1}(a)$ and $F\subset u^{-1}(b)$ for some $a<b$. Then there exists a Lipschitz function $v\colon X \to \mathbb{R}$ such that $E\subset v^{-1}(a)$, $F\subset v^{-1}(b)$, and 
\begin{equation} \label{equ:coarea2}
  \int_a^b \mathcal{H}^1(v^{-1}(t))\,dt \leq 2 \int g_u\, d\mathcal{H}_X^2 .
\end{equation}
\end{lemm}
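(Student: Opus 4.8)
The plan is to deduce the lemma from the results of Eriksson-Bique--Poggi-Corradini \cite{EBC:21} after a routine reduction. First I would replace $u$ by $\phi\circ u$, where $\phi(t)=\min\{b,\max\{a,t\}\}$ is the $1$-Lipschitz retraction of $\mathbb{R}$ onto $[a,b]$; since $\phi$ is $1$-Lipschitz, $g_{\phi\circ u}\leq g_u$ $\mathcal{H}^2$-a.e., so the right-hand side of \eqref{equ:coarea2} does not increase, while $\phi\circ u$ still equals $a$ on $E$ and $b$ on $F$. Hence we may assume from the outset that $a\leq u\leq b$ on $X$.

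Next I would invoke \cite{EBC:21}: since $X$ is compact with $\mathcal{H}^2(X)<\infty$ and $u\in N^{1,1}(X)$ agrees with the constants $a$ and $b$ on the disjoint continua $E$ and $F$, for every $\delta>0$ there is a Lipschitz function $v\colon X\to[a,b]$ with $v\equiv a$ on $E$, $v\equiv b$ on $F$, and $\int_X\operatorname{Lip}v\,d\mathcal{H}^2\leq(1+\delta)\int_X g_u\,d\mathcal{H}^2$, where $\operatorname{Lip}v(x)=\limsup_{y\to x}|v(x)-v(y)|/d(x,y)$ is the pointwise Lipschitz constant; after post-composing with $\phi$ again we may also assume $a\leq v\leq b$. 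In \cite{EBC:21} the function $v$ is constructed essentially as a distance function in an auxiliary length metric whose conformal weight is a regularization of $g_u$, and it is precisely the requirement that $v$ be pinned on $E$ and $F$ that forces one to use this construction rather than an elementary mollification.

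Finally I would combine this with the localized form of the coarea inequality (see \cite{EH:21}): applying Lemma \ref{lemm:coarea} with $g\equiv 1$ on each piece of a countable Borel partition of $X$ into sets on which $v$ is $L_i$-Lipschitz with $L_i$ arbitrarily close to $\sup\operatorname{Lip}v$ on that piece, and summing over the partition, yields
\[
\int_{\mathbb{R}}\mathcal{H}^1(v^{-1}(t))\,dt\leq \frac{4}{\pi}\int_X\operatorname{Lip}v\,d\mathcal{H}^2\leq \frac{4}{\pi}(1+\delta)\int_X g_u\,d\mathcal{H}^2.
\]
Choosing $\delta\leq\pi/2-1$ makes the leading constant at most $2$, and since $a\leq v\leq b$ the integral on the left equals $\int_a^b\mathcal{H}^1(v^{-1}(t))\,dt$. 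This gives \eqref{equ:coarea2}.

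I expect the only genuine difficulty to be the Lipschitz approximation with the one-sided bound $\int_X\operatorname{Lip}v\,d\mathcal{H}^2\lesssim\int_X g_u\,d\mathcal{H}^2$ together with the simultaneous pinning on $E$ and $F$; this is exactly the ingredient extracted from \cite{EBC:21}, and a self-contained proof would need to contend with the possible scarcity of rectifiable curves in an abstract compact metric space of finite Hausdorff $2$-measure. The truncation, the localization of the coarea inequality, and the bookkeeping of constants are all routine.
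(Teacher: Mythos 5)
The paper does not actually reprove this lemma: it is quoted directly from \cite{EBC:21}, whose statement is precisely the displayed inequality with $\partial\bigl(v^{-1}((-\infty,t))\bigr)$ in place of $v^{-1}(t)$, and the only thing the paper adds is the observation that these two sets coincide for a.e.\ $t$ because the set of local extremal values of a real-valued function on a separable space is countable. Your proposal instead attributes to \cite{EBC:21} a different and substantially stronger ingredient: a Lipschitz $v$ pinned to the value $a$ on $E$ and $b$ on $F$ with $\int_X\operatorname{Lip}v\,d\mathcal H^2\leq(1+\delta)\int_X g_u\,d\mathcal H^2$, from which you recover \eqref{equ:coarea2} via a localized coarea inequality. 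This is where the gap lies. Such an energy-quasi-optimal Lipschitz approximation of an $N^{1,1}$ function is a density-of-Lipschitz-functions statement; on a bare compact metric space of finite Hausdorff $2$-measure there is no doubling measure and no Poincar\'e inequality, and no such approximation theorem is available (the natural regularizations, e.g.\ replacing $g_u$ by $\sup_{B(\cdot,\varepsilon)}g_u$ along discrete chains, need not converge in $L^1$ for a general Borel upper gradient). The result of \cite{EBC:21} that is being invoked is the coarea-type bound itself, with the constant $2$ produced by their level-set and covering argument; it does not pass through an intermediate bound on $\int\operatorname{Lip}v$ of the form you assert.

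A useful sanity check: your argument, if correct, would prove the lemma with constant $\tfrac{4}{\pi}(1+\delta)$ for every $\delta>0$, i.e.\ with any constant strictly larger than $4/\pi\approx 1.27$, thereby sharpening the constant $2$ of the very theorem being cited. An apparently routine rederivation that improves the cited constant should have raised suspicion. The peripheral steps of your proposal are fine --- the truncation by the $1$-Lipschitz retraction $\phi$, the localization of the coarea inequality via \cite{EH:21}, and the bookkeeping --- but the load-bearing step is exactly the one you describe as ``extracted from \cite{EBC:21}'', and it is not supported by that reference. The correct route is the paper's: quote the inequality of \cite{EBC:21} as stated, with $\partial\bigl(v^{-1}((-\infty,t))\bigr)$, and replace that set by $v^{-1}(t)$ for a.e.\ $t$ using the countability of local extremal values.
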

In fact, the formulation in \cite{EBC:21} involves $\partial (v^{-1}((-\infty,t)))$ instead of $v^{-1}(t)$ in the integral. However, the two sets are the same for a.e.\ $t\in \R$, because the set of local extremal values of a real-valued function on a separable space is countable \cite[Lemma 2.10]{Nt:monotone}.

Our method of proof of Theorem \ref{thm:triangle_area} is based on a particular bi-Lipschitz embedding of an arbitrary metric triangle into the plane that we gave in \cite[Sect.\ 3]{NR:21}, which we now review. A metric triangle $\Delta$ is by definition a metric space homeomorphic to a Jordan curve that consists of three non-overlapping edges $I_1, I_2, I_3$ in cyclic order, each isometric to an interval. Suppose that the endpoints of the edges are $p_1,p_2,p_3$, where $I_j$ joins $p_j$ to $p_{j+1}$; here and below the indices are taken $\Mod 3$. For each $x \in I_j$, let $\widehat{I}(x)$ denote the union of the other two edges of $\Delta$. If $x$ belongs to two edges, then assign $x$ to one of the $I_j$ arbitrarily. Note in particular that $x \in \widehat{I}(x)$ in this case. Let $\bar{\Delta}$ denote a corresponding tripod in $\C$ having central vertex $0$ and outer vertices $u_j = (p_{j+1} \cdot p_{j+2})_{p_j}e^{(2j-2)\pi i/3}$, where $(p\cdot q)_r$ denotes the Gromov product
 \[(p\cdot q)_r = \frac{1}{2}(d(p,r) + d(q,r) - d(p,q)).\]
There is a unique 1-Lipschitz projection map from $\Delta$ to $\bar{\Delta}$ taking each vertex $p_j$ to the vertex $u_j$; we denote the image of $x$ under this map by $\bar{x}$. Let $v_j = e^{\pi(2j-1) i/3}$ for $j \in \{1,2,3\}$. As shown in \cite[Sect.\ 3]{NR:21}, there exists an embedding $F\colon \Delta \to \mathbb{C}$ given by the formula
\[F(x) = \bar{x} + \text{dist}(x,\widehat{I}(x))v_j, \]
where $x \in I_j$; see Figure \ref{fig:embedding}. Proposition 3.2 in \cite{NR:21} states that the map $F$ is $4$-bi-Lipschitz for any metric triangle $\Delta$.

\begin{figure}
    \centering
    \begin{tikzpicture}
           \node () at (0,0) {\includegraphics[width=.35\textwidth]{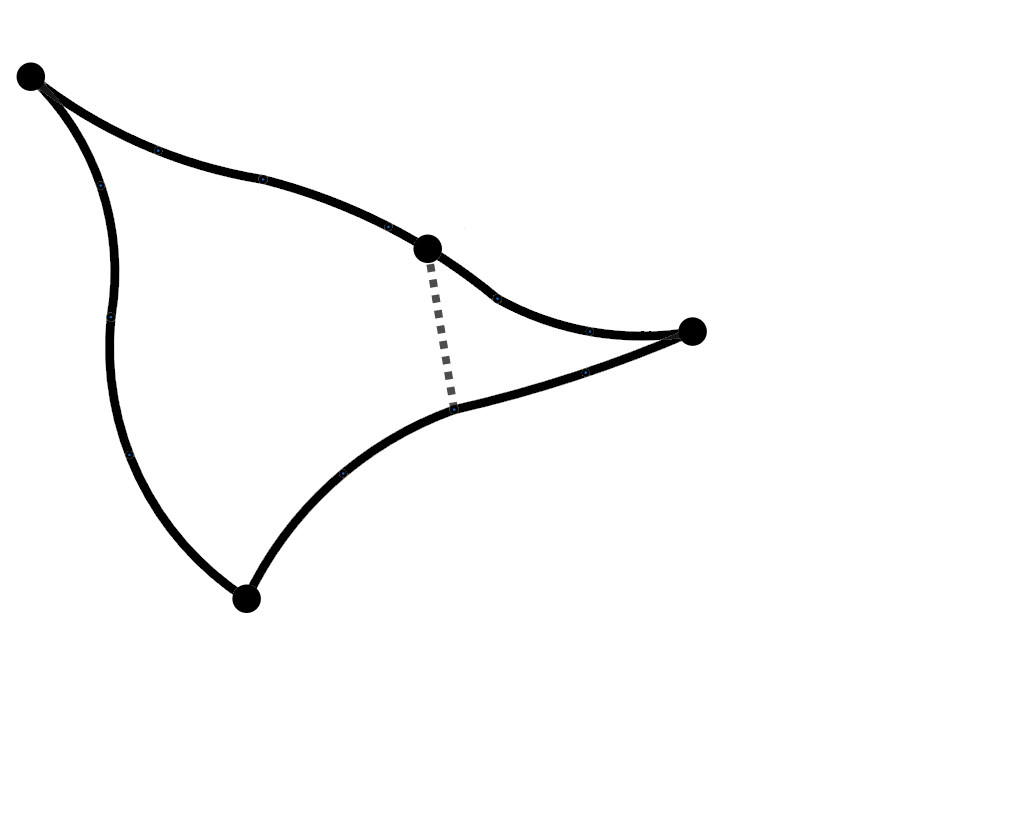}
           \includegraphics[width=.4\textwidth]{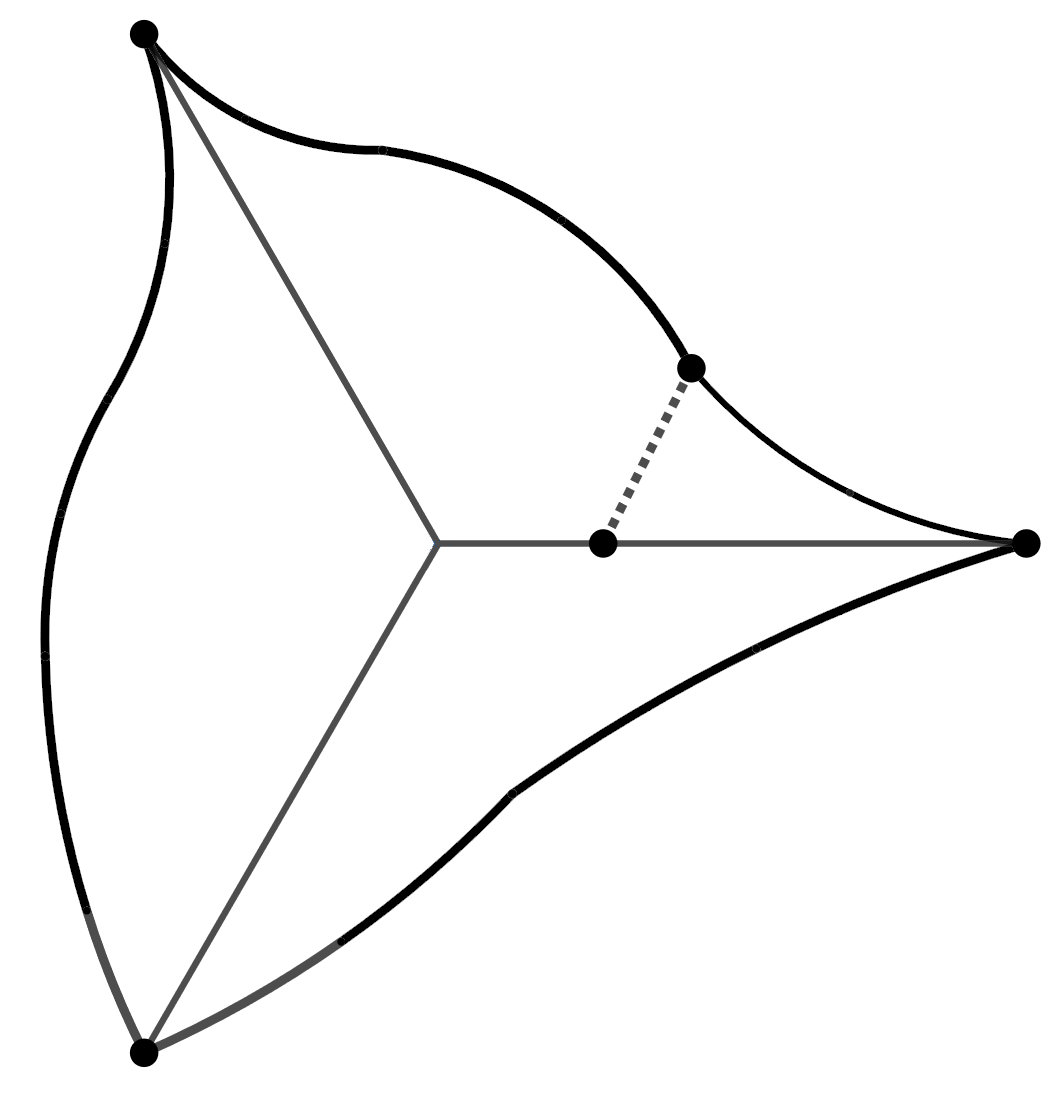}};
           \node () at (-2.7,0.1) {$x$};
           \node () at (-2.9,-1.3) {$\widehat I(x)$};
           \node () at (3.4,1.2) {$F(x)$};
           \node () at (2.5,-0.25) {$\bar x$};
           \node () at (-4.,-.2) {$\Delta$};
           \node () at (1.4,0.2) {$\bar \Delta$};
           \draw [->] (-2.7,.5) to [out=30,in=150] (-.5, .5);
           \node () at (-1.7,1.1) {$F$};
    \end{tikzpicture}
    \caption{The embedding $F$ from an arbitrary metric triangle $\Delta$ into the plane.}
    \label{fig:embedding}
\end{figure}

Observe that if $T$ is a triangular disk in a metric surface $(X,d)$ of locally finite Hausdorff $2$-measure, then $(\partial T,\bar d)$ is a metric triangle. Thus, we can exploit the above device to embed such metric triangles into the plane.

\begin{proof}[Proof of \Cref{thm:triangle_area}]
Let  $F \colon (\partial T, \bar{d}) \to \mathbb{R}^2$ be the $4$-bi-Lipschitz embedding given above and $\Omega$ be the closed Jordan domain bounded by $F(\partial T)$. Suppose for the moment that $\mathcal H^2(\Omega)\leq K \mathcal H^2(T)$ for a uniform constant $K>0$. Let $f\colon (\partial T,\bar{d})\to \partial \Omega'$ be an arbitrary $L$-Lipschitz map of non-zero topological degree, where $\Omega'\subset \R^2$ is a closed Jordan region. Then $f\circ F^{-1}\colon \partial \Omega \to \partial \Omega'$ is $4L$-Lipschitz and has non-zero topological degree. By the McShane--Whitney extension theorem \cite[Cor.\ 2.4]{Hei:05}, there exists an extension $g\colon \R^2\to \R^2$ of $f\circ F^{-1}$ that is $4\sqrt{2}L$-Lipschitz. Note that $g(\Omega)\supset \Omega'$ since $g|_{\partial \Omega}\colon \partial \Omega\to \partial \Omega'$ has non-zero topological degree. Therefore, 
$$\mathcal H^2(\Omega')\leq 32L^2\mathcal H^2(\Omega)\leq 32K L^2 \mathcal H^2(T).$$

Now, we focus on showing the statement for the map $F$. For a given point $x \in \partial T$, we write $\widetilde{x} = F(x)$ and $B_{\widetilde{x}} = \overline{B}(\widetilde{x},r_x)$, where $r_x = \bar{d}(x, \widehat{I}(x))$. Note that $r_x>0$ unless $x$ is a vertex of $T$. It follows directly from the definition of $F$ that the collection $\mathcal{B} = \{B_{\widetilde{x}}: x \in \partial T\}$ covers $\overline{\Omega}$. We apply the basic covering lemma (see \cite[Thm.\ 1.2, p.~2]{Hei:01}) to find a disjoint subcollection $\mathcal{B}' \subset \mathcal{B}$ such that $\{5B: B \in \mathcal{B}'\}$ also covers ${\Omega}$. It follows that 
\begin{align*}
    \mathcal{H}^2(\Omega) \leq \sum_{B_{\widetilde{x}} \in \mathcal{B}'} 25 \pi r_{x}^2.
\end{align*}
We next pass to a finite subcollection $\mathcal{B}'' \subset \mathcal{B}'$ that covers most of $\Omega$, say
\begin{equation} \label{equ:omega_estimate}  
   \mathcal{H}^2(\Omega) \leq \sum_{B_{\widetilde{x}} \in \mathcal{B}''} 30 \pi r_{x}^2.
\end{equation}
Moreover, we require that $r_x>0$ for each $B_{\widetilde x} \in \mathcal B''$.

On the other hand, we consider the collection 
\[\mathcal{A}'' = \{\overline{B}_{\bar{d}}(x,r_x/4): \overline{B}(\widetilde{x},r_x) \in \mathcal{B}''\}.\]
We claim that any two distinct balls in $\mathcal{A}''$ are disjoint. Otherwise, there are two balls $A_1, A_2 \in \mathcal{A}''$, where $A_i = \overline{B}_{\bar{d}}(x_i,r_{x_i}/4)$, that intersect in a point $y$. Then $\bar{d}(x_1,x_2) \leq \bar{d}(x_1,y) + \bar{d}(y,x_2) \leq (r_{x_1} + r_{x_2})/4$. However, $|\widetilde{x}_1 - \widetilde{x}_2| > r_{x_1} + r_{x_2}$ since the balls $B(\widetilde{x}_1,r_{x_1})$ and $B(\widetilde{x}_2, r_{x_2})$ are disjoint. This contradicts the fact that $F$ is a $4$-bi-Lipschitz embedding with respect to $\bar{d}$. 

We now want to obtain a lower bound for the Hausdorff $2$-measure on $T$. We observe first that, by \Cref{lemm:lsc},  each ball $A_i =\overline{B}_{\bar{d}}(x_i,s_i)$ in $\mathcal{A}''$ is closed with respect to the topology on $X$ given by $d$ and is connected. Since $A_i\cap \partial T$ is connected, we see that $A_i\cap T$ is connected. Moreover, $A_i$ does not intersect the edges of $\partial T$ that do not contain $x_i$; hence, $\partial T\setminus A_i$ is connected. Using Lemma \ref{lemma:separation} (in fact we only need the topological consequences and not the rectifiability) for the function $x\mapsto \dist(A_i\cap T,x)$ on $T$, one can find a Jordan arc $F_i \subset T\setminus A_i$, arbitrarily close to $A_i\cap T$, whose endpoints lie on the edge of $T$ that contains $x_i$ and with the property that the arc $F_i$ together with a subarc of $\partial T$ bound a closed Jordan region $U_i$ that contains $A_i\cap T$. Since $F_i$ can be taken arbitrarily close to $A_i\cap T$, we may have that the regions $U_i$ are mutually disjoint.

Next, we take $E_i=\br B_{\bar{d}}(x_i,s_i/2)\cap \partial T$, which is a subarc of $A_i\cap \partial T$ of length $s_i$ centered at $x_i$. We use \Cref{lemm:EBPC} to give a lower bound on the Hausdorff $2$-measure of $U_i$ as follows. Define the function $u_i \colon U_i \to [s_i/2,s_i]$ by $u_i(y) = \min\{\max\{\bar{d}(x_i ,y),s_i/2\}, s_i\}$. Then $u_i$ is in $L^1(U_i)$, and moreover the constant function $\chi_{U_i}$ is an upper gradient of $u_i$ also in the space $L^1(U_i)$. We conclude that $u_i \in N^{1,1}(U_i)$. Moreover, $E_i \subset u_i^{-1}(s_i/2)$ and $F_i \subset u_i^{-1}(s_i)$. 

Let $v_i$ be the Lipschitz function given by \Cref{lemm:EBPC} with respect to the continua $E_i$ and $F_i$, so that $E_i\subset v_i^{-1}(s_i/2)$ and $F_i\subset v_i^{-1}(s_i)$. By Lemma \ref{lemma:separation}, for a.e.\ $r\in (s_i/2,s_i)$ we can find a Jordan arc $G_r\subset v_i^{-1}(r)\subset U_i$ that separates $E_i$ from $F_i$ and therefore intersects both components of $( A_i\cap \partial T) \setminus E_i$. Since $A_i \cap \partial T$ is a $\bar{d}$-geodesic, it follows that $\mathcal{H}^1(G_r) \geq \mathcal H^1(E_i)\geq s_i$ for a.e.\ $r \in (s_i/2,s_i)$. By \Cref{lemm:EBPC}, we have
\[s_i^2/2 \leq \int_{s_i/2}^{s_i} \mathcal{H}^1(v_i^{-1}(t))\,dt \leq 2 \int_{U_i} 1 \,d\mathcal{H}^2 = 2\mathcal{H}^2(U_i).  \]
Since the sets $U_i$ are disjoint, we have
\[ \sum_{B_{x_i} \in \mathcal{B}''} \frac{r_{x_i}^2}{64} \leq \sum_i \frac{s_i^2}{4} \leq \sum_i \mathcal H^2(U_i)\leq  \mathcal{H}^2(T).\]
This inequality combines with \eqref{equ:omega_estimate} to establish the lemma for $K = 64 \cdot 30\pi$.
\end{proof}

\smallskip

\subsection{Polyhedral fillings of triangular disks}
The following lemma is an adaptation of Theorem 4.2 in \cite{NR:21}. One feature is the interplay of the metrics $d$ and $\bar{d}$.

\begin{lemm} \label{thm:triangular_surface}
Let $(X,d)$ be a metric surface of locally finite Hausdorff $2$-measure, and let $T$ be a triangular disk in $X$ with edges $\alpha_j$, $j\in \{1,2,3\}$. There exists a polyhedral surface $(S,d_S)$ that is a triangular disk with edges $\beta_j$, $j\in \{1,2,3\}$, and a homeomorphism  {$\varphi\colon S \to T$} such that the following hold for an absolute constant $L>0$.
\begin{enumerate}[label=\normalfont(\arabic*)]
    \item $\diam_{d_S}(S)\leq L \diam_{\bar{d}}(\partial T)$.   \label{item:filling_1}
    \item $\mathcal H_{d_S}^2(S)\leq L \mathcal  H_d^2(T)$. \label{item:filling_2}
    \item {$\varphi|_{|\beta_j|}$ maps $|\beta_j|$ isometrically onto $|\alpha_j|$ (with respect to $\bar{d}$) for each $j \in \{1,2,3\}$. In particular,} $\varphi|_{\partial S}$ is length-preserving.
    \label{item:filling_3}
    \item For all $x,y \in \partial S$, $d_S(x,y) \geq \bar{d}(\varphi(x),\varphi(y))\geq d(\varphi(x),\varphi(y))$. \label{item:filling_4}
\end{enumerate}
\end{lemm}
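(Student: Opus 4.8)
The construction adapts the proof of \cite[Thm.\ 4.2]{NR:21}; the one genuinely new feature is that the planar model below and all metric comparisons are carried out with respect to $\bar d$, while the area bound comes from \Cref{thm:triangle_area}, which governs $\mathcal H^2_d(T)$. Throughout, $L$ denotes an absolute constant that is adjusted finitely many times.

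The plan is first to pass to a planar model of the boundary. Since $(\partial T,\bar d)$ with the marking $\alpha_1,\alpha_2,\alpha_3$ is a metric triangle, let $F\colon(\partial T,\bar d)\to\mathbb C$ be the $4$-bi-Lipschitz embedding recalled before \Cref{thm:triangle_area}, and let $\Omega\subset\mathbb C$ be the closed Jordan region bounded by $F(\partial T)$. Because $F$ is $4$-Lipschitz with respect to $\bar d$ and $F\colon\partial T\to\partial\Omega$ has degree $\pm1$, \Cref{thm:triangle_area} gives $\mathcal H^2(\Omega)\le 16K\,\mathcal H^2_d(T)$, and $\diam\Omega=\diam\partial\Omega\le 4\diam_{\bar d}(\partial T)$. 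Moreover each $\alpha_j$ is a $\bar d$-geodesic, hence rectifiable, so $F(\alpha_j)$ is rectifiable and $\mathcal H^2(\partial\Omega)=0$.

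Next I would build the polyhedral disk. Fix a partition of $\partial T$ by points $x_0,\dots,x_N=x_0$ in cyclic order that includes the three vertices, with consecutive sub-arcs $\gamma_i$ (from $x_{i-1}$ to $x_i$, contained in some $\alpha_{j(i)}$) of comparable $\bar d$-lengths $s_i$, so that $\sum_i s_i=\ell_1+\ell_2+\ell_3$ is fixed while $\max_i s_i\to0$ under refinement. Let $P$ be the inscribed polygon with vertices $F(x_0),\dots,F(x_N)$, which is simple for a fine enough partition; since $\mathcal H^2(\partial\Omega)=0$, the region $\Omega_P$ bounded by $P$ differs from $\Omega$ by arbitrarily small measure, so $\mathcal H^2(\Omega_P)\le 32K\,\mathcal H^2_d(T)$ and $\diam\Omega_P\le5\diam_{\bar d}(\partial T)$. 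I then dilate, setting $\Omega'=4\Omega_P$, so that the scaled vertices satisfy $|4F(x_i)-4F(x_k)|\ge\bar d(x_i,x_k)$, triangulate $\Omega'$ by Euclidean triangles of small mesh, and attach a polyhedral collar: to each scaled edge $[4F(x_{i-1}),4F(x_i)]$ of $\partial\Omega'$ (of length $4t_i$, where $t_i=|F(x_{i-1})-F(x_i)|\in[\tfrac14 s_i,4s_i]$, so $s_i\le 4t_i$) I glue an isosceles trapezoid $Q_i$, divided into two triangles, with shorter parallel side of length $s_i$ and legs of length $10 s_i$. Gluing consecutive trapezoids along their legs, and gluing for each fixed $j$ the shorter sides of the cells with $\gamma_i\subset\alpha_j$ along a single straight segment, yields an edge $\beta_j$ of length $\sum_{\gamma_i\subset\alpha_j}s_i=\ell_j$; as in \cite[Thm.\ 4.2]{NR:21}, the proportions of the trapezoids make each $\beta_j$ a $d_S$-geodesic, so the resulting polyhedral surface $(S,d_S)$ is a triangular disk with edges $\beta_1,\beta_2,\beta_3$. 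The collar has area $\lesssim\sum_i s_i^2\le(\max_i s_i)(\ell_1+\ell_2+\ell_3)$ and diameter $\lesssim\max_i s_i$, both tending to $0$, while $\mathcal H^2(\Omega')=16\,\mathcal H^2(\Omega_P)$ and $\diam\Omega'=4\diam\Omega_P$; so for a sufficiently fine partition $S$ satisfies \ref{item:filling_1} and \ref{item:filling_2} with an absolute $L$. Defining $\varphi$ on $\partial S$ to be the arclength-preserving bijection carrying $\beta_j$ onto $\alpha_j$ gives an isometry of $\beta_j$ onto the $\bar d$-geodesic $\alpha_j$, so \ref{item:filling_3} holds, and by the Schoenflies theorem $\varphi$ extends to a homeomorphism $\varphi\colon S\to T$.

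It remains to verify \ref{item:filling_4}, which I expect to be the main obstacle. The inequality $\bar d(\varphi x,\varphi y)\ge d(\varphi x,\varphi y)$ is immediate from $\bar d\ge d$. For $d_S(x,y)\ge\bar d(\varphi x,\varphi y)$ with $x,y\in\partial S$: when $x,y$ lie on the same $\beta_j$ this is an equality, since $\beta_j$ is a $d_S$-geodesic and $\varphi|_{\beta_j}$ an isometry onto the $\bar d$-geodesic $\alpha_j$. In general one takes a shortest path $\sigma$ from $x$ to $y$ in $S$ and splits it into the arcs it spends inside $\Omega'$ and inside the collar: an arc inside $\Omega'$ has Euclidean length at least the Euclidean distance between its endpoints on $\partial\Omega'$, which by the choice of dilation factor is at least the $\bar d$-distance in $\partial T$ of the associated partition points, and an arc crossing collar cells is long compared with the $\bar d$-displacement of the associated boundary points; assembling the corresponding curves in $X$ produces a path from $\varphi x$ to $\varphi y$ of $\bar d$-length at most $d_S(x,y)$, once the error terms coming from the thinness of the collar are accounted for. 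Carrying out this bookkeeping while keeping $\mathcal H^2_{d_S}(S)$ controlled is the technical heart of the argument, and it proceeds as in the proof of \cite[Thm.\ 4.2]{NR:21}, with \Cref{thm:triangle_area} replacing the Besicovitch inequality used there.
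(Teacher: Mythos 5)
Your proposal is correct and follows essentially the same route as the paper: both use the $4$-bi-Lipschitz embedding $F\colon(\partial T,\bar d)\to\mathbb C$, construct $S$ as a rescaled polygonal approximation of the region $\Omega$ bounded by $F(\partial T)$ following the construction in \cite[Thm.\ 4.2]{NR:21}, and obtain property \ref{item:filling_2} by applying \Cref{thm:triangle_area} to $F$ in place of the Besicovitch inequality. The paper in fact gives less detail than you do, simply noting that the construction is that of \cite{NR:21} with $\bar d$ replacing $d$ on $\partial T$ and that properties \ref{item:filling_1}, \ref{item:filling_3}, \ref{item:filling_4} follow immediately from it.
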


The only essential difference between the statement of this lemma and the statement of Theorem 4.2 in \cite{NR:21} is that in \ref{item:filling_1}, \ref{item:filling_3} and \ref{item:filling_4} we use the metric $\bar d$ here instead of the metric $d$. The construction of the surface $S$ is exactly the same as in \cite{NR:21}, where we use the metric $\bar d$ on $\partial T$ instead of the metric $d$. Namely, $S$ is constructed as follows. We consider the $4$-bi-Lipschitz embedding $F\colon (\partial T, \bar{d})\to \R^2$ given by \cite[Prop.\ 3.2]{NR:21} and define $\Omega\subset \R^2$ to be the closed Jordan region bounded by $F(\partial T)$. Then the surface $S$ is constructed by taking a fine polygonal approximation of the region $\Omega$, replacing each polygon with a certain small polyhedron, and then rescaling appropriately the Euclidean metric.

Properties \ref{item:filling_1}, \ref{item:filling_3} and \ref{item:filling_4} follow immediately from the construction. On the other hand, property \ref{item:filling_2} requires more work since we consider the Hausdorff measure on $T$ with respect to the original metric $d$ rather than the length metric $\bar{d}$. In \cite{NR:21}, we obtain  \ref{item:filling_2} as a consequence of the Besicovitch inequality; see Theorem 2.1 in \cite{NR:21}. Here, the same relationship follows, with a larger constant as a consequence of \Cref{thm:triangle_area}. Indeed, by the construction of $S$, $\mathcal H^2_{d_S}(S)$ is comparable to $\mathcal H^2(\Omega)$. Using \Cref{thm:triangle_area}, applied to the embedding $F$, we obtain $\mathcal H^2(\Omega)\leq 16 K \mathcal H^2_d(T)$.  This implies the inequality in \ref{item:filling_2}.

\bigskip

\section{Triangulations of surfaces and polyhedral approximation} \label{sec:triangulations}

\subsection{Triangulations}

The triangle decomposition theorem in \cite{CR:21} extends to our situation with small modifications. First we introduce the relevant terminology. Let $(X,d)$ be a metric surface of locally finite Hausdorff $2$-measure and consider the extended length metric $\bar d$. Recall that a polygonal disk is a closed Jordan domain with piecewise $\bar{d}$-geodesic boundary. Two polygonal disks are \textit{non-overlapping} if their interiors are disjoint.  If every boundary component of the surface $X$ is a piecewise $\bar{d}$-geodesic curve, we say that $X$ has \textit{polygonal boundary}. A set $A$ is \textit{convex} if every pair of points in the topological boundary of $A$ can be connected by a $\bar{d}$-geodesic contained in $A$. Note that we restrict to connecting boundary points because we cannot expect that all points of $A$ are accessible by rectifiable curves. Observe that any two points of a rectifiable curve that are sufficiently close to each other can be joined by a $\bar{d}$-geodesic. It is evident that, if a polygonal disk $A$ is convex, then both the metric $d$ on $X$ and the restriction of $d$ to $A$ induce the same extended length metric when restricted to $A$.  

The following theorem is a version of the conclusion of Theorem 1.2 in \cite{CR:21}, where the assumption of a length metric is replaced with the assumption that the surface has locally finite Hausdorff $2$-measure.

\begin{thm}
\label{thm:triangulation}
Let $(X,d)$ be a metric surface of locally finite Hausdorff $2$-measure such that every boundary component of $X$ is a piecewise $\bar{d}$-geodesic curve, and let $\varepsilon >0$. Then $X$ may be covered by a locally finite collection of non-overlapping triangular disks $\{T_i\}_{i \in \mathcal{I}}$ such that each disk $T_i$ is convex and has diameter and perimeter at most $\varepsilon$.
\end{thm}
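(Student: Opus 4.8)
The plan is to run the proof of \cite[Thm.\ 1.2]{CR:21} essentially verbatim, replacing every geodesic of the length metric that appears there by a $\bar d$-geodesic, and to check that the two features of that proof which genuinely use the length-metric hypothesis survive under the weaker assumption of locally finite Hausdorff $2$-measure. The key point making the adaptation possible is that \emph{all} length and area estimates in the construction are taken with respect to the original metric $d$, and never with respect to $\bar d$; consequently the possible failure of $(X,\bar d)$ to have locally finite Hausdorff $2$-measure, or even to be a genuine metric, never enters.

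The first feature is the supply of short curves along which to cut. This is provided by the co-area inequality (\Cref{lemm:coarea}) applied to the $d$-Lipschitz distance functions $y\mapsto d(y,p)$, together with the level-set structure result \cite[Thm.\ 1.5]{Nt:monotone} already invoked in the proof of \Cref{lemma:separation}: for a.e.\ $r$ the sphere $S(p,r)$ is a finite union of rectifiable Jordan curves and arcs, of total length of order $\mathcal H^2(B(p,r))$ on average, and more generally \Cref{lemma:separation} lets one separate prescribed sets by short rectifiable Jordan curves or arcs bounding controlled subregions. A rectifiable curve has the same length with respect to $d$ and to $\bar d$, and its trace has finite $\mathcal H^1$ in either metric, so such curves are admissible as edges of triangular disks. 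The second feature is the existence of geodesics between the points one wishes to join by an edge; but, as noted in the text preceding the theorem, any two points lying on a common rectifiable curve that are sufficiently close can be joined by a $\bar d$-geodesic. Since every curve along which we cut is rectifiable, every vertex arising in the construction lies on a rectifiable curve, so this suffices; in particular the points of $X$ that are not accessible by rectifiable curves never serve as vertices.

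With these two ingredients in hand the construction proceeds as in \cite{CR:21}. Fix $\varepsilon>0$; take a locally finite cover of $X$ by $d$-balls of radius less than $\varepsilon/2$; use the curves produced above to cut $X$, in a consistent and locally finite manner, into non-overlapping closed Jordan regions, each contained in one of the balls --- hence of diameter less than $\varepsilon$ --- and each bounded by a concatenation of rectifiable arcs of total length less than $\varepsilon$, the hypothesis that $\partial X$ is piecewise $\bar d$-geodesic being used to treat the regions meeting $\partial X$. Then subdivide each of these polygonal regions into triangular disks whose edges are $\bar d$-geodesics joining nearby boundary vertices, so that each resulting $T_i$ again has diameter and perimeter less than $\varepsilon$.

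The step requiring real care --- and the main obstacle --- is arranging that each triangular disk $T_i$ is \emph{convex}, i.e.\ that any two of its boundary points are joined by a $\bar d$-geodesic contained in $T_i$. In \cite{CR:21} this is obtained by an argument specific to length spaces; in the present setting one must instead choose the cutting curves and the diagonals so that the $\bar d$-geodesic of $X$ between any two vertices of a given piece already remains inside that piece. This requires the mesh of the ball cover near a given center $p$ to be selected \emph{after} examining the behaviour of short $\bar d$-geodesics near $p$, rather than taken from a single absolute scale; with that adjustment the bookkeeping parallels \cite{CR:21}, and the theorem follows.
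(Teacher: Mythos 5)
There is a genuine gap: you have not explained how to obtain polygonal pieces of small \emph{perimeter}, and this is precisely the step where the length-metric hypothesis is used in an essential way in \cite{CR:21}. Your first ingredient (co-area plus \Cref{lemma:separation}) produces, around each point, a separating rectifiable Jordan curve or arc inside a level set $S(p,r)$, but the co-area inequality only bounds $\int_0^{r_0}\mathcal H^1(S(p,r))\,dr$ by $\tfrac{4}{\pi}\mathcal H^2(B(p,r_0))$, so a typical level set has length of order $\mathcal H^2(B(p,r_0))/r_0$, which need not be small as $r_0\to 0$. Thus this step yields a polygonal neighborhood $P$ of small diameter and small \emph{area}, but with no control on $\mathcal H^1(\partial P)$ (this is the content of \Cref{lemm:polygonal_nbhd}). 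In \cite{CR:21} the perimeter is then reduced by joining a fixed vertex of $P$ to all other vertices by geodesics; your second ingredient does not rescue this, because those vertices, while $d$-close, need not be joinable by $\bar d$-geodesics at all, and even when they are, the $\bar d$-distance between them may be large compared with their $d$-distance, so the resulting edges need not be short. Saying that nearby points \emph{on a common rectifiable curve} can be joined by short $\bar d$-geodesics only handles consecutive vertices along $\partial P$, not the diagonals needed to cut $P$ into small-perimeter pieces.

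The paper closes this gap with a new argument (\Cref{lemm:small_perimeter}): mark $\partial P$ as a quadrilateral with four sides of equal length; the universal lower bound $\Mod\Gamma(P)\cdot\Mod\Gamma^*(P)\geq \pi^2/16$ from \cite{RR:19,EBC:21}, combined with the admissibility of the constant $1/M$ where $M$ is the infimal length of curves joining opposite sides, forces $M^2\leq 4\mathcal H^2(P)/\pi$; a length-minimizing such curve is a piecewise $\bar d$-geodesic arc of length at most $\varepsilon/8$ that splits $P$ into pieces whose perimeters drop by a factor $7/8$, and iterating finitely many times gives the required decomposition into small-perimeter polygons. This modulus-based perimeter reduction is the genuinely new idea your proposal is missing; the remainder of the construction (passing to maximal perimeter-minimizing, hence absolutely convex, polygons and then subdividing into triangles) does follow \cite{CR:21} as you describe, but it takes the small-perimeter lemma as input. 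Your treatment of convexity by ``selecting the mesh after examining short $\bar d$-geodesics near $p$'' is also not a proof, but the essential missing step is the perimeter bound.
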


The basic idea is that the Hausdorff $2$-measure assumption guarantees enough rectifiable curves for any point to be enclosed by a polygonal curve of arbitrarily small diameter and perimeter. Then, all the operations in the proof of the version of \Cref{thm:triangulation} in \cite{CR:21} necessarily avoid the non-rectifiably connected points. Note that \Cref{thm:triangulation} does not necessarily yield a triangulation in the usual topological sense, since we do not require adjacent triangular disks to intersect along entire edges or at only a vertex. We remark that convexity does not play a direct role in the proof of \Cref{thm:extended_polyhedral_approximation}, although convexity is used in obtaining \Cref{thm:triangulation} itself since it enables one to decompose an arbitrary polygon into triangles. 

The proof proceeds similarly to the proof in \cite{CR:21} but with a pair of modifications in the beginning stages. First, we need to modify the proof  \cite[Lemma 5.1]{CR:21} on the existence of polygonal neighborhoods at every point, which in turn is based on \cite[Lemma III.3]{AZ:67}. We state this lemma in slightly simpler form.

\begin{lemm} \label{lemm:polygonal_nbhd}
Suppose that $X$ is as in Theorem \ref{thm:triangulation} and let $x \in X$ and $\varepsilon>0$. Then there is a polygonal disk $P$ with $x\in \Int (P)$ such that $\diam (P) < \varepsilon$ and $\mathcal{H}^2(P) < \varepsilon$. 
\end{lemm}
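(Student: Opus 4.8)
The goal is to produce, around an arbitrary point $x\in X$, a polygonal disk $P$ with $x\in\Int(P)$, $\diam(P)<\varepsilon$, and $\mathcal H^2(P)<\varepsilon$, where ``polygonal'' means the boundary is a piecewise $\bar d$-geodesic Jordan curve (possibly together with an arc of $\partial X$ when $x\in\partial X$). The plan is to combine the separation lemma \Cref{lemma:separation} with the area estimate for triangular disks \Cref{thm:triangle_area}, following the outline of \cite[Lemma III.3]{AZ:67} and \cite[Lemma 5.1]{CR:21}, but replacing the length-metric arguments by the co-area inequality and $\bar d$-geodesics. First I would reduce to the interior case; if $x\in\partial X$, then by hypothesis a neighborhood of $x$ in $\partial X$ is a piecewise $\bar d$-geodesic arc, so one can double $X$ across this arc (or simply work in a half-disk neighborhood) and treat the boundary arc as one side of the polygon, so I will concentrate on $x\in\Int(X)$.

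For $x\in\Int(X)$, fix a closed Jordan region $Y\subset\Int(X)$ with $x\in\Int(Y)$ small enough that $\diam(Y)<\varepsilon/C$ (with $C$ the absolute constant from \Cref{thm:triangle_area}, adjusted below) and $\mathcal H^2(Y)<\varepsilon/C'$; this is possible since $\mathcal H^2$ is locally finite and hence gives small mass to small neighborhoods (using that $\mathcal H^2(\{x\})=0$ and outer regularity of the measure). Apply \Cref{lemma:separation} to the $1$-Lipschitz function $f(y)=d(x,y)$ on $Y$, with $A=\{x\}=f^{-1}(0)$ and $x_0\in\partial Y$: for a.e.\ small $r>0$ there is a continuum $E\subset f^{-1}(r)$ of finite $\mathcal H^1$-measure separating $x$ from $x_0$, together with a closed Jordan region $U\subset Y\setminus\{x_0\}$ with $\partial U=E$ and $x\in\Int(U)$. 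So $U$ is a closed Jordan region containing $x$ in its interior with $\diam(U)\le\diam(Y)<\varepsilon$ and $\mathcal H^2(U)\le\mathcal H^2(Y)<\varepsilon$, but its boundary $E$ is merely a rectifiable Jordan curve, not yet polygonal. The remaining task is to replace $U$ by a polygonal disk with comparable diameter and area still containing $x$; this is where the area estimate enters.

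To polygonalize, I would choose finitely many points $z_1,\dots,z_m$ on the rectifiable Jordan curve $E=\partial U$, in cyclic order, spaced so that each subarc of $E$ between consecutive $z_j$'s has small $\bar d$-diameter, say at most $\delta$ (possible since $E$ is rectifiable, hence $\bar d$-rectifiable, and $\bar d$-distances of nearby points on $E$ are controlled by the arclength). Since consecutive $z_j$'s are $\bar d$-close, they can be joined by short $\bar d$-geodesics; but to stay inside a controlled region and to enclose $x$ I would instead triangulate the closed region slightly inside $E$: pick a point $w\in\Int(U)$ near $x$, and for each $j$ form the triangular disk $T_j$ with vertices $w, z_j, z_{j+1}$, whose edges are $\bar d$-geodesics, chosen short enough (by taking $\delta$ small and $w$ near the $z_j$'s impossible in general — so instead) pick a small polygonal disk $P_0\ni x$ obtained by joining $x$ itself to finitely many nearby points by short $\bar d$-geodesics. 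More carefully: the cleanest route is to select a finite $\delta$-dense set on $\partial U$ and build a polygonal Jordan curve $\partial P$ each of whose edges is a $\bar d$-geodesic of length $\le\delta$, lying in a $\delta$-neighborhood of $\partial U$, and enclosing $x$; then $P$ is a polygonal disk, and $\diam(P)\le\diam(U)+2\delta<\varepsilon$ for $\delta$ small. For the area bound: triangulate $P$ into finitely many triangular disks $T_k$ (decompose the polygon along $\bar d$-geodesic diagonals), and to each $T_k$ apply \Cref{thm:triangle_area} with $f=F$ the $4$-bi-Lipschitz embedding $(\partial T_k,\bar d)\to\partial\Omega_k$; this gives $\mathcal H^2(T_k)$ controlled from above in terms of $\mathcal H^2(\Omega_k)$? — no, \Cref{thm:triangle_area} bounds $\mathcal H^2(\Omega)$ by $\mathcal H^2(T)$, the wrong direction for an upper bound on $\mathcal H^2(P)$. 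The correct observation is simpler: if the $\bar d$-geodesic polygonal curve $\partial P$ is taken to lie in the $\delta$-neighborhood of $\partial U\subset Y$ and to enclose $x$, then one can arrange $P\subset N_\delta(U)\subset Y'$ for a slightly larger Jordan region $Y'$ with $\mathcal H^2(Y')<\varepsilon$; concretely, shrink $Y$ at the outset so that $\mathcal H^2(N_{\varepsilon}(Y))<\varepsilon$ as well, which is again possible by local finiteness, and ensure $\delta$ is small. Then $\mathcal H^2(P)\le\mathcal H^2(Y')<\varepsilon$ directly, without any area estimate.

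\textbf{Main obstacle.} The delicate point is the \emph{simultaneous} control: producing a piecewise $\bar d$-geodesic Jordan curve that (i) encloses $x$, (ii) has all edges of length $<\delta$ so it stays in a prescribed small neighborhood, and (iii) actually bounds a topological disk, all while only having access to $\bar d$-geodesics between boundary points (not arbitrary points). The rectifiable curve $E$ from \Cref{lemma:separation} solves the enclosure and smallness, but turning it into a polygon requires that short $\bar d$-geodesics between nearby points of $E$ do not wander far and do not create unwanted crossings — this is exactly the type of careful local surgery carried out in \cite[Lemma 5.1]{CR:21} and \cite[Lemma III.3]{AZ:67}, and I would cite and adapt that argument, the only change being the systematic use of $\bar d$ in place of $d$ and the use of the co-area inequality (\Cref{lemm:coarea}) to guarantee that a.e.\ level set $f^{-1}(r)$ has finite $\mathcal H^1$-measure so that \Cref{lemma:separation} applies. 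The area estimate \Cref{thm:triangle_area} is, on reflection, not needed for this particular lemma (it is needed later, in \Cref{thm:triangular_surface}); here the area bound comes for free from having built $P$ inside a small-area Jordan region.
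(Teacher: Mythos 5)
Your proposal is correct and follows essentially the same route as the paper: start with a small neighborhood of controlled diameter and area, apply \Cref{lemma:separation} to the distance function to get a rectifiable separating curve, replace it by a concatenation of short $\bar d$-geodesics between finitely many points on it, extract a Jordan subcurve via the surgery of \cite[Lemma 5.1]{CR:21}, and get the area bound for free from containment in the small-area region. Your self-correction is also right on target — \Cref{thm:triangle_area} plays no role in this lemma, and the paper likewise obtains $\mathcal H^2(P)<\varepsilon$ simply from $P\subset U$.
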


The proof is the same as that of \cite[Lemma 5.1]{CR:21} except for one step, where we use a substitute argument based on Lemma \ref{lemma:separation}.

\begin{proof}
Let $U$ be a neighborhood of $x$ homeomorphic to a closed disk such that $\diam U < \varepsilon$  and $\mathcal{H}^2(U) < \varepsilon$. Note that if $x \in \partial X$, then $x$ belongs to the manifold boundary of $U$. Assume moreover that $U$ does not contain any vertex of $\partial X$ except for possibly $x$ itself. 

By Lemma \ref{lemma:separation}, there exists $r>0$ with $S(x,r)\subset U$ and a rectifiable Jordan curve or Jordan arc $E\subset S(x,r)$ that separates $x$ from $X\setminus U$. Moreover, there exists a closed Jordan region $W\subset U$ containing a neighborhood of $x$ such that $\partial W$ coincides with $E$ (if $x\in \Int(X)$) or is the union of $E$ with an arc of $\partial X$ (if $x\in \partial X$). 

Recall that any two points of $\partial W$ that are sufficiently close to each other can be joined by a $\bar{d}$-geodesic. Let $\{w_i\}_{i=1}^m$ be a finite collection of points in $\partial W$ in cyclic order such that the subarc of $\partial W$ between $w_i$ and $w_{i+1}$ has length at most $\delta$ for some $\delta>0$ sufficiently small.  If $x \in \partial X$, then we take $w_1=x$. Connect $w_i$ to $w_{i+1}$ by a $\bar{d}$-geodesic $\gamma_i$, where we require $\gamma_i$ to lie in $\partial X$ if $w_i$ and $w_{i+1}$ do. Take $\gamma$  to be the concatenation $\gamma_1 * \gamma_2 * \cdots * \gamma_m$. The final step is to extract a subcontinuum of $\gamma$ bounding a polygonal neighborhood $P$ of $x$. For this step, we refer back to proof for Lemma 5.1 in \cite{CR:21}.  Since $P$ is contained in $U$, it follows that $\mathcal{H}^2(P) < \varepsilon$.
\end{proof}

The next step in the proof concerns finding a neighborhood of an arbitrary point $x$ consisting of finitely many polygons of small perimeter. This step is carried out as Lemma 5.2 in \cite{CR:21}. Its proof is based on connecting a fixed vertex of the polygon $P$ given by \Cref{lemm:polygonal_nbhd} to the other vertices of $P$ by geodesics. However, this argument does not work in our setting, since $\bar{d}$-geodesics between these vertices may fail to exist, and, even if they do exist, they might not have small length. Instead, we can give a different proof applying to our setting based on the reciprocal lower bound on modulus of quadrilaterals in \eqref{ireciprocality:12} as proved in \cite{RR:19} and \cite{EBC:21}.

\begin{lemm} \label{lemm:small_perimeter}
Suppose that $X$ is as in Theorem \ref{thm:triangulation} and let $x \in X$ and $\varepsilon>0$. Then there is a closed neighborhood $P$ of $x$ that is a polygonal disk such that $\diam(P) < \varepsilon$ and $P$ is the union of finitely many non-overlapping polygonal disks $P_1, \ldots, P_n$, where each $P_i$ has perimeter at most $\varepsilon$.
\end{lemm}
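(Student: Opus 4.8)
Unlike the length-space argument of \cite[Lemma 5.2]{CR:21}, one cannot subdivide $P$ by joining its vertices with $\bar d$-geodesics, since such geodesics may not exist or may be long. The plan is instead to bisect $P$ repeatedly, cutting along short piecewise $\bar d$-geodesic crosscuts produced from the universal lower bound $\Mod\Gamma(Q)\cdot\Mod\Gamma^*(Q)\geq \kappa_0^{-1}$ for quadrilaterals (with $\kappa_0=16/\pi^2$) established in \cite{RR:19} and \cite{EBC:21}. The main step will be the following claim: every topological quadrilateral $Q\subseteq X$ contains a crosscut joining one pair of opposite sides that is piecewise $\bar d$-geodesic and has $\bar d$-length at most $C\,\mathcal H^2(Q)^{1/2}$, for an absolute constant $C$.

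To prove the claim I would argue as follows. By the lower bound, after possibly relabeling the sides we may assume $\Mod\Gamma(Q)\geq\kappa_0^{-1/2}$. Let $\lambda$ be the infimal length of curves in $\Gamma(Q)$; since the opposite sides $\zeta_1$ and $\zeta_3$ are disjoint compacta, $\lambda\geq\dist(\zeta_1,\zeta_3)>0$, so $\lambda^{-1}\chi_Q$ is admissible for $\Gamma(Q)$, whence $\Mod\Gamma(Q)\leq\lambda^{-2}\mathcal H^2(Q)$ and therefore $\lambda^2\leq\kappa_0^{1/2}\mathcal H^2(Q)$. (Here $\mathcal H^2(Q)>0$, since $Q$ carries curve families of positive modulus by \cite[Prop.\ 3.5]{Raj:17}.) Thus there is a curve $\gamma\in\Gamma(Q)$ with $\ell(\gamma)<2\kappa_0^{1/4}\mathcal H^2(Q)^{1/2}$; since $\gamma$ is rectifiable, its $d$- and $\bar d$-lengths coincide, and using that sufficiently close points of a rectifiable curve are joined by a $\bar d$-geodesic, one may replace $\gamma$ by a piecewise $\bar d$-geodesic arc with the same endpoints and no greater length, then pass to a subarc that is a genuine crosscut of $Q$. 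This yields the claim with $C=2\kappa_0^{1/4}$.

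Granting the claim, I would construct $P$ as follows. Fix $\eta>0$ small enough (depending only on $\varepsilon$ and $C$) that $4C\sqrt\eta<\varepsilon/2$, and apply \Cref{lemm:polygonal_nbhd} to obtain a polygonal disk $P\ni x$ with $\diam(P)<\varepsilon$ and $\mathcal H^2(P)<\eta$; write $L=\mathcal H^1(\partial P)<\infty$. Subdivide recursively: given a polygonal sub-disk $Q\subseteq P$ of perimeter $L_Q\geq\varepsilon$, partition $\partial Q$ into four consecutive arcs of equal length $L_Q/4$ and regard $Q$ as a quadrilateral; the claim then gives a piecewise $\bar d$-geodesic crosscut $\sigma$ with $\ell_{\bar d}(\sigma)<C\sqrt\eta$, since $\mathcal H^2(Q)\leq\mathcal H^2(P)<\eta$. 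This crosscut splits $Q$ into two polygonal sub-disks, and since the boundary of each is $\sigma$ together with a subarc of $\partial Q$ contained in the union of three of the four marked arcs, each has perimeter at most $\tfrac34 L_Q+C\sqrt\eta$. Iterating on every piece whose perimeter is still $\geq\varepsilon$, after $m$ generations every remaining piece has perimeter at most $(\tfrac34)^m L+\sum_{j\geq0}(\tfrac34)^jC\sqrt\eta\leq(\tfrac34)^m L+4C\sqrt\eta$. Choosing $m$ with $(\tfrac34)^m L<\varepsilon/2$, all pieces have perimeter $<\varepsilon$; there are at most $2^m$ of them, they are non-overlapping polygonal disks, and their union is $P$.

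\textbf{Main obstacle.} The crux is the short-crosscut claim: extracting a short connecting curve from the modulus lower bound is the conceptual point, and the fiddly part is the regularization --- turning that curve into a piecewise $\bar d$-geodesic crosscut without increasing its length and without leaving $Q$, which needs care near the endpoints, where the curve meets $\partial Q$. The ensuing recursion and the bookkeeping with the constants are then routine.
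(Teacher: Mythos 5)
Your proposal is correct and follows essentially the same route as the paper: start from the small-area polygonal neighborhood of Lemma \ref{lemm:polygonal_nbhd}, mark its boundary into four arcs of equal length, play the universal lower bound $\Mod\Gamma(Q)\cdot\Mod\Gamma^*(Q)\geq \pi^2/16$ against the trivial upper bound coming from the constant admissible function to produce a cut of length $O(\mathcal H^2(Q)^{1/2})$, and iterate so that perimeters decay geometrically. The one substantive difference is in the regularization step: the paper takes the actual length minimizer over $\Gamma(P)\cup\Gamma^*(P)$ (which by minimality is already injective, piecewise $\bar d$-geodesic, and contained in $Q$, and which after removing superfluous intersections via Lemma 4.1 of \cite{CR:21} splits $Q$ into finitely many --- not necessarily two --- pieces, each meeting at most three of the four sides), thereby sidestepping the worries in your sketch that geodesic replacement of a near-minimizer could exit $Q$ and that passing to a single crosscut subarc could leave you joining adjacent rather than opposite sides, which would destroy the $3/4$ perimeter reduction.
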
 
\begin{proof}
Let $\varepsilon' = \pi \varepsilon^2/256$ and consider the polygonal disk $P$ from the previous lemma for such $\varepsilon'$ so that $\diam(P) <\varepsilon'$ and $\mathcal{H}^2(P) < \varepsilon'$. We subdivide $P$ inductively according to the following procedure. 

Divide the boundary $\partial P$ into four non-overlapping arcs of equal length $\mathcal H^1(\partial P)/4$, thus making $P$ a topological quadrilateral. If $\mathcal H^1(\partial P) \leq \varepsilon$, then $P$ satisfies the conclusions of the lemma. Otherwise, we let $\Gamma (P), \Gamma^*(P)$ denote the two families of curves connecting opposite sides of the quadrilateral $P$. Let $M = \inf \ell(\gamma)$, the infimum taken over all curves in $\Gamma(P) \cup \Gamma^*(P)$. Observe that the constant function $\rho = 1/M$ is admissible for both $\Gamma(P)$ and $\Gamma^*(P)$. Thus
\[\Mod \Gamma(P) \cdot \Mod \Gamma^*(P) \leq \left( \int_P \frac{1}{M^2}\,d\mathcal{H}^2 \right)^2 = \frac{(\mathcal{H}^2(P))^2}{M^4} \leq \frac{(\varepsilon')^2}{M^4}.  \] 
Applying the left-hand inequality in \eqref{ireciprocality:12} with $\kappa^{-1} = \pi^2/16$ gives the inequality $M^2 \leq 4\varepsilon'/\pi$. 

Since $P$ is compact, as are its sides, there is a curve $\gamma \in \Gamma_1 \cup \Gamma_2$ having length $M$. In particular, $\gamma$ has length at most $\sqrt{4\varepsilon'/\pi} = \varepsilon/8$. Moreover, it follows from the minimality of length that $\gamma$ is injective and piecewise $\bar{d}$-geodesic. In addition, applying Lemma 4.1 in \cite{CR:21} to remove any ``superfluous intersections'' of $\gamma$ with $\partial P$, we may assume that $\gamma$ intersects each edge of the polygon $P$ in a connected set. We see from this that $\gamma$ splits $P$ into finitely many non-overlapping polygonal disks $P_{1,i}$, $i\in I_1$. The boundary of each disk $P_{1,i}$ is a subset of $|\gamma|$ and at most three of the sides of the quadrilateral $P$; this follows from the minimality of $\ell(\gamma)$. In particular, $\mathcal H^1(\partial P_{1,i}) \leq (3/4)\mathcal H^1(\partial P) + \varepsilon/8$. Given that $\mathcal H^1(\partial P) > \varepsilon$, we have
$$\mathcal H^1(\partial P_{1,i}) \leq (7/8) \mathcal H^1(\partial P).$$   

We now repeat this step to each $P_{1,i}$, $i\in I_1$, that satisfies $\mathcal H^1(\partial P_{1,i})>\varepsilon$. As a result, we obtain a decomposition of such $P_{1,i}$ into finitely many non-overlapping polygonal disks $P_{2,j}$, $j\in I_{2,i}$ with perimeter at most $$(7/8)\mathcal H^1(\partial P_{1,i}) \leq (7/8)^2\mathcal H^1(\partial P).$$
It is evident that after finitely many subdivisions we will have the desired decomposition of the original polygonal disk $P$. 
\end{proof} 

For convenience of the reader, we sketch the remainder of the proof of \Cref{thm:triangulation}.  For each point $x \in X$, we consider a fixed neighborhood $U$ of $x$ homeomorphic to a closed disk. We apply \Cref{lemm:small_perimeter} at $x$, taking $\varepsilon>0$ sufficiently small based on the neighborhood $U$, to obtain a collection of polygonal disks $P_1, \ldots, P_n$ each of perimeter at most $\varepsilon$ whose union is a polygonal neighborhood $P$ of $x$. As the next step, for each polygon $P_i$, we consider the  set of polygonal disks in $U$ of minimal perimeter containing $P_i$. Among such disks, we choose the unique one which is maximal with respect to set inclusion, denoted by $Q_i$. By taking $\varepsilon>0$ sufficiently small, we ensure that $Q_i$ is well-separated from the boundary of $U$ and consequently is convex, in fact satisfying a very strong form of convexity called \textit{absolute convexity}; see Section 3 of \cite{CR:21} for the definition. In this way we obtain a covering of the surface by potentially overlapping absolutely convex polygons. The next step is a subdivision procedure to obtain a covering by non-overlapping convex polygons. This procedure is delicate and relies heavily on the absolute convexity of the polygonal disks. The details can be found in Lemmas 5.6 and 5.7 of \cite{CR:21}. The final step is to split each polygon into triangles by connecting a fixed base vertex to the other vertices by $\bar{d}$-geodesics. 

We next state a variation on Lemma 5.3 in \cite{NR:21}. 

\begin{lemm} \label{lemm:polygonal_boundary}
Let $X$ be a surface of locally finite Hausdorff $2$-measure and $\varepsilon>0$.  There is an $\varepsilon$-isometric retraction from $X$ onto a subset $\widetilde{X}$ that is homeomorphic to $X$ and has polygonal boundary. Moreover, the inclusion from $\widetilde X$ into $X$ is proper.
\end{lemm}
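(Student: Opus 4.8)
The plan is to modify $X$ inside pairwise disjoint collar neighborhoods of its boundary components: in the collar of a component $J$ we construct a piecewise $\bar{d}$-geodesic Jordan curve $J'$ lying close to $J$, discard the thin annular region between $J$ and $J'$, and retract that region onto $J'$. Since $\partial X$ is a locally finite disjoint union of circles and lines, the collar neighborhood theorem gives, for each boundary component $J$, a collar $C_J$ of $J$ homeomorphic to $J\times[0,1)$, with the $C_J$ pairwise disjoint and locally finite; all modifications below are supported in the $C_J$, so it suffices to treat one component at a time, and I describe the case that $J$ is a Jordan curve (the noncompact case being identical, with a locally finite family replacing the finite one). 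Fix such a $J$ and $\delta>0$ with $\delta<\varepsilon/100$ and $N_\delta(J)\subset C_J$.

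The first step is to produce, for each $x\in J$, a \emph{polygonal half-disk} $P_x\subset N_\delta(x)$: a closed Jordan region whose boundary is the union of a subarc $\alpha_x\subset J$ with $x$ in its relative interior together with a piecewise $\bar{d}$-geodesic arc $\beta_x$ lying in $\Int X$ except for its two endpoints, which lie on $J$. To build $P_x$, apply Lemma~\ref{lemma:separation} to the $1$-Lipschitz function $y\mapsto d(x,y)$ on a closed half-disk neighborhood of $x$ of diameter less than $\delta$; by the co-area inequality (Lemma~\ref{lemm:coarea}), almost every sphere $S(x,r)$ meets this neighborhood in a set of finite $\mathcal H^1$-measure, so for a.e.\ small $r$ there is a rectifiable Jordan arc in $S(x,r)$ separating $x$ from the far side of the half-disk and bounding, together with an arc of $\partial X$, a Jordan region that is a neighborhood of $x$. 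Discretizing this rectifiable arc into $\bar{d}$-geodesic segments joining consecutive nearby points and removing superfluous self-intersections and intersections with $\partial X$ via \cite[Lemma~4.1]{CR:21}, exactly as in the proof of Lemma~\ref{lemm:polygonal_nbhd}, turns it into the desired arc $\beta_x$.

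By compactness of $J$, finitely many relative interiors $\operatorname{relint}(\alpha_{x_1}),\dots,\operatorname{relint}(\alpha_{x_m})$ cover $J$, so $N:=\bigcup_{i=1}^m P_{x_i}$ is a compact connected neighborhood of $J$ contained in $N_\delta(J)$, and its topological frontier lies in $\bigcup_i|\beta_{x_i}|$, hence is a finite union of $\bar{d}$-geodesic subarcs. The central step, carried out as in the proof of \cite[Lemma~5.3]{NR:21} and using the intersection-removal of \cite{CR:21} to make $\bigcup_i|\beta_{x_i}|$ a finite embedded graph, is to upgrade $N$ to a genuine closed collar $\overline{A}$ of $J$, homeomorphic to $J\times[0,1]$ with boundary $J\sqcup J'$, where $J'$ is a single Jordan curve that is piecewise $\bar{d}$-geodesic; moreover the product structure on $\overline{A}$ can be chosen adapted to the cover $\{P_{x_i}\}$ so that the fiber over each point of $J$ lies in one of the $P_{x_i}$ and thus has diameter less than $\delta$.

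Performing this construction in the disjoint collars, let $A_J:=\overline{A_J}\setminus J'_J$, which is open in $X$, being the topological interior of the closed collar $\overline{A_J}$, and set $\widetilde X:=X\setminus\bigcup_J A_J$. Then $\widetilde X$ is closed in $X$, so the inclusion $\widetilde X\hookrightarrow X$ is proper; $\widetilde X$ is homeomorphic to $X$ by the collar neighborhood theorem; and $\partial\widetilde X=\bigsqcup_J J'_J$ is piecewise $\bar{d}$-geodesic. Define $R\colon X\to\widetilde X$ to be the identity on $\widetilde X$ and the fiberwise projection $(\theta,t)\mapsto(\theta,1)$ on each $\overline{A_J}\cong J\times[0,1]$; by the pasting lemma for the locally finite closed cover $\{\widetilde X\}\cup\{\overline{A_J}\}_J$, $R$ is a continuous retraction onto $\widetilde X$. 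Since the fiber over $\theta$ has diameter less than $\delta<\varepsilon/2$, we have $d(p,R(p))<\varepsilon/2$ for every $p\in X$, whence $|d(p,q)-d(R(p),R(q))|<\varepsilon$ for all $p,q\in X$, so $R$ is an $\varepsilon$-isometry. The only delicate step is the topological upgrade of $N$ to an honest collar with a single piecewise $\bar{d}$-geodesic Jordan-curve frontier and short fibers; it mirrors the corresponding step of \cite[Lemma~5.3]{NR:21}, and the existence of the half-disks, the properness, and the displacement estimate are routine.
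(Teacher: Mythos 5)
Your overall strategy matches the paper's: work inside pairwise disjoint collars of the boundary components, replace each component $J$ by a nearby piecewise $\bar d$-geodesic Jordan curve $J'$, retract the intervening annulus fiberwise, and get properness from the closedness of $\widetilde X$ in $X$. The construction of the local arcs $\beta_x$ via Lemma \ref{lemma:separation}, the co-area inequality, and geodesic discretization is also sound and parallels Lemma \ref{lemm:polygonal_nbhd}. The problem is the step you yourself flag as the only delicate one, and it is a genuine gap, not a routine upgrade. You take $N=\bigcup_i P_{x_i}$ and assert that the closed region $\overline A$ between $J$ and the outer frontier curve $J'$ admits a product structure whose fibers each lie in a single $P_{x_i}$, hence have diameter $<\delta$. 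But $\overline A$ need not be contained in $\bigcup_i P_{x_i}$ at all: two overlapping Jordan half-disks $P_{x_i},P_{x_j}$ attached to $J$ can have interiors meeting in several components, so their union can enclose ``holes'' --- components of the complement lying between $J$ and $J'$ but contained in no $P_{x_i}$. Such a hole has its frontier in $\bigcup_i|\beta_{x_i}|\subset N_\delta(J)$, but the hole itself carries no metric control in $(X,d)$ (think of a long thin tentacle of the surface whose base sits between two crossing arcs $\beta_{x_i},\beta_{x_j}$); a fiber passing through it can be long, and the displacement bound $d(p,R(p))<\varepsilon/2$ fails. Appealing to \cite[Lemma~5.3]{NR:21} does not close this: as the present paper notes, that argument produces only an $\varepsilon$-isometry, not a retraction, and its geometric control comes from the length-metric/convexity setting that is unavailable here.

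The paper's proof is organized precisely to preempt this issue. It does not take a union of half-disks; it builds the curve $E_Y'$ directly as a concatenation of arcs $\gamma_i$, each confined to the interior of two consecutive rectangles $R_{i-1}\cup R_i$ of diameter $<\varepsilon/4$, and arranges a monotone (``no long backtracking'') parametrization. This forces each quadrilateral $Q_j$ bounded by $Y$, two transversal arcs, and $E_Y'$ to be \emph{topologically} trapped in $R_{j-1}\cup R_j\cup R_{j+1}$ (a sub-strip whose complementary components in the collar are all unbounded), so $\diam Q_j\le 3\varepsilon/4$ regardless of the metric behavior of the enclosed region; the retraction then foliates each $Q_j$ using the collar's product structure. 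To repair your argument you would need an analogous device: either prove your $J'$ can be chosen so that the region it bounds with $J$ is covered by sets of controlled diameter (which the half-disk union does not give), or restructure the construction of $J'$ along the lines of the consecutive-rectangles argument.
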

Recall that a retraction of a topological space $Y$ onto a subset $Z$ is a continuous map from $Y$ to $Z$ whose restriction to $Z$ is the identity map. Note that Lemma 5.3 in \cite{NR:21} includes the statement that $\widetilde{X}$ is a convex subset of $X$. This guarantees that the metric on $X$ restricts to a length metric on $\widetilde{X}$. However, our approach in this paper no longer requires this feature, which simplifies the current proof compared to the proof of Lemma 5.3 in \cite{NR:21}. The statement that $X$ retracts onto $\widetilde{X}$ plays a similar role in this paper.

We remark that the proof of Lemma 5.3 in \cite{NR:21} constructs only an $\varepsilon$-isometry and not a retraction. However, the same argument as in this lemma can be inserted into the proof in \cite{NR:21} to find an $\varepsilon$-isometric retraction from $X$ onto the subspace $\widetilde{X}$ constructed in that lemma. In particular, we can guarantee that condition \ref{item:main_3} holds in the length space case covered in Theorem 1.1 in \cite{NR:21}.

\begin{proof}
Let $Y$ be a component of the boundary $\partial X$. If $Y$ is homeomorphic to $\mathbb{R}$ (resp.\ $\mathbb S^1$), we use the tubular neighborhood theorem to find a closed neighborhood $U_Y$ of $Y$ homeomorphic to the strip $\R\times [0,1]$ (resp.\ $\mathbb S^1\times [0,1]$), with $\varphi$ the corresponding homeomorphism, mapping $Y$ onto $\R\times \{0\}$ (resp.\ $\mathbb S^1\times \{0\}$). By restricting to smaller neighborhoods if needed, we may assume that $U_Y$ is contained in the $\varepsilon$-neighborhood of $Y$, and that $U_{Y_1}$ and $U_{Y_2}$ are disjoint for any distinct components $Y_1, Y_2$ of $\partial X$. In particular, the topological boundary of each $U_Y$ as a subset of $X$ is contained in the (manifold) interior of $X$. 

We fix a boundary component $Y$ homeomorphic to $\R$ (resp.\ $\mathbb S^1$). Our goal is to find a closed set $W_Y\subset U_Y$ whose topological boundary in $X$ consists of the topological boundary of $U_Y$ and a piecewise $\bar d$-geodesic so that the image of $W_Y$ under $\varphi$ is a closed strip bounded by $\R\times \{1\}$ and a proper embedding of $\R$ separating the two boundary lines  (resp.\ a closed annulus bounded by $\mathbb S^1\times \{1\}$ and by a Jordan curve contained in $\mathbb S^1\times (0,1)$ homotopic to $\mathbb S^1\times \{0\}$); in particular, $U_Y$ is homeomorphic to $W_Y$. We then find an $\varepsilon$-isometric retraction from $U_Y$ onto $W_Y$. Replacing each set $U_Y$ in $X$ with the corresponding $W_Y$ for each boundary component $Y$ gives a set $\widetilde X\subset X$ homeomorphic to $X$. Pasting together the retractions arising from different boundary components gives the desired $\varepsilon$-isometric retraction from $X$ onto $\widetilde X$. Finally, if $A$ is a compact subset of $X$, then 
$$A\cap \widetilde X = A \setminus \bigcup_Y (U_Y\setminus W_Y).$$
Since $U_Y\setminus W_Y$ is an open subset of $X$ for each boundary component $Y$, we see that $A\cap \widetilde X$ is compact and therefore the inclusion from $\widetilde X$ into $X$ is proper.

We only present the construction of $W_Y$ in the case that $Y$ is homeomorphic to $\R$, since the other case is similar and simpler. By shrinking the neighborhood $U_Y$ and modifying the homeomorphism $\varphi$, we may assume that the diameter of $R_i=\varphi^{-1}([i,i+1]\times [0,1])$ is less than $\varepsilon/4$ for each $i\in \Z$. We also set $V_Y=\varphi^{-1}(\R\times (0,1))$.

Our first goal is to find a locally rectifiable simple curve $E_Y\subset V_Y$ near $Y$ that is the concatenation of countably many rectifiable Jordan arcs $\eta_i$, each contained in the interior of the topological closed disk $R_{i-1}\cup R_{i}$, $i\in \Z$. In particular, $E_Y$ admits a parametrization $\alpha\colon \R\to E_Y$ such that if $\alpha(t)\in R_i$ for some $t\in \R$ and $i\in \Z$ then $\alpha(s)$ is disjoint from  $\bigcup_{j<i-1} R_j$ for $s\geq t$; that is $E_Y$ does not go ``back and forth" for a very long distance. Now we proceed with the construction. Let $A_i= \varphi^{-1}( [i-1/2,i+1/2]\times \{1/2\})$. The function $f_i(x)= d(x,A_i)$ in $R_{i-1}\cup R_i$ is Lipschitz, so its level sets are rectifiable by the co-area inequality of Lemma \ref{lemm:coarea}. By Lemma \ref{lemma:separation}, arbitrarily close to $A_i$ there exists a rectifiable Jordan curve contained in a level set of $f_i$ that separates $A_i$ from the boundary of $R_{i-1}\cup R_i$.  Note that Jordan curves corresponding to consecutive sets $A_i$, $i\in \Z$, intersect. Thus, we may find non-overlapping subarcs $\eta_i$, $i\in \Z$, of these Jordan curves, whose concatenation gives the curve $E_Y$. 

Next, we show how to find a piecewise $\bar d$-geodesic with the same properties as $E_Y$. Fix a path $\eta_i$ in $E_Y$ and consider a finite collection of ordered points $\{w_1,\dots,w_k\}$ that partition $\eta_i$. Since $\eta_i$ lies in the interior of $R_{i-1}\cup R_i$, if the partition is fine enough, then we may connect each point $w_j$ to $w_{j+1}$ with a $\bar d$-geodesic lying in the interior of $R_{i-1}\cup R_i$. We concatenate these geodesics to obtain a curve $\gamma_i$ in $R_{i-1}\cup R_i$ with the same endpoints as $\eta_i$. Note that $\gamma_i$ might not be a simple curve. However, we can replace $\gamma_i$ with a simple subpath with the same endpoints that is a piecewise $\bar d$-geodesic and is contained in the interior of $R_{i-1}\cup R_i$. Moreover, by passing to further subpaths we can ensure that the concatenation of the paths $\gamma_i$, $i\in \Z$, gives a simple piecewise $\bar d$-geodesic $E_Y'$, which we parametrize by $ \alpha' \colon \R \to \bigcup_{i\in \Z} |\gamma_i|$. Then $\alpha'$ has the desired property that if $\alpha'(t)\in R_i$ for some $t\in \R$ and $i\in \Z$ then $\alpha'(s)$ is disjoint from $\bigcup_{j< i-1} R_j$ for $s\geq t$.

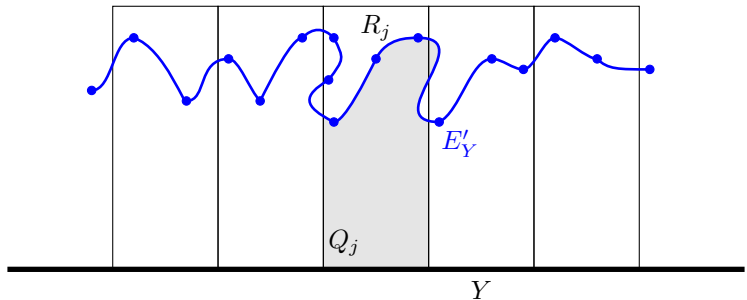
\begin{figure} 
    \centering
    \begin{tikzpicture}[scale=1.4]
        \fill[gray,opacity=.2] (0,-.5) to (.0,.95) to (.1,.9) .. controls (.2,.9) and (.4,1.3) .. (.5,1.5) .. controls (.6,1.7) and (.8,1.7) .. (.9,1.7) .. controls (1.5,1.7) and (.6,.975) .. (1.0, .92) to (1,-.5);
        \draw[line width = 2pt] (-3,-.5) to (4,-.5);
        %\draw[line width = 2pt] (-.5,.75) to (.5,.75);
        \draw[] (-2,-.5) to (-1,-.5) to (-1, 2.0) to (-2,2.0) to (-2,-.5);
        \draw[] (-1,-.5) to (0,-.5) to (0, 2.0) to (-1,2.0) to (-1,-.5);
        \draw[] (0,-.5) to (1,-.5) to (1, 2.0) to (0,2.0) to (0,-.5);
        \draw[] (1,-.5) to (2,-.5) to (2, 2.0) to (1,2.0) to (1,-.5);
        \draw[] (2,-.5) to (3,-.5) to (3, 2.0) to (2,2.0) to (2,-.5);
        \draw[blue, line width = 1pt] (-2.2,1.2) .. controls (-2.0,1.2) and (-2.0,1.7) .. (-1.8,1.7) .. controls (-1.5,1.6) and (-1.3,1.0) .. (-1.3,1.1) .. controls (-1.1,1.1) and (-1.2,1.5) .. (-.9,1.5) .. controls (-.8,1.5) and (-.7,1.2) .. (-.6,1.1) .. controls (-.5,1.3) and (-.3,1.7) .. (-.2,1.7) .. controls (-.1,1.8) and (0,1.8) .. (.1, 1.7) .. controls (.2,1.5) and (.2,1.5) .. (.05,1.3) .. controls (-.2,1.15) and (-.2,1.05) .. (.1,.9) .. controls (.2,.9) and (.4,1.3) .. (.5,1.5) .. controls (.6,1.7) and (.8,1.7) .. (.9,1.7) .. controls (1.5,1.7) and (.5,.9) .. (1.1, .9) .. controls (1.3,1.0) and (1.4,1.5) .. (1.6, 1.5) .. controls (1.7,1.5) and (1.8,1.4) .. (1.9, 1.4) .. controls (2.0,1.4) and (2.1,1.7) .. (2.2, 1.7) .. controls (2.3, 1.7) and (2.5,1.5) .. (2.6, 1.5) .. controls (2.7,1.4) and (2.8,1.4) .. (3.1, 1.4); 
        \filldraw[blue] (-2.2,1.2) circle (.04);
        \filldraw[blue] (-1.8,1.7) circle (.04);
        \filldraw[blue] (-1.3,1.1) circle (.04);
        \filldraw[blue] (-.9,1.5) circle (.04);
        \filldraw[blue] (-.6,1.1) circle (.04);
        \filldraw[blue] (-.2,1.7) circle (.04);
        \filldraw[blue] (.1, 1.7) circle (.04);
        \filldraw[blue] (.05,1.3) circle (.04);
        \filldraw[blue] (.1,.9) circle (.04);
        \filldraw[blue] (.5,1.5) circle (.04);
        \filldraw[blue] (.9,1.7) circle (.04);
        \filldraw[blue] (1.1, .9) circle (.04);
        \filldraw[blue] (1.6, 1.5) circle (.04);
        \filldraw[blue] (1.9, 1.4) circle (.04);
        \filldraw[blue] (2.2, 1.7) circle (.04);
        \filldraw[blue] (2.6, 1.5) circle (.04);
        \filldraw[blue] (3.1, 1.4) circle (.04);
        \node[] at (1.5,-.7) {$Y$};
        \node[] at (.5,1.8) {$R_j$};
        \node[] at (.2,-.25) {$Q_j$};
        \node[blue] at (1.3,.7) {$E_Y'$};
        %\node[] at (.2,.55) {$A_j$};
    \end{tikzpicture}
    \caption{The topological quadrilaterals $Q_j$ retract onto the curve $E_Y'$.}
    \label{fig:retraction}
\end{figure}

Let $S_j$ the the subarc of the left edge of $R_j$ from $Y$ to the first point of intersection with $E_Y'$. The sets $Y$, $S_j$, $E_Y'$ and $S_{j+1}$ enclose a topological quadrilateral $Q_j$. By the properties of the parametrization $\alpha'$, we observe that $Q_j$ is contained in $R_{j-1} \cup R_j \cup R_{j+1}$. See \Cref{fig:retraction}. 

There is a retraction from $Q_j$ onto $Q_j \cap E_Y'$ obtained by foliating $Q_j$ by arcs connecting $Y$ to $E_Y'$ (through the map $\varphi$) and mapping each arc to its endpoint in $E_Y'$. Since $Q_j\subset R_{j-1}\cup R_j\cup R_{j+1}$ and the latter set has diameter bounded by $3\varepsilon/4$, the retraction is $\varepsilon$-isometric. 
To conclude the proof, let $W_Y$ denote the closed strip bounded by $E_Y'$ and $\varphi^{-1}(\R\times \{1\})$.  Finally, pasting the retractions of $Q_j$ onto $Q_j \cap E_Y'$, $j\in \Z$, together with the identity map on $W_Y$ gives the desired retraction from $U_Y$ onto $W_Y$.
\end{proof}

\smallskip

\subsection{Polyhedral approximation} \label{sec:polyhedral_approximation}

In this section, we prove \Cref{thm:extended_polyhedral_approximation} on polyhedral approximation of metric surfaces. The idea is to take a sufficiently fine triangulation of the original surface $X$ and replace each triangular disk with a suitable polyhedral surface, thus defining the approximating surfaces $X_n$. This follows the corresponding proof in \cite{NR:21} with one main exception: to prove \Cref{thm:extended_polyhedral_approximation} for non-length spaces, the polyhedral metric on $X_n$ is not suitable and must be modified on large scales to match the metric on $X$. This is accomplished by abstractly gluing additional line segments to $X_n$ connecting the vertices; the length of these segments is the same as the distance between the image in $X$ of the endpoints. We equip $X_n$ with the restriction of the length metric on this enlarged space. This construction can be illustrated by taking a flat piece of paper and forcing it to bend by attaching short strings to certain pairs of points.

\begin{proof}[Proof of \Cref{thm:extended_polyhedral_approximation}]

Let $(X,d)$ be a metric surface with locally finite Hausdorff $2$-measure. Choose a sequence $\{\varepsilon_n\}_{n=1}^\infty$ of positive reals satisfying $\varepsilon_n \to 0$ as $n \to \infty$. 

We apply \Cref{lemm:polygonal_boundary} to find a surface $\widetilde{X}_n \subset X$ that is homeomorphic to $X$ and has polygonal boundary and an $\varepsilon_n$-isometric retraction from $X$ onto $\widetilde X_n$. Note that the inclusion map from $\widetilde X_n$ into $X$ is an $\varepsilon_n$-isometry. The surface $\widetilde{X}_n$ is equipped with the restriction of the metric $d$ from $X$, which we continue to denote by $d$.  

Since the space $\widetilde{X}_n$ has polygonal boundary, we can apply \Cref{thm:triangulation} with the parameter $\varepsilon_n$ to obtain a decomposition $\widetilde{\mathcal{T}}_n$ of $\widetilde{X}_n$ into triangular disks with diameter and perimeter less than $\varepsilon_n$. We consider the edge graph $\widetilde{\mathcal{E}}_n = \mathcal{E}(\widetilde{\mathcal{T}}_n)$ as having the length metric $\widetilde{d}_n$ induced by the restriction of $d$ to $\widetilde{\mathcal{E}}_n$. Let $\widetilde{\mathcal{V}}_n$ be the corresponding vertex set. 

For each triangular disk $T \in \widetilde{\mathcal{T}}_n$, consider the polyhedral surface $S$ and the corresponding homeomorphism $\varphi_T\colon S \to T$ given by \Cref{thm:triangular_surface}.  By condition \ref{item:filling_3}, $\varphi_T|_{\partial S}$ is length-preserving as a map from $\partial S$ into $\widetilde{\mathcal{E}}_n$. 
We define a locally compact length metric space $X_n'$ as follows. First, we glue each disk $S$ into $\widetilde{\mathcal{E}}_n$ along the map $\varphi_T$ and obtain a polyhedral length surface. Next, for all pairs of vertices $x,y \in \widetilde{\mathcal V}_n$ we glue in a line segment $I_{xy}$ of length $d(x,y)$ connecting $x$ and $y$. Denote the resulting length metric on $X_n'$ by $d_{n}$. More formally, let $Z$ be the disjoint union of the polyhedral surfaces $S$ with the segments $I_{xy}$ and let $\rho$ be the induced metric of $Z$. The metric space $X_n'$ is obtained by taking the quotient of $Z$ with the described identifications and the metric $d_n$ is defined by
$$d_n(x,y)=\inf \left\{ \sum_{i=1}^k \rho (p_i,q_i): p_1=x, \,\, q_k=y, \,\, k\in \N \right\},$$
where the infimum is taken over all choices of $\{p_i\}_{i=1}^k$ and $\{q_i\}_{i=1}^k$ in $Z$ such that $q_i\sim p_{i+1}$ for $i\in \{1,\dots,k-1\}$. 

We now define the surface $X_n\subset X_n'$ by removing the interiors of the glued line segments $I_{xy}$. We continue to denote by $d_n$ the restriction of the length metric on $X_n'$ to $X_n$. Define the map $\Phi_n\colon X_n \to \widetilde{X}_n$ by gluing the individual maps $\varphi_T$. It is immediate that $\Phi_n$ is a homeomorphism. We set $\mathcal V_n =\Phi_n^{-1} (\widetilde{\mathcal V}_n)$.  We observe that $d_n$ is a locally geodesic metric on $X_n$ that is locally isometric to the polyhedral length metric on $X_n$. To see this, observe that if $x,y \in X_n$ satisfy $d_n(x,y) < \dist_{d_n}(x,{\mathcal V}_n\setminus \{x\})$, then $x$ and $y$ can be connected with a geodesic that avoids the interiors of the glued segments in $X_n'$ and thus is contained in $X_n$.

We claim that 
\begin{align}\label{approx:vertices}
    d_n(x,y)=d(\Phi_n(x),\Phi_n(y)) \quad \textrm{whenever $x,y\in {\mathcal V}_n$.}
\end{align}
If $x,y\in {\mathcal V}_n$, it is immediate that $d_n(x,y)\leq d(\Phi_n(x),\Phi_n(y))$ by the definition of the metric $d_n$. Conversely, we note that for any chain connecting $x$ and $y$ as in the definition $d_n$, if $p_i,q_i\in \partial S$ for some surface $S$ corresponding to a triangle $T$, then $\rho(p_i,q_i)=d_S(p_i,q_i)$, which is at least $d( \varphi_T(p_i),\varphi_T(q_i))$ by Lemma \ref{thm:triangular_surface} \ref{item:filling_4}, while if  $p_i,q_i$ are the endpoints of a glued segment $I_{p_iq_i}$, then $\rho(p_i,q_i)=d(p_i,q_i)$.  Therefore, the triangle inequality gives $d_n(x,y)\geq d(\Phi_n(x),\Phi_n(y))$.

Define $f_n \colon X_n \to X$ to the composite of $\Phi_n$ and the inclusion map from $\widetilde{X}_n$ into $X$. By Lemma \ref{lemm:polygonal_boundary} the latter inclusion is proper. Hence $f_n$ is a proper topological embedding. The set $f_n(X_n)=\widetilde X_n$ is $\varepsilon_n$-dense in $X$, since the inclusion map from $\widetilde X_n$ into $X$ is an $\varepsilon_n$-isometry. Next, for all $x,y \in X_n$, we can find $x',y' \in \mathcal{V}_n$ belonging to the same triangle as $x$ and $y$, respectively, satisfying $d_n(x',x) \leq L\varepsilon_n$ and $d_n(y',y) \leq L \varepsilon_n$; here $L>0$ is a uniform constant as in Lemma \ref{thm:triangular_surface} \ref{item:filling_1}. Moreover, $d(f_n(x),f_n(x')) \leq \varepsilon_n$ and $d(f_n(y),f_n(y')) \leq \varepsilon_n$, since the pairs $f_n(x),f_n(x')$ and $f_n(y),f_n(y')$ belong to the same triangles. By \eqref{approx:vertices}, we have $d_n(x',y')= d(f_n(x'),f_n(y'))$.
Combining these estimates, we see that
\[|d(f_n(x),f_n(y)) - d_n(x,y)| \leq (2L+4)\varepsilon_n. \] 
This verifies the claim that $f_n$, $n\in\N$, is an approximately isometric sequence, as required in \ref{item:main_1}. The $\varepsilon_n$-isometric retraction from $X$ onto $\widetilde X_n=f_n(X_n)$ given by Lemma \ref{lemm:polygonal_boundary} already verifies \ref{item:main_3}.

We now prove property \ref{item:main_2} regarding the Hausdorff 2-measure, which proceeds identically to the argument in \cite{NR:21}. Let $A \subset X$ be a compact set and fix $\delta>0$. Choose $n\in \N$ sufficiently large so that $\diam(T)<\delta$ for every triangular disk $T\in \widetilde{\mathcal T}_n$. Then $T$ is contained in the $\delta$-neighborhood $N_{\delta}(A)$ of $A$ whenever $T\cap A\neq \emptyset$. The set $f_n^{-1}(A)$, which is possibly empty, is covered by the sets $f_n^{-1}(T)$ for which $T\cap A\neq \emptyset$. Moreover, from \Cref{thm:triangular_surface} \ref{item:filling_2} it follows that $\mathcal H^2(f_n^{-1}(T)) \leq L \mathcal H^2( T)$ for each $T\in \widetilde{\mathcal T}_n$. Since the boundary of each triangle $T$ has Hausdorff $2$-measure zero, we have $$\mathcal H^2(f_n^{-1}(A)) \leq L\mathcal H^2(N_\delta(A)).$$
Hence, letting $n\to\infty$ and then $\delta\to 0$ gives
$$\limsup_{n\to\infty}\mathcal  H^2(f_n^{-1}(A)) \leq L\mathcal H^2(A).$$
This completes the proof.
\end{proof}

\bigskip

\section{Uniformization of compact surfaces}\label{sec:uniformization}

In this section we establish Theorem \ref{thm:one-sided_qc} for compact surfaces homeomorphic to $\widehat \C$ or $\br \D$ as a consequence of Theorem \ref{thm:extended_polyhedral_approximation}. We follow essentially the same argument as in \cite{NR:21} for deriving the uniformization theorem from the polyhedral approximation theorem; we only have to check that all the steps remain valid. The proof of Theorem \ref{thm:one-sided_qc} in the general case is discussed in Section \ref{sec:global} and follows from Theorem \ref{thm:uniformization_global}. In addition, the optimization of the quasiconformal dilatation to the minimal value $4/\pi$ is discussed in Section \ref{sec:global}. We start with the necessary preliminaries.

\smallskip

\subsection{Gromov--Hausdorff convergence}

A sequence of metric spaces $\{X_n\}_{n=1}^\infty$ is \textit{asymptotically uniformly locally path connected} if for each $\varepsilon>0$ there exists $\delta>0$ and $N\in \N$ such that for each $n\geq N$, every two points of $X_n$ at distance less than $\delta$ can be connected by a curve of diameter less than $\varepsilon$. Equivalently, for each positive sequence $\delta_n\to 0$ there exists a sequence $\varepsilon_n\to 0$ and $N\in \N$ such that for each $n\geq N$, every two points of $X_k$, $k\geq n$, at distance less than $\delta_n$ can be connected with a curve of diameter at most $\varepsilon_n$. Namely,  $\varepsilon_n=\sup_{x,y}  \inf_{\gamma} \diam(|\gamma|)$, where the supremum is taken over all pairs of points $x,y\in X_k$, $k\geq n$, at distance less than $\delta_n$ and the infimum is over all paths $\gamma$ joining $x$ and $y$---this set of paths is non-empty for large $n\in \N$.

The approximation theorem, Theorem \ref{thm:extended_polyhedral_approximation}, gives naturally sequences of asymptotically uniformly path connected spaces, as the next lemma shows. 
\begin{lemm}\label{lemma:aulpc}
Let $X$ be a compact locally connected metric space and $\{X_n\}_{n=1}^\infty$ be a sequence of compact metric spaces.  Suppose that there exists an approximately isometric sequence $f_n\colon X_n\to X$, $n\in \N$, of topological embeddings and an approximately isometric sequence $R_n\colon X\to f_n(X_n)$, $n\in \N$, of retractions. Then the sequence $\{X_n\}_{n=1}^\infty$ is asymptotically uniformly locally path connected.  
\end{lemm}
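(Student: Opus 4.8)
The plan is to reduce the statement to the classical fact that a compact, locally connected metric space is \emph{uniformly} locally path connected, and then to transport curves from $X$ back to $X_n$ by post-composing with the retraction $R_n$ and afterwards applying $f_n^{-1}$.

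First I would record the auxiliary statement. Since $X$ is compact and locally connected, its connected components are open and finite in number, and each of them is a compact, connected, locally connected metric space, i.e.\ a Peano continuum, hence locally path connected; therefore $X$ itself is locally path connected, and by compactness it is uniformly so: for every $\varepsilon>0$ there is $\delta_0>0$ such that any two points of $X$ at distance less than $\delta_0$ can be joined by a curve in $X$ of diameter less than $\varepsilon$. (If this failed, one would find sequences $x_k,y_k$ with $d_X(x_k,y_k)\to 0$ admitting no such curve; passing to a subsequence with $x_k,y_k\to p$ and choosing a path-connected open neighbourhood of $p$ of diameter less than $\varepsilon$ gives a contradiction. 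Alternatively one may simply cite the relevant result from Peano continuum theory.)

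Next, fix $\varepsilon>0$ and let $\varepsilon_n\to 0$ be a sequence such that both $f_n$ and $R_n$ are $\varepsilon_n$-isometries (take the maximum of the two error sequences). Apply the auxiliary statement with the value $\varepsilon/2$ to obtain $\delta_0>0$, choose $N\in\N$ so large that $\varepsilon_n<\min\{\delta_0/2,\varepsilon/4\}$ for all $n\ge N$, and set $\delta=\delta_0/2$. Now take $n\ge N$ and $x,y\in X_n$ with $d_{X_n}(x,y)<\delta$. Since $f_n$ is an $\varepsilon_n$-isometry, $d_X(f_n(x),f_n(y))<\delta+\varepsilon_n<\delta_0$, so there is a curve $\gamma$ in $X$ joining $f_n(x)$ to $f_n(y)$ with $\diam(|\gamma|)<\varepsilon/2$. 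Because $R_n$ is a retraction onto $f_n(X_n)$ and $f_n(x),f_n(y)\in f_n(X_n)$, the curve $R_n\circ\gamma$ has trace in $f_n(X_n)$ and still joins $f_n(x)$ to $f_n(y)$; moreover, as $R_n$ is an $\varepsilon_n$-isometry, $\diam\big(R_n(|\gamma|)\big)\le\diam(|\gamma|)+\varepsilon_n$. Finally, $f_n^{-1}$ is defined and continuous on $f_n(X_n)$ since $f_n$ is a topological embedding, so $f_n^{-1}\circ R_n\circ\gamma$ is a curve in $X_n$ from $x$ to $y$, and a second application of the $\varepsilon_n$-isometry estimate for $f_n$ gives $\diam\big(f_n^{-1}(R_n(|\gamma|))\big)\le\diam\big(R_n(|\gamma|)\big)+\varepsilon_n<\varepsilon/2+2\varepsilon_n<\varepsilon$. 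This is precisely the asymptotic uniform local path connectivity of $\{X_n\}_{n=1}^\infty$.

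The only genuinely non-routine ingredient is the auxiliary statement that a compact locally connected metric space is uniformly locally path connected; once this is in hand, everything else is bookkeeping with the two $\varepsilon$-isometry estimates and the observation that a retraction fixes its image pointwise. I therefore expect that passage — and in particular the reduction to the connected case using that compactness forces finitely many (open) components — to be the step worth writing carefully or citing, while the transfer-of-curves part is straightforward.
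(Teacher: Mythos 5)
Your proposal is correct and follows essentially the same route as the paper's proof: both establish that the compact locally connected space $X$ is uniformly locally path connected (the paper cites \cite[Thm.\ 31.4]{Wil:70} where you sketch the argument), then transport the resulting short curve back to $X_n$ via $R_n$ followed by $f_n^{-1}$, losing at most an $\varepsilon_n$ in diameter at each of the two steps. The bookkeeping with the $\varepsilon$-isometry constants matches the paper's up to the choice of fractions of $\varepsilon$.
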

\begin{proof}
Since $X$ is compact and locally connected, by \cite[Thm.\ 31.4]{Wil:70}, for each $\varepsilon>0$ there exists $\delta>0$ so that if $x,y\in X$ and $d(x,y) < \delta$ then there exists a path in $X$ connecting $x$ and $y$ with diameter less than $\varepsilon/3$. Choose $N \in \mathbb{N}$ large enough so that for all $n \geq N$, $f_n$ is $\eta$-isometric from $X_n$ to $X$ with $\eta=\min \{\varepsilon/3,\delta/2\}$, and the retraction $R_n$ from $X$ onto $f_n(X_n)$ is $(\varepsilon/3)$-isometric. Now let $x,y \in X_n$ with $d_n(x,y)<\delta/2$ for some $n \geq N$. Then $d(f_n(x),f_n(y))<\delta/2+\eta \leq \delta$ so there is a curve in $X$ of diameter at most $\varepsilon/3$ connecting $f_n(x)$ to $f_n(y)$. This curve retracts onto a curve in $f_n(X_n)$ of diameter at most $2\varepsilon/3$ connecting $f_n(x)$ to $f_n(y)$. Then the preimage of the retracted curve under $f_n$ is a curve in $X_n$ of diameter at most $\varepsilon$ connecting $x$ to $y$.
\end{proof}

Next, we establish a path-lifting property, whose analogue for length spaces has been established in \cite[Prop.\ 2.2 (ii)]{NR:21}.

\begin{prop}\label{prop:asympt}
Let $\{X_n\}_{n=1}^\infty$ be a sequence of asymptotically uniformly locally path connected metric spaces converging in the Gromov--Hausdorff sense to a metric space $X$, and consider an approximately isometric sequence $f_n\colon X_n\to X$, $n\in \N$. 

\vspace{1em}
\noindent
Then for each path $\gamma\colon[0,1] \to X$ and for each sequences of points $a_n,b_n\in X_n$ with $f_n(a_n)\to \gamma(0)$ and $f_n(b_n)\to \gamma(1)$ as $n\to\infty$ there exists $N\in \N$ and a sequence of paths $\gamma_n\colon [0,1]\to X_n$, $n\geq N$, such that $\gamma_n(0)=a_n$, $\gamma_n(1)=b_n$, and $f_n\circ \gamma_n$ converges uniformly to $\gamma$ as $n\to\infty$. 
\end{prop}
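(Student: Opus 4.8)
\emph{Proof sketch.} The plan is to transfer $\gamma$ to the spaces $X_n$ by threading a path through approximate preimages of finitely many points along $\gamma$, filling in the gaps using the asymptotic uniform local path connectedness, and then running this construction along a sequence of shrinking tolerances and diagonalizing to extract one sequence $\gamma_n$. Throughout, write $f_n$ for the given approximately isometric maps, say each $f_n$ is an $\eta_n$-isometry with $\eta_n\to 0$; in particular $f_n(X_n)$ is $\eta_n$-dense in $X$.

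First I would fix a tolerance $\varepsilon>0$. By the asymptotic uniform local path connectedness of $\{X_n\}$ there are $\delta\in(0,\varepsilon)$ and $N_0\in\N$ so that for every $n\geq N_0$ any two points of $X_n$ within distance $\delta$ are joined by a curve in $X_n$ of diameter $<\varepsilon$. Using the uniform continuity of $\gamma$ on $[0,1]$, choose a partition $0=t_0<t_1<\dots<t_m=1$ with $d(\gamma(t_{i-1}),\gamma(t))<\delta/4$ whenever $t\in[t_{i-1},t_i]$. Put $\theta_n=\max\{\eta_n,\ d(f_n(a_n),\gamma(0)),\ d(f_n(b_n),\gamma(1))\}$, which tends to $0$ by hypothesis. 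For each $n$ set $x_{n,0}=a_n$, $x_{n,m}=b_n$, and for $0<i<m$ choose $x_{n,i}\in X_n$ with $d(f_n(x_{n,i}),\gamma(t_i))<\eta_n$, which is possible by $\eta_n$-density; then $d(f_n(x_{n,i}),\gamma(t_i))\le\theta_n$ for every $i$. The triangle inequality together with the $\eta_n$-isometry property gives $d_{X_n}(x_{n,i-1},x_{n,i})<3\theta_n+\delta/4$, which is $<\delta$ once $\theta_n<\delta/4$. For such $n$, the local path connectedness supplies curves $\sigma_{n,i}$ in $X_n$ from $x_{n,i-1}$ to $x_{n,i}$ with $\diam|\sigma_{n,i}|<\varepsilon$; reparametrizing $\sigma_{n,i}$ affinely on $[t_{i-1},t_i]$ and concatenating yields a path $\gamma_n^{\varepsilon}\colon[0,1]\to X_n$ with $\gamma_n^\varepsilon(0)=a_n$ and $\gamma_n^\varepsilon(1)=b_n$.

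Next I would bound the error. For $t\in[t_{i-1},t_i]$ the points $\gamma_n^\varepsilon(t)$ and $x_{n,i-1}$ both lie on $|\sigma_{n,i}|$, so
\[
d\bigl(f_n(\gamma_n^\varepsilon(t)),\gamma(t)\bigr)\le\bigl(\diam|\sigma_{n,i}|+\eta_n\bigr)+d\bigl(f_n(x_{n,i-1}),\gamma(t_{i-1})\bigr)+d\bigl(\gamma(t_{i-1}),\gamma(t)\bigr),
\]
and the right-hand side is $<\varepsilon+\theta_n+\theta_n+\delta/4<2\varepsilon+2\theta_n$. The point is that the diameter bound $\varepsilon$ and the threshold $N_0$ coming from local path connectedness are independent of the index $i$, so this estimate is uniform over the $m$ pieces; hence $\sup_{t}d(f_n\circ\gamma_n^\varepsilon(t),\gamma(t))<3\varepsilon$ for all $n$ beyond some threshold $N(\varepsilon)$ (namely once $2\theta_n<\varepsilon$). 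Finally, applying this with $\varepsilon=1/k$ for each $k$, arranging the thresholds $N(1/k)$ to be strictly increasing, and setting $\gamma_n=\gamma_n^{1/k}$ for $N(1/k)\le n<N(1/(k+1))$ and $N=N(1)$, one obtains $\gamma_n(0)=a_n$, $\gamma_n(1)=b_n$, and $\sup_t d(f_n\circ\gamma_n(t),\gamma(t))<3/k$ whenever $n\ge N(1/k)$, which is the desired uniform convergence.

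The construction is elementary; the care required is in the bookkeeping of the quantifiers—choosing the path-connectedness threshold $N_0$ and gap-filling diameter $\varepsilon$ \emph{before} the partition, so that they do not depend on the number $m$ of partition points, and packaging the prescribed endpoint data $f_n(a_n)\to\gamma(0)$, $f_n(b_n)\to\gamma(1)$ into the single auxiliary quantity $\theta_n\to0$—together with the diagonal argument needed to produce a single sequence $\gamma_n$ rather than a doubly indexed family. No compactness of $X$ or of the $X_n$ is used beyond what is built into the notion of an approximately isometric sequence.
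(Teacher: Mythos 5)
Your proposal is correct and follows essentially the same route as the paper: transfer finitely many points of $\gamma$ to $X_n$ via the $\varepsilon_n$-density of $f_n(X_n)$, join consecutive points by small-diameter curves supplied by asymptotic uniform local path connectedness, and estimate the uniform error by the triangle inequality. The only difference is organizational — the paper invokes the sequential reformulation of asymptotic uniform local path connectedness (the sequence $\varepsilon_n'\to 0$) to absorb the diagonalization, whereas you run the $\varepsilon$--$\delta$ version for $\varepsilon=1/k$ and diagonalize explicitly.
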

\begin{proof}
Suppose that $f_n$ is an $\varepsilon_n$-isometry, where $\varepsilon_n> d_X( f_n(a_n),\gamma(0))$,  $\varepsilon_n>d_X(f_n(b_n),\gamma(1))$ for each $n\in \N$, and $\varepsilon_n\to 0$. By the uniform continuity of $\gamma$, for each $n\in \N$ there exists $\delta_n>0$ such that if $|p-q|<\delta_n$, then $d_X(\gamma(p),\gamma(q))<\varepsilon_n$. We pick a finite set $Q_n\subset [0,1]$ that contains $0$ and $1$ so that each of the complementary intervals of $Q_n$ has length less than $\delta_n$. We define $\gamma_n(0)=a_n$ and $\gamma_n(1)=b_n$. By the definition of an $\varepsilon_n$-isometry, for each $q\in Q_n\setminus \{0,1\}$ there exists a point $\gamma_n(q)\in X_n$ such that $d_X(f_n(\gamma_n(q)), \gamma(q)) <\varepsilon_n$. This defines a map $\gamma_n\colon Q_n \to X_n$. If $(q_1,q_2)$ is a complementary interval of $Q_n$, note that 
\begin{align*}
    d_{X_n}( \gamma_n(q_1),\gamma_n(q_2))&\leq \varepsilon_n + d_X(f_n(\gamma_n(q_1)), f_n(\gamma_n(q_2))\\
    &\leq \varepsilon_n+ d_X(f_n(\gamma_n(q_1)) ,\gamma(q_1))+ d_X(\gamma(q_1),\gamma(q_2)) \\
    &\qquad\qquad+ d_X(f_n(\gamma_n(q_2)) ,\gamma(q_2))   \\
    &< 4\varepsilon_n.
\end{align*}
By assumption, there exists a sequence $\varepsilon_n'\to 0$ and $N\in \N$ such that for $n\geq N$  the points $\gamma_n(q_1)$ and $\gamma_n(q_2)$ can be connected by a path of diameter less than $\varepsilon_n'$.  For $n\geq N$ we define $\gamma_n$ on $[q_1,q_2]$ to be this curve. This procedure gives rise to a path $\gamma_n\colon [0,1]\to X_n$. For each $p\in [0,1]$ there exists a complementary interval $(q_1,q_2)$ of $Q_n$ whose closure contains $p$. We have
\begin{align*} 
    d_X( \gamma(p), f_n(\gamma_n(p))) &\leq d_X(\gamma(p),\gamma(q_1)) + d_X(\gamma(q_1), f_n(\gamma_n(q_1)))\\&\qquad\qquad+ d_X( f_n(\gamma_n(q_1)), f_n(\gamma_n(p))) \\
    &\leq \varepsilon_n + \varepsilon_n +\varepsilon_n+d_{X_n} (\gamma_n(q_1),\gamma_n(p))\\
    &\leq 3\varepsilon_n+ \diam (\gamma_n([q_1,q_2]))\\
    &\leq 3\varepsilon_n+\varepsilon_n'
\end{align*}
for all $n\geq N$. Hence, $f_n\circ \gamma_n$ converges uniformly to $\gamma$, as desired.
\end{proof}

Let $X$ be a metric space. For each pair of disjoint continua $E,F\subset X$, we define $\Gamma^*(E,F;X)$ to be the family of rectifiable curves in $X\setminus (E\cup F)$ separating $E$ from $F$. That is, for each $\gamma\in \Gamma^*(E,F;X)$, the sets $E$ and $F$ lie in different components of $X\setminus |\gamma|$. 

\begin{lemm}[{cf.\ \cite[Lemma 2.4]{NR:21}}]\label{lemma:modulus_bound} \label{lemma:modulus_bound_convergence}Let $\{X_n\}_{n=1}^\infty$ be a sequence of asymptotically uniformly locally path connected compact metric spaces converging in the Gromov--Hausdorff sense to a compact metric surface $X$. Moreover, suppose that $\limsup_{n\to\infty} \mathcal H^2(X_n)<\infty$. 

\vspace{1em}
\noindent
Then for each $\delta>0$ and for any sequence of pairs of disjoint continua $E_n,F_n \subset X_n$ with $\min\{\diam(E_n),\diam(F_n)\}\geq \delta$ we have
\begin{align*}
    \limsup_{n\to\infty}\Mod \Gamma^*(E_n,F_n;X_n) <\infty.
\end{align*}
\end{lemm}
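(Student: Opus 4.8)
The plan is to bound the modulus by a cheap admissible function; this works as soon as one knows that curves separating $E_n$ from $F_n$ cannot be arbitrarily short. Write $M=\limsup_n\mathcal H^2(X_n)<\infty$. It suffices to produce $c>0$ and $N\in\N$ so that $\ell(\gamma)\geq c$ for every $\gamma\in\Gamma^*(E_n,F_n;X_n)$ and every $n\geq N$: then $\rho_n:=c^{-1}\chi_{X_n}$ is admissible for $\Gamma^*(E_n,F_n;X_n)$, whence $\Mod\Gamma^*(E_n,F_n;X_n)\leq c^{-2}\mathcal H^2(X_n)$ and $\limsup_n\Mod\Gamma^*(E_n,F_n;X_n)\leq M/c^2<\infty$. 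We may assume $X$, hence $X_n$ for $n$ large, is connected (otherwise $E_n,F_n$ eventually lie in a common component of $X_n$ converging to a single component of $X$, and one argues there); note $\diam X=\lim_n\diam X_n\geq\delta$.

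\textbf{Reduction to a separation statement.} A curve $\gamma$ separating $E_n$ from $F_n$ leaves them in distinct components $V_1\neq V_2$ of $X_n\setminus|\gamma|$ with $\diam V_i\geq\min\{\diam E_n,\diam F_n\}\geq\delta$. Hence the length bound follows from the following claim, which I would establish by contradiction: there exist $c_0\in(0,\delta/8)$ and $N$ such that for $n\geq N$ and every curve $\sigma$ in $X_n$ with $\diam|\sigma|<c_0$, the set $X_n\setminus|\sigma|$ has at most one component of diameter $\geq\delta/2$ (so a separating $\gamma$ must have $\diam|\gamma|\geq c_0$, giving $\ell(\gamma)\geq c_0=:c$).

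\textbf{Proof of the claim.} Suppose it fails: there are $n_k\to\infty$, curves $\sigma_k$ in $X_{n_k}$ with $\diam|\sigma_k|\to 0$, and distinct components $V_1^k,V_2^k$ of $X_{n_k}\setminus|\sigma_k|$ of diameter $\geq\delta/2$; fix $\varepsilon_n$-isometries $f_n\colon X_n\to X$ with $\varepsilon_n\to 0$. The only input from the surface structure of $X$ is: at each $w\in X$ there are nested topological disks $B_X(w,\tau_w)\subset D_w''\Subset D_w'\Subset D_w$ (half-disk charts if $w\in\partial X$) with $\diam D_w<\delta/16$, with $\rho_w:=\dist(\overline{D_w''},X\setminus D_w')>0$, and with $X\setminus\overline{D_w''}$ and $X\setminus D_w'$ connected --- removing an embedded closed disk from a connected compact surface leaves it connected, and $\diam D_w\to 0$ as the charts shrink, by continuity of the chart. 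Now $f_{n_k}(|\sigma_k|)\subset B_X(v_k,\eta_k)$ with $\eta_k\to 0$; pass to a subsequence with $v_k\to v$, so for $k$ large $f_{n_k}(|\sigma_k|)\subset B_X(v,\tau_v)\subset D_v''$ and hence $\dist_X(f_{n_k}(|\sigma_k|),X\setminus D_v')\geq\rho_v$. Set $Y:=X\setminus D_v'$, a compact, connected, locally connected metric space; by the Hahn--Mazurkiewicz theorem fix a continuous surjection $\Sigma\colon[0,1]\to Y$, which serves as a universal template. For each $i$, since $\diam|\sigma_k|<\delta/8$ and $\diam V_i^k\geq\delta/2$ there is $z_i^k\in V_i^k$ with $\dist_{X_{n_k}}(z_i^k,|\sigma_k|)\geq\delta/8$; then $\dist_X(f_{n_k}(z_i^k),f_{n_k}(|\sigma_k|))\geq\delta/16$ for $k$ large, which with $\diam D_v'<\delta/16$ forces $f_{n_k}(z_i^k)\in Y$, so $\Sigma(t_i^k)=f_{n_k}(z_i^k)$ for some $t_i^k$. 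Lift $\Sigma$ by Proposition \ref{prop:asympt} (with $X_{n_k}$-points mapping near $\Sigma(0),\Sigma(1)$ as endpoints) to paths $\Sigma^k\colon[0,1]\to X_{n_k}$ with $f_{n_k}\circ\Sigma^k\to\Sigma$ uniformly; then for $k$ large every point of $|\Sigma^k|$ lies at $X_{n_k}$-distance $\geq\rho_v/2-\varepsilon_{n_k}>0$ from $|\sigma_k|$, so $|\Sigma^k|$ lies in a single component $V^*$ of $X_{n_k}\setminus|\sigma_k|$. Finally $d_{X_{n_k}}(z_i^k,\Sigma^k(t_i^k))\leq d_X(\Sigma(t_i^k),f_{n_k}(\Sigma^k(t_i^k)))+\varepsilon_{n_k}\to 0$, so for $k$ large asymptotic uniform local path connectivity joins $z_i^k$ to $\Sigma^k(t_i^k)$ by a curve of diameter $<\delta/16$, which stays at distance $\geq\delta/16$ from $|\sigma_k|$ and thus avoids it; hence $z_1^k,z_2^k\in V^*$, contradicting $V_1^k\neq V_2^k$.

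\textbf{Main obstacle.} The crux is the claim, and within it the transfer of a connectedness and separation statement across the merely approximately isometric --- and discontinuous --- maps $f_n$: one must move curves between $X_n$ and the limit surface $X$ in both directions, which is exactly what the path-lifting Proposition \ref{prop:asympt} and asymptotic uniform local path connectivity are for, while a single Hahn--Mazurkiewicz curve keeps the bookkeeping finite. Lesser points are the half-disk charts needed at $\partial X$ and the reduction to $X$ connected.
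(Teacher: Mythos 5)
Your argument is correct and runs on the same engine as the paper's: reduce the modulus bound to a uniform lower bound on the length of separating curves, argue by contradiction so that the offending curves collapse under the approximate isometries $f_n$ to a single point $v$ of the limit surface $X$, use the fact that a surface stays connected after deleting a small neighborhood of $v$, and transport that connectivity back to $X_n$ via the path-lifting statement of Proposition \ref{prop:asympt}. The implementations diverge from there. The paper keeps track of the continua themselves: it passes to Hausdorff limits $E,F$ of $f_n(E_n)$, $f_n(F_n)$ (a Blaschke-type compactness statement from \cite{NR:21}), connects $E$ to $F$ by a single path $\eta$ in $X$ avoiding the collapse point, lifts $\eta$ to paths $\eta_n$ joining $E_n$ to $F_n$, and gets the contradiction from the forced intersection of $\eta_n$ with the short separating curves. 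You instead forget $E_n,F_n$ and prove the stronger, purely metric statement that for large $n$ the complement in $X_n$ of any curve of small diameter has at most one component of diameter $\geq \delta/2$; to see this you lift one Hahn--Mazurkiewicz parametrization of $X$ minus a small disk around $v$ and use asymptotic uniform local path connectivity to attach representative points of the two alleged large components to the lifted curve. Your route dispenses with the compactness of continua in the Hausdorff metric, but pays with the space-filling curve, the nested tame-disk bookkeeping at $v$ (including half-disk charts at $\partial X$), and an extra invocation of asymptotic uniform local path connectivity; the paper's route is shorter. One cosmetic caveat: your reduction to connected $X$ is stated too casually (disjoint continua need not eventually lie in a common component of $X_n$ if $X$ were disconnected), but this is moot since $X$ is a connected surface in the intended setting --- exactly what the paper's own proof tacitly uses when it joins $E$ to $F$ by a path in $X\setminus\{x_0\}$.
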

The proof is the same as \cite[Lemma 2.4]{NR:21}, where one replaces \cite[Prop.\ 2.2 (ii)]{NR:21} with Proposition \ref{prop:asympt}. We present the main ingredients here for the convenience of the reader.

\begin{proof}
It suffices to show that there exists $\eta>0$, depending on $\delta$ but not on $n$, such that if $E_n,F_n\subset X_n$ is a pair of disjoint continua with $\min \{\diam(E_n), \diam(F_n)\}\geq \delta$, then $\ell(\gamma)\geq \eta$ for every $\gamma\in \Gamma^*(E_n,F_n;X_n)$, $n\in \N$. 

We argue by contradiction. Let $f_n\colon X_n\to X$ be a sequence of $\varepsilon_n$-isometries, where $\varepsilon_n\to 0$. Suppose that there exist sequences of disjoint continua $E_n,F_n\subset X_n$ with $\min\{\diam(E_n),\diam(F_n)\}\geq \delta$ for some $\delta>0$ and a sequence of paths $\gamma_n\in \Gamma^*(E_n,F_n;X_n)$ with $\ell(\gamma_n)\to 0$ as $n\to\infty$.  After passing to a subsequence, we assume that $f_n\circ \gamma_n$ converges uniformly to a point $x_0\in X$ (\cite[Prop.\ 2.2 (i)]{NR:21}) and the sets $f_n(E_n)$ and $f_n(F_n)$ converge in the Hausdorff sense to continua $E$ and $F$, respectively, with $\min\{\diam(E),\diam(F)\}\geq \delta$ (\cite[Prop.\ 2.2 (iii)]{NR:21}). 

Since $X$ is a surface, there exists a path $\eta\colon [0,1]\to X\setminus \{x_0\}$ with $\eta(0)\in E$ and $\eta(1)\in F$. By the Hausdorff convergence of $f_n(E_n)$ and $f_n(F_n)$ to $E$ and $F$, respectively, there exist points $a_n\in E_n$ and $b_n\in F_n$ such that $f_n(a_n)$ converges to $\eta(0)$ and $f_n(b_n)$ converges to $\eta(1)$. By Proposition \ref{prop:asympt}, there exist paths $\eta_n\colon [0,1]\to X_n$ for sufficiently large $n\in \N$ such that $\eta_n(0)=a_n\in E_n$, $\eta_n(1)=b_n\in F_n$, and $f_n\circ \eta_n$ converges uniformly to $\eta$. 

Since $\gamma_n$ separates $E_n$ from $F_n$ and $\eta_n$ connects $E_n$ and $F_n$, the paths $\gamma_n$ and $\eta_n$ intersect each other for each sufficiently large $n\in \N$. The uniform convergence of $f_n\circ \gamma_n$ and $f_n\circ \eta_n$ to $x_0$ and $\eta$, respectively, implies that $\eta$ intersects the point $x_0$. This is a contradiction. 
\end{proof}

\begin{lemm}[{cf.\ \cite[Lemma 2.3]{NR:21}}]\label{lemma:boundary_convergence}
Let $X$ be a metric space homeomorphic to a topological closed disk and $\{X_n\}_{n=1}^\infty$ be a sequence of metric spaces homeomorphic to $X$. Suppose that there exists an approximately isometric sequence $f_n\colon X_n\to X$, $n\in \N$, of topological embeddings. Then 
$$\liminf_{n\to\infty} \diam(\partial X_n)\geq \diam(\partial X).$$
\end{lemm}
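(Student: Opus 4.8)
The plan is to reduce everything to the following claim, which I will call $(\star)$: for every $p\in\partial X$,
\[
    \dist_{d_X}\big(p,\, f_n(\partial X_n)\big)\longrightarrow 0 \qquad\text{as }n\to\infty.
\]
Granting $(\star)$, fix $\delta>0$ and choose $p,q\in\partial X$ with $d_X(p,q)>\diam(\partial X)-\delta$; this is possible because $X$, being homeomorphic to a compact space, is compact, so $\diam(\partial X)<\infty$. Using $(\star)$, pick $p_n,q_n\in f_n(\partial X_n)$ with $d_X(p_n,p)\to 0$ and $d_X(q_n,q)\to 0$. Then $d_X(p_n,q_n)\to d_X(p,q)$, so $\diam(f_n(\partial X_n))\geq d_X(p_n,q_n)>\diam(\partial X)-\delta$ for all large $n$. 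Since $f_n$ is an $\varepsilon_n$-isometry with $\varepsilon_n\to 0$, we have $\diam(\partial X_n)\geq\diam(f_n(\partial X_n))-\varepsilon_n>\diam(\partial X)-2\delta$ for all large $n$, and letting $\delta\to 0$ gives the conclusion.

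Before proving $(\star)$ I would record the topological input. Put $Y_n=f_n(X_n)$, a compact $2$-manifold with boundary embedded in the $2$-manifold with boundary $X$; since $f_n$ is a homeomorphism onto $Y_n$ we have $\partial Y_n=f_n(\partial X_n)$ and $\Int(Y_n)=f_n(\Int(X_n))$. By invariance of domain, $f_n(\Int(X_n))$ is open in $X$ and contained in $\Int(X)$; in particular $\partial X\cap\Int(Y_n)=\emptyset$, and hence $\partial X\cap Y_n\subseteq f_n(\partial X_n)$.

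Now I prove $(\star)$ by contradiction. Suppose it fails for some $p\in\partial X$: there are $\delta_0>0$ and a subsequence along which $\dist_{d_X}(p,f_n(\partial X_n))\geq\delta_0$, that is, $B_{d_X}(p,\delta_0)\cap f_n(\partial X_n)=\emptyset$. Since $X$ is locally connected, fix once and for all a connected open set $N$ and a radius $r>0$ with $p\in B_{d_X}(p,r)\subseteq N\subseteq B_{d_X}(p,\delta_0)$. For each $n$ in the subsequence, $B_{d_X}(p,\delta_0)$ meets $Y_n$ only inside $\Int(Y_n)$, because its intersection with $\partial Y_n=f_n(\partial X_n)$ is empty; therefore $N\cap Y_n=N\cap\Int(Y_n)$ is open in $X$, while $N\setminus Y_n$ is open in $X$ since $Y_n$ is compact, hence closed. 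These two sets partition the connected set $N$, and $p\in N\setminus Y_n$ because $p\in\partial X$ cannot lie in $\Int(Y_n)\subseteq\Int(X)$; consequently $N\cap Y_n=\emptyset$. But then $\dist_{d_X}(p,Y_n)\geq r$ for all $n$ in the subsequence, which contradicts the fact that $Y_n=f_n(X_n)$ is $\varepsilon_n$-dense in $X$ with $\varepsilon_n\to 0$. This establishes $(\star)$.

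I expect the only genuinely non-formal ingredient to be the topological step of the second paragraph, namely that the embedded closed disk $f_n(X_n)$ has its manifold interior open in $X$ and disjoint from $\partial X$. This is what makes rigorous the intuition that, near a boundary point of $X$, the set $f_n(X_n)$ is simultaneously open and relatively closed whenever it avoids the Jordan curve $f_n(\partial X_n)$; once this is granted, the remainder is elementary bookkeeping with the definition of an $\varepsilon_n$-isometry.
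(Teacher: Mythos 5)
Your proof is correct and follows essentially the same route as the paper: both reduce the lemma to showing that every point of $\partial X$ is asymptotically close to $f_n(\partial X_n)$, using the $\varepsilon_n$-density of $f_n(X_n)$ together with the fact (via invariance of domain) that $\Int(f_n(X_n))$ is open in $X$ and disjoint from $\partial X$. The only cosmetic difference is that you run the separation step as a clopen-partition argument on a connected neighborhood, whereas the paper connects $x\in\partial X$ to a nearby point of $f_n(X_n)$ by a small-diameter path and observes that the path must cross $f_n(\partial X_n)$.
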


\begin{proof} 
Suppose that $f_n$ is an $\varepsilon_n$-isometry, where $\varepsilon_n\to 0$ as $n\to\infty$. We claim that for each $r>0$ there exists $N\in \N$ such that for $n\geq N$ and for each $x\in \partial X$ there exists a point $y_n\in \partial X_n$ with $d(f_n(y_n),x)<r$. In this case,  $\partial X\subset N_{r}(f_n(\partial X_n))$ for $n\geq N$, so
\begin{align*}
    \diam(\partial X_n) \geq \diam(f_n(\partial X_n)) -\varepsilon_n \geq \diam (\partial X) -2r-\varepsilon_n.
\end{align*}
We first let $n\to\infty$ and then $r\to 0$ to obtain the desired conclusion.

Now we prove the claim. For each $r>0$, using the local path connectivity of $ X$, we may find $\delta>0$ such that if $x,y\in X$ and $d(x,y)<\delta$, then there exists a path in $X$ containing $x$ and $y$ with diameter less than $r$. We now let $N\in \N$ such that $\varepsilon_n<\delta$ for $n\geq N$ and fix $x\in \partial X$. By the definition of an $\varepsilon_n$-isometry, there exists $x_n\in X_n$ with $x_n'=f(x_n)$ such that $d(x_n',x)<\varepsilon_n<\delta$. Thus, there exists a path $\gamma$ connecting $x$ and $x_n'$ with diameter less than $r$. Since $f_n$ is an embedding, the set $V=\Int (f_n(X_n))$ is an open Jordan region in $X$, bounded by $f_n(\partial X_n)$. The curve $\gamma$ connects a point of $\br V = f_n(X_n)$ to a point of $X\setminus V$, hence it intersects $f_n(\partial X_n)$ at a point $y_n'$. We let $y_n=f_n^{-1}(y_n')$ and the claim follows.  
\end{proof}

\smallskip

\subsection{Quasiconformal maps} \label{sec:qc_maps}
Let $X,Y$ be metric surfaces of locally finite Hausdorff $2$-measure. A homeomorphism $h\colon X\to Y$ is \textit{quasiconformal} if there exists $K\geq 1$ such that for all curve families $\Gamma$ in $X$ we have
$$K^{-1} \Mod\Gamma\leq \Mod h(\Gamma)\leq K\Mod\Gamma.$$
In this case, we say that $h$ is $K$-quasiconformal. Recall that a continuous, surjective, proper, and cell-like map $h\colon X\to Y$ is {weakly quasiconformal} if  there exists $K\geq 1$ such that for every curve family $\Gamma$ in $X$ we have
$$\Mod \Gamma\leq K \Mod h(\Gamma).$$
In this case, we say that $h$ is weakly $K$-quasiconformal. If $X$ and $Y$ are compact surfaces that are homeomorphic to each other, then we may replace cell-likeness with the requirement that $h$ is monotone; that is, the preimage of every {point} is a continuum. In this case, we also have the stronger statement that that the preimage of every connected set in $Y$ is connected in $X$ \cite[(2.2), Chap.\ VIII, p.~138]{Why:58}.
See Section \ref{sec:topological} blow for further topological properties.

The next  theorem of Williams (\cite[Thm.\ 1.1 and Cor.\ 3.9]{Wil:12}) relates the above definitions of quasiconformality with the ``analytic" definition that relies on upper gradients; see also the discussion in \cite[Sect.\ 2.4]{NR:21}.

\begin{thm}[Definitions of quasiconformality]\label{theorem:definitions_qc}
Let $X,Y$ be metric surfaces of locally finite Hausdorff $2$-measure and let $h\colon X\to Y$ be a continuous map. The following are equivalent.
\begin{enumerate}[label=\normalfont(\roman*)]
    \item\label{def:i} $h\in N^{1,2}_{\loc}(X,Y)$ and there exists a weak upper gradient $g$ of $h$ such that for every Borel set $E\subset Y$ we have
    $$\int_{h^{-1}(E)} g^2\, d\mathcal H^2 \leq K \mathcal H^2(E).$$
    \item\label{def:i'}Each point of $X$ has a neighborhood $U$ such that $h|_U\in N^{1,2}(U,Y)$ and there exists an upper gradient $g_U$ of $h|_U$ such that for every Borel set $E\subset Y$ we have
    $$\int_{(h|_{U})^{-1}(E)} g_U^2\, d\mathcal H^2 \leq K \mathcal H^2(E).$$
    \item\label{def:ii} For every curve family $\Gamma$ in $X$ we have
    $$\Mod \Gamma \leq K\Mod h(\Gamma).$$
\end{enumerate}
\end{thm}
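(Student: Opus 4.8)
The plan is to obtain this essentially from a theorem of Williams together with a routine localization of Newtonian Sobolev maps, presenting it as an assembly rather than reproving Williams' result. The core is the equivalence of \ref{def:i} and \ref{def:ii} for the map $h$ on all of $X$, which is exactly \cite[Thm.\ 1.1 and Cor.\ 3.9]{Wil:12}. For \ref{def:i}$\Rightarrow$\ref{def:ii} I would argue as follows: given a curve family $\Gamma$ in $X$ and a function $\rho$ admissible for $h(\Gamma)$, a Fuglede-type argument shows that for all curves $\gamma$ outside a family of $2$-modulus zero the composition $h\circ\gamma$ is absolutely continuous, so that applying the upper gradient inequality to subcurves gives $\int_{h\circ\gamma}\rho\,ds\le\int_\gamma(\rho\circ h)\,g\,ds$; hence $(\rho\circ h)\,g$ is admissible for $\Gamma$ outside a family of modulus zero. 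A layer-cake decomposition over the superlevel sets of $\rho$, fed into the area inequality of \ref{def:i}, then yields
\[\int_X(\rho\circ h)^2\,g^2\,d\mathcal H^2\le K\int_Y\rho^2\,d\mathcal H^2,\]
and infimizing over admissible $\rho$ gives $\Mod\Gamma\le K\Mod h(\Gamma)$.

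The reverse implication \ref{def:ii}$\Rightarrow$\ref{def:i} is the substantive one, and I expect it to be the main obstacle; it is the heart of \cite[Sect.\ 3]{Wil:12}. Here I would simply invoke Williams: from the modulus inequality one shows $h$ is absolutely continuous along $2$-modulus almost every curve, extracts a weak upper gradient $g\in L^2_{\loc}(X)$ of $h$ from the metric speeds of $h$ along these curves so that $h\in N^{1,2}_{\loc}(X,Y)$, and recovers the area inequality by testing \ref{def:ii} against curve families concentrated near an arbitrary Borel set $E\subset Y$ of finite measure. I would not reproduce this argument, only cite it, adding a pointer to the exposition in \cite[Sect.\ 2.4]{NR:21}.

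It remains to fold the local condition \ref{def:i'} into the chain. The implication \ref{def:i}$\Rightarrow$\ref{def:i'} is by restriction: on a small coordinate neighborhood $U$ the function $g|_U$ is a weak upper gradient of $h|_U$ with the same area bound, and on such a $U$ a weak upper gradient may be replaced by a genuine upper gradient after modifying on a $2$-null curve family. For \ref{def:i'}$\Rightarrow$\ref{def:i} I would paste the local data: fix a locally finite cover $\{U_j\}$ of $X$ by such neighborhoods and a Borel partition $\{V_j\}$ of $X$ subordinate to it with $V_j\subset U_j$, and set $g:=\sum_j g_{U_j}\chi_{V_j}$. One then checks that $g$ is a weak upper gradient of $h$ by the standard pasting lemma, that $g\in L^2_{\loc}(X)$, and that for every Borel $E\subset Y$
\[\int_{h^{-1}(E)}g^2\,d\mathcal H^2=\sum_j\int_{V_j\cap h^{-1}(E)}g_{U_j}^2\,d\mathcal H^2\le\sum_j K\,\mathcal H^2(E\cap h(V_j))\le K\,\mathcal H^2(E),\]
where the final inequality uses that the images $h(V_j)$ of the disjoint sets $V_j$ overlap in an $\mathcal H^2$-null set: since any point with a single-point fiber lies in exactly one $V_j$, this overlap is contained in the set of points of $Y$ with non-degenerate fiber, which is $\mathcal H^2$-null for a cell-like map of the present regularity (this is already implicit in the definition of $J_h$). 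The genuinely hard input throughout is Williams' implication \ref{def:ii}$\Rightarrow$\ref{def:i}; everything else is bookkeeping.
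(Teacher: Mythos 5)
You should first be aware that the paper does not prove this theorem at all: it is quoted from Williams with the citation \cite[Thm.\ 1.1 and Cor.\ 3.9]{Wil:12} and no argument is supplied. So your decision to defer the substantive implication (that the modulus inequality forces the Sobolev regularity and the area inequality) to Williams is exactly what the paper does, and your sketch of the converse direction via Fuglede's lemma and a layer-cake argument is the standard one and is fine.

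The genuine gap is in your pasting argument for passing from the local condition to the global one. The final step $\sum_j\mathcal H^2(E\cap h(V_j))\le\mathcal H^2(E)$ requires the images $h(V_j)$ of the disjoint pieces $V_j$ to be essentially disjoint, and you justify this by asserting that $h$ is cell-like --- but cell-likeness is not a hypothesis of the theorem; $h$ is only assumed continuous. Without some injectivity the implication fails with the stated constant: take $h$ to be the locally isometric double cover of the flat cylinder $[0,1]\times(\R/\Z)$ by $[0,1]\times(\R/2\Z)$. Every point of the domain has a neighborhood mapped isometrically onto its image, so the local condition holds with $K=1$; but the minimal weak upper gradient of $h$ is $1$ and $\mathcal H^2(h^{-1}(E))=2\mathcal H^2(E)$, so the global analytic condition requires $K=2$, as does the modulus inequality (the family of curves joining the two boundary circles of the domain has modulus $2$ while its image family has modulus at most $1$). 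Thus the essential disjointness of the $h(V_j)$ is precisely where the content of the localization lies; it has to be extracted from the exact formulation in \cite{Wil:12}, or the statement restricted to the homeomorphic or monotone maps to which the paper actually applies it. Even granting monotonicity, your claim that the set of points of $Y$ with non-degenerate fiber is $\mathcal H^2$-null is not free: it rests on local finiteness of the multiplicity measure, which is part of what the global condition asserts, so as written the argument is also circular at that point.
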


Next, we state some boundary extension results for weakly quasiconformal maps.

\begin{lemm}
Let $X$ be a metric space and let $h\colon \br \D\to X$ be a continuous map with $h|_{\D}\in N^{1,2}(\D,X)$. Then $h\in N^{1,2}(\br \D,X)$ and $h$ has the same minimal weak upper gradient as $h|_{\D}$. 
\end{lemm}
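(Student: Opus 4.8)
The plan is to take $g$ to be the minimal weak upper gradient of $h|_{\D}$ and to prove that the function $\bar g$ obtained by extending $g$ by $0$ over $\partial\D$ is the minimal weak upper gradient of $h$ on $\br\D$. Because $\mathcal H^2(\partial\D)=0$, one has $\|\bar g\|_{L^2(\br\D)}=\|g\|_{L^2(\D)}$ and the function $x\mapsto d_X(y,h(x))$ lies in $L^2(\br\D)$ as soon as it lies in $L^2(\D)$, so there are no integrability issues; the substance is that $\bar g$ is a weak upper gradient of $h$ and that it is minimal. For the minimality, I would observe that for every curve family $\Gamma$ all of whose curves lie in $\D$ one has $\Mod_{\D}\Gamma=\Mod_{\br\D}\Gamma$ (admissible densities restrict from $\br\D$ to $\D$ and extend back by zero, and $\partial\D$ is $\mathcal H^2$-null). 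Consequently the restriction to $\D$ of any weak upper gradient of $h$ on $\br\D$ is a weak upper gradient of $h|_{\D}$, hence dominates $g$ a.e.\ on $\D$, hence dominates $\bar g$ a.e.\ on $\br\D$; together with the fact that $\bar g$ is itself a weak upper gradient this yields minimality.

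It then remains to show that $\bar g$ is a weak upper gradient of $h$ on $\br\D$. Fix a curve family $\Gamma_0$ in $\D$ of modulus zero such that every curve in $\D$ outside $\Gamma_0$, and every subcurve of such a curve, satisfies the upper gradient inequality for $g$. I would discard the following families of modulus zero: the family $\Gamma^+$ of curves in $\br\D$ with positive length inside $\partial\D$, which has modulus zero because $\rho=(+\infty)\chi_{\partial\D}$ is admissible and $\int_{\br\D}\rho^2\,d\mathcal H^2=0$; the family of curves in $\br\D$ having a subcurve that lies in $\D$ and in $\Gamma_0$, of modulus zero by the usual overflow argument; and the family of curves $\gamma$ with $\int_\gamma\bar g\,ds=\infty$, of modulus zero by Fuglede's lemma. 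Fix a rectifiable $\gamma\colon[0,1]\to\br\D$, parametrized by arclength, that avoids all three. Then $K\coloneqq\gamma^{-1}(\partial\D)$ is closed of zero length, so $\gamma^{-1}(\D)$ is open, dense, and of full measure; writing $\gamma^{-1}(\D)$ as a disjoint union of open intervals, for each such interval $(a,b)$ and each $[c,d]\subset(a,b)$ the curve $\gamma|_{[c,d]}$ lies in $\D$ and is not in $\Gamma_0$, hence $d_X(h(\gamma(c)),h(\gamma(d)))\le\int_c^d g(\gamma(t))\,dt$; letting $c\to a^+$ and $d\to b^-$ and using the continuity of $h$ and monotone convergence gives $d_X(h(\gamma(a)),h(\gamma(b)))\le\int_a^b\bar g(\gamma(t))\,dt$. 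One must then chain these estimates across $K$ to conclude $d_X(h(\gamma(0)),h(\gamma(1)))\le\int_\gamma\bar g\,ds$, using that $h\circ\gamma$ is continuous, that the components of $\gamma^{-1}(\D)$ exhaust $[0,1]$ up to the null set $K$, and that $\gamma$ being $1$-Lipschitz forces $\gamma(K)$ to be $\mathcal H^1$-null in $\partial\D$.

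The main obstacle I expect is exactly this chaining across $K$: the set $K$ is only Lebesgue-null and may be topologically large (e.g.\ a positive-dimensional Cantor set), so one has to rule out a singular ``Cantor-function type'' accumulation of distance of $h\circ\gamma$ along $K$. I would handle this by enlarging the exceptional family once more to include the curves $\gamma$ for which the total variation of $h\circ\gamma$ over $\gamma^{-1}(\partial\D)$ is positive, and proving that family has modulus zero — by comparing $\gamma$ with the curves $\pi_t\circ\gamma$ obtained by radially retracting $\br\D$ onto $\br B(0,1-t)\subset\D$, along which the upper gradient inequality is already available, and passing to the limit $t\to0$ using the continuity of $h$ and the fact that these retractions are $1$-Lipschitz and displace $\gamma$ by an amount tending to $0$. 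As an alternative route I would reduce to real-valued maps via a Kuratowski embedding of $X$ into $\ell^\infty$ and invoke the identification of the Newtonian space $N^{1,2}(\br\D)$ with the classical Sobolev space $W^{1,2}(\D)$ with identical minimal gradients, valid because $\br\D$ is Ahlfors $2$-regular and supports a Poincaré inequality; the continuity of $h$ then identifies the Newtonian representative with $h$ itself. Once the chaining is justified, the remaining steps are routine bookkeeping.
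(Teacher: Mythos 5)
Your direct chaining argument does not close, for exactly the reason you flag: knowing the upper gradient inequality on each component of $\gamma^{-1}(\D)$ says nothing about the variation of $h\circ\gamma$ on the closed null set $K=\gamma^{-1}(\partial\D)$, and a Cantor-function-type accumulation there is not excluded by anything in your setup. So the entire burden of the proof falls on your proposed patch, and as written that patch has a concrete gap. You say you will pass to the limit $t\to 0$ in the inequality $d_X\bigl(h(\pi_t\gamma(0)),h(\pi_t\gamma(1))\bigr)\leq\int_{\pi_t\circ\gamma}g\,ds$ ``using the continuity of $h$ and the fact that the retractions are $1$-Lipschitz and displace $\gamma$ by an amount tending to $0$.'' Continuity of $h$ controls the left-hand side, but neither fact controls the right-hand side: for a fixed curve $\gamma$ and a general $g\in L^2(\D)$ there is no reason that $\int_{\pi_t\circ\gamma}g\,ds\to\int_\gamma g\,ds$. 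To get this you must set $g_r(x)=r^{-1}g(r^{-1}x)$, note that $g_r\to g$ in $L^2(\br\D)$, and invoke Fuglede's lemma to discard one more modulus-null family outside of which $\int_\gamma g_{r_n}\,ds\to\int_\gamma g\,ds$ along a subsequence; you must also observe that the exceptional families for the retracted maps pull back under the bi-Lipschitz dilations to modulus-null families. None of this is fatal, but it is the actual mathematical content of the step, and it is missing. Your alternative route (b) via Kuratowski embedding and the identification $N^{1,2}=W^{1,2}$ trades this for the extension problem for Banach-valued Sobolev maps across $\partial\D$, which is the same difficulty in different clothing and is likewise only asserted.

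Once the comparison with the retracted maps is set up correctly, it proves the upper gradient inequality for $h$ along a.e.\ curve in $\br\D$ directly, so the decomposition of $\gamma^{-1}(\D)$ into intervals, the three discarded families, and the chaining across $K$ all become unnecessary. This is in fact the paper's proof: define $h_r(x)=h(r^{-1}x)$ for $r>1$, observe that $g_r(x)=r^{-1}g(r^{-1}x)$ is a weak upper gradient of $h_r$ on all of $\br\D$, check that $g_r\to g$ in $L^2(\br\D)$ and $h_r\to h$ uniformly, and conclude by the stability result for Newtonian maps under such convergence \cite[Prop.~7.3.7]{HKST:15}, which packages exactly the Fuglede-type limit passage you are missing. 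I would restructure your writeup around that argument and drop the chaining entirely; your minimality bookkeeping (restriction of any weak upper gradient of $h$ to $\D$ dominates $g$ a.e., since modulus of curve families in $\D$ is the same computed in $\D$ or $\br\D$) is fine and can stay.
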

\begin{proof}
Let $g\in L^2(\D)$ be a weak upper gradient of $h|_{\D}$. It suffices to show that $g$ is a weak upper gradient of $h$ in $\br \D$. For $r>1$, define $h_r(x)=h(r^{-1}x)$, $x\in \br \D$. Observe that $g_r(x)=r^{-1}g(r^{-1}x)$ is a weak upper gradient of $h_r$ in $\br \D$. Thus, $h_r\in N^{1,2}(\br \D, X)$. It is elementary to show, using approximation by continuous functions, that $g_r$ converges to $g$ in $L^2(\br \D)$ as $r\to 1$. Since $h_r$ converges to $h$ uniformly in $\br \D$, we conclude from (a slight variant of)  \cite[Prop.\ 7.3.7, p.~193]{HKST:15} that $h\in N^{1,2}(\br \D,X)$ with weak upper gradient $g$.
\end{proof}

Combining this lemma with Theorem \ref{theorem:definitions_qc} gives the next corollary.

\begin{cor}\label{cor:extension}
Let $X$ be a metric surface of finite Hausdorff $2$-measure that is homeomorphic to a topological closed disk and let $h\colon \br{\mathbb D}\to X$ be a continuous, surjective, and monotone map. If $h(\D)=\Int(X)$ and $h|_{\D}\colon \D \to \Int(X)$ is weakly $K$-quasiconformal for some $K\geq 1$, then $h$ is weakly $K$-quasiconformal. 
\end{cor}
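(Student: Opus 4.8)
The plan is to reduce the statement to the analytic characterization of weak quasiconformality in Theorem \ref{theorem:definitions_qc} and to transfer the Sobolev data from $\D$ to $\br\D$ using the lemma stated just above the corollary. First I would dispose of the topological requirements, which are immediate: $h$ is continuous and surjective by hypothesis, it is proper because $\br\D$ is compact, and it is cell-like because it is monotone and $\br\D$ and $X$ are homeomorphic compact surfaces (see Section \ref{sec:topological}); moreover $\br\D$ and $X$ are metric surfaces of finite, hence locally finite, Hausdorff $2$-measure. So the only thing left to prove is the modulus inequality $\Mod\Gamma\le K\Mod h(\Gamma)$ for every path family $\Gamma$ in $\br\D$, which by Theorem \ref{theorem:definitions_qc} is equivalent to condition \ref{def:i} of that theorem for the map $h$.

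Next I would apply Theorem \ref{theorem:definitions_qc} to $h|_\D\colon\D\to\Int(X)$. Since $h|_\D$ is weakly $K$-quasiconformal, the implication \ref{def:ii}$\Rightarrow$\ref{def:i} gives $h|_\D\in N^{1,2}_{\loc}(\D,\Int(X))$ together with a weak upper gradient $g$, which we may take to be minimal, satisfying
\[\int_{(h|_\D)^{-1}(E)}g^2\,d\mathcal H^2\le K\,\mathcal H^2(E)\qquad\text{for every Borel }E\subset\Int(X).\]
Taking $E=\Int(X)$ and using $\mathcal H^2(X)<\infty$ gives $g\in L^2(\D)$; since $X$ is bounded and of finite measure, $x\mapsto d_X(y,h(x))$ lies in $L^2(\D)$ as well, so in fact $h|_\D\in N^{1,2}(\D,X)$. (This is the one place where the hypothesis of \emph{finite}, rather than merely locally finite, Hausdorff $2$-measure is used.) The lemma preceding the corollary then yields $h\in N^{1,2}(\br\D,X)$ with minimal weak upper gradient $g$, now regarded as a function on $\br\D$; here one uses that $\partial\D$ is $\mathcal H^2$-null, so Hausdorff $2$-measure on $\br\D$ agrees with that on $\D$.

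To conclude, I would fix a Borel set $E\subset X$ and write $h^{-1}(E)=(h|_\D)^{-1}(E)\cup(h|_{\partial\D})^{-1}(E)$. The second set is contained in the null set $\partial\D$, and $h(\D)=\Int(X)$ forces $(h|_\D)^{-1}(E)=(h|_\D)^{-1}(E\cap\Int(X))$, so
\[\int_{h^{-1}(E)}g^2\,d\mathcal H^2=\int_{(h|_\D)^{-1}(E\cap\Int(X))}g^2\,d\mathcal H^2\le K\,\mathcal H^2(E\cap\Int(X))\le K\,\mathcal H^2(E).\]
Hence condition \ref{def:i} of Theorem \ref{theorem:definitions_qc} holds for $h$ with constant $K$, and the implication \ref{def:i}$\Rightarrow$\ref{def:ii} supplies the required modulus inequality. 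Together with the topological observations above, this proves that $h$ is weakly $K$-quasiconformal.

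I expect no serious obstacle. The substantive analytic input---extending a Sobolev map across the boundary circle without changing its minimal weak upper gradient---is precisely the lemma quoted just before the corollary, so the remaining work is bookkeeping: treating $\partial\D$ as an $\mathcal H^2$-null set, and noting that the minimal weak upper gradient of $h|_\D$ does not change when the target is enlarged from $\Int(X)$ to $X$, because the metric on $\Int(X)$ is the restriction of the metric on $X$. The only hypothesis-management point worth flagging is the use of global finiteness of $\mathcal H^2(X)$ to pass from $N^{1,2}_{\loc}$ to $N^{1,2}$ on $\D$, which is what makes the extension lemma applicable.
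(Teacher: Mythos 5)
Your argument is correct and is precisely the route the paper intends: the paper derives the corollary in one line by ``combining'' the preceding extension lemma with Theorem \ref{theorem:definitions_qc}, and your write-up fills in exactly those details (the topological conditions via compactness and Theorem \ref{theorem:monotone_approximation}, the passage from $N^{1,2}_{\loc}$ to $N^{1,2}$ using $\mathcal H^2(X)<\infty$, and the bookkeeping with the $\mathcal H^2$-null set $\partial\D$). No gaps.
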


The next statement is an analogue of Carath\'eodory's extension theorem. 

\begin{thm}\label{theorem:caratheodory}
Let $X$ be a metric surface of finite Hausdorff $2$-measure that is homeomorphic to a topological closed disk and let $h\colon \D \to \Int(X)$ be a $K$-quasi\-con\-formal homeomorphism for some $K\geq 1$. Then $h$ extends to a weakly $K$-quasiconformal map from $\br \D$ onto $X$. 
\end{thm}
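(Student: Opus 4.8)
The plan is to first produce a continuous, surjective, monotone extension $h\colon\br{\D}\to X$ with $h(\D)=\Int(X)$, and then to apply Corollary~\ref{cor:extension}. The existence of the extension is the analogue of Carath\'eodory's theorem in this setting, and the main work is a length--area estimate controlling the oscillation of $h$ along small circular crosscuts at boundary points of $\D$.

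First I would record that, by Theorem~\ref{theorem:definitions_qc}, $h\in N^{1,2}_{\loc}(\D,\Int(X))$ and there is a weak upper gradient $g$ of $h$ with $\int_{h^{-1}(E)}g^2\,d\mathcal H^2\le K\mathcal H^2(E)$ for every Borel $E\subset\Int(X)$; taking $E=\Int(X)$ gives $g\in L^2(\D)$. Fix $\zeta\in\partial\D$ and set $\sigma_r=\D\cap S(\zeta,r)$ for $0<r<1$, a crosscut of $\D$ of length at most $\pi r$. For a.e.\ $r$ the upper gradient inequality holds along $\sigma_r$, so $\ell(h(\sigma_r))\le\int_{\sigma_r}g\,ds$, and by the Cauchy--Schwarz inequality together with Fubini's theorem in polar coordinates centered at $\zeta$,
\[\int_0^{1}\Big(\int_{\sigma_r}g\,ds\Big)^2\frac{dr}{r}\le\pi\int_{\D}g^2\,d\mathcal H^2<\infty.\]
Since $\int_0^1 dr/r=\infty$, one may choose a strictly decreasing sequence $r_j\to 0$ with the ratios $r_{j-1}/r_j$ lying in a fixed compact subinterval of $(1,\infty)$ and with $\ell(h(\sigma_{r_j}))\to 0$. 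Writing $\hat\sigma_j=h(\sigma_{r_j})$ and $D_j=\D\cap B(\zeta,r_j)$, the set $\hat\sigma_j$ is a crosscut of $\Int(X)$ of diameter at most $\epsilon_j:=\ell(h(\sigma_{r_j}))\to 0$, and $h(D_j)$ is a component of $\Int(X)\setminus\hat\sigma_j$; let $V_j$ be its closure in $X$.

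The crucial claim is that $\diam(V_j)\to 0$, i.e.\ that a crosscut of $\Int(X)$ of vanishing diameter cuts off a piece of $X$ of vanishing diameter. I expect this to be the main obstacle: it is false for general domains, and here it must use both the finite area of $X$ and the fact that $X$ is a metric surface homeomorphic to $\br{\D}$. The argument I would attempt is by contradiction: assuming $\diam(V_j)\ge\delta_0>0$ along a subsequence, compare (using $K$-quasiconformality of $h$) the moduli of the families joining opposite sides of the ``round'' quadrilaterals $D_{j-1}\setminus\br{D_j}\subset\D$ --- which are bounded above and below by absolute constants because $r_{j-1}/r_j$ is controlled --- with those of their images $h(D_{j-1}\setminus\br{D_j})\subset\Int(X)$; since these image quadrilaterals are pairwise disjoint, their areas sum to at most $\mathcal H^2(X)<\infty$ and hence tend to $0$, and combined with the vanishing of $\diam\hat\sigma_j$ and the sharp reciprocal lower bound $\Mod\Gamma(Q)\cdot\Mod\Gamma^*(Q)\ge\pi^2/16$ of \cite{RR:19,EBC:21} one is led to a contradiction with $\diam(V_j)\ge\delta_0$. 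Alternatively, this step may be quoted from the theory of quasiconformal parametrizations of metric Jordan domains, cf.\ \cite{Iko:21}.

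Granting $\diam(V_j)\to 0$, the $V_j$ form a nested sequence of continua with $\diam(V_j)\to 0$, so $C(\zeta):=\bigcap_j V_j=\bigcap_{0<r<1}\br{h(\D\cap B(\zeta,r))}$ is a single point (the second equality because the $\sigma_{r_j}$ are cofinal), and $C(\zeta)\subset\partial X$ since $h\colon\D\to\Int(X)$ is proper. Setting $h(\zeta)$ equal to this point yields a continuous map $h\colon\br{\D}\to X$ --- continuity at $\zeta$ follows because the nested compacta $\br{h(\D\cap B(\zeta,r))}$ shrink to $h(\zeta)$ --- which is surjective (its image is compact and contains the dense set $\Int(X)$), with $h(\partial\D)=\partial X$. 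Finally $h$ is monotone: for $x\in\Int(X)$ the fibre $h^{-1}(x)$ is a point, while for $x\in\partial X$, if $\zeta_1,\zeta_2\in h^{-1}(x)$ then $h$ maps the chord $[\zeta_1,\zeta_2]$ (a simple arc, as $h|_\D$ is injective) to a continuum through $x$ separating $\Int(X)$ into the images of the two components of $\D\setminus[\zeta_1,\zeta_2]$; since $\partial X\setminus\{x\}$ is connected, the image of one of the two arcs of $\partial\D$ determined by $\zeta_1,\zeta_2$ must reduce to $\{x\}$, and an elementary argument then shows $h^{-1}(x)$ is connected. As $\br{\D}$ and $X$ are compact and homeomorphic, monotonicity is equivalent to cell-likeness (Section~\ref{sec:topological}), so $h\colon\br{\D}\to X$ is continuous, surjective, proper, and cell-like, with $h(\D)=\Int(X)$ and $h|_\D$ weakly $K$-quasiconformal; Corollary~\ref{cor:extension} then gives that $h$ is weakly $K$-quasiconformal.
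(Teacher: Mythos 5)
Your overall architecture is exactly the paper's: produce a continuous, surjective, monotone extension $h\colon\br\D\to X$ and then invoke Corollary~\ref{cor:extension}. The paper's proof is in fact only two sentences long --- it quotes the extension directly from Ikonen \cite[Thm.\ 1.1]{Iko:21} and then applies Corollary~\ref{cor:extension} --- so the fallback you offer at the end of your second paragraph (``this step may be quoted from \cite{Iko:21}'') is precisely what the paper does, and with that citation your proof is complete and coincides with the intended one.

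The direct argument you sketch for the extension does, as you anticipate, have a gap at the crucial claim $\diam(V_j)\to 0$. The modulus comparison you outline gives, at best, that $\dist(\hat\sigma_{j-1},\hat\sigma_j)\to 0$ (a positive lower bound on $\Mod\, h(\Gamma(Q_j))$ together with $\mathcal H^2(h(Q_j))\to 0$ forces the shortest connecting curve to be short), but this together with $\diam\hat\sigma_j\to 0$ does not imply that the nested pieces $V_j$ shrink: one would need summability of these quantities, or a genuinely different argument, to rule out the cluster set $\bigcap_j V_j$ being a nondegenerate continuum in $\partial X$. Moreover, it is not clear how the reciprocal lower bound $\pi^2/16$ enters: the sets $h(D_{j-1}\setminus\br{D_j})$ are not closed Jordan regions in $\Int(X)$ (part of their frontier is an impression of the boundary), so they are not quadrilaterals in the sense of \eqref{ireciprocality:12} to which that bound applies. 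This is exactly the delicate point in \cite{Iko:21}, where finiteness of $\mathcal H^2(X)$ and the surface structure near $\partial X$ are used in an essential way; so either cite that result, as the paper does, or expect substantially more work here. Your final paragraph (continuity, surjectivity, monotonicity of the extension, and the reduction to Corollary~\ref{cor:extension}) is fine.
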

\begin{proof}
According to a result of Ikonen \cite[Thm.\ 1.1]{Iko:21}, there exists a continuous, surjective, and monotone extension $h\colon \br \D \to X$. By Corollary \ref{cor:extension}, the extension is weakly quasiconformal. 
\end{proof}

\begin{rem}\label{remark:caratheodory}
In fact, the conclusion of Theorem \ref{theorem:caratheodory} is true under the mere assumption that $h|_{\D}$ is a weakly quasiconformal map rather than a quasiconformal homeomorphism; in particular, Corollary \ref{cor:extension} is a consequence of this stronger statement.  In order to show this, one has to modify slightly the argument in the proof of \cite[Thm.\ 1.1]{Iko:21}, which we used in the above proof of Theorem \ref{theorem:caratheodory}.
\end{rem}

\smallskip

\subsection{Classical uniformization}
Recall that a homeomorphism between Riemann surfaces is conformal if it is complex differentiable in local coordinates.  The classical uniformization theorem \cite[Thm.\ 15.12, p.~242]{Mar:19} has the following consequence.

\begin{thm}[Classical uniformization]\label{theorem:uniformization_classical}
Let $X$ be a Riemann surface homeomorphic to $\widehat{\C}$, $\br \D$, or $\C$. Then there exists a conformal homeomorphism from $\widehat{\C}$, $\br \D$, or $\D$ or $\C$, respectively, onto $X$. 
\end{thm}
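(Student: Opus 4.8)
The plan is to reduce each of the three topological cases to the uniformization theorem \cite[Thm.\ 15.12]{Mar:19}, which asserts that a simply connected Riemann surface without boundary is conformally equivalent to one of $\widehat\C$, $\C$, or $\D$.

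If $X$ is homeomorphic to $\widehat\C$, then $X$ is a compact simply connected Riemann surface without boundary; by uniformization it is conformally equivalent to $\widehat\C$, $\C$, or $\D$, and since the latter two are noncompact, there is a conformal homeomorphism $\widehat\C\to X$. If $X$ is homeomorphic to $\C$, then $X$ is a noncompact simply connected Riemann surface without boundary, hence conformally equivalent to $\widehat\C$, $\C$, or $\D$; noncompactness rules out $\widehat\C$, leaving a conformal homeomorphism from $\C$ or from $\D$ onto $X$, which is precisely the alternative in the statement.

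The remaining case, where $X$ is homeomorphic to $\br\D$ (that is, $X$ is a compact bordered Riemann surface whose interior is a disk), I would handle by doubling. Form the Schottky double $\widehat X$ by gluing $X$ to a copy of its conformal mirror image along the identity on $\partial X$; this is a Riemann surface without boundary, it is topologically a sphere, and it carries an anticonformal involution $\tau$ whose fixed-point set is $\partial X$ and which interchanges the two copies of $X$. By the first case there is a conformal homeomorphism $\psi\colon\widehat\C\to\widehat X$, and $\sigma=\psi^{-1}\circ\tau\circ\psi$ is an anticonformal involution of $\widehat\C$ whose fixed-point set is a Jordan curve. Every anticonformal involution of $\widehat\C$ with a fixed point is conjugate by a Möbius transformation $\phi$ to complex conjugation $z\mapsto\bar z$, so $\phi$ carries the fixed Jordan curve of $\sigma$ onto $\widehat\R=\R\cup\{\infty\}$ and carries $\psi^{-1}(X)$ onto one of the two closed disks of $\widehat\C$ bounded by $\widehat\R$, say the closed upper half-plane $H$. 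Precomposing $(\psi\circ\phi^{-1})|_H\colon H\to X$ with a Möbius transformation mapping $\br\D$ conformally onto $H$ produces the desired conformal homeomorphism $\br\D\to X$.

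Granting the uniformization theorem, the argument is essentially bookkeeping; the only step requiring genuine care is the bordered case, where the substantive inputs are the doubling construction and the classification of anticonformal involutions of the sphere into the two conjugacy classes (fixed-point-free, or conjugate to $z\mapsto\bar z$). An alternative route for the bordered case is to apply uniformization to the noncompact simply connected surface $\Int(X)$ and then rule out the conformal type $\C$ using the collar neighborhood of $\partial X$; reducing to the already-settled $\widehat\C$ case through doubling is cleaner.
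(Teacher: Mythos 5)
Your argument is correct. The paper itself offers no proof of this statement: it is recorded as a direct consequence of the uniformization theorem for simply connected Riemann surfaces cited from \cite[Thm.\ 15.12]{Mar:19}, so there is no argument to compare against line by line. Your derivation supplies exactly the missing bookkeeping. The $\widehat\C$ and $\C$ cases are immediate from compactness versus noncompactness, as you say, and the statement's alternative ``$\D$ or $\C$'' for the planar case means you do not need to decide the conformal type there. The only substantive step is the bordered case, and your Schottky-double argument is a standard and complete way to handle it: the double of a bordered Riemann surface is a genuine Riemann surface (the transition maps of half-plane charts extend by Schwarz reflection), the double of a closed disk is a sphere, and the classification of anticonformal involutions of $\widehat\C$ into the fixed-point-free class and the class of $z\mapsto\bar z$ is classical, so the fixed curve of $\sigma$ is a round circle and $\psi^{-1}(X)$ is a closed disk in $\widehat\C$. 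This route has the advantage of producing conformality and the homeomorphic boundary extension simultaneously; the alternative you mention (uniformizing $\Int(X)$ and then extending to $\partial X$) would require a separate boundary-regularity argument of Carath\'eodory type, which is the kind of input the paper only invokes later (Theorem \ref{theorem:caratheodory}) in the much weaker metric setting. No gaps.
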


We say that a metric $d$ on a Riemann surface $X$ is \textit{compatible} with the complex structure of $X$ if each local conformal chart $\varphi$ from an open subset $U$ of $X$ into the plane $\C$ is $1$-quasiconformal, where $U$ is equipped with the restriction of the metric $d$. When the metric $d$ arises from a Riemannian metric $g$, we will simply say that $g$ is compatible with the complex structure. Another implication of the classical uniformization is the next statement. 

\begin{thm}[Classical Riemannian uniformization]\label{theorem:uniformization_riemannian}
Let $X$ be a Riemann surface. Then there exists a Riemannian metric $g$ on $X$ that is complete, has constant curvature, and is compatible with the complex structure of $X$. 
\end{thm}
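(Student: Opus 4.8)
The plan is to reduce to the simply connected case by passing to the universal cover. Assume $X$ is connected (otherwise argue componentwise), and let $\pi\colon \widetilde X\to X$ be its universal covering. Pulling back the complex structure of $X$ makes $\widetilde X$ a simply connected Riemann surface and $\pi$ a holomorphic covering map, while the deck transformation group $G=\mathrm{Deck}(\pi)$ acts freely and properly discontinuously on $\widetilde X$ by conformal automorphisms, with $X=\widetilde X/G$. Since a simply connected surface is homeomorphic either to $S^{2}\cong\widehat\C$ (the compact case) or to $\R^{2}\cong\C$ (the non-compact case), \Cref{theorem:uniformization_classical} applies to $\widetilde X$ and provides a conformal homeomorphism $\Phi\colon \Omega\to\widetilde X$, where $\Omega$ is one of $\widehat\C$, $\D$, or $\C$.

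Next I would equip $\Omega$ with its canonical conformal metric $g_{\Omega}$ of constant curvature: the spherical metric on $\widehat\C$, the Poincar\'e metric on $\D$, or the Euclidean metric on $\C$. In each case $g_{\Omega}$ is complete, has constant curvature, and is a positive conformal multiple of $|dz|^{2}$ in the identity chart; hence every holomorphic local coordinate on $\Omega$ is $1$-quasiconformal, i.e.\ $g_{\Omega}$ is compatible with the complex structure of $\Omega$. Transporting by the conformal homeomorphism $\Phi$, the metric $g_{\widetilde X}:=(\Phi^{-1})^{*}g_{\Omega}$ on $\widetilde X$ is again complete and of constant curvature, and—because compatibility with the complex structure is a local property preserved under conformal maps—it is compatible with the complex structure of $\widetilde X$.

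The heart of the argument is to verify that $G$ acts on $(\widetilde X,g_{\widetilde X})$ by isometries, equivalently that $\Phi^{-1}G\,\Phi\subset\mathrm{Aut}(\Omega)$ acts by isometries of $g_{\Omega}$. This is a short case analysis. If $\Omega=\widehat\C$ then $\widetilde X$ is compact, so $G$ is a finite group acting freely on a simply connected space and must be trivial, and there is nothing to prove (here $X\cong\widehat\C$). If $\Omega=\C$ then $\mathrm{Aut}(\C)$ consists of the affine maps $z\mapsto az+b$; the fixed-point-free ones are exactly the translations, which are Euclidean isometries, and since $G$ acts freely every element of $\Phi^{-1}G\,\Phi$ is such a translation. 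If $\Omega=\D$ then $\mathrm{Aut}(\D)$ coincides with the group of orientation-preserving isometries of the Poincar\'e metric, so the claim is automatic. Consequently $g_{\widetilde X}$ is $G$-invariant and descends to a Riemannian metric $g$ on $X=\widetilde X/G$ with $\pi^{*}g=g_{\widetilde X}$. Since $\pi$ is simultaneously a local conformal homeomorphism and a local isometry from $(\widetilde X,g_{\widetilde X})$ onto $(X,g)$, the metric $g$ has constant curvature and is compatible with the complex structure of $X$ (both being local conditions), and it is complete because geodesics of $(X,g)$ lift to geodesics of the complete space $(\widetilde X,g_{\widetilde X})$, hence extend indefinitely, so Hopf--Rinow applies.

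The main obstacle is precisely the invariance of $g_{\widetilde X}$ under the deck group: it rests on the special feature that, for each model domain $\Omega$, the fixed-point-free conformal automorphisms are isometries of the canonical metric, together with the bookkeeping needed to confirm that constant curvature and compatibility with the complex structure pass between $\widetilde X$ and $X$ through the Riemannian covering $\pi$. The compactness remark in the sphere case and the reduction to connected $X$ are minor points to be dispatched along the way.
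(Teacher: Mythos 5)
The paper does not actually prove this statement: it records it as a standard consequence of the classical uniformization theorem and cites it as such, so there is no in-paper argument to compare against. Your covering-space proof is the standard one and is essentially correct: uniformize the universal cover, put the canonical spherical/Euclidean/hyperbolic metric on the model domain, check the deck group acts by isometries, and descend. The key observations (fixed-point-free elements of $\mathrm{Aut}(\C)$ are translations, $\mathrm{Aut}(\D)$ consists of hyperbolic isometries, completeness and local conditions descend through a Riemannian covering) are all right.

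One justification is wrong as stated, though the conclusion it supports is correct. In the case $\Omega=\widehat\C$ you claim that a finite group acting freely on a simply connected space must be trivial; this is false in general (e.g.\ the antipodal $\Z/2$-action on $S^3$). The correct reason the deck group is trivial here is that every deck transformation is a conformal automorphism of $\widehat\C$, hence a M\"obius transformation, and every M\"obius transformation has a fixed point (equivalently, by Lefschetz, every orientation-preserving self-homeomorphism of $S^2$ has a fixed point); a nontrivial deck transformation is fixed-point-free, so $G$ is trivial. With that repair the argument is complete for Riemann surfaces without boundary. Be aware that the paper later applies this theorem to a Riemann surface $Z$ \emph{with} boundary (in the bordered case of Theorem \ref{thm:uniformization_global}); your argument as written does not cover that case, since the universal cover of a bordered surface is not one of your three models, and one would have to pass to the double or argue separately there.
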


Theorems \ref{theorem:uniformization_classical} and \ref{theorem:uniformization_riemannian} are often used in conjunction with the existence of isothermal coordinates in Riemannian surfaces.
\begin{thm}[Isothermal coordinates]\label{theorem:isothermal}
Let $(X,g)$ be a Riemannian surface. Then there exists a complex structure on $X$ that is compatible with the Riemannian metric $g$.
\end{thm}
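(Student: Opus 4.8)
The statement is local: it suffices to cover $X$ by \emph{isothermal charts}, that is, coordinate charts $\varphi = u+iv$ in which $g$ takes the form $g = \lambda(du^2+dv^2)$ for some positive smooth function $\lambda$. Granting this, suppose two isothermal charts $\varphi,\psi$ overlap. On the overlap, the transition map $\tau = \psi\circ\varphi^{-1}$ is a diffeomorphism between planar domains with $\tau^{*}(\psi_{*}g) = \varphi_{*}g$; since both $\psi_{*}g$ and $\varphi_{*}g$ are positive conformal multiples of the Euclidean metric, the differential of $\tau$ is at every point a similarity (orientation-preserving or -reversing), so $\tau$ is holomorphic or anti-holomorphic on each component of the overlap. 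A complex structure determines an orientation, so we may assume $X$ is oriented; keeping only the isothermal charts that respect the orientation, all transition maps become holomorphic, and the resulting holomorphic atlas is the desired complex structure. Each isothermal chart is then a holomorphic chart in which $g$ is a conformal multiple of the Euclidean metric, hence $1$-quasiconformal, so $g$ is compatible with this structure.

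Thus the content lies in producing local isothermal coordinates. Fix $p\in\Int(X)$ and a smooth chart near $p$ in which $g = E\,dx^{2}+2F\,dx\,dy+G\,dy^{2}$ with $E,G>0$ and $EG-F^{2}>0$, and set $z = x+iy$. Writing $g = \tfrac{\lambda}{2}\,\lvert dz+\mu\,d\bar z\rvert^{2}$ with
\[
\mu = \frac{E-G+2iF}{E+G+2\sqrt{EG-F^{2}}},\qquad \lambda = \frac{E+G+2\sqrt{EG-F^{2}}}{2},
\]
one checks directly that $\lVert\mu\rVert_{L^{\infty}}<1$ on a small enough neighbourhood of $p$, and that if $w$ is a diffeomorphic solution of the \emph{Beltrami equation} $\partial_{\bar z}w = \mu\,\partial_{z}w$, then $dw = w_{z}(dz+\mu\,d\bar z)$ and hence $g = \tfrac{\lambda}{2\lvert w_{z}\rvert^{2}}\,\lvert dw\rvert^{2}$; writing $w = u+iv$, this is precisely an isothermal chart.

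The remaining step---the existence of a diffeomorphic local solution of the Beltrami equation with smooth (indeed, merely H\"older) coefficient $\mu$ satisfying $\lVert\mu\rVert_{\infty}<1$, i.e.\ the Korn--Lichtenstein theorem---is the analytic heart of the matter and the step I expect to be the main obstacle. It is obtained classically either by the Ahlfors--Bers method, inverting $\mathrm{Id}-\mu\mathcal{B}$ on $L^{p}$ for the Beurling transform $\mathcal{B}$ and applying the Cauchy transform to produce a quasiconformal solution, or by the elliptic regularity theory for the associated divergence-form equation; for our purposes it would be quoted as a known result. Finally, for $p\in\partial X$ one reduces to the interior case by a reflection (doubling) argument: extend $g$ across the boundary component through $p$ to a Riemannian (in general merely Lipschitz) metric on the double, which still has Beltrami coefficient of $L^{\infty}$-norm less than $1$, apply the measurable Riemann mapping theorem there, and restrict the resulting chart to obtain isothermal coordinates up to $\partial X$.
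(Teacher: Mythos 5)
The paper states this as classical background (the Korn--Lichtenstein / Gauss theorem on isothermal coordinates) and gives no proof, so there is nothing to compare against; your sketch is the standard and correct argument: reduce to the local existence of isothermal charts via the Beltrami equation $w_{\bar z}=\mu w_z$ with the correct coefficient $\mu=(E-G+2iF)/(E+G+2\sqrt{EG-F^2})$ satisfying $\lvert\mu\rvert<1$, quote Korn--Lichtenstein (or Ahlfors--Bers) for a diffeomorphic solution, and observe that transition maps between orientation-compatible isothermal charts are holomorphic. Your caveat about orientability is the right one to flag, since a non-orientable surface admits no complex structure and the theorem is implicitly stated for orientable (or suitably doubled) surfaces, and your reflection argument for boundary points is the standard way to handle the bordered case.
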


The next lemma give the relation between conformality and $1$-quasiconformality of maps between Riemann surfaces.

\begin{lemm}\label{lemma:conformal_compatible}
Let $X,Y$ be Riemann surfaces with metrics $d_X,d_Y$, respectively, that are compatible with the complex structures. Then each homeomorphism $h\colon X\to Y$ is conformal if and only if $h\colon (X,d_X)\to (Y,d_Y)$ is $1$-quasiconformal.  
\end{lemm}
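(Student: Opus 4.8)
The plan is to reduce the statement to the classical planar fact that a homeomorphism between domains in $\C$ is conformal if and only if it is $1$-quasiconformal, using that both properties are local. I will take $X$ and $Y$ to have locally finite Hausdorff $2$-measure; this is implicit in the notion of a compatible metric, since the modulus-based definition of $1$-quasiconformality for the charts only makes sense then, and it certainly holds for the compatible Riemannian metrics produced by Theorems \ref{theorem:uniformization_riemannian} and \ref{theorem:isothermal} to which the lemma is applied, so that Theorem \ref{theorem:definitions_qc} is available. Recall also that a $K$-quasiconformal homeomorphism has a $K$-quasiconformal inverse; hence compatibility of $d_X$ with the complex structure says exactly that every local conformal chart $\varphi\colon(U,d_X)\to U'\subset\C$ and its inverse are $1$-quasiconformal, and similarly for $(Y,d_Y)$.

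First I would fix $p\in X$, pick a conformal chart $\varphi\colon U\to U'\subset\C$ around $p$ and a conformal chart $\psi\colon V\to V'\subset\C$ around $h(p)$, and shrink $U$ so that $h(U)\subset V$. Set $\Psi=\psi\circ h\circ\varphi^{-1}\colon U'\to V'$. By the definition of the complex structures on $X$ and $Y$, the restriction $h|_U$ is conformal if and only if $\Psi$ is conformal. On the quasiconformal side, writing $h|_U=\psi^{-1}\circ\Psi\circ\varphi$ and $\Psi=\psi\circ h\circ\varphi^{-1}$ and using that a composition of $1$-quasiconformal homeomorphisms is $1$-quasiconformal, together with the fact that $\varphi$, $\psi$ and their inverses are $1$-quasiconformal, I obtain that $h|_U\colon(U,d_X)\to(V,d_Y)$ is $1$-quasiconformal if and only if $\Psi\colon U'\to V'$ is. Finally, by the equivalence of the geometric definition \ref{def:ii} with the local analytic definition \ref{def:i'} in Theorem \ref{theorem:definitions_qc}, the map $h$ is $1$-quasiconformal on $X$ if and only if its restriction to some neighborhood of each point is $1$-quasiconformal; the analogous local characterization of conformality is immediate. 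Hence it suffices to settle the planar case.

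In the plane, the forward direction is the conformal invariance of modulus: a biholomorphism $\Psi\colon U'\to V'$ satisfies $\Mod\Psi(\Gamma)=\Mod\Gamma$ for every curve family $\Gamma$ in $U'$, so it is $1$-quasiconformal. For the converse, applying Theorem \ref{theorem:definitions_qc} to a $1$-quasiconformal $\Psi$ gives $\Psi\in N^{1,2}_{\loc}=W^{1,2}_{\loc}$ with $(|\partial\Psi|+|\bar\partial\Psi|)^2=g_\Psi^2\le J_\Psi=|\partial\Psi|^2-|\bar\partial\Psi|^2$ almost everywhere, which forces $\bar\partial\Psi=0$ almost everywhere, and then Weyl's lemma shows $\Psi$ is holomorphic and hence conformal. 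I do not expect a serious obstacle here: the local reduction is routine and the planar facts are classical, the only genuine inputs being Theorem \ref{theorem:definitions_qc} and the classical analytic theory of planar quasiconformal maps. The one point requiring care is the orientation convention, since the $K=1$ geometric condition alone does not distinguish holomorphic from antiholomorphic maps; this is handled by the standing convention that the quasiconformal (and conformal) maps under consideration are orientation-preserving, which is the natural convention for maps between Riemann surfaces.
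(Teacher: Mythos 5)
Your proposal is correct and follows essentially the same route as the paper's (sketched) argument: reduce to the planar case via the compatible charts, invoke the classical equivalence of conformality and $1$-quasiconformality for planar homeomorphisms (which the paper simply cites from Lehto--Virtanen rather than re-deriving via the Beltrami equation as you do), and use Theorem \ref{theorem:definitions_qc} to pass between local and global quasiconformality. Your additional remark about the orientation convention is a reasonable point of care but does not change the argument.
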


The proof relies on the fact that conformal maps coincide with $1$-quasiconformal maps in planar domains (see \cite{LV:73}) and on the fact that global quasiconformality of a map between metric surfaces follows from the local quasiconformality, by Theorem \ref{theorem:definitions_qc}.

Each orientable polyhedral surface $X$ admits  a  complex structure  and becomes Riemann surface. The complex structure is natural in the sense that it is compatible with the polyhedral metric; see \cite[Sect.\ 2.5]{NR:21} for more details. In what follows, polyhedral Riemann surfaces refer to this natural complex structure. We note that if we equip a polyhedral surface with a metric that is locally isometric to the polyhedral metric, then it is still compatible with the natural complex structure.

\smallskip

\subsection{Proof of Theorem \ref{thm:one-sided_qc} for compact surfaces}
The proof bears no differences to the proof of \cite[Thm.\ 1.3]{NR:21} other than the use of the extended polyhedral approximation Theorem \ref{thm:extended_polyhedral_approximation} in place of \cite[Thm.\ 1.1]{NR:21} and the technical statements of Proposition \ref{prop:asympt} and Lemma \ref{lemma:modulus_bound} in place of \cite[Prop.\ 2.2 (ii)]{NR:21} and \cite[Lemma 2.4]{NR:21}, respectively. Thus, below we only emphasize the differences.

We will establish the following statement that readily implies Theorem \ref{thm:one-sided_qc}.

\begin{thm}\label{theorem:uniformization_long}
Let $\Omega=\widehat\C$ or $\Omega=\overline{\mathbb D}$. Let $X$ be a metric space homeomorphic to $\Omega$ with $\mathcal H^2(X)<\infty$ and $\{X_n\}_{n=1}^\infty$ be an asymptotically uniformly locally path connected sequence of metric spaces homeomorphic to $X$. Suppose that there exists an approximately isometric sequence $f_n\colon X_n\to X$, $n\in \N$, such that there exists $K\geq 1$ with
$$\limsup_{n\to\infty}\mathcal H^2 (f_n^{-1}(A)) \leq K\mathcal H^2(A)$$
for all compact sets $A\subset X$.  If $h_n\colon \Omega\to X_n$, $n\in \N$, is a normalized sequence of weakly $L$-quasiconformal maps for some $L\geq 1$, then $f_n\circ h_n$ has a subsequence that converges uniformly to a weakly $(K\cdot L)$-quasiconformal map $h\colon \Omega \to X$. 
\end{thm}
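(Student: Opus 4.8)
The plan is to run the standard compactness-plus-modulus-lower-semicontinuity scheme, exactly as in \cite[Sect.\ 6]{NR:21}, adapted to use the tools assembled above. First I would set up the limit map. Since $\Omega$ is compact and the maps $f_n\circ h_n\colon \Omega\to X$ all land in the compact space $X$, I would show this family is equicontinuous and hence, by Arzel\`a--Ascoli, has a uniformly convergent subsequence $f_n\circ h_n\to h$ for some continuous $h\colon\Omega\to X$. Equicontinuity is the crucial point: it follows from a uniform modulus-of-continuity estimate obtained by combining the weak $L$-quasiconformality of $h_n$, the uniform area bound $\limsup\mathcal H^2(X_n)<\infty$ (which follows from the hypothesis applied to $A=X$ together with properness), and a standard modulus argument — if $h$ failed to be equicontinuous one would produce a degenerate annulus in $\Omega$ whose image under $h_n$ has uniformly small modulus while the source has large modulus, contradicting $\Mod\Gamma\le L\Mod h_n(\Gamma)$ and the Loewner-type estimates available on $\Omega$ (using Lemma \ref{lemma:modulus_bound} on the $X_n$ side to control the separating curve families). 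The normalization of the $h_n$ (fixing, say, three boundary or interior points) is what prevents the limit from being constant and also pins down that $h$ is surjective: the image $h(\Omega)$ is compact, and using Proposition \ref{prop:asympt} to lift paths one shows it is also dense, hence all of $X$.

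Next I would verify the topological properties of $h$: continuous and surjective are already in hand; properness is automatic since $\Omega$ is compact; for cell-likeness (equivalently monotonicity, as $X$ and $\Omega$ are compact and homeomorphic) one argues that point preimages $h^{-1}(x)$ are continua. If some $h^{-1}(x)$ were disconnected, separated by an open set, one could find a small circle $\gamma$ in $\Omega$ around part of it; its images $h_n(\gamma)$ separate the corresponding pieces in $X_n$, and one runs the same separating-curve-family modulus contradiction via Lemma \ref{lemma:modulus_bound} and Proposition \ref{prop:asympt} — this is the argument of \cite[Sect.\ 6]{NR:21} verbatim, and the hypotheses have been arranged precisely so it goes through. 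For the disk case one additionally uses Lemma \ref{lemma:boundary_convergence} to see that $h$ maps $\partial\Omega$ to $\partial X$ and $\Int\Omega$ onto $\Int X$, so that Corollary \ref{cor:extension} applies at the end.

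The analytic heart is the modulus inequality $\Mod\Gamma\le (KL)\Mod h(\Gamma)$. Here I would use Theorem \ref{theorem:definitions_qc} to translate: each $h_n$ has a weak upper gradient $g_n$ with $\int_{h_n^{-1}(E)} g_n^2\,d\mathcal H^2_{X_n}\le L\,\mathcal H^2_{X_n}(E)$ for Borel $E\subset X_n$. Pushing forward along $f_n$ (an approximate isometry) and using the hypothesis $\limsup\mathcal H^2(f_n^{-1}(A))\le K\mathcal H^2(A)$, one obtains, in the limit, that $h$ lies in $N^{1,2}_{\loc}(\Omega,X)$ with a weak upper gradient $g$ satisfying $\int_{h^{-1}(E)} g^2\le KL\,\mathcal H^2(E)$; the passage to the limit uses lower semicontinuity of the $N^{1,2}$-energy under uniform convergence with $L^2$-bounded gradients (as in \cite[Prop.\ 7.3.7]{HKST:15}, cf.\ the boundary-extension lemma above) together with weak-$*$ convergence of the pushed-forward measures $\mathcal H^2\circ h_n$ against $\mathcal H^2\circ h$. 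Then Theorem \ref{theorem:definitions_qc}, in the direction \ref{def:i}$\Rightarrow$\ref{def:ii}, yields $\Mod\Gamma\le KL\,\Mod h(\Gamma)$ for every path family. In the disk case one finishes by noting $h|_{\D}$ is weakly $KL$-quasiconformal onto $\Int X$ and invoking Corollary \ref{cor:extension} to conclude $h$ itself is weakly $KL$-quasiconformal on $\br\D$.

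\medskip

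The step I expect to be the main obstacle is establishing equicontinuity of the family $\{f_n\circ h_n\}$ uniformly in $n$ — i.e.\ getting a uniform modulus of continuity before taking limits — since this is where the weak (one-sided) quasiconformality, the absence of a length metric, and the only-asymptotic control of the $X_n$ all interact. Everything else is a matter of carefully porting the arguments of \cite[Sect.\ 6]{NR:21}, replacing \cite[Prop.\ 2.2(ii)]{NR:21} and \cite[Lemma 2.4]{NR:21} by Proposition \ref{prop:asympt} and Lemma \ref{lemma:modulus_bound} respectively, and checking that no step secretly used the length-space hypothesis.
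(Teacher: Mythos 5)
Your proposal follows essentially the same route as the paper's proof: equicontinuity via the separating-curve-family modulus contradiction using Lemma \ref{lemma:modulus_bound}, uniform subconvergence to a monotone limit via Proposition \ref{prop:asympt}, and the modulus inequality via weak $L^2$-compactness of the upper gradients $g_{h_n}$ combined with the measure hypothesis and Theorem \ref{theorem:definitions_qc}. The only cosmetic difference is that you invoke Corollary \ref{cor:extension} and Lemma \ref{lemma:boundary_convergence} inside this proof for the disk case, whereas the paper establishes weak quasiconformality on all of $\Omega=\br\D$ directly and reserves Lemma \ref{lemma:boundary_convergence} for the normalization step in the deduction of Theorem \ref{thm:one-sided_qc}; this does not affect correctness.
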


Here we say that a sequence $h_n\colon X_n\to Y_n$ of maps between {compact} metric spaces is \textit{normalized} if there exists a value $\delta>0$ and a sequence of triples $a_n,b_n,c_n\in X_n$ with mutual distances bounded from below by $\delta$ such that the mutual distances between the points $h_n(a_n),h_n(b_n),h_n(c_n)$ are also bounded from below by $\delta$, where $\delta$ is independent of $n\in \N$.

That Theorem \ref{theorem:uniformization_long} implies Theorem \ref{thm:one-sided_qc} in the compact case is standard (see also \cite[Sect.\ 6.1]{NR:21}), with the exception of finding the optimal value of the quasiconformal dilatation, a procedure that we describe in Section \ref{sec:global}.

\begin{proof}[Proof of Theorem \ref{thm:one-sided_qc} for compact $X$]
Theorem \ref{thm:extended_polyhedral_approximation}, in combination with Lemma \ref{lemma:aulpc}, provides us with an asymptotically uniformly locally path connected sequence of polyhedral Riemann surfaces $\{(X_n,d_{X_n})\}_{n=1}^\infty$ and an approximately isometric sequence of topological embeddings $f_n\colon X_n\to X$, $n\in \N$, such that the Hausdorff measure inequality in the assumption of Theorem \ref{theorem:uniformization_long} is satisfied. The metric $d_{X_n}$ is locally isometric to the polyhedral metric on $X_n$ and thus it is compatible with the complex structure. By the classical uniformization theorem (Theorem \ref{theorem:uniformization_classical}), for each $n\in \N$ there exists a conformal homeomorphism $h_n\colon \Omega \to X_n$, which is also $1$-quasiconformal by Lemma \ref{lemma:conformal_compatible}. Thus, in order to apply Theorem \ref{theorem:uniformization_long}, it only remains to normalize the sequence $h_n$. If $\Omega=\widehat{\C}$, then it suffices to precompose $h_n$ with a suitable M\"obius transformation of $\widehat{\C}$. If $\Omega= \overline{\mathbb D}$, then, by  Lemma \ref{lemma:boundary_convergence}, $\diam(\partial X_n)$ is uniformly bounded from below away from $0$. Hence, we may find points $a_n',b_n',c_n'\in \partial X_n$ with mutual distances uniformly bounded from below. We now precompose $h_n$ with a M\"obius transformation of $\overline {\mathbb D}$, so that the preimages of $a_n',b_n',c_n'$ are the points $1,i,-1\in \partial \mathbb D$.
\end{proof}

Next, we focus on proving Theorem \ref{theorem:uniformization_long}. Let $\{X_n\}_{n=1}^\infty$ be a sequence of metric surfaces homeomorphic to $X$ that converges to $X$ in the Gromov--Hausdorff sense. By assumption, we have 
\begin{align*}
    \limsup_{n\to\infty}\mathcal H^2(X_n) \leq K\mathcal H^2(X)
\end{align*}
for an absolute constant $K\geq 1$.  Also, let $f_n\colon X_n\to X$, $n\in \N$, be an approximately isometric sequence and let $h_n\colon \Omega\to X_n$, $n\in \N$, be a normalized sequence of weakly $L$-quasiconformal maps. We will use the assumption that the spaces $X_n$, $n\in \N$, are asymptotically uniformly locally path connected, which enables us to apply Proposition \ref{prop:asympt} and Lemma \ref{lemma:modulus_bound}. 

\begin{lemm}[Equicontinuity]
The sequence $f_n\circ h_n\colon \Omega\to X$, $n\in \N$, is asymptotically uniformly equicontinuous.
\end{lemm}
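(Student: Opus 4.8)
The plan is to argue by contradiction and reduce to the modulus machinery already in place. Recall that asymptotic uniform equicontinuity of $\{f_n\circ h_n\}$ means that for every $\varepsilon>0$ there are $\delta>0$ and $N\in\N$ with $d_X(f_n(h_n(w)),f_n(h_n(w')))<\varepsilon$ whenever $n\ge N$ and $d_\Omega(w,w')<\delta$. If this fails for some $\varepsilon_0>0$, then after passing to a subsequence there are $p_n,q_n\in\Omega$ with $\rho_n:=d_\Omega(p_n,q_n)\to 0$ but $d_X(f_n(h_n(p_n)),f_n(h_n(q_n)))\ge\varepsilon_0$; since $f_n$ is an $\varepsilon_n$-isometry with $\varepsilon_n\to 0$, for large $n$ we get $d_{X_n}(h_n(p_n),h_n(q_n))\ge\varepsilon_0/2$ and in particular $\rho_n>0$. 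Let $\delta_0>0$ be the normalization constant of $\{h_n\}$ and fix once and for all a radius $R>0$ with $2R<\delta_0$; then $B_\Omega(p_n,R)$ meets at most one of the three normalization points, so after relabelling (and a further subsequence) we may assume two of them, $a_n$ and $b_n$, lie in $\Omega\setminus B_\Omega(p_n,R)$.

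Next I would let the modulus of a family of concentric metric circles blow up. For $n$ large enough that $2\rho_n<R/2$, let $\Gamma_n$ consist of the metric circles $\{z\in\Omega:d_\Omega(p_n,z)=t\}$, $t\in[2\rho_n,R/2]$; since the metric of $\Omega$ is comparable to the Euclidean metric on $B_\Omega(p_n,R)$, the standard length--area computation gives $\Mod\Gamma_n\ge c\log(R/(4\rho_n))$ for a universal $c>0$, hence $\Mod\Gamma_n\to\infty$. Weak $L$-quasiconformality of $h_n$ then forces $\Mod h_n(\Gamma_n)\ge L^{-1}\Mod\Gamma_n\to\infty$.

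The core of the proof is to contradict this by bounding $\Mod h_n(\Gamma_n)$, and here I would re-run the argument behind \Cref{lemma:modulus_bound} (that is, of \cite[Lemma 2.4]{NR:21}) adapted to \emph{separating loops}. Each $\gamma\in\Gamma_n$ splits $\Omega$ into the disjoint open sets $B_\Omega(p_n,t)$ and $\{d_\Omega(p_n,\cdot)>t\}$, the former containing $p_n,q_n$ and the latter containing $a_n,b_n$ (as $\rho_n<t<R$). Because $h_n$ is monotone, preimages of connected sets under $h_n$ are connected (\cite[(2.2), Chap.\ VIII, p.~138]{Why:58}), so no curve in $X_n$ avoiding $h_n(|\gamma|)$ can join $h_n(B_\Omega(p_n,t))$ to $h_n(\{d_\Omega(p_n,\cdot)>t\})$: its $h_n$-preimage would be a connected subset of $\Omega\setminus|\gamma|$ meeting both of the two open pieces, which is impossible. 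Suppose now, toward a contradiction, that $\ell(h_n(\gamma_n))\to 0$ for some $\gamma_n\in\Gamma_n$; then $\diam_X(f_n(h_n(|\gamma_n|)))\to 0$, so after a subsequence $f_n(h_n(|\gamma_n|))$ converges in the Hausdorff sense to a point $x_\ast\in X$. Using the lower bounds above and that $f_n$ is an $\varepsilon_n$-isometry, one of $p_n,q_n$ has $f_n\circ h_n$-image staying at distance $\ge\varepsilon_0/2$ from $x_\ast$, and one of $a_n,b_n$, say $a_n$, has $f_n\circ h_n$-image staying at distance $\ge\delta_0/4$ from $x_\ast$; passing to a subsequence so these images converge to $x',x''\in X\setminus\{x_\ast\}$, join $x'$ to $x''$ by a path $\beta$ in $X\setminus\{x_\ast\}$ (possible since $X$, being homeomorphic to $\widehat{\C}$ or $\br{\D}$, remains connected after removing a point). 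By \Cref{prop:asympt}, which applies since $\{X_n\}$ is asymptotically uniformly locally path connected, lift $\beta$ to paths $\beta_n$ in $X_n$ with endpoints $h_n(p_n)$ (or $h_n(q_n)$) and $h_n(a_n)$ such that $f_n\circ\beta_n\to\beta$ uniformly. Then $\beta_n$ must meet $h_n(|\gamma_n|)$, say at $\beta_n(s_n)$, whence $f_n(\beta_n(s_n))\to x_\ast$, contradicting the uniform convergence $f_n\circ\beta_n\to\beta$ together with the fact that $|\beta|$ is a compact subset of $X\setminus\{x_\ast\}$. Hence there are $\eta>0$ and $N$ with $\ell(h_n(\gamma))\ge\eta$ for all $\gamma\in\Gamma_n$ and all $n\ge N$; since $\limsup_n\mathcal H^2(X_n)\le K\mathcal H^2(X)<\infty$, the function $\eta^{-1}\chi_{X_n}$ is admissible for $h_n(\Gamma_n)$, giving $\Mod h_n(\Gamma_n)\le\eta^{-2}\mathcal H^2(X_n)$, which is bounded --- the desired contradiction.

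The remaining points are routine bookkeeping: the case $\Omega=\br{\D}$, where metric circles centered near $\partial\D$ degenerate to arcs (this changes nothing, since $\Omega$ minus such an arc still splits into two relatively open pieces and the modulus lower bound persists), and the repeated passage to subsequences, which is harmless because the statement being disproved has the form ``for all large $n$.'' In effect this lemma is the corresponding equicontinuity lemma of \cite{NR:21} transplanted almost verbatim, with \Cref{prop:asympt} and \Cref{lemma:modulus_bound} playing the roles of \cite[Prop.\ 2.2(ii)]{NR:21} and \cite[Lemma 2.4]{NR:21}. I expect the step most prone to slips --- and the only genuinely non-formal point --- to be the monotonicity argument showing that $h_n(|\gamma|)$ still separates the relevant points despite $h_n$ possibly being non-injective.
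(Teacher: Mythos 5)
Your proof is correct and follows essentially the same route as the paper's (which is the argument of \cite[Lemma 6.3]{NR:21}): negate equicontinuity, build a family of separating curves in $\Omega$ whose modulus blows up, transfer the blow-up to $X_n$ via weak quasiconformality, and contradict a uniform length lower bound for the image curves obtained from \Cref{prop:asympt} and the monotonicity of $h_n$. The only (harmless, arguably cleaner) deviation is that you re-run the proof of \Cref{lemma:modulus_bound} for your concentric circles and specific endpoints instead of citing it for the image continua, which conveniently avoids having to check that those images are disjoint continua of definite diameter in $X_n$.
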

\begin{proof}
This is proved exactly in the same way as \cite[Lemma 6.3]{NR:21}, where one uses Lemma \ref{lemma:modulus_bound} in place of \cite[Lemma 2.4]{NR:21}. Concisely, if the statement fails, then there exists a sequence of continua $E_n$, $n\in \N$, in $\Omega$ with diameters converging to $0$ such that, after passing to a subsequence, $h_n(E_n)$ has diameter uniformly bounded from below away from $0$ as $n\to\infty$. Using the linear local connectivity of $\Omega$ and the fact that $h_n$ is normalized, one can find a sequence of continua $F_n$, $n\in \N$, in $\Omega$ with diameters bounded away from $0$ and with $\dist(E_n,F_n)$ bounded away from $0$ as $n\to\infty$ such that the continua $h_n(F_n)$ also have large diameter. Standard modulus estimates in $\Omega$ show that $\Mod \Gamma^*(E_n,F_n;\Omega) \to \infty$ so by the weak $L$-quasiconformality of $h_n$, $\Mod \Gamma^*(h_n(E_n),h_n(F_n); X_n) \to \infty$. This contradicts Lemma \ref{lemma:modulus_bound}.
\end{proof}

\begin{lemm}[Convergence]
The sequence $f_n\circ h_n\colon \Omega\to X$, $n\in \N$, has a subsequence that converges uniformly to a continuous, surjective, and monotone map $h\colon \Omega \to X$.
\end{lemm}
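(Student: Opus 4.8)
The plan is to obtain $h$ by an Arzel\`a--Ascoli argument from the preceding equicontinuity lemma, and then to verify surjectivity and monotonicity separately, following the corresponding step in \cite[Sect.\ 6]{NR:21}; I only indicate the structure. Throughout write $g_n=f_n\circ h_n\colon\Omega\to X$, and note that each $g_n$ is a monotone surjection onto $f_n(X_n)$: indeed $h_n$ is weakly quasiconformal and $\Omega,X_n$ are homeomorphic compact surfaces, so $h_n^{-1}$ sends connected sets to connected sets \cite[(2.2), Chap.\ VIII, p.~138]{Why:58}, while $f_n$ is a homeomorphism onto its image; hence $g_n^{-1}$ of a continuum in $f_n(X_n)$ is a continuum.

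\textbf{Extracting the limit and surjectivity.} By the previous lemma the family $\{g_n\}$ is asymptotically uniformly equicontinuous, so for each $\varepsilon>0$ there are $\delta>0$ and $N\in\N$ with $d_X(g_n(z),g_n(w))<\varepsilon$ whenever $n\ge N$ and $d_\Omega(z,w)<\delta$. Since $X$ is compact, a diagonal argument over a countable dense subset of $\Omega$ gives a subsequence converging pointwise there, and asymptotic equicontinuity upgrades this to uniform convergence on $\Omega$ to a continuous limit $h\colon\Omega\to X$; I relabel so that $g_n\to h$ uniformly. For surjectivity, fix $x\in X$; as $f_n(X_n)$ is $\varepsilon_n$-dense and $h_n$ is onto, there is $z_n\in\Omega$ with $d_X(g_n(z_n),x)<\varepsilon_n\to 0$, and after passing to a further subsequence $z_n\to z\in\Omega$, so $h(z)=\lim_n g_n(z_n)=x$ by uniform convergence and continuity of $h$.

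\textbf{Monotonicity.} Suppose $h^{-1}(x_0)$ is disconnected for some $x_0\in X$, say $h^{-1}(x_0)=A\sqcup B$ with $A,B$ nonempty, disjoint, and compact. Since $\Omega$ is homeomorphic to $\widehat\C$ or $\br\D$, elementary plane topology provides a Jordan curve, or a Jordan arc with endpoints on $\partial\Omega$ and interior in $\Int\Omega$, contained in $\Omega\setminus h^{-1}(x_0)$ and separating $A$ from $B$; call it $J$, so that $\Omega\setminus J=D_1\sqcup D_2$ with $A\subset D_1$ and $B\subset D_2$. Because $h(J)$ is compact and avoids $x_0$, we have $\delta:=\dist(x_0,h(J))>0$, and uniform convergence gives $\dist(x_0,g_n(J))>\delta/2$ for all large $n$. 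Fix $a\in A$ and $b\in B$; then $g_n(a)\to x_0$ and $g_n(b)\to x_0$, and since $f_n$ is an $\varepsilon_n$-isometry, $d_{X_n}(h_n(a),h_n(b))\le d_X(g_n(a),g_n(b))+\varepsilon_n\to 0$. Using that $\{X_n\}$ is asymptotically uniformly locally path connected, for all large $n$ the points $h_n(a),h_n(b)$ are joined in $X_n$ by a curve of diameter tending to $0$; its image under $f_n$ is a curve $\sigma_n$ in $f_n(X_n)$ from $g_n(a)$ to $g_n(b)$ with $\diam|\sigma_n|\to 0$. Hence $|\sigma_n|\subset B(x_0,\delta/2)$ for $n$ large, so $|\sigma_n|\cap g_n(J)=\emptyset$. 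On the other hand $g_n^{-1}(|\sigma_n|)$ is a continuum (by monotonicity of $g_n$) containing $a\in D_1$ and $b\in D_2$, so it must meet $J$, producing a point of $g_n(J)\cap|\sigma_n|$. This contradiction shows $h^{-1}(x_0)$ is connected, i.e.\ $h$ is monotone.

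The main obstacle is the monotonicity step: one has to transfer the separation properties of $\Omega$ through the non-injective maps $g_n$ and keep all estimates uniform in $n$. This is precisely where the asymptotic uniform local path connectivity of $\{X_n\}$, rather than mere local connectivity of $X$, is needed, since the images $f_n(X_n)$ need not exhaust $X$ and $x_0$ may lie on $\partial X$, so one cannot simply use a connected neighborhood of $x_0$ in $X$ to obstruct the separation.
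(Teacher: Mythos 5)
Your proof is correct and follows essentially the same route as the paper's: convergence and surjectivity are extracted from the asymptotic equicontinuity exactly as in the paper, and monotonicity is obtained by separating two points of a disconnected fiber with a Jordan curve (or crosscut) $J$ in $\Omega\setminus h^{-1}(x_0)$ and contradicting the separation with a small continuum $h_n^{-1}(|\sigma_n'|)$ joining $a$ to $b$; your direct appeal to asymptotic uniform local path connectivity is just Proposition \ref{prop:asympt} specialized to the constant path at $x_0$, which is what the paper uses. One caveat: for an arbitrary clopen splitting $h^{-1}(x_0)=A\sqcup B$ there need not exist a Jordan curve in the complement separating \emph{all} of $A$ from \emph{all} of $B$ (e.g.\ $A$ a circle and $B$ a pair of points on either side of it), but your argument only uses that $J$ avoids $h^{-1}(x_0)$ and separates the two chosen points $a,b$ lying in different components of the fiber, which is the correct classical separation statement, so nothing breaks.
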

\begin{proof}
The convergence and surjectivity follow from the asymptotic uniform equi\-continuity, so we only argue for the monotonicity, which relies on the asymptotic uniform local path connectivity of $X_n$, $n\in \N$ (cf.\ \cite[Lemma 6.4]{NR:21}). If $h$ is not monotone, then a point $x\in X$ has a disconnected preimage $h^{-1}(x)$ and by planar topology there exist points $a,b\in h^{-1}(x)$ and a curve $\gamma$ in $\Omega\setminus h^{-1}(x)$ separating them.  Note that $f_n(h_n(a))$ and $f_n(h_n(b))$ converge to $x$, which we consider as a constant path. By Proposition \ref{prop:asympt}, there exists a sequence of paths $\gamma_n\colon [0,1]\to X_n$, $n\geq N$, such that $\gamma_n(0)=h_n(a)$, $\gamma_n(1)=h_n(b)$, and $f_n\circ \gamma_n$ converges uniformly to the constant path $x$ as $n\to\infty$. The monotonicity of $h_n$ implies that $h_n^{-1}(|\gamma_n|)$ is a continuum joining $a$ and $b$.  Since $\gamma$ separates $a$ and $b$, we conclude that $\gamma$ intersects $h_n^{-1}(|\gamma_n|)$. Therefore, the paths $h_n\circ \gamma$ and $\gamma_n$ intersect. By the uniform convergence of $f_n\circ h_n\circ \gamma$ to $h\circ \gamma$ and of $f_n\circ \gamma_n$ to $x$, we conclude that $h\circ \gamma$ intersects $x$, a contradiction.
\end{proof}

Next, we discuss the regularity properties of $h$. It is crucial here that $f_n$ has the property that for every compact set $A\subset X$ we have
\begin{align}\label{ineq:unif_measure}
    \limsup_{n\to\infty}\mathcal H^2(f_n^{-1}(A))\leq K\mathcal H^2(A)
\end{align}
for some uniform constant $K>0$.  By Theorem \ref{theorem:definitions_qc} \ref{def:i}, $h_n$ has an upper gradient $g_{h_n}$ with the property that 
\begin{align}\label{ineq:unif_integral}
    \int_{h_n^{-1}(E)} g_{h_n}^2 \, d\mathcal H^2 \leq L\mathcal H^2(E)
\end{align}
for each Borel set $E\subset X_n$. With absolutely no changes, the proof of the following lemma is the same as in \cite[Sect.\ 6.1.2]{NR:21}.

\begin{lemm}[Quasiconformality]
The sequence of upper gradients $g_{h_n}$ of $h_n$, $n\in \N$, has a subsequence that converges weakly in $L^2(\Omega)$ to a function $g_h$ that is a weak upper gradient of $h$. Moreover, for each Borel set $E\subset X$ we have
$$\int_{h^{-1}(E)}g_h^2\, d\mathcal H^2 \leq KL \mathcal H^2(E).$$
\end{lemm}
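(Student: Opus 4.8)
The plan is to obtain $g_h$ as a weak $L^2$-limit of a subsequence of $\{g_{h_n}\}$ and then verify the upper gradient property and the measure inequality separately, following the scheme of \cite[Sect.\ 6.1.2]{NR:21}. First I would record the uniform bound $\int_\Omega g_{h_n}^2\,d\mathcal H^2=\int_{h_n^{-1}(X_n)} g_{h_n}^2\,d\mathcal H^2\leq L\,\mathcal H^2(X_n)$, obtained from \eqref{ineq:unif_integral} with $E=X_n$; the right-hand side stays bounded because $\limsup_{n\to\infty}\mathcal H^2(X_n)\leq K\mathcal H^2(X)<\infty$. Hence $\{g_{h_n}\}$ is bounded in $L^2(\Omega)$, and after passing to a further subsequence---still arranged so that $f_n\circ h_n\to h$ uniformly---it converges weakly in $L^2(\Omega)$ to some $g_h\in L^2(\Omega)$; all limits below are taken along this subsequence.

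To see that $g_h$ is a weak upper gradient of $h$, fix a rectifiable curve $\gamma$ in $\Omega$ with endpoints $a,b$. The upper gradient inequality for $h_n$ gives $d_{X_n}(h_n(a),h_n(b))\leq \int_\gamma g_{h_n}\,ds$, and since $f_n$ is an $\varepsilon_n$-isometry with $f_n(h_n(a))\to h(a)$ and $f_n(h_n(b))\to h(b)$, letting $n\to\infty$ yields $d_X(h(a),h(b))\leq \liminf_{n\to\infty}\int_\gamma g_{h_n}\,ds$. To replace the right-hand side by $\int_\gamma g_h\,ds$, I would apply Mazur's lemma to produce finite convex combinations $\widehat g_j=\sum_i a_{j,i}\,g_{h_{n_i}}$, with all indices $\geq j$, converging strongly in $L^2(\Omega)$ to $g_h$, and then Fuglede's lemma to pass to a further subsequence along which $\int_\gamma \widehat g_j\,ds\to \int_\gamma g_h\,ds$ for $2$-a.e.\ curve $\gamma$. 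Since $\int_\gamma \widehat g_j\,ds\geq \inf_{i\geq j}\int_\gamma g_{h_{n_i}}\,ds$, this gives $\int_\gamma g_h\,ds\geq \liminf_{n\to\infty}\int_\gamma g_{h_n}\,ds\geq d_X(h(a),h(b))$ for $2$-a.e.\ $\gamma$. Enlarging the exceptional family so that it is closed under passing to subcurves (the enlarged family still has modulus zero) then shows that $g_h$ is a weak upper gradient of $h$.

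For the measure inequality I would first treat a compact set $E\subset X$. Fix $\delta>0$. Using the uniform convergence $f_n\circ h_n\to h$, choose $N$ with $d_X(f_n(h_n(x)),h(x))<\delta$ for all $x\in\Omega$ and $n\geq N$; then $h^{-1}(E)\subset h_n^{-1}\bigl(f_n^{-1}(\overline{N_\delta(E)})\bigr)$ for such $n$, and \eqref{ineq:unif_integral} gives
\[
  \int_{h^{-1}(E)} g_{h_n}^2\,d\mathcal H^2\leq \int_{h_n^{-1}(f_n^{-1}(\overline{N_\delta(E)}))} g_{h_n}^2\,d\mathcal H^2\leq L\,\mathcal H^2\bigl(f_n^{-1}(\overline{N_\delta(E)})\bigr).
\]
Because $g_{h_n}$ restricted to the fixed set $h^{-1}(E)$ converges weakly in $L^2(h^{-1}(E))$ to $g_h$, weak lower semicontinuity of the norm gives $\int_{h^{-1}(E)}g_h^2\,d\mathcal H^2\leq \liminf_{n\to\infty}\int_{h^{-1}(E)}g_{h_n}^2\,d\mathcal H^2$, while \eqref{ineq:unif_measure} applied to the compact set $\overline{N_\delta(E)}$ gives $\limsup_{n\to\infty}\mathcal H^2\bigl(f_n^{-1}(\overline{N_\delta(E)})\bigr)\leq K\,\mathcal H^2(\overline{N_\delta(E)})$. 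Combining these, and then letting $\delta\to 0$ using $\bigcap_{\delta>0}\overline{N_\delta(E)}=E$ together with the finiteness of $\mathcal H^2$, yields $\int_{h^{-1}(E)}g_h^2\,d\mathcal H^2\leq KL\,\mathcal H^2(E)$ for all compact $E$. Finally, the set function $E\mapsto \int_{h^{-1}(E)}g_h^2\,d\mathcal H^2$ is the pushforward under the continuous map $h$ of the finite Borel measure $g_h^2\,d\mathcal H^2$, hence a finite Borel---thus Radon and inner regular---measure on the compact space $X$; inner regularity propagates the inequality from compact sets to all Borel sets.

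The step I expect to be the main obstacle is the passage from weak $L^2$-convergence of the upper gradients to the upper gradient inequality for $h$: orchestrating Mazur's lemma, Fuglede's lemma, and the enlargement of the modulus-zero curve family must be done carefully, and this is the one place where weak, rather than strong, convergence is genuinely used. A secondary point needing attention is the uniform control, as $n\to\infty$, of the slightly enlarged targets $f_n^{-1}(\overline{N_\delta(E)})$ in the measure estimate, which is precisely where condition \eqref{ineq:unif_measure} and the uniformity of $f_n\circ h_n\to h$ enter.
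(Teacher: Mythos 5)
Your proposal is correct and follows essentially the same route as the paper: a uniform $L^2$ bound from \eqref{ineq:unif_integral} plus the area bound, weak compactness (Banach--Alaoglu), Mazur's and Fuglede's lemmas for the weak upper gradient property, and the combination of \eqref{ineq:unif_measure}, \eqref{ineq:unif_integral}, and weak lower semicontinuity for the integral inequality. The paper only sketches these steps with reference to \cite[Sect.\ 6.1.2]{NR:21}, and your write-up fills them in consistently.
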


Based on this lemma, the weak quasiconformality of $h$ follows from Theorem \ref{theorem:definitions_qc} and thus the proof of Theorem \ref{theorem:uniformization_long} is completed. We provide a sketch of the proof of the lemma. The weak limit $g_h$ of $g_{h_n}$ is obtained by the Banach--Alaoglu theorem \cite[Thm.\ 2.4.1]{HKST:15}, due to the uniform bound of the $L^2$ norm of $g_{h_n}$, given by \eqref{ineq:unif_measure} and \eqref{ineq:unif_integral}. Mazur's lemma \cite[p.~19]{HKST:15} allows one to upgrade weak convergence of $g_{h_n}$ to strong convergence of convex combinations of $g_{h_n}$. The fact that $g_h$ is a weak upper gradient of $h$ then follows from Fuglede's lemma \cite[p.~131]{HKST:15}. The integral inequality for $g_h^2$ in the conclusion of the lemma is a consequence of \eqref{ineq:unif_measure}, \eqref{ineq:unif_integral}, and the weak convergence of $g_{h_n}$ to $g_h$.  

\bigskip

\section{Local to global uniformization}\label{sec:global}

In this section we prove the global uniformization result of Theorem \ref{thm:uniformization_global} and we use it to derive Theorem \ref{thm:one-sided_qc} in the general case. We also establish the minimal value of the dilatation of the weak quasiconformal parametrizations in these theorems.

\smallskip

\subsection{Minimizing dilatation}
The next general result allows us to minimize the dilatation of a weakly quasiconformal map from a Riemannian surface onto a metric surface.

\begin{thm}\label{thm:minimize_dilatation}
Let $X$ be a metric surface of locally finite Hausdorff $2$-measure,  $(Z,g)$ be a Riemannian surface, and $h\colon (Z,g) \to X$ be a weakly quasiconformal map. Then there exists a Riemannian surface $(\widetilde Z,\widetilde g)$ and a quasiconformal map $\psi \colon (\widetilde Z,\widetilde g)\to (Z,g)$ such that $h\circ \psi\colon (\widetilde Z,\widetilde g)\to X$ is weakly $(4/\pi)$-quasiconformal. 
\end{thm}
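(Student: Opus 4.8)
The plan is to absorb the excess dilatation of $h$ into the conformal structure of the domain, working at the level of the approximate metric derivative. By the analytic description of weak quasiconformality recalled in the introduction (equivalently, Theorem~\ref{theorem:definitions_qc}), $h\in N^{1,2}_{\loc}(Z,X)$, and for a.e.\ $z\in Z$ the approximate metric derivative $\apmd h_z$ is a seminorm on $(T_zZ,g_z)\cong\R^2$ satisfying $\|\apmd h_z\|_{g_z}^2\le K\,\mathbf{J}(\apmd h_z)$, where $\|\cdot\|_{g_z}$ denotes maximal stretch, $\mathbf{J}(\cdot)$ the Jacobian, and $K\ge1$ the dilatation of $h$. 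The key pointwise input is the elementary planar fact that for every seminorm $s$ on a two-dimensional inner product space there is an inner product $q=q(s)$ with
\[
\Bigl(\max_{q(v)=1}s(v)\Bigr)^2\ \le\ \frac{4}{\pi}\,\mathbf{J}_q(s),
\]
with equality exactly for nonzero multiples of $\ell^\infty$; one takes $q(s)$ to be the normalization of the John ellipse (maximal-area inscribed ellipse) of $\{s\le1\}$ when $s$ is a norm, and $q(s)=g_z$ on the exceptional set where $s$ is degenerate (there $\apmd h_z\equiv0$ by the inequality above, so the bound is vacuous). Moreover, since $\apmd h_z$ has $g_z$-linear distortion bounded by a constant depending only on $K$---as one sees by comparing $\mathbf{J}(\apmd h_z)$ with the area of a triangle inscribed in $\{\apmd h_z\le1\}$---the John ellipse $q_z:=q(\apmd h_z)$ has $g_z$-distortion bounded in terms of $K$ alone.

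Next I would globalize. The measurable field $z\mapsto[q_z]$ is a measurable conformal structure on $Z$ whose Beltrami coefficient has $L^\infty$-norm bounded away from $1$ by a constant depending only on $K$. By the measurable Riemann mapping theorem on surfaces (the planar case is \cite{LV:73}; the general case follows by a patching argument), there is a Riemann surface $\widetilde Z$ and a quasiconformal homeomorphism $\psi\colon\widetilde Z\to Z$ such that, for a.e.\ $w$, the differential $D\psi_w$ is a similarity from $(T_w\widetilde Z,\widetilde g_w)$ onto $(T_{\psi(w)}Z,q_{\psi(w)})$, where $\widetilde g$ is any Riemannian metric on $\widetilde Z$ compatible with its complex structure (which exists by Theorem~\ref{theorem:uniformization_riemannian}).

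It then remains to verify that $h\circ\psi\colon(\widetilde Z,\widetilde g)\to X$ is weakly $(4/\pi)$-quasiconformal. It is continuous, surjective, proper, and cell-like because $\psi$ is a homeomorphism; it lies in $N^{1,2}_{\loc}(\widetilde Z,X)$, and by the chain rule for approximate metric derivatives---valid since the quasiconformal map $\psi$ is differentiable a.e.\ with nonvanishing Jacobian---one has $\apmd(h\circ\psi)_w=\apmd h_{\psi(w)}\circ D\psi_w$ for a.e.\ $w$. Since the dilatation quotient $(\text{maximal stretch})^2/(\text{Jacobian})$ of a seminorm is invariant under precomposition with a linear similarity,
\[
\frac{g_{h\circ\psi}(w)^2}{J_{h\circ\psi}(w)}\ =\ \frac{\bigl(\max_{q_{\psi(w)}(v)=1}\apmd h_{\psi(w)}(v)\bigr)^2}{\mathbf{J}_{q_{\psi(w)}}(\apmd h_{\psi(w)})}\ \le\ \frac{4}{\pi}
\]
for a.e.\ $w\in\widetilde Z$; that is, $g_{h\circ\psi}^2\le\tfrac4\pi J_{h\circ\psi}$ a.e. By the analytic description of weak quasiconformality (Theorem~\ref{theorem:definitions_qc}), $h\circ\psi$ is weakly $(4/\pi)$-quasiconformal.

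The step I expect to be the main obstacle is the sharp planar lemma stated above. Using $\mathbf{J}_q(s)=\pi/|\{s\le1\}|_q$ and normalizing the John ellipse of $\{s\le1\}$ to be the $q$-unit disk---so that $\max_{q(v)=1}s(v)=1$---the inequality becomes the assertion that a symmetric planar convex body whose maximal-area inscribed ellipse is the unit disk has area at most $4$. This is stronger than what John's containment $\{s\le1\}\subseteq\sqrt2\cdot(\text{John ellipse})$ alone gives (namely $2\pi$): it requires the equality conditions in John's theorem in dimension two, which in the extremal configuration force two orthogonal pairs of contact points and identify the extremizer as a square, i.e.\ $s$ a multiple of $\ell^\infty$---consistent with the sharpness of the constant $4/\pi$ in Theorems~\ref{thm:one-sided_qc} and~\ref{thm:uniformization_global}. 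The remaining points---measurability of $z\mapsto q_z$, the distortion estimate behind the invocation of the measurable Riemann mapping theorem, and the chain rule for $\apmd$ under quasiconformal precomposition---are routine.
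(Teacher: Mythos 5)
Your proposal is correct and follows essentially the same route as the paper's proof: passing to the approximate metric derivative, taking the John ellipse field of the unit balls $\{N_x\le 1\}$ to define a measurable conformal structure of bounded distortion, straightening it by the measurable Riemann mapping theorem on surfaces, and then invoking the sharp two-dimensional volume-ratio consequence of John's theorem (area at most $4$ when the inscribed maximal ellipse is the unit disk, with the square as extremizer) to get the constant $4/\pi$. The only differences are cosmetic choices of reference and phrasing.
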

The constant $4/\pi$ is optimal, as was observed by Rajala \cite[Example 2.2]{Raj:17}. The proof follows the steps of Rajala \cite[Sect.\ 14]{Raj:17}, so we only provide a sketch. 
\begin{proof} 
Since $h$ belongs to $N^{1,2}_{\loc}(Z,X)$, $h$ is approximately metrically differentiable. This means that for a.e.\ $x \in Z$, there is a unique seminorm $N_x \colon \mathbb{R}^2 \to [0, \infty)$ for which (identifying a neighborhood of $x$ in $Z$ with a subset of $\mathbb{R}^2$ via the exponential map)
\[\underset{y\to x}{\aplim}\frac{d(h(y),h(x)) - N_x(y-x)}{|y-x|} = 0,\]
where $\aplim$ denotes the approximate limit; see Definition 4.1 in \cite{LW:17}. The correspondence $x \mapsto N_x$ is measurable as a map from $Z$ into the metric space of seminorms in $\R^2$, where the distance between two seminorms $s$ and $s'$ is defined as $\sup \{|s(y)-s'(y)|: y \in \mathbb{S}^1\}$; see Proposition 4.3 in \cite{LW:17}. Let $B_x = \{y \in \mathbb{R}^2: N_x(y) \leq 1\}$ be the closed unit ball in the seminorm $N_x$. If $N_x$ is a norm, then $B_x$ is a symmetric convex body in $\mathbb{R}^2$. Let $L_x =  \sup\{N_x(y): y \in \mathbb{S}^1\}$ denote the maximal stretching of $N_x$. As shown in Lemma 3.1 of \cite{LW:18a}, $L_x$ is a representative of the minimal weak upper gradient of $h$ that we denote by $g_h$. 

We define the Jacobian of $N_x$ to be $J_x = \pi/|B_x|$, where $|B_x|$ is the Lebesgue $2$-measure of $B_x$. Note that $|B_x|=\infty$ and $J_x=0$ if $N_x$ is not a norm.  It was shown in \cite[Thm.\ 7.1]{NR:21} that the weakly quasiconformal map $h$ has a Jacobian $J_h$ that satisfies $g_h(x)^2\leq K J_h(x)$ for a.e.\ $x\in Z$ and which is by definition the Radon--Nikodym derivative of the measure $\mathcal H^2 \circ h$. Moreover, this measure agrees with the measure \[E\mapsto \int \# (h^{-1}(y)\cap E)\, d\mathcal H^2.\]See \cite[Remark 7.2]{NR:21}. Finally, the absolutely continuous part of the latter measure agrees with the measure \[E\mapsto \int_E J_x\, d\mathcal H^2.\] This is true in general for Lipschitz maps \cite[Thm.\ 7]{Kir:94} and can be established for the map $h$ in the Sobolev space by decomposing $h$ into countably many Lipschitz maps up to a set of measure zero \cite[Lemma 14.1]{Raj:17}. Altogether, we have $J_h(x)=J_x$ for a.e.\ $x\in Z$. Thus $L_x^2 \leq KJ_x$ for a.e.\ $x \in Z$.  In particular, we see that $N_x$ is the zero seminorm for a.e.\ $x \in Z$ for which $N_x$ is not a norm, in which case we have $B_x = \mathbb{R}^2$.

Recall that the \textit{John ellipse} of a planar symmetric convex body is the unique ellipse of maximum area it contains. We take $E_x$ to be the John ellipse corresponding to $B_x$ whenever $N_x$ is a norm. We define $E_x$ to be the Euclidean closed unit ball otherwise. The John ellipse is continuous as a function of the norm $N_x$; see for example Section 3 in \cite{LYZ:05}. Moreover, the set $\{x \in Z: N_x = 0\}$ is measurable. Thus the correspondence $x \mapsto E_x$ is measurable.  By John's theorem (see Theorem 3.1 in \cite{Ball:97} for a statement), $E_x$ satisfies $E_x \subset B_x \subset \sqrt{2} E_x$ whenever $N_x$ is a norm. The latter property and the relation $L_x^2 \leq KJ_x$ imply that the ellipse field $E_x$ has uniformly bounded eccentricity. 

Let $\mu$ be the corresponding Beltrami differential in the Riemann surface $Z$. By \cite[Prop.\ 4.8.12]{Hub:06} there exists a conformal structure on $Z$ giving rise to a Riemann surface $\widetilde Z$ such that the identity map $\psi \colon \widetilde Z \to Z$ is  $K'$-quasiconformal in local coordinates for some $K'\geq 1$ and maps infinitesimal balls to the corresponding infinitesimal ellipses specified by $\mu$.  Using isothermal coordinates (Theorem \ref{theorem:isothermal}), we consider a Riemannian metric $\widetilde g$ compatible with the complex structure of $\widetilde Z$. 

The composite $\widetilde{h} = h \circ \psi$ is  a weakly quasiconformal map and in particular belongs to the space $N^{1,2}_{\loc}(\widetilde{Z},X)$. Thus $\widetilde{h}$ is approximately metrically differentiable. Let $z \in \widetilde{Z}$ be a point of approximate metric differentiability of $h$. Denote by $\widetilde{N}_z$ the approximate metric derivative and $\widetilde{B}_z$ the corresponding closed unit ball.  For all $z \in \widetilde{Z}$ at which $\psi$ is differentiable, let $D\psi_z$ denote the derivative of $\psi$ at $z$. Then for a.e.\ $z\in \widetilde Z$, $(D\psi_z)^{-1}$ takes the ellipse $E_{\psi(z)}$ onto the closed disk of radius $r_z$ centered at $0$ for some $r_z>0$. For a.e.\ $z \in \widetilde{Z}$ for which $N_{\psi(z)}$ is a norm, we have $\widetilde N_z =N_{\psi(z)} \circ D\psi_z$ and $D\psi_z (\widetilde{B}_z) =B_{\psi(z)}$. In this case $\widetilde{B}_z$ contains a maximal disk of radius $r_z$, and so $\widetilde{N}_z$ has maximal stretching equal to $1/r_z$. Moreover, this disk is the John ellipse for $\widetilde{B}_z$. As a consequence of John's theorem (cf. Theorem 6.2 and the following discussion in \cite{Ball:97}), the set $\{y \in \mathbb{R}^2: \widetilde{N}_z(y) \leq 1\}$ is contained in a square of area $4r_z^2$. In particular, the Jacobian $\widetilde{J}_z$ satisfies $\widetilde{J}_z \geq \pi/(4r_z^2)$. In the next case, for a.e.\ $z \in \widetilde{Z}$ for which $N_{\psi(z)}=0$, we have $\widetilde{N}_z=0$ as well, namely at any point $z$ at which $\psi$ is differentiable.   In all cases, we have $\widetilde{L}_z^2 \leq (4/\pi)\widetilde{J}_z$ for a.e.\ $z \in \widetilde{Z}$, which implies that $\widetilde{h}$ is weakly $(4/\pi)$-quasiconformal.
\end{proof}

\smallskip

\subsection{Topological preliminaries}\label{sec:topological}
Let $X,Y$ be topological spaces and $\nu\colon X\to Y$ be a continuous map. Recall that $\nu$ is monotone if the preimage of each point is a continuum. Also, $\nu$ is {proper} if the preimage of each compact set is compact. A compact set $A$ in $X$ is \textit{cell-like in $X$} if for each open neighborhood $U$ of $A$ in $X$, $A$ can be contracted within $U$ to a point. Finally, the map $\nu$ \textit{cell-like} if the preimage of each point in $Y$ is a cell-like set in $X$. 

In $1$-manifolds, cell-like sets coincide with homeomorphic images of compact intervals. In topological surfaces without boundary, cell-like sets coincide with continua having a simply connected open neighborhood that they do not separate \cite[Cor.\ 15.2B and Cor.\ 15.4C]{Dav:86}.

\begin{thm}[Approximation for cell-like maps]\label{theorem:cell_like_approximation}
Let $\nu \colon X\to Y$ be a proper cell-like map between metric surfaces such that $\nu(\partial X)\subset \partial Y$ and the restriction $\nu|_{\partial X} \colon \partial X \to \partial Y$ is a proper cell-like map. Then $\nu$ can be approximated uniformly by homeomorphisms. Moreover, if $U$ is an open subset of $Y$, then $\nu|_{\nu^{-1}(U)}\colon \nu^{-1}(U) \to U$ can be approximated uniformly by homeomorphisms.
\end{thm}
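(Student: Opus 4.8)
The plan is to reduce the statement to the cell-like approximation theorem for surfaces \emph{without} boundary, which is classical, and then pass to the boundary case by a doubling argument. First, the ``moreover'' clause follows formally from the main assertion: if $U\subseteq Y$ is open and $X'=\nu^{-1}(U)$, $Y'=U$, then $X',Y'$ are surfaces with $\partial X'=X'\cap\partial X$ and $\partial Y'=U\cap\partial Y$; the restriction $\nu|_{X'}\colon X'\to Y'$ is still surjective, proper, and cell-like (the point preimages are unchanged), it carries $\partial X'$ into $\partial Y'$, and its boundary restriction is the restriction of $\nu|_{\partial X}$ to $(\nu|_{\partial X})^{-1}(U)$, hence is proper and cell-like. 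So it suffices to prove that $\nu$ itself is a uniform limit of homeomorphisms (and we may assume $\nu$ surjective).

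When $\partial X=\partial Y=\varnothing$ this is well known: the point preimages of $\nu$ form a cell-like upper semicontinuous decomposition of $X$, and by Moore's theorem for $\R^2$ and $\widehat{\mathbb C}$ (see \cite{Dav:86}), together with a patching argument over a locally finite atlas of $Y$, such a decomposition is shrinkable; equivalently, $\nu$ is a uniform limit of homeomorphisms. I would cite this rather than reprove it, since the present section collects topological preliminaries.

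For the general case, form the doubles $DX=X\cup_{\partial X}X$ and $DY=Y\cup_{\partial Y}Y$, which are surfaces without boundary, and let $D\nu\colon DX\to DY$ agree with $\nu$ on each copy; this is well defined and continuous because $\nu(\partial X)\subseteq\partial Y$, and it is proper and surjective via the folding maps $DX\to X$, $DY\to Y$. The key point is that $D\nu$ is cell-like: over $y\in\Int(Y)$ the preimage $\nu^{-1}(y)$ lies in one copy of $\Int(X)\subseteq DX$, hence is cell-like there; over $y\in\partial Y$ the preimage is the double $A\cup_B A$ of $A=\nu^{-1}(y)$ (cell-like in $X$) along $B=A\cap\partial X=(\nu|_{\partial X})^{-1}(y)$ (a point or a closed arc, being a cell-like subset of the $1$-manifold $\partial X$), and a bicollar of $\partial X$ in $DX$ converts a neighborhood basis of $A$ consisting of sets null-homotopic in larger ones into such a basis for $A\cup_B A$ in $DX$ — alternatively one invokes the neighborhood characterization of cell-like subsets of surfaces recalled above. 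By the boundaryless case there are homeomorphisms $G_k\colon DX\to DY$ with $\sup d(G_k,D\nu)\to0$. Restricting $G_k$ to one copy $X_1\cong X$ gives topological embeddings $\varepsilon_k$-close to $\nu$ with $\varepsilon_k\to0$, so that $G_k(\partial X_1)$ is within $\varepsilon_k$ of $\partial Y\subseteq DY$ and $G_k(X_1)$ within $\varepsilon_k$ of $Y_1=\nu(X)\subseteq DY$; for $k$ large one then corrects $G_k$ by an ambient homeomorphism $\Psi_k\colon DY\to DY$ converging uniformly to the identity, supported near $\partial Y$, with $\Psi_k(G_k(X_1))=Y_1$, so that $\Psi_k\circ G_k|_{X_1}\colon X\to Y$ is a homeomorphism converging uniformly to $\nu$.

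The main obstacle is constructing $\Psi_k$ in this last step: this is a controlled uniqueness-of-collars statement in dimension two — the properly embedded $1$-manifold $G_k(\partial X_1)$, being uniformly close to $\partial Y$, is the graph of a small function over $\partial Y$ inside a bicollar and can be straightened there, after which $G_k(X_1)$ is straightened near its boundary to coincide with $Y_1$. A cleaner but less elementary alternative that bypasses the transfer step is to prove directly that a $\mathbb{Z}/2$-symmetric cell-like upper semicontinuous decomposition of a boundaryless surface is \emph{equivariantly} shrinkable, applied to the doubled decomposition of $DX$. Verifying cell-likeness of the doubled point preimages $A\cup_B A$ is a secondary, more routine technical point.
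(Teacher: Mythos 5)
The paper does not prove this statement from first principles: it is quoted from the literature, with the boundaryless case attributed to \cite[Cor.\ 25.1A]{Dav:86} and the general (bounded) case to Siebenmann \cite{Sieb:72}. Your reduction of the ``moreover'' clause to the main assertion is correct (restricting a proper cell-like map to the preimage of an open set preserves properness and cell-likeness, both on the surface and on its boundary), and citing the boundaryless approximation theorem rather than reproving it is exactly what the paper does. The doubling strategy for the boundary case is a legitimate alternative to citing Siebenmann, and you correctly identify where the boundary hypothesis enters: without the assumption that $\nu|_{\partial X}$ is cell-like, the set $B=A\cap\partial X$ could be disconnected and $A\cup_B A$ could contain an essential circle, so the doubled map would fail to be cell-like.

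However, as written the argument has two unfilled steps, and you flag the more serious one yourself. First, the cell-likeness of $A\cup_B A$ is not ``routine'': it requires either the sum theorem for cell-like compacta (the union of two cell-like sets with cell-like intersection is cell-like, a shape-theoretic statement) or a genuine argument producing arbitrarily small half-disk neighborhoods of $A$ meeting $\partial X$ in an arc --- and the latter is close in difficulty to the theorem being proved. Second, and decisively, the passage from the homeomorphisms $G_k\colon DX\to DY$ back to homeomorphisms $X\to Y$ is not carried out. The set $G_k(\partial X_1)$ is only Hausdorff-close to $\partial Y$; it need not be a graph over $\partial Y$ in a bicollar, and producing a small ambient homeomorphism $\Psi_k$ of $DY$ carrying $G_k(X_1)$ onto $Y_1$ is a controlled two-dimensional isotopy and collaring statement (uniform over possibly non-compact boundary components) that is essentially the content of the bounded case of the theorem. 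Declaring it ``the main obstacle'' and sketching how one might attack it does not close the gap; either this step must be proved, or the bounded case should simply be cited to \cite{Sieb:72} as the paper does.
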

See \cite[Cor.\ 25.1A]{Dav:86} for the case of surfaces without boundary and \cite{Sieb:72} for the general case. For compact surfaces one has the following  stronger statement.

\begin{thm}[Approximation for monotone maps]\label{theorem:monotone_approximation}
Let $\nu \colon X\to Y$ is a continuous and surjective map between compact metric surfaces that are homeomorphic. The following are equivalent.
\begin{enumerate}[label=\normalfont(\arabic*)]
    \item  $\nu$ is monotone.
    \item $\nu $ is cell-like.
    \item $\nu $ can be approximated uniformly by homeomorphisms.
\end{enumerate}
In this case, we have $\nu(\partial X)=\partial Y$ and $\nu^{-1}(\Int(Y))\subset \Int(X)$.
\end{thm}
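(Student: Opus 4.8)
The plan is to run the cycle $(1)\Rightarrow(2)\Rightarrow(3)\Rightarrow(1)$, with the two boundary assertions falling out of the step $(1)\Rightarrow(2)$, which is where essentially all the content lies. The direction $(2)\Rightarrow(1)$ is immediate since a cell-like set is a continuum, and the step $(2)\Rightarrow(3)$ will be an application of \Cref{theorem:cell_like_approximation} once its boundary hypotheses are known to be met. Throughout one uses that $\nu$, being a continuous surjection of a compact space, is closed: the fibre decomposition $G=\{\nu^{-1}(y):y\in Y\}$ is then upper semicontinuous, $X/G$ is canonically homeomorphic to $Y$, and, when $\nu$ is monotone, preimages of connected sets are connected by \cite[(2.2), Chap.\ VIII]{Why:58}.

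\textbf{Step $(1)\Rightarrow(2)$ and the boundary.} Assume $\nu$ is monotone. For $y\in Y$ write $C=\nu^{-1}(y)$; it is a continuum, and $X\setminus C=\nu^{-1}(Y\setminus\{y\})$ is connected, so $C$ does not separate $X$. The hard part is to upgrade this to: $C$ is \emph{cellular}, hence cell-like. For this I would invoke the classical theory of monotone (equivalently cellular) decompositions of surfaces (see \cite{Why:58,Dav:86}): collapsing a non-cellular continuum in a surface either makes the quotient fail to be a manifold or strictly increases its Euler characteristic, and since here $X/G\cong Y\cong X$, neither is possible, so every fibre is cellular. It should be stressed that the hypothesis $X\cong Y$ (rather than merely ``$Y$ is a surface'') is essential: collapsing a one-sided simple closed curve in the Klein bottle gives the manifold $\mathbb{R}P^2$, yet that curve is not cellular. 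Granting cell-likeness, the boundary statements follow by a local argument: a small neighbourhood $W$ of a fibre $C$ is mapped homeomorphically onto $V\setminus\{\nu(C)\}$ off $C$ for a small disc (or half-disc) $V\ni\nu(C)$, so $W\setminus C$ is a punctured disc or punctured half-disc; comparing manifold boundaries rules out $\nu(C)\in\Int Y$ when $C\cap\partial X\neq\varnothing$ and rules out $\nu(C)\in\partial Y$ when $C\subset\Int X$, which gives $\nu(\partial X)=\partial Y$ and $\nu^{-1}(\Int Y)\subset\Int X$. The same comparison shows that $C\cap\partial X$ is connected whenever it is nonempty (otherwise $C$ would separate a disc neighbourhood of $C$), so $\nu|_{\partial X}\colon\partial X\to\partial Y$ is a monotone surjection between compact $1$-manifolds; its fibres, being subcontinua of $1$-manifolds, are arcs or points, hence cell-like.

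\textbf{Step $(2)\Rightarrow(3)$.} If $\nu$ is cell-like then by the previous step $\nu(\partial X)\subset\partial Y$ and $\nu|_{\partial X}$ is a proper cell-like map, so $\nu$ meets the hypotheses of \Cref{theorem:cell_like_approximation}; hence $\nu$ is a uniform limit of homeomorphisms.

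\textbf{Step $(3)\Rightarrow(1)$.} Suppose $\nu=\lim_k\nu_k$ uniformly with $\nu_k$ homeomorphisms; surjectivity is clear, so suppose for contradiction that some fibre is disconnected, say $\nu^{-1}(y)=A\sqcup B$ with $A,B$ disjoint nonempty compacta. Using that $X$ is a surface, choose an open $\Omega$ with $A\subset\Omega$, $\overline\Omega\cap B=\varnothing$, whose topological frontier $\partial\Omega$ is a closed $1$-submanifold disjoint from $\nu^{-1}(y)$. Then $d_Y(y,\nu(\partial\Omega))=:2\varepsilon>0$, and by local connectivity of $Y$ there is a connected open $V$ with $y\in V\subset B(y,\varepsilon)$. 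Fix $a\in A$, $b\in B$ and take $k$ large enough that $d_Y(\nu_k(x),\nu(x))<\varepsilon$ for all $x$; then $\nu_k(a),\nu_k(b)\in V$ while $\nu_k(\partial\Omega)\cap V=\varnothing$, so $\nu_k(a)$ and $\nu_k(b)$ lie in one component of $Y\setminus\nu_k(\partial\Omega)$. But $\nu_k$ is a homeomorphism and $\partial\Omega$ separates $\Omega\ni a$ from $X\setminus\overline\Omega\ni b$ in $X$, so $\nu_k(a),\nu_k(b)$ lie in distinct components of $Y\setminus\nu_k(\partial\Omega)$ --- a contradiction. Hence $\nu$ is monotone, closing the cycle, and the main obstacle, as indicated, is the cellularity of the fibres in Step $(1)\Rightarrow(2)$, which relies on the decomposition theory of surfaces rather than on the methods developed earlier in the paper.
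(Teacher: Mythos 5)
Your cycle $(1)\Rightarrow(2)\Rightarrow(3)\Rightarrow(1)$ is a genuinely different ordering from the paper's, which runs $(1)\Rightarrow(3)\Rightarrow(2)\Rightarrow(1)$: the paper quotes Youngs's theorem that a monotone surjection between homeomorphic compact $2$-manifolds is a uniform limit of homeomorphisms (so $(1)\Rightarrow(3)$ is a single citation), gets $(3)\Rightarrow(2)$ from Daverman's Theorem 17.4 for near-homeomorphisms between ANRs together with the fact that compact manifolds are ANRs, and takes $(2)\Rightarrow(1)$ as trivial. The payoff of that ordering is that the boundary assertions $\nu(\partial X)=\partial Y$ and $\nu^{-1}(\Int(Y))\subset\Int(X)$ really are immediate once one has $(3)$: if $\nu_k\to\nu$ uniformly with $\nu_k(\partial X)=\partial Y$, then $\nu(\partial X)\subset\partial Y$ because $\partial Y$ is closed, and surjectivity onto $\partial Y$ follows by compactness of $\partial X$. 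Your $(3)\Rightarrow(1)$ separation argument is fine (modulo the small slip that $\nu_k(a)\in V$ should be deduced from $\nu_k(a)\to y$ and $V$ being a neighbourhood of $y$, not from $d_Y(\nu_k(a),y)<\varepsilon$), and $(2)\Rightarrow(1)$ is indeed trivial.

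The genuine gap is in how you obtain the boundary statements, and it propagates. Your local argument asserts that a small neighbourhood $W$ of a fibre $C$ is mapped homeomorphically onto $V\setminus\{\nu(C)\}$ off $C$; this is false for a general monotone map, since the fibres over points near $\nu(C)$ need not be singletons, so $\nu|_{W\setminus C}$ need not be injective. Consequently the punctured-disc versus punctured-half-disc comparison does not get off the ground, and you have not established $\nu(\partial X)\subset\partial Y$ or the cell-likeness of $\nu|_{\partial X}$ at the point where you need them. This matters because your step $(2)\Rightarrow(3)$ invokes \Cref{theorem:cell_like_approximation}, whose hypotheses include precisely these boundary conditions; as written, the argument is circular, since the clean source of the boundary facts is the approximation by homeomorphisms itself. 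Separately, your step $(1)\Rightarrow(2)$ rests on the heuristic that collapsing a non-cellular continuum either destroys the manifold property or changes the Euler characteristic; the conclusion (fibres of a monotone self-map of a compact surface are cellular) is true, but the heuristic is not a proof, and the vague appeal to decomposition theory is doing all the work. If you want to keep your ordering, you should replace both weak points by a single citation to Youngs's theorem (or Roberts--Steenrod) giving $(1)\Rightarrow(3)$ directly, after which the boundary facts and $(3)\Rightarrow(2)$ follow as in the paper.
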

\begin{proof}
If $\nu$ is monotone, then a result of Youngs \cite[p.~92]{You:48} implies that $\nu$ is the uniform limit of homeomorphisms between $X$ and $Y$. If $\nu$ can be approximated by homeomorphisms, then it is cell-like; this is true for maps between spaces that are absolute neighborhood retracts (abbr.\ ANR) by \cite[Thm.\ 17.4]{Dav:86} and every compact manifold is an ANR \cite[Cor.\ 14.8A]{Dav:86}. Finally, a cell-like map is trivially monotone, since cell-like sets are connected \cite[Cor.\ 16.3A]{Dav:86}. This completes the proof of the equivalences. The last properties in the statement of the lemma are immediate for uniform limits of homeomorphisms.
\end{proof}

\smallskip

\subsection{Proof of Theorems \ref{thm:uniformization_global} and \ref{thm:one-sided_qc}}

This section is devoted to the proofs of Theorem \ref{thm:uniformization_global} and of the general case in Theorem \ref{thm:one-sided_qc}. We split the proof of Theorem \ref{thm:uniformization_global} into two main cases: surfaces without boundary and surfaces with boundary.

\begin{proof}[Proof of Theorem \ref{thm:uniformization_global}: surfaces without boundary]
By Theorem \ref{thm:extended_polyhedral_approximation}, there exists a sequence of polyhedral surfaces $\{X_n\}_{n=1}^\infty$ converging to $X$ and a sequence of approximately isometric proper topological embeddings $f_n\colon X_n \to X$, $n\in \N$; in fact, since $X$ has no boundary, the maps $f_n$ are homeomorphisms. By the paracompactness of the surface $X$, there exists a locally finite countable collection of topological open disks $D_k$, $k\in \N$, whose union covers the surface $X$. For each $k\in \N$ and $n\in \N$, we define the topological closed disk $\br{D_{k}^n} =f_n^{-1}(\br{D_k})$ and equip it with the restriction of the metric $d_{X_n}$. We note that for each $k\in \N$, the sequence $f_n|_{\br{D_k^n}}\colon \br {D_{k}^n}\to \br{D_k}$, $n\in \N$, satisfies the conclusions of Theorem \ref{thm:extended_polyhedral_approximation}.  In particular, by Lemma \ref{lemma:aulpc}, the spaces $\{\br{D_k^n}\}_{n=1}^\infty$ are asymptotically uniformly locally path connected. 

\vspace{1em}
\noindent
\textit{Step 1: Normalizations in the spaces $X_n$ and $X$.} For fixed $k\in \N$ we consider distinct points $p_k,q_k\in D_k$. For each $n\in\N$ the map $f_n|_{\br{D_k^n}}\colon \br {D_{k}^n}\to \br{D_k}$ is a homeomorphism, so there exist points $p_k^n,q_k^n\in D_k^n$ such that $f_n(p_k^n)=p_k$ and $f_n(q_k^n)= q_k$. We have $f_n^{-1}(\partial D_k) =\partial D_{k}^n$, so the fact that $f_n$ is approximately isometric implies that
$$ \liminf_{n\to\infty} \dist(p_k^n, \partial D_k^n) \geq \dist(p_k,\partial D_k)>0.$$
Thus, the distance from  from $p_k^n$ to $\partial D_k^n$ is bounded away from $0$ as $n\to\infty$. The same conclusion holds for $q_k^n$.

\vspace{1em}
\noindent
\textit{Step 2: Uniformization by disks and normalizations in the plane.} By the classical uniformization theorem, Theorem \ref{theorem:uniformization_classical}, for each $k\in \N$ and $n\in \N$ there exists a conformal map from $\D$ onto ${D_k^n}$. By precomposing with a M\"obius transformation, we obtain a conformal map $h_k^n$ from a disk $B(0,r_k^n)$ with $r_k^n>1$ onto ${D_k^n}$ such that $h_k^n(0)=p_k^n$ and $h_k^n(1)=q_k^n$. Note that $h_k^n$ is $1$-quasiconformal in $B(0,r_k^n)$ by Lemma \ref{lemma:conformal_compatible}. By Theorem \ref{theorem:caratheodory}, the map $h_k^n$ extends to a weakly $1$-quasiconformal map from $\br B(0,r_k^n)$ onto $\br{D_k^n}$. 

We claim that for each fixed $k\in \N$ the sequence $r_k^n$, $n\in \N$, is bounded from above. Let $E$ be the unit interval {$[0,1]\times \{0\}$} inside $B(0,r_k^n)$ and $F_n=\partial B(0,r_k^n)$. Consider the continua $h_k^n(E)$ and $h_{k}^n(F_n)=\partial D_k^n$. From Lemma \ref{lemma:boundary_convergence}, $\partial D_k^n$ has diameter uniformly bounded below away from $0$ as $n\to\infty$. Since $p_k^n,q_k^n\in h_k^n(E)$, that set also has diameter uniformly bounded away from $0$. From Lemma \ref{lemma:modulus_bound_convergence}, we conclude that $\Mod \Gamma^*( h_k^n(E), h_k^n(F_n); \br{D_k^n})$ is uniformly bounded above as $n\to\infty$. Since $h_k^n$ is weakly $1$-quasiconformal, it follows that $\Mod \Gamma^*( E,F_n; \br B(0,r_k^n))$ is uniformly bounded above. On the other hand, the family $\Gamma^*( E,F_n; \br B(0,r_k^n))$ contains the circles $\partial B(0,r)$ for all $1<r<r_k^n$, so
$$\frac{1}{2\pi} \log \left(r_k^n\right)\leq  \Mod \Gamma^*( E,F_n; \br B(0,r_k^n)).$$
The boundedness of $r_k^n$ follows.

For fixed $k\in \N$, consider the sequence $g_n(z)= h_k^n(r_k^nz)$, $n\in \N$, from $\br{\mathbb D}$ onto $\br{D_k^n}$. We show that this sequence is normalized in the sense of Theorem \ref{theorem:uniformization_long}. Note that the points $0$, $1/r_k^n$, and $-1$ in $\br{\mathbb D}$ have mutual distances uniformly bounded away from $0$ as $n\to \infty$. Moreover, we have $g_n(0)= p_k^n$, $g_n(1/r_k^n)=q_k^n$, and $g_n(-1)\in \partial D_k^n$, and by Step 1 the mutual distances of these points are also bounded away from $0$. Thus, the sequence $g_n$ is normalized, as claimed. 

\vspace{1em}

\noindent
\textit{Step 3: Weakly quasiconformal parametrizations.} By Theorem \ref{theorem:uniformization_long}, for each $k\in \N$, there exists a subsequence of $f_n\circ g_n$, $n\in \N$, that converges uniformly on $\br{\mathbb D}$ to a weakly $K$-quasiconformal map onto $\overline{D_k}$. Since $r_k^n$ is bounded above and below in $n\in \N$, we conclude that there exists a subsequential limit $r_k$ {of} $r_k^n$ such that the sequence $f_n\circ h_k^n$ has a subsequence that converges to a weakly $K$-quasiconformal map $h_k\colon \br B(0,r_k) \to \br{D_k}$.   By passing to a diagonal subsequence, we assume that $r_k^n$ converges to $r_k$ and $f_n\circ h_k^n$ converges to $h_k$ for each $k\in \N$ (in an appropriate sense, since the domains are variable). Note that for each compact set $A\subset B(0,r_k)$ we have $A\subset B(0,r_k^n)$ for all sufficiently large $n\in \N$. Hence, $f_n\circ h_k^n$ converges to $h_k$ uniformly on each compact subset of $B(0,r_k)$.

\vspace{1em}

\noindent
\textit{Step 4: Compatibility of parametrizations.} Suppose that $D_k\cap D_l\neq \emptyset$. Our goal is to show that $h_l^{-1}\circ h_k$ defines a conformal map from $U_{k,l}=h_k^{-1}(D_k\cap D_l)$ onto $U_{l,k}=h_l^{-1}(D_k\cap D_l)$; this composition has to be interpreted appropriately since $h_l$ might not be invertible. 

The monotonicity of $h_k$ implies that $U_{k,l}\subset B(0,r_k)$, by the last part of Theorem \ref{theorem:monotone_approximation}. Let $A'\subset D_k\cap D_l$ be a non-degenerate continuum. Then $A=h_k^{-1}(A')$ is a continuum in $U_{k,l}\subset B(0,r_k)$.  For large $n\in \N$, the set $A_n=h_k^n(A)$ is defined and $f_n(A_n)=f_n\circ h_k^n(A)$ converges in the Hausdorff sense to $h_k(A)=A'$ as $n\to\infty$ by the uniform convergence. Thus, $A_n$ is contained in $D_k^n\cap D_l^n$ for all sufficiently large $n\in \N$ and the diameter of $A_n$ is  uniformly bounded from below away from $0$. Now, the uniform convergence of $f_n\circ h_l^n$ to $h_l$ implies that $(h_l^n)^{-1}(A_n)$ has diameter that is uniformly bounded from below.

Now, let $V\supset A$ be a connected open set whose closure is compact and is contained in $h_k^{-1}(D_k\cap D_l)$. By the previous argument, $h_k^n(V)\subset D_k^n\cap D_l^n$ for all sufficiently large $n\in \N$. Thus, for all sufficiently large $n\in \N$ we have
$$f_n \circ h_l^n  \circ (h_l^n)^{-1}\circ h_k^n =f_n\circ h_k^n$$
on $V$. Consider the sequence of conformal embeddings $(h_l^n)^{-1}\circ h_k^n\colon V\to B(0,r_l^n)$, $n\in \N$. Note that the balls $B(0,r_l^n)$ are uniformly bounded in $n\in \N$. By Montel's theorem \cite[Thm.\ 10.7, p.~160]{Mar:19}, as $n\to\infty$ this sequence {subconverges}  locally uniformly in $V$ to a holomorphic map ${\varphi_{k,l}}$ on $V$. By considering larger and larger open sets $V$, we may assume that the convergence is locally uniform in the component of $h_k^{-1}(D_k\cap D_l)$ that contains the continuum $A$. Moreover, by Hurwitz's theorem \cite[Cor.\ 8.9, p.~129]{Mar:19}, the limiting map $\varphi_{k,l}$ is either constant or conformal. Since $(h_l^n)^{-1}\circ h_k^n(A)$ has diameter uniformly bounded from below, we conclude that $\varphi_{k,l}$ is conformal. Arguing in the same way, we may obtain a limiting map $\varphi_{k,l}$ that is conformal in each component of $U_{k,l}=h_k^{-1}(D_k\cap D_l)$.

By {passing to} a diagonal subsequence, we may assume that $(h_l^n)^{-1}\circ h_k^n$ converges locally uniformly in $U_{k,l}$ to $\varphi_{k,l}$ for each $k,l\in \N$ with $D_k\cap D_l\neq \emptyset$ and
$$h_l\circ \varphi_{k,l} =h_k.$$
This also shows that $\varphi_{k,l}$ is injective an all of $U_{k,l}$ and $\varphi_{k,l}(U_{k,l})=h_l^{-1}(D_k\cap D_l)=U_{l,k}$.  Moreover, it is immediate that $\varphi_{l,k}=\varphi_{k,l}^{-1}$, since $(h_k^n)^{-1}\circ h_l^n$ is the inverse of $(h_l^n)^{-1}\circ h_k^n$. Finally, if $U_{k,l}\cap U_{k,m}=h_k^{-1}(D_k\cap D_l\cap D_m)\neq \emptyset$, then $\varphi_{k,l}(U_{k,l}\cap U_{k,m})= U_{l,k}\cap U_{l,m}$ and $\varphi_{l,m}\circ \varphi_{k,l}= \varphi_{k,m}$ on $U_{k,l}\cap U_{k,m}$. When $D_k\cap D_l=\emptyset$, we also define $U_{k,l}=\emptyset$ and $\varphi_{k,l}$ to be the empty map.

\vspace{1em}
\noindent 
\textit{Step 5: Gluing and construction of a Riemann surface $Z$.} For $k\in \N$, let $U_k=h_k^{-1}(D_k)$ and note that for each $k\in \N$ there exist only finitely many $l\in \N$ with $U_{k,l}\neq \emptyset$. Consider the space $ \bigsqcup_{k\in \N} U_k$ and define an equivalence relation so that $z\sim w$ if $z\in U_{k,l}$, $w\in U_{l,k}$ and $\varphi_{k,l}(z)=w$. By definition, all other points have trivial equivalence classes. The symmetry and transitive properties follow from the compatibility properties of Step 4. 

Let $Z$ be the resulting quotient space. Then $Z$ is automatically a two-dimensional topological manifold, once we verify that it is a Hausdorff space. For this, it suffices to show that for each $k,l\in\N$ the set $\{(z,\varphi_{k,l}(z)): z\in U_{k,l}\}$ is a closed subset of $U_k\times U_l$ \cite[Prop.\ 3.57, p.~68]{Lee:11}. To see this, if $z_n\in U_{k,l}$, $n\in \N$, is a sequence converging to a point $z\in U_k$ lying in the topological boundary of $U_{k,l}$ relative to $U_k$, then, by continuity, $h_{k}(z_n)$ converges to the point $h_k(z)\in D_k\cap \partial D_l $. Thus, $h_l\circ \varphi_{k,l}(z_n)$ converges to a point of $D_k\cap \partial D_l \subset \partial {D_l}$, which implies that $\varphi_{k,l}(z_n)$ cannot converge to a point of $U_l=h_l^{-1}(D_l)$. 

The charts of $Z$ are simply the inclusions of $U_k$, $k\in \N$, into the plane and the transition maps are the conformal maps $\varphi_{k,l}$, $k,l\in \N$. Hence $Z$ is a Riemann surface.

We define a map from $\bigsqcup_{k\in \N}U_k$ onto $X$ by $z\mapsto h_k(z)$ if $z\in U_k$. The compatibility relation $h_k=h_l\circ \varphi_{k,l}$ implies that this map projects to a map $F\colon Z\to X$. From the local coordinate representation of $F$, we see that $F$ is continuous. Since for each $k\in \N$, the map $h_k\colon \br B(0,r_k) \to \br{D_k}$ is cell-like, the set $h_k^{-1}(x)$ is a cell-like subset of $U_k$ for each $x\in D_k$. Therefore, again from local coordinates we see that $F$ is cell-like. Since the cover of $X$ by $\{D_k\}_{k\in \N}$, is locally finite, we see that each compact subset of $X$ can be written as a finite union of compact sets, each contained in a set $D_k$. From this we conclude that $F$ is a proper map. Since $\bigcup_{k=1}^\infty D_k=X$, the map $F$ is also surjective. By Theorem \ref{theorem:cell_like_approximation}, $F$ can be approximated by homeomorphisms and in particular $Z$ is homeomorphic to $X$. Finally, we note that $F$ is weakly $K$-quasiconformal in local coordinates, since $h_k$ is weakly $K$-quasiconformal.

\vspace{1em}
\noindent 
\textit{Step 6: Uniformization by a Riemannian manifold.} By the classical Riemannian uniformization theorem (Theorem \ref{theorem:uniformization_riemannian}), there exists a Riemannian metric $g$ on $Z$ such that $(Z,g)$ is complete and has constant curvature and such that  $g$ is compatible with the complex structure of $Z$. Since the map $F\colon Z\to X$ is weakly $K$-quasiconformal in local coordinates, we conclude that the map $F$ from the metric space $(Z,g)$ onto $X$ is locally weakly $K$-quasiconformal. By Theorem \ref{theorem:definitions_qc}, $F$ is globally weakly $K$-quasiconformal. Finally, by Theorem \ref{thm:minimize_dilatation}, there exists a Riemannian surface $(\widetilde Z,\widetilde g)$ and a quasiconformal map $\psi\colon (\widetilde Z,\widetilde g)\to (Z,g)$ such that $F\circ \psi \colon (\widetilde Z,\widetilde g)\to X$ is weakly $(4/\pi)$-quasiconformal.
\end{proof}

\begin{proof}[Proof of Theorem \ref{thm:uniformization_global}: surfaces with boundary]
We apply Theorem \ref{thm:uniformization_global} for surfaces without boundary to $\Int(X)$ to find a Riemannian surface $W$ (together with a compatible complex structure arising from Step 5 or from passing to isothermal coordinates) and a weakly $K$-quasiconformal map $G\colon W\to  \Int(X)$, where $K=4/\pi$. In particular, $G$ is proper and cell-like. Our goal is to construct a new Riemannian surface $Z$ with boundary, homeomorphic to $X$, and a weakly $K$-quasiconformal map $F\colon Z\to X$.

We repeat the procedure of Step 4 of the previous proof. We first cover $\Int(X)$ by a locally finite collection of countably many topological open disks $D_k$, $k\in \N$, so that for each point of $\partial X$ there exist finitely many disks $D_k$ with the property that $\partial X\cap \partial D_k$ is a non-degenerate arc containing that point in its interior. We define $U_k=G^{-1}(D_k)$, $k\in \N$. By the last part of Theorem \ref{theorem:cell_like_approximation}, we note that $U_k$ is homeomorphic to $D_k$ for each $k\in \N$. By the classical uniformization theorem (Theorem \ref{theorem:uniformization_classical}), there exist conformal parametrizations $h_k\colon   \D \to  {U_k}$, $k\in \N$, which are also $1$-quasiconformal by Lemma \ref{lemma:conformal_compatible}. For each $k\in \N$ the map $g_k=G\circ h_k\colon \D \to D_k$ is weakly $K$-quasiconformal. By Remark \ref{remark:caratheodory}, it extends to a weakly $K$-quasiconformal map of the closures. 

Let $U_{k,l}=U_k\cap U_l$, $k,l\in \N$. The transition maps $\varphi_{k,l}=h_l^{-1}\circ h_k$, when defined,  are conformal and they satisfy the relation $g_l\circ \varphi_{k,l}=g_k$ and the compatibility properties of Step 4. Let $J\subset \partial X$ be an open arc contained in $\br {D_k}\cap \br{D_l}$. Let $J_k,J_l\subset \partial \D $ be its preimages under $g_k$, $g_l$, respectively. Then $\varphi_{k,l}(z)$ accumulates at $J_l$ as $z$ approaches $J_k$. By a refined version of Carath\'eodory's extension theorem \cite[Thm.\ 3.1]{Pom02}, $\varphi_{k,l}$ extends to a homeomorphism from a neighborhood of $J_k$ in $\br \D$ onto a neighborhood of $J_l$ in $\br \D$. By Schwarz reflection, $\varphi_{k,l}$ extends to a conformal map from a neighborhood of $J_k$ onto a neighborhood of $J_l$ in the plane. Moreover, the extension satisfies the compatibility properties of Step 4.

We denote by $\D_k$ the union of the unit disk $\D$ together with all open arcs $J_k$ arising as preimages under $g_k$ of open arcs $J\subset \partial X \cap \br {D_k}$. As in Step 5, we can glue together the sets $ \D_k$ with the identifications $z\sim \varphi_{k,l}(z)$ to obtain a Riemann surface $Z$ with boundary; specifically, the boundary arises from the arcs of $\D_k$, $k\in \N$, lying on the unit circle. We also obtain a map $F\colon Z\to X$, given in local coordinates by $g_k|_{\D_k}$. By construction, $F$ is surjective and $F^{-1}(\partial X)=\partial Z$. The map $F$ is continuous, proper, and cell-like, as can be seen by the local coordinate representation. Indeed, recall that $g_k$ extends to a cell-like map from $\br \D$ onto $\br{D_k}$ that respects the interiors and boundaries. Moreover,  $F|_{\partial Z}$ is cell-like. Therefore, by Theorem \ref{theorem:cell_like_approximation}, $Z$ is homeomorphic to $X$. Finally, we note that $F\colon Z\to X$ is locally weakly $K$-quasiconformal. One now continues exactly as in the previous proof to find a Riemannian metric $g$ on $Z$, so that $F\colon (Z,g)\to X$ is globally weakly $K$-quasiconformal. 
\end{proof}

\begin{proof}[Proof of Theorem \ref{thm:one-sided_qc} in the general case]
Suppose that $X$ is homeomorphic to $\widehat{\C}$, $\br\D$, or $\C$. By Theorem \ref{thm:uniformization_global}, there exists a Riemannian surface $(Z,g)$ homeomorphic to $X$ and a weakly $(4/\pi)$-quasiconformal map $F\colon (Z,g)\to X$. By passing to isothermal coordinates (Theorem \ref{theorem:isothermal}), we may find a complex structure on $Z$ that is compatible with $g$. The classical uniformization theorem (Theorem \ref{theorem:uniformization_classical}) implies that there exists a conformal map from $\widehat \C$, $\br \D$, or $\D$ or $\C$, respectively onto $Z$. By Lemma \ref{lemma:conformal_compatible}, this conformal map is also $1$-quasiconformal. Therefore, by composing it with $F$, we obtain a weakly $(4/\pi)$-quasiconformal map from $\widehat \C, \br \D$, or $\D$ or $\C$, respectively onto $Z$.
\end{proof}

\bigskip

\section{Reciprocity of metric surfaces}\label{sec:reciprocal}
In this section we prove Theorem \ref{theorem:reciprocal} together with \Cref{theorem:reciprocal:interior}. We recall some definitions from the introduction. Let $X$ be a metric surface of locally finite Hausdorff $2$-measure. A quadrilateral in $X$ is a closed Jordan region $Q$ together with a partition of $\partial Q$ into four non-overlapping edges $\zeta_1,\zeta_2,\zeta_3,\zeta_4\subset \partial Q$ in cyclic order. When we refer to a quadrilateral $Q$, it will be implicitly understood that there exists such a marking on its boundary. We define $\Gamma(Q)=\Gamma(\zeta_1,\zeta_3;Q)$  and $\Gamma^*(Q) =\Gamma(\zeta_2,\zeta_4;Q)$. The metric surface $X$ is {reciprocal} if there exist constants $\kappa,\kappa'\geq 1$ such that 
\begin{align}\label{reciprocality:12}
    \kappa^{-1}\leq \Mod \Gamma(Q) \cdot \Mod\Gamma^*(Q) \leq \kappa' \quad \textrm{for each quadrilateral $Q\subset X$}
\end{align}
and 
\begin{align}\label{reciprocality:3}
        &\lim_{r\to 0} \Mod \Gamma( \br B(a,r), X\setminus B(a,R);X )=0 \quad \textrm{for each ball $B(a,R)$.} 
\end{align}
A metric surface $X$ is upper reciprocal if there exists $\kappa'\geq 1$ such that the the right inequality in \eqref{reciprocality:12} holds for each quadrilateral.

\smallskip

\subsection{Further properties of weakly quasiconformal maps}
We first discuss some preliminary statements. Assume that $X$ is a metric surface of locally finite Hausdorff $2$-measure. Let $E\subset X$ be a Borel set and $\Gamma$ be a curve family in $X$. We say that a Borel function $\rho\colon X \to [0,\infty]$ is \textit{admissible for $\Gamma|E$} if $\rho=0$ outside the set $E$ and  $$\int_{\gamma}\rho\, ds\geq 1$$
for all $\gamma\in \Gamma$.  We define 
$$\Mod (\Gamma| E) =\inf_{\rho} \int \rho^2\, d\mathcal H^2$$ 
where the infimum is over all functions $\rho$ that are admissible for $\Gamma|E$. The next lemma gives a finer regularity property of weakly quasiconformal maps. 

\begin{lemm}\label{lemma:extension}
Let $X,Y$ be metric surfaces of locally finite Hausdorff $2$-measure and $h\colon X\to Y$ be a continuous, surjective, proper, and cell-like map. Let $A\subset Y$ be a closed set and suppose that $h|_{X\setminus h^{-1}(A)}\colon X\setminus h^{-1}(A)\to Y\setminus A$ is weakly $K$-quasiconformal for some $K\geq 1$. If $\mathcal H^1(A)=0$, then
$$\Mod \Gamma \leq  \Mod (\Gamma | X\setminus h^{-1}(A)) \leq K \Mod h(\Gamma)$$
for every curve family $\Gamma$ in $X$ and $h\colon X\to Y$ is weakly $K$-quasiconformal. In particular, this is true if the set $A$ is closed and countable.
\end{lemm}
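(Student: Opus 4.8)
The plan is to prove the displayed chain of inequalities for an arbitrary curve family $\Gamma$ in $X$ and then to read off weak quasiconformality of $h$ directly from the definition, since all the topological properties required of a weakly quasiconformal map are already hypotheses. Write $X'=X\setminus h^{-1}(A)$ and $Y'=Y\setminus A$, so $h(X')=Y'$. The inequality $\Mod\Gamma\le\Mod(\Gamma\,|\,X')$ is immediate, because every $\rho$ admissible for $\Gamma\,|\,X'$ is admissible for $\Gamma$. For the rest, apply Theorem~\ref{theorem:definitions_qc} to $h|_{X'}$ to obtain a weak upper gradient $g\in L^2_{\loc}(X')$ of $h|_{X'}$ with $\int_{(h|_{X'})^{-1}(E)}g^2\,d\mathcal H^2\le K\mathcal H^2(E)$ for Borel $E\subseteq Y'$, and extend $g$ by $0$ to $\bar g$ on $X$. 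The hypothesis $\mathcal H^1(A)=0$ is used through two facts: $\mathcal H^2(A)=0$; and, crucially, every locally rectifiable curve $\delta$ in $Y$ parametrized by arclength has $\mathcal L^1(\delta^{-1}(A))=0$, which follows from the area formula $\mathcal L^1(\delta^{-1}(A))=\int_A\#(\delta^{-1}(y))\,d\mathcal H^1(y)$ for Lipschitz curves with metric target \cite{Kir:94}. (In particular a curve whose image lies in $A$ is constant, since a non-degenerate continuum has positive $\mathcal H^1$-measure.)

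The key step is that $\bar g$ is a weak upper gradient of $h\colon X\to Y$ satisfying the corresponding measure bound. Fix a rectifiable curve $\gamma$ in $X$, parametrized by arclength, outside the modulus-zero family of curves having a subcurve in $X'$ along which $g$ fails the upper gradient inequality. Let $F=(h\circ\gamma)^{-1}(A)$ (closed) with complementary components $J_k=(a_k,b_k)$. Since $h\circ\gamma$ maps each component of $F$ onto a connected subset of $A$, it is constant there; writing $v(t)=\ell(h\circ\gamma|_{[0,t]})$ for the length function and $\delta$ for the arclength reparametrization of $h\circ\gamma$, one has $\delta^{-1}(A)=v(F)$, which is $\mathcal L^1$-null, whence $\sum_k\ell(h\circ\gamma|_{[a_k,b_k]})=\sum_k\bigl(v(b_k)-v(a_k)\bigr)=\ell(h\circ\gamma)$. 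Applying the upper gradient inequality for $h|_{X'}$ on $\gamma|_{[s,t]}$ with $a_k<s<t<b_k$ and letting $s\to a_k$, $t\to b_k$ gives $\ell(h\circ\gamma|_{[a_k,b_k]})\le\int_{\gamma|_{[a_k,b_k]}}g\,ds$, so $d_Y(h(\gamma(0)),h(\gamma(1)))\le\ell(h\circ\gamma)=\sum_k\ell(h\circ\gamma|_{[a_k,b_k]})\le\int_\gamma\bar g\,ds$ (the claim being trivial when the right side is infinite). Moreover $\int_{h^{-1}(E)}\bar g^2\,d\mathcal H^2=\int_{(h|_{X'})^{-1}(E\setminus A)}g^2\,d\mathcal H^2\le K\mathcal H^2(E)$ for every Borel $E\subseteq Y$, and $\bar g\in L^2_{\loc}(X)$ because $\overline{h(U)}$ is compact, hence of finite $\mathcal H^2$-measure, for precompact $U$; thus $h\in N^{1,2}_{\loc}(X,Y)$ with this data.

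To obtain the second inequality, let $\sigma$ be admissible for $h(\Gamma)$ with $\int_Y\sigma^2\,d\mathcal H^2\le\Mod h(\Gamma)+\varepsilon$ and put $\rho=(\sigma\circ h)\bar g$, which vanishes on $h^{-1}(A)$. For $\gamma$ outside the exceptional family, a change of variables through $v$ (using $v'(t)\le g(\gamma(t))$ a.e.\ on each $J_k$) gives $\int_\gamma\rho\,ds=\sum_k\int_{\gamma|_{[a_k,b_k]}}(\sigma\circ h)g\,ds\ge\sum_k\int_{h\circ\gamma|_{[a_k,b_k]}}\sigma\,ds=\int_{h\circ\gamma}\sigma\,ds\ge1$, where the third step again uses $\mathcal L^1(\delta^{-1}(A))=0$, so the part of $h\circ\gamma$ lying over $A$ contributes nothing. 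Since the exceptional family is concentrated on $X'$, a correction supported on $X'$ of arbitrarily small energy makes $\rho$ admissible for $\Gamma\,|\,X'$; and pushing $g^2\,d\mathcal H^2$ forward under $h|_{X'}$ bounds the energy of $\rho$ by $K\int_Y\sigma^2\,d\mathcal H^2\le K(\Mod h(\Gamma)+\varepsilon)$. Letting $\varepsilon\to0$ gives $\Mod(\Gamma\,|\,X\setminus h^{-1}(A))\le K\Mod h(\Gamma)$. Together with the first inequality and the standing topological hypotheses, this is precisely the statement that $h$ is weakly $K$-quasiconformal; and the final assertion follows since a closed countable set has $\mathcal H^1$-measure zero.

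I expect the main obstacle to be the length bookkeeping around $h^{-1}(A)$: proving that $h\circ\gamma$ accumulates no length over $(h\circ\gamma)^{-1}(A)$, so that both the length of $h\circ\gamma$ and the line integral $\int_{h\circ\gamma}\sigma\,ds$ decompose over the components $J_k$. This is exactly where $\mathcal H^1(A)=0$ is genuinely needed, through the area-formula fact that a rectifiable curve spends zero time in an $\mathcal H^1$-null set; the weaker topological consequence that $A$ is totally disconnected does not suffice. The remaining ingredients — existence and measure bound for $g$, the behavior of exceptional curve families, and local integrability — are routine given Theorem~\ref{theorem:definitions_qc} and standard metric-Sobolev and modulus theory.
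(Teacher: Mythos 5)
Your proposal is correct and follows essentially the same route as the paper's proof: obtain the weak upper gradient $g$ of $h|_{X\setminus h^{-1}(A)}$ from Theorem~\ref{theorem:definitions_qc}, use $\mathcal H^1(A)=0$ to see that rectifiable curves accumulate no length over $A$ (so admissible functions may be taken to vanish there), decompose each curve of $\Gamma$ over the complementary components of $\gamma^{-1}(h^{-1}(A))$, and conclude that $(\rho\circ h)g$ is admissible for $\Gamma$ restricted to $X\setminus h^{-1}(A)$ outside an exceptional family of zero restricted modulus. The only difference is cosmetic: you additionally verify that the zero-extension $\bar g$ is a weak upper gradient of the full map $h$ (reaching weak quasiconformality via condition \ref{def:i} of Theorem~\ref{theorem:definitions_qc}), whereas the paper deduces it directly from the modulus inequality, i.e.\ condition \ref{def:ii}.
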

\begin{proof}
By Theorem \ref{theorem:definitions_qc}, there exists a weak upper gradient $g$ of $h|_{X\setminus h^{-1}(A)}$  such that 
\begin{align*}
    \int_{h^{-1}(E)} g^2\, d\mathcal H^2\leq K \mathcal H^2(E)
\end{align*}
for each Borel set $E\subset Y\setminus A$. In particular, this implies that 
\begin{align}\label{lemma:extension:qc_integral}
    \int_{X\setminus h^{-1}(A)} (\rho\circ h) g^2\, d\mathcal H^2 \leq K \int_{Y\setminus A} \rho \, d\mathcal H^2
\end{align}
for each Borel function $\rho\colon X\setminus h^{-1}(A) \to [0,\infty]$. 

Note that the first inequality in the statement of the lemma is trivial, since any function that is admissible for $\Gamma | X\setminus h^{-1}(A)$ is also admissible for $\Gamma$. Thus, we only show the second inequality. Let $\Gamma$ be any curve family in $X$ and $\rho$ be an admissible function for $h(\Gamma)$ that vanishes on the set $A$; such a function exists since $\mathcal H^1(A)=0$. Since $g$ is a weak upper gradient of $h|_{X\setminus h^{-1}(A)}$, by \cite[Lemma 2.5 (i)]{NR:21}, there exists an exceptional family of curves $\Gamma_0$ in $X\setminus h^{-1}(A)$ with $\Mod\Gamma_0=\Mod(\Gamma_0| X\setminus h^{-1}(A))=0$ such that for for all locally rectifiable curves $\gamma$ in $X\setminus h^{-1}(A)$ outside $\Gamma_0$ we have
\begin{align*}
   \int_{h\circ \gamma} \rho \, ds \leq \int_{\gamma} (\rho\circ h) g \, ds. 
\end{align*}
Moreover, if $\Gamma_1$ is the family of paths in $X$ that have a non-trivial subpath in $\Gamma_0$, then $\Mod(\Gamma_1| X\setminus h^{-1}(A))=0$.

Consider a path $\gamma\colon [a,b] \to X$ lying in $\Gamma\setminus \Gamma_1$. Let $[a_i,b_i]$, $i\in I$, be the closures of the components of $\gamma^{-1}(X\setminus h^{-1}(A))$ and define $\gamma_i=\gamma|_{[a_i,b_i]}$. No subpath of $\gamma_i$ lies in $\Gamma_0$, hence
\begin{align*}
   \int_{h\circ \gamma_i} \rho \, ds \leq \int_{\gamma_i} (\rho\circ h) g \, ds 
\end{align*}
for all $i\in I$. It follows that 
\begin{align*}
   1&\leq \int_{h\circ \gamma} \rho \, ds=\int_{h\circ \gamma} \rho\chi_{Y\setminus A} \, ds = \sum_{i\in I} \int_{h\circ \gamma_i} \rho \, ds  \leq \sum_{i\in I} \int_{\gamma_i} (\rho\circ h) g \, ds\leq  \int_{\gamma} (\rho\circ h) g \, ds. 
\end{align*}
Thus, $(\rho\circ h) g$ is a Borel function that vanishes on the set $h^{-1}(A)$ and is admissible for $\Gamma\setminus \Gamma_1|X\setminus h^{-1}(A)$. It follows that
\begin{align*}
    \Mod (\Gamma|X\setminus h^{-1}(A))&=\Mod (\Gamma\setminus \Gamma_1|X\setminus h^{-1}(A))   \\
    &\leq \int_{X\setminus h^{-1}(A)} (\rho\circ h)^2 g^2 \, d\mathcal H^2 \leq K \int_Y \rho^2\, d\mathcal H^2,
\end{align*}
where the latter inequality follows from \eqref{lemma:extension:qc_integral}. Infimizing over all admissible functions $\rho$ gives the conclusion. 
\end{proof}

The next proposition allows one to upgrade weak quasiconformality to quasiconformality.

\begin{prop}[{\cite[Prop.\ 3.3]{MW:21}}]\label{proposition:meier:wenger}
Let $X$ be a metric surface of finite Hausdorff $2$-measure that is homeomorphic to a topological closed disk and let $h\colon \br \D \to X$ be a weakly quasiconformal homeomorphism. If $X$ is upper reciprocal, then $h$ is a quasiconformal homeomorphism, quantitatively.
\end{prop}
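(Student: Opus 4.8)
The plan is to derive from the hypotheses the reverse modulus inequality $\Mod h(\Gamma)\le K'\Mod\Gamma$ for every path family $\Gamma$ in $\br{\mathbb D}$, with $K'$ depending only on the weak quasiconformality constant $K$ of $h$ and the upper reciprocity constant $\kappa'$ of $X$; together with the assumed bound $\Mod\Gamma\le K\Mod h(\Gamma)$ this shows that $h$ is $\max\{K,K'\}$-quasiconformal. Equivalently, the goal is to show that $h^{-1}\colon X\to\br{\mathbb D}$ is weakly $K'$-quasiconformal (the topological conditions in the definition are automatic, since $h^{-1}$ is a homeomorphism between compact surfaces).

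The core computation is the case of curve families of quadrilaterals, and it is short. Let $Q\subset\br{\mathbb D}$ be a quadrilateral with non-degenerate edges $\zeta_1,\zeta_2,\zeta_3,\zeta_4$ in cyclic order. Since $h$ is a homeomorphism, $h(Q)$ is a quadrilateral in $X$ with edges $h(\zeta_i)$, and $h(\Gamma(Q))=\Gamma(h(Q))$, $h(\Gamma^*(Q))=\Gamma^*(h(Q))$. Weak $K$-quasiconformality of $h$ gives
\[\Mod\Gamma(Q)\le K\Mod\Gamma(h(Q))\qquad\text{and}\qquad \Mod\Gamma^*(Q)\le K\Mod\Gamma^*(h(Q)).\]
Upper reciprocity of $X$ gives $\Mod\Gamma(h(Q))\cdot\Mod\Gamma^*(h(Q))\le\kappa'$. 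Finally, $Q$ is a Jordan quadrilateral in the plane, hence conformally equivalent to a Euclidean rectangle, so $\Mod\Gamma(Q)\cdot\Mod\Gamma^*(Q)=1$. Combining these,
\[\Mod\Gamma(h(Q))\le\frac{\kappa'}{\Mod\Gamma^*(h(Q))}\le\frac{\kappa' K}{\Mod\Gamma^*(Q)}=\kappa' K\,\Mod\Gamma(Q).\]
Thus $h^{-1}$ distorts the modulus of every quadrilateral family in $X$ by at most the factor $\kappa' K$, and combined with the weak $K$-quasiconformality of $h$ we obtain the two-sided estimate $(\kappa' K)^{-1}\Mod\Gamma(\hat Q)\le\Mod\Gamma(h^{-1}(\hat Q))\le\kappa' K\,\Mod\Gamma(\hat Q)$ for every quadrilateral $\hat Q\subset X$.

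It remains to upgrade this two-sided quadrilateral bound to the geometric definition of quasiconformality of \Cref{sec:qc_maps}, i.e.\ to a modulus inequality for arbitrary path families. Here I would invoke the standard reconciliation of the various definitions of quasiconformality: for a homeomorphism between metric surfaces of locally finite Hausdorff $2$-measure, a uniform two-sided bound on the moduli of quadrilateral families is equivalent to quasiconformality (this is classical for planar domains, cf.\ \cite{LV:73}, and in the metric setting it follows by combining the analytic characterization of \Cref{theorem:definitions_qc} with the measure-theoretic analysis of approximate metric derivatives, cf.\ \cite{Wil:12} and \cite[Sect.\ 2.4, Sect.\ 7]{NR:21}). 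Concretely, weak $K$-quasiconformality of $h$ already forces its approximate metric derivative $N_x$ to satisfy $g_h(x)^2\le KJ_h(x)$ a.e., hence $N_x$ has uniformly bounded eccentricity wherever it is a genuine norm and $N_x=0$ on the exceptional set $Z=\{x:N_x=0\}$; the content of the reverse quadrilateral inequality is exactly that $\mathcal H^2(Z)=0$, after which $h^{-1}\in N^{1,2}_{\loc}(X,\br{\mathbb D})$ with minimal weak upper gradient essentially the reciprocal of the minimal stretching of $N_x$, and the required integral bound against $\mathcal H^2$ on $X$ follows from the area formula together with $J_h=J_x$ a.e.; \Cref{theorem:definitions_qc} then yields the global modulus inequality and hence quasiconformality of $h$, with constant controlled by $\kappa' K$.

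The main obstacle is precisely this last passage, namely ruling out $\mathcal H^2(Z)>0$, i.e.\ showing that $h^{-1}$ is a Sobolev map. A weakly quasiconformal homeomorphism can a priori collapse a set of positive measure infinitesimally — a two–dimensional \emph{devil's staircase} phenomenon — and the only hypothesis preventing this is upper reciprocity. The cleanest route is the one above: feed the reverse quadrilateral modulus inequality, which we obtained cheaply, into the dictionary between moduli of quadrilateral families and the analytic derivative. Alternatively, one can argue directly by contradiction: if $\mathcal H^2(Z)>0$, a Vitali-type covering of $Z$ produces small squares $Q$ for which $h(Q)$ carries $\mathcal H^2$-measure much smaller than $|Q|$, and testing the families $\Gamma(h(Q))$, $\Gamma^*(h(Q))$ with suitable constant densities on $h(Q)$ makes the product $\Mod\Gamma(h(Q))\cdot\Mod\Gamma^*(h(Q))$ arbitrarily large, contradicting upper reciprocity; this is more hands-on but requires care with the interplay of area and modulus on $X$, in particular with the possible singular part of the measure $E\mapsto\mathcal H^2(h(E))$.
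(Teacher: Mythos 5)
The paper does not prove this proposition: it is quoted from Meier--Wenger \cite[Prop.\ 3.3]{MW:21} and the authors explicitly defer to Section 3 of that paper, so there is no internal proof to compare against. Your first step is correct, and it is the right reduction: combining the weak quasiconformality of $h$ applied to the conjugate family $\Gamma^*(Q)$, the upper reciprocity of $X$ applied to $h(Q)$, and the identity $\Mod\Gamma(Q)\cdot\Mod\Gamma^*(Q)=1$ for planar Jordan quadrilaterals does give $\Mod\Gamma(h(Q))\le \kappa' K\,\Mod\Gamma(Q)$ for every quadrilateral $Q\subset\br{\mathbb D}$, i.e.\ the reverse modulus inequality for all quadrilateral families.

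The gap is the second step. The claim that, for a homeomorphism involving a metric surface, a two-sided bound on moduli of quadrilateral families is ``equivalent to quasiconformality'' is not a citable standard fact in this generality: it is classical only when domain and target are both planar (\cite{LV:73}), and in the metric setting the passage from quadrilateral estimates to the modulus inequality for \emph{arbitrary} path families is precisely the technical core of \cite{Raj:17} and of \cite[Sect.\ 3]{MW:21}. Concretely, before Theorem \ref{theorem:definitions_qc} can be applied to $h^{-1}$ one must show $h^{-1}\in N^{1,2}_{\loc}(X,\br{\mathbb D})$ with the correct integral bound, which requires ruling out both a positive-measure set where the approximate metric derivative $N_x$ of $h$ degenerates and a singular part of the measure $E\mapsto\mathcal H^2(h(E))$; you correctly identify this as the obstacle but do not carry it out. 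Moreover, your fallback ``direct'' argument has the modulus estimate pointing the wrong way: exhibiting an admissible density (e.g.\ a constant multiple of $\chi_{h(Q)}$) bounds $\Mod\Gamma(h(Q))$ from \emph{above}, so squares on which $h(Q)$ has small area can only make the product $\Mod\Gamma(h(Q))\cdot\Mod\Gamma^*(h(Q))$ small --- which would contradict the universal lower bound $\pi^2/16$ of \cite{RR:19,EBC:21}, not the assumed upper bound. To contradict upper reciprocity one needs \emph{lower} bounds on both moduli, and those cannot be produced by exhibiting admissible functions. As written, the degeneration of $h^{-1}$ on a set of positive measure is not excluded, so the proof is incomplete at exactly the point where the real work lies.
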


\smallskip

\subsection{Proof of Theorems \ref{theorem:reciprocal} and \ref{theorem:reciprocal:interior}} The proofs of of both theorems are given at the end of the section. We first establish several preliminary statements.

\begin{lemm}\label{lemma:boundary_homeo}
Let $X$ be a metric surface of finite Hausdorff $2$-measure that is homeomorphic to a topological closed disk and let $h\colon \br \D \to X$ be a weakly quasiconformal map. If $X$ is upper reciprocal and $h^{-1}(\partial X)=\partial \D$, then $h|_{\partial \D}$ is a homeomorphism. 
\end{lemm}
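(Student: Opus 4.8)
The plan is to argue by contradiction, producing a sequence of quadrilaterals in $X$ whose modulus products blow up, in violation of upper reciprocity. First I would note that, since $h$ is cell-like it is monotone, so $h|_{\partial\mathbb D}$ is a monotone map, and $h(\partial\mathbb D)=\partial X$ because $h$ is surjective with $h^{-1}(\partial X)=\partial\mathbb D$. If $h|_{\partial\mathbb D}$ is injective it is a continuous bijection of a compact space, hence a homeomorphism, and we are done. Otherwise some fiber $\alpha:=h^{-1}(p)$, $p\in\partial X$, is non-degenerate; being a proper subcontinuum of the circle $\partial\mathbb D$ (it lies in $h^{-1}(\partial X)=\partial\mathbb D$ and cannot be all of it), it is a closed arc with distinct endpoints $a,b$. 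The goal is then to exploit the collapse of $\alpha$ to a point.

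The next step is to understand $h$ on the complementary arc $\beta=\overline{\partial\mathbb D\setminus\alpha}$. Here $h|_\beta$ is a continuous surjection onto $\partial X$ with $h(a)=h(b)=p$ and $h(\beta^\circ)=\partial X\setminus\{p\}$ (as $h^{-1}(p)\cap\beta^\circ=\varnothing$); identifying $\beta^\circ$ and $\partial X\setminus\{p\}$ with $\mathbb R$, surjectivity forces $h$ to run off to opposite ends of $\partial X\setminus\{p\}$ as one approaches $a$ and as one approaches $b$. Equivalently, there are disjoint arcs $J^{-},J^{+}\subset\partial X$ abutting $p$ from the two sides such that $h^{-1}(J^{-})$ equals $\alpha$ together with a small sub-arc of $\beta$ at $b$, and $h^{-1}(J^{+})$ equals $\alpha$ together with a small sub-arc of $\beta$ at $a$; as $J^{\pm}$ shrink to $\{p\}$ their preimages shrink to $\alpha$ (using upper semicontinuity of the fibers of the monotone map $h|_{\partial\mathbb D}$). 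Fixing $m\in\partial X\setminus\{p\}$ and choosing $u_n\in J^{-}$, $v_n\in J^{+}$ with $u_n,v_n\to p$ so that $p,u_n,m,v_n$ occur in this cyclic order on $\partial X$, I would take $Q_n:=X$ (a closed Jordan region) as a quadrilateral with these four vertices, so $\Gamma(Q_n)=\Gamma([p,u_n],[m,v_n];X)$ and $\Gamma^*(Q_n)=\Gamma([u_n,m],[v_n,p];X)$.

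Then I would show $\Mod\Gamma(Q_n)\to\infty$ and, symmetrically, $\Mod\Gamma^*(Q_n)\to\infty$. For the first, note that $E_n:=h^{-1}([p,u_n])$ is a closed sub-arc of $\partial\mathbb D$ which contains $\alpha$ and has $a$ as one endpoint (it extends $\alpha$ only on the $b$-side), while $F_n:=h^{-1}([m,v_n])$ is a disjoint closed sub-arc of $\beta^\circ$ one of whose endpoints tends to $a$ (since $v_n\to p$ along $J^{+}$) and whose other endpoint stays near $h^{-1}(m)$; hence $E_n,F_n$ are disjoint closed boundary arcs of $\overline{\mathbb D}$ of diameter bounded below whose distance tends to $0$, so $\Mod\Gamma(E_n,F_n;\overline{\mathbb D})\to\infty$ (the boundary arc separating them near $a$ degenerates to a point). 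Since $h$ maps a curve joining $E_n$ and $F_n$ to a curve in $X$ joining $[p,u_n]$ to $[m,v_n]$, we have $h(\Gamma(E_n,F_n;\overline{\mathbb D}))\subset\Gamma(Q_n)$, and the weak $K$-quasiconformality of $h$ gives $\Mod\Gamma(Q_n)\geq K^{-1}\Mod\Gamma(E_n,F_n;\overline{\mathbb D})\to\infty$. The argument for $\Gamma^*(Q_n)$ is identical with the roles of $a,J^{+}$ replaced by $b,J^{-}$: $h^{-1}([v_n,p])$ contains $b$ and $h^{-1}([u_n,m])$ is a sub-arc of $\beta^\circ$ with an endpoint converging to $b$. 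Thus $\Mod\Gamma(Q_n)\cdot\Mod\Gamma^*(Q_n)\to\infty$, contradicting the upper bound in \eqref{reciprocality:12} and finishing the proof.

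The hard part will be the topological bookkeeping: establishing the ``loop'' structure of $h|_\beta$ and the upper semicontinuity of the fibers of $h|_{\partial\mathbb D}$, so that $E_n$ and $F_n$ genuinely pinch together near $a$ (and likewise for the conjugate family near $b$); and recording precisely the elementary planar fact that two disjoint closed boundary arcs of $\overline{\mathbb D}$ with diameters bounded below and mutual distance tending to $0$ have connecting modulus tending to infinity — which is just the reciprocal of the vanishing modulus of the conjugate family of the associated degenerating quadrilateral. Both points should be routine, but the indexing of the various sub-arcs of $\partial X$ and $\partial\mathbb D$ needs to be set up carefully.
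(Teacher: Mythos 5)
Your proposal is correct, but it reaches the contradiction by a genuinely different route than the paper. The paper fixes one edge $\zeta_1\subset \partial X$ away from the bad point and lets only the opposite edge $\zeta_3(r)$ shrink to $x_0$; it then shows $\Mod\Gamma(Q(r))$ is bounded \emph{below by a positive constant} (via the Loewner property applied to $\Gamma(h^{-1}(\zeta_1),E;\br{\D})$), while $\Mod\Gamma^*(Q(r))\to\infty$. Because the two conjugate edges pinch at \emph{both} endpoints $x_1,x_2$ of the collapsed arc $E$, the divergent family there consists of concatenations of circular arcs around $x_1$ and $x_2$ joined by a subarc of $E$ itself; since those curves run along $h^{-1}(x_0)$, the paper must work with the restricted modulus $\Mod(\cdot\,|\,\br{\D}\setminus E)$ and invoke Lemma \ref{lemma:extension}. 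Your quadrilaterals instead have two short edges $[p,u_n]$ and $[v_n,p]$, one on each side of $p$, so that $\Gamma(Q_n)$ pinches only at $a$ and $\Gamma^*(Q_n)$ only at $b$; both moduli then diverge by the plain annulus estimate with circular arcs centered at a single endpoint, and only the unrestricted inequality $\Mod\Gamma\le K\Mod h(\Gamma)$ together with monotonicity of modulus under $h(\Gamma(E_n,F_n;\br{\D}))\subset\Gamma(Q_n)$ is needed. This buys you independence from Lemma \ref{lemma:extension} (no curves through the collapsed fiber are used) and a symmetric treatment of $\Gamma$ and $\Gamma^*$, at the cost of the extra boundary bookkeeping you flag: the monotone ``loop'' structure of $h$ on the complementary arc $\beta$, the orientation claim that the two sides $J^{\pm}$ of $p$ pull back to the two sides $b$, $a$ of $\alpha$, and the Hausdorff convergence $h^{-1}([p,u_n])\to\alpha$. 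All of these are standard for monotone circle maps, so the argument goes through; just make sure, when asserting $\Mod\Gamma(E_n,F_n;\br{\D})\to\infty$, that you phrase the estimate in terms of the identified pinch point (the gap arc at $a$ degenerating while both arcs have definite diameter there), rather than only ``distance tends to $0$.''
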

\begin{proof}
Suppose that $h\colon \br \D \to X$ is a weakly $K$-quasiconformal map for some $K\geq 1$ with $h^{-1}(\partial X)=\partial \D$. Suppose to the contrary that there exists $x_0\in \partial X$ such that  $h^{-1}(x_0)$ is a non-degenerate closed arc $E\subset \partial \D$. We let $\zeta_1$ be a non-degenerate closed arc in $\partial X\setminus \{x_0\}$. For $0<r<\dist( x_0,\zeta_1)$, let $\zeta_3(r)$ be the component of $\partial X\cap \br B(x_0,r)$ that contains $x_0$.  We let $\zeta_2(r),\zeta_4(r)$ be the closures of the remaining two arcs of $\partial X$ and define $Q(r)$ to be the quadrilateral $X$ with edges $\zeta_1,\zeta_2(r),\zeta_3(r)$, and $\zeta_4(r)$. By the weak quasiconformality of $h$, for all $r<\dist(x_0,\zeta_1)$ we have
\begin{align*}
    \Mod \Gamma(Q(r)) &=\Mod \Gamma( \zeta_1,\zeta_3(r); X) \geq \Mod\Gamma(\zeta_1,\{x_0\};X)\\&\geq K^{-1} \Mod\Gamma( h^{-1}(\zeta_1), E; \br \D).
\end{align*}
The latter quantity is positive, since $h^{-1}(\zeta_1)$ is disjoint from $E=h^{-1}(x_0)$; for example, this can be shown using the Loewner property of Euclidean space. We claim that $\Mod \Gamma^*(Q(r)) \to \infty$ as $r\to 0$, which contradicts the upper reciprocity of $X$.

Let $x_1,x_2$ be the endpoints of the arc $E$. Let 
$$\delta_0=\min\{ \dist(E, h^{-1}(\zeta_1) ), |x_1-x_2|/2\}$$
and for $0<t<\delta_0$ and $i=1,2$, let $\gamma_t^i$ be the intersection of the circle $\partial B(x_i,t)$ with $\br \D$, parametrized as a simple curve. For $\delta<\delta_0$, let $\Gamma_{\delta}$ be the family of curves arising as the concatenation of $\gamma_t^1$, $\gamma_t^2$, and a subarc of $E$, where $t\in (\delta,\delta_0)$. 

We estimate $\Mod(\Gamma_{\delta}| \br \D \setminus E)$ from below. Let $\rho$ be a function that is admissible for $\Gamma_{\delta}$ and vanishes on $E$. Then for $t\in (\delta,\delta_0)$ we have 
\begin{align*}
    1\leq \int_{\gamma_t^1}\rho\, ds +\int_{\gamma_t^2}\rho\, ds \leq \left(\int_{\gamma_t^1}\rho^2\, ds +\int_{\gamma_t^2}\rho^2\, ds\right)^{1/2} (4\pi t)^{1/2}. 
\end{align*}
Integrating over $t\in (\delta,\delta_0)$ gives
$$ \int \rho^2 \, d\mathcal H^2 \geq \frac{1}{4\pi}\log\left( \frac{\delta_0}{\delta}\right).$$
Thus, $\Mod(\Gamma_\delta|\br \D\setminus E)\geq (4\pi)^{-1}\log(\delta_0/\delta)$.

For each fixed $\delta<\delta_0$, if $r$ is sufficiently small so that $h^{-1}(\zeta_3(r))\subset N_{\delta}(E)$, then each path of $h(\Gamma_\delta)$ connects $\zeta_2(r)$ and $\zeta_4(r)$. Therefore, by Lemma \ref{lemma:extension}
\begin{align*}
    \Mod \Gamma^*(Q(r))&= \Mod \Gamma(\zeta_2(r),\zeta_4(r) ;X) \geq \Mod h(\Gamma_{\delta})  \\
    &\geq K \Mod(\Gamma_\delta|\br \D\setminus E)\geq K(4\pi)^{-1}\log(\delta_0/\delta).
\end{align*}
We first let $r\to 0$ and then $\delta\to 0$ to obtain that $\Mod\Gamma^*(Q(r))\to \infty$.
\end{proof}

\begin{lemm}\label{lemma:topological}
Let $X$ be a metric surface of finite Hausdorff $2$-measure that is homeomorphic to a topological closed disk and let  $h\colon \br{\mathbb D}\to X$ be a weakly $K$-quasiconformal map for some $K\geq 1$. Let $x_0\in \Int(X)$ be a point such that $h^{-1}(x_0)$ is a non-degenerate continuum. For each $\varepsilon>0$ there exists a Jordan region $V\subset B(x_0,\varepsilon)$ containing $x_0$ and a weakly $K$-quasiconformal map $f\colon \br \D \to \br V$ such that $f^{-1}(\partial V)=\partial \D$ and $f^{-1}(x_0)= \br B(0,1-\delta)$ for some $\delta\in (0,1)$ that depends on $\varepsilon$. Moreover, $\delta\to 0$ as $\varepsilon\to 0$.  
\end{lemm}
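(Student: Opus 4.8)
The plan is to localize the weakly quasiconformal map $h$ near $x_0$, replace the domain of the localized map by a round annulus using the conformal classification of doubly connected planar domains, and then fill the inner hole by a round disk that is collapsed to $x_0$. Set $C:=h^{-1}(x_0)$. Since $h$ is a continuous, surjective, cell-like (equivalently monotone) map between compact surfaces homeomorphic to one another, Theorem \ref{theorem:monotone_approximation} gives $h(\partial\D)=\partial X$ and $h^{-1}(\Int X)\subset\Int\D$, and together with surjectivity this forces $h^{-1}(\Int X)=\Int\D$. Hence $C\subset\D$, and $C$ is a non-degenerate cell-like continuum in $\D$, so a non-degenerate cellular continuum.

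\textit{Step 1: a good Jordan region around $C$.} The first and principal task is to produce a closed Jordan region $W\subset\D$ with $C\subset\Int W\subset W\subset h^{-1}(B(x_0,\varepsilon))$ such that $V:=h(W)$ is a closed Jordan region with $x_0\in\Int V\subset V\subset B(x_0,\varepsilon)$, such that $h(\partial W)=\partial V$, $h^{-1}(\partial V)\cap W=\partial W$, and $h|_W\colon W\to V$ is weakly $K$-quasiconformal. I would obtain $\partial W$ as a level curve of the function $u:=d_X(x_0,\cdot)\circ h$, which lies in $N^{1,2}_{\loc}(\D)$ with weak upper gradient the minimal weak upper gradient of $h$. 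Using the coarea inequality (Lemma \ref{lemm:coarea}) together with Lemma \ref{lemm:EBPC} to replace $u$ by a nearby Lipschitz function, and then Lemma \ref{lemma:separation}, one finds for a small value $r>0$ a rectifiable Jordan curve in $u^{-1}(r)$ separating $C$ from the complement of $h^{-1}(B(x_0,\varepsilon))$; choosing it inside a suitable component of $\{u<r\}$ one arranges $u<r$ on $\Int W\setminus C$, so that $h(W)\subset\br B(x_0,r)\subset B(x_0,\varepsilon)$, $h(\partial W)\subset S_X(x_0,r)$, and $h(\Int W\setminus C)\subset B(x_0,r)$. That $V=h(W)$ is then a genuine Jordan region with $\partial V=h(\partial W)$ and with no interior point of $W$ mapped into $\partial V$ is the delicate point; I would argue it using the monotonicity of $h$ (preimages of connected sets are connected) and the cellularity of the fibres, while the weak $K$-quasiconformality of $h|_W$ follows from Theorem \ref{theorem:definitions_qc} applied on the open set $\Int W$, since $\partial W$ is $\mathcal H^2$-null. \emph{I expect this step to be the main obstacle}, because metric balls and level sets of $u$ may be poorly behaved and $h$ need not be an open map.

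\textit{Step 2: rounding by a conformal map.} The open set $\Int W\setminus C$ is a doubly connected subdomain of $\C$ whose two complementary components, namely $C$ and the unbounded component of $\C\setminus\Int W$, are both non-degenerate continua; hence it is conformally equivalent to a round annulus $\{z:\rho<|z|<1\}$ for a unique $\rho\in(0,1)$. Let $g\colon\{\rho<|z|<1\}\to\Int W\setminus C$ be a conformal homeomorphism, which extends by Carath\'eodory's theorem to a homeomorphism of $\{|z|=1\}$ onto $\partial W$. Put $\delta:=1-\rho\in(0,1)$ and define $f\colon\br\D\to\br V$ by $f\equiv x_0$ on $\br B(0,1-\delta)$ and $f:=h\circ g$ on $\{1-\delta<|z|\le1\}$. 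The cluster set of $g$ along $\{|z|=\rho\}$ is contained in $C$, on which $h\equiv x_0$, so $f$ is continuous; it is surjective onto $\br V=h(W)$, proper since $\br\D$ is compact, and satisfies $f^{-1}(\partial V)=\partial\D$ and $f^{-1}(x_0)=\br B(0,1-\delta)$ because $h^{-1}(x_0)\cap(W\setminus C)=\varnothing$. It is also cell-like: for $y\ne x_0$ the fibre $f^{-1}(y)$ is the image under the homeomorphism $g^{-1}$ of the cell-like set $h^{-1}(y)$, which meets $W$ in a connected set thanks to the property $W=h^{-1}(V)$ arranged in Step 1. On $\br\D\setminus f^{-1}(x_0)=\{1-\delta<|z|\le1\}$ the map $f$ equals $h|_{W\setminus C}\circ g$, a composition of a conformal (hence weakly $1$-quasiconformal) homeomorphism with the weakly $K$-quasiconformal map $h|_{W\setminus C}\colon W\setminus C\to V\setminus\{x_0\}$, so it is weakly $K$-quasiconformal. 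Applying Lemma \ref{lemma:extension} with the closed $\mathcal H^1$-null set $A=\{x_0\}$ then yields $\Mod\Gamma\le K\Mod f(\Gamma)$ for every curve family $\Gamma$ in $\br\D$, so $f$ is weakly $K$-quasiconformal.

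\textit{Step 3: $\delta\to0$ as $\varepsilon\to0$.} The conformal modulus of the family of curves in $\Int W\setminus C$ separating its two boundary components equals $\tfrac1{2\pi}\log\tfrac1{1-\delta}$, so it suffices to bound this quantity from above in terms of $\varepsilon$. Since $W\subset h^{-1}(B(x_0,\varepsilon))$ and $\overline{h^{-1}(B(x_0,\varepsilon))}$ decreases to $C$ as $\varepsilon\to0$, the region $W$ is contained in a Euclidean ball of radius $r_\varepsilon$ with $r_\varepsilon\to0$, whereas every separating curve surrounds $C$ and therefore has Euclidean diameter at least $\diam(C)=:d_0>0$ by non-degeneracy of $C$. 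The function $\chi_W/d_0$ is admissible for this family, so its modulus is at most $\pi r_\varepsilon^2/d_0^2\to0$. Hence $\log\tfrac1{1-\delta}\to0$, that is, $\delta\to0$ as $\varepsilon\to0$, which completes the proof.
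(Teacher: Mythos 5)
Your overall route (localize $h$ near $C=h^{-1}(x_0)$, conformally identify the resulting annulus $\Int W\setminus C$ with a round annulus, collapse the inner disk to $x_0$, and recover weak $K$-quasiconformality across the single point $x_0$ via Lemma \ref{lemma:extension} and Corollary \ref{cor:extension}) is exactly the paper's. However, your Step 1 — which you yourself flag as ``the main obstacle'' and ``the delicate point'' — is precisely the substance of the lemma, and you do not supply an argument for it. The assertions that $V=h(W)$ is a Jordan region, that $\partial V=h(\partial W)$, and that $h^{-1}(V)=W$ do not follow from monotonicity and cellularity alone: a monotone map can collapse subarcs of a Jordan curve, so the push-forward of a level curve of $u=\psi\circ h$ need not be a Jordan curve. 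The paper resolves this by going in the opposite direction: it chooses a single good value $t$ for which \emph{both} $\psi^{-1}(t)\subset X$ and $\alpha^{-1}(t)\subset\D$ (where $\alpha=\psi\circ h\in W^{1,2}(\D)$) have all components equal to points, arcs, or circles (using \cite[Thm.\ 1.5]{Nt:monotone} together with the co-area inequality on $X$ and the classical Sobolev co-area formula on $\D$), takes the circle component $J$ of $\psi^{-1}(t)$ separating $x_0$ from $\partial X$, and observes that $h^{-1}(J)$ is a continuum (monotonicity) contained in a component of $\alpha^{-1}(t)$ that separates the plane, hence \emph{equals} a circle component; the identities $h(U)=V$ and $h^{-1}(V)=U$ then follow from a connectivity argument. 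A related technical misstep: invoking Lemma \ref{lemm:EBPC} produces a \emph{different} Lipschitz function $v$, whose level sets are unrelated to those of $u$, so the curve you would extract need not lie in $u^{-1}(r)$ and the inclusion $h(W)\subset \br B(x_0,r)$ would be lost; the correct tool here is the co-area formula for $W^{1,2}$ functions on planar domains.

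Two further points. In Step 3 your claim that $W$ is contained in a Euclidean ball of radius $r_\varepsilon\to 0$ is false, since $C\subset W$ and $\diam C=d_0>0$; the argument is salvaged by using the admissible function $\chi_{W\setminus C}/d_0$ and noting $\mathcal H^2(W\setminus C)\le \mathcal H^2\bigl(\overline{h^{-1}(B(x_0,\varepsilon))}\setminus C\bigr)\to 0$, since these sets decrease to the empty set. (The paper instead estimates the modulus of the \emph{joining} family from below via the Loewner property; either dual estimate works.) Step 2 is essentially correct and matches the paper, granting the unproven properties of $W$ and $V$ from Step 1.
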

\begin{proof}
Consider the distance function $\psi(x)=d(x,x_0)$ on $\Int(X)$, which is Lipschitz continuous. By the co-area inequality (Lemma \ref{lemm:coarea}), for a.e.\ $t>0$ the level set $\psi^{-1}(t)$ is a compact set of finite Hausdorff $1$-measure. By \cite[Thm.\ 1.5]{Nt:monotone}, for a.e.\ $t>0$, each connected component of $\psi^{-1}(t)$ is homeomorphic to a point or an interval or $\mathbb S^1$. 

Consider the function $\alpha(z)=d( h(z),  x_0)= \psi\circ  h(z)$ in $\D$. Then $\alpha$ is continuous and  each weak upper gradient of $h$ lying in $L^2(\br \D)$ is also a weak upper gradient of $\alpha$. Thus, $\alpha$ lies in the classical Sobolev space $W^{1,2}(\D)$. By the co-area formula for Sobolev spaces \cite{MSZ:03,Fed:69}, a.e.\ level set of $\alpha$ has finite Hausdorff $1$-measure. Thus, as above, for a.e.\ $t>0$, each connected component of $\alpha^{-1}(t)$ is homeomorphic to a point or an interval or $\mathbb S^1$. 

Let $\varepsilon>0$ and we fix $0<t<\min \{\dist( x_0, \partial X), \varepsilon\}$ such that each component of $\psi^{-1}(t)$ and of $\alpha^{-1}(t)$ is homeomorphic to a point or an interval or $\mathbb S^1$. Note that $\psi^{-1}(t)$ is a compact subset of $\Int(X)$. By the last part of Theorem \ref{theorem:monotone_approximation} we see that $\alpha^{-1}(t)= h^{-1}(\psi^{-1}(t))$ is a compact subset of $\D$.  One component of $\psi^{-1}(t)$, say $J$, separates $x_0$ and $\partial X$, so it is homeomorphic to  $\mathbb S^1$. By continuity, the set $h^{-1}(J)$ separates the sets $h^{-1}( x_0)$ and $ h^{-1}(\partial \D)$. Moreover, by monotonicity, $ h^{-1}( J)$ is a continuum that is contained in a connected component of $\alpha^{-1}(t)$. The only possibility is that this connected component is homeomorphic to $\mathbb S^1$, since it separates the plane, and $ h^{-1}(J)$ is equal to that component. 

Let $V$ be the Jordan region bounded by $J$ and $U$ be the Jordan region bounded by $h^{-1}(J)$. By continuity, the set $h(U)$ is connected and contained in either $V$ or $X\setminus \br V$. Since $U\cap h^{-1}(x_0)\neq \emptyset$, we conclude that $h(U)\subset V$. Similarly, $h(\br \D\setminus \br U)$ is connected and contained in either $V$ or $X\setminus \br V$. The fact that $h$ is surjective implies that $h(U)=V$ and $h(\br \D\setminus \br U)= X\setminus \br V$. In particular, $h^{-1}(V)=U$. Consider the map $h\colon  U\setminus h^{-1}(x_0) \to V\setminus \{x_0\}$ and note that it satisfies the assumptions of Theorem \ref{theorem:cell_like_approximation}. Therefore, $U\setminus h^{-1}(x_0)$ is homeomorphic to $V\setminus \{x_0\}$, and in particular, it is a topological annulus.

Consider a conformal map $\varphi\colon A(0;1-\delta,1) \to U\setminus h^{-1}(x_0)$ for some $\delta\in (0,1)$ such that  $\varphi(z)\to \partial U$ as $z\to \partial \D$; here $A(0;1-\delta,1)$ denotes the annulus $\{z\in \C: 1-\delta<|z|<1\}$. Observe that as $\varepsilon\to 0$, the region $V$ approaches $x_0$, so the region $U$ approaches $h^{-1}(x_0)$ in the Hausdorff sense. This implies that $\Mod\Gamma(\partial U, h^{-1}(x_0);\D)$ approaches $\infty$ (e.g.\ from the Loewner property of Euclidean space), thus by conformality, the quantity
$$ \Mod(\partial \D, \partial B(0,1-\delta);\C) = 2\pi (\log(1-\delta)^{-1})^{-1}$$
approaches $\infty$. Therefore, $\delta\to 0$. 

Since $\partial U$ is a Jordan curve, by a refined version of Carath\'eodory's extension theorem \cite[Thm.\ 3.1]{Pom02}, $\varphi$ extends to a homeomorphism from $\br \D \setminus B(0,1-\delta)$ onto $\br U \setminus h^{-1}(x_0)$. Consider the map $f=h\circ \varphi$, which extends to a continuous, surjective, and monotone map from $\br \D$ onto $\br V$ such that $f^{-1}(\partial V)= \partial \D$ and $f^{-1}(x_0)=\br B(0,1-\delta)$. Moreover,  $f$ is weakly $K$-quasiconformal in $A(0;1-\delta,1)=\D \setminus  f^{-1}(x_0)$. From Lemma \ref{lemma:extension} we see that $f$ is weakly $K$-quasiconformal in $\D$. Finally, by Corollary \ref{cor:extension}, $f$ is weakly $K$-quasiconformal on $\br \D$. 
\end{proof}

\begin{lemm}\label{theorem:interior}
Let $X$ be a metric surface of finite Hausdorff $2$-measure that is homeomorphic to a topological closed disk and let $h\colon \br{\mathbb D}\to X$ be a weakly quasiconformal map. If $\Int(X)$ is upper reciprocal, then $h|_{h^{-1}(\Int (X))}\colon h^{-1}(\Int(X))\to \Int(X) $ is a homeomorphism.
\end{lemm}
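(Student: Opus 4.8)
The plan is to argue by contradiction, reducing the statement to the model situation furnished by Lemma \ref{lemma:topological} and then producing a family of quadrilaterals in $\Int(X)$ on which the product $\Mod\Gamma(Q)\cdot\Mod\Gamma^*(Q)$ is forced to blow up, contradicting upper reciprocity.

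\emph{Reduction to injectivity.} The map $h$ is monotone, continuous, surjective, and cell-like; by Theorem \ref{theorem:monotone_approximation} we have $h(\partial\mathbb D)=\partial X$ and $h^{-1}(\Int(X))\subset\mathbb D$. Hence $h|_{h^{-1}(\Int(X))}$ is a continuous surjection onto $\Int(X)$ whose domain is an open subset of $\mathbb D$, i.e.\ a $2$-manifold. If every fiber $h^{-1}(x_0)$ over $x_0\in\Int(X)$ is a singleton, this map is a continuous injection between $2$-manifolds, hence open by Brouwer's invariance of domain, hence a homeomorphism. So it suffices to prove that no $x_0\in\Int(X)$ has a non-degenerate fiber. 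Suppose, for contradiction, that some $x_0\in\Int(X)$ does, and fix $\kappa'\geq 1$ witnessing the upper reciprocity of $\Int(X)$.

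\emph{The model and the quadrilateral.} For $\varepsilon<\dist(x_0,\partial X)$, Lemma \ref{lemma:topological} provides a Jordan region $V\subset B(x_0,\varepsilon)\subset\Int(X)$, a weakly $K$-quasiconformal map $f\colon\overline{\mathbb D}\to\overline{V}$ with $f^{-1}(\partial V)=\partial\mathbb D$ and $f^{-1}(x_0)=\overline{B}(0,R)$ where $R=1-\delta$ and $\delta\to 0$ as $\varepsilon\to 0$; moreover $f$ restricts to a homeomorphism of $A(0;R,1)$ onto $V\setminus\{x_0\}$ (as is apparent from the construction $f=h\circ\varphi$ in the proof of that lemma). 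Fix such an $\varepsilon$, put $s=\delta/2$ and $\psi=\pi/2$, and let $\tilde U\subset\overline{\mathbb D}$ be the closed topological disk $\overline{B}(0,R)\cup\{z:R\le|z|\le 1-s,\ \arg z\in[\psi,2\pi-\psi]\}$. Since $\tilde U\supset\overline{B}(0,R)=f^{-1}(x_0)$ and $\tilde U$ is disjoint from $\partial\mathbb D=f^{-1}(\partial V)$ while $f$ is injective on $A(0;R,1)$, the set $Q:=f(\tilde U)$ satisfies $f^{-1}(Q)=\tilde U$ and is a closed Jordan region in $V\subset\Int(X)$; the only part of $\partial\tilde U$ that collapses under $f$ is the arc lying on $\partial B(0,R)$, which is sent to the single point $x_0\in\partial Q$. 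Marking the vertices $q_1=x_0$, $q_2=f((1-s)e^{i\psi})$, $q_3=f(-(1-s))$, $q_4=f((1-s)e^{-i\psi})$ turns $Q$ into an honest quadrilateral with edges $\zeta_1,\zeta_2,\zeta_3,\zeta_4$ such that $x_0\in\zeta_1\cap\zeta_4$, while $f^{-1}(\zeta_2)$ and $f^{-1}(\zeta_3)$ are the arcs of the circle $\{|z|=1-s\}$ with $\arg z\in[\psi,\pi]$ and with $\arg z\in[\pi,2\pi-\psi]$, respectively.

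\emph{Modulus blow-up and contradiction.} For $\theta\in(\pi,2\pi-\psi)$ the Euclidean segment $\gamma_\theta(t)=te^{i\theta}$, $t\in[R,1-s]$, lies in $\tilde U$ and joins $\overline{B}(0,R)$ to $f^{-1}(\zeta_3)$; a Cauchy--Schwarz estimate over these segments gives $\Mod\{\gamma_\theta:\theta\in(\pi,2\pi-\psi)\}\geq R(\pi-\psi)/(1-s-R)=2(1-\delta)(\pi-\psi)/\delta$. Their images under $f$ are curves in $Q$ joining $x_0$ to $\zeta_3$, hence belong to $\Gamma(Q)=\Gamma(\zeta_1,\zeta_3;Q)$ because $x_0\in\zeta_1$; the one-sided modulus inequality for $f$ then yields $\Mod\Gamma(Q)\geq K^{-1}\cdot 2(1-\delta)(\pi-\psi)/\delta$. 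Symmetrically, using the segments with $\theta\in(\psi,\pi)$ and the fact that $x_0\in\zeta_4$, we get $\Mod\Gamma^*(Q)=\Mod\Gamma(\zeta_2,\zeta_4;Q)\geq K^{-1}\cdot 2(1-\delta)(\pi-\psi)/\delta$. Consequently $\Mod\Gamma(Q)\cdot\Mod\Gamma^*(Q)\geq 4(1-\delta)^2(\pi-\psi)^2/(K\delta)^2$, and the right-hand side exceeds $\kappa'$ once $\varepsilon$ (hence $\delta$) is taken small enough. Since $Q$ is a quadrilateral contained in $\Int(X)$, this contradicts the upper reciprocity of $\Int(X)$; therefore no interior fiber is non-degenerate, and by the first paragraph $h|_{h^{-1}(\Int(X))}$ is a homeomorphism onto $\Int(X)$.

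\emph{Main obstacle.} The substantive difficulty is the topological bookkeeping in the third paragraph: one must verify that collapsing the disk $\overline{B}(0,R)=f^{-1}(x_0)$ folds only an \emph{arc} of $\partial\tilde U$ to the point $x_0$ and never a two-dimensional piece of $\tilde U$, so that $Q$ is genuinely a topological quadrilateral with $x_0$ an endpoint of two of its edges simultaneously; this is what allows \emph{both} $\Gamma(Q)$ and $\Gamma^*(Q)$ to inherit the large modulus created near $x_0$. This relies on $f^{-1}(x_0)$ sitting against the boundary of $\tilde U$ and on the injectivity of $f$ on $A(0;R,1)$. Everything after that is routine planar modulus estimation combined with the weak $K$-quasiconformality of $f$.
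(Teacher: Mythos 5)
Your reduction to injectivity, your use of Lemma \ref{lemma:topological}, and your modulus computations are all fine, but there is a genuine gap at the point you yourself flag as the ``main obstacle'': you assert that $f$ restricts to a homeomorphism of $A(0;R,1)$ onto $V\setminus\{x_0\}$, claiming this is ``apparent from the construction $f=h\circ\varphi$''. It is not. In the proof of Lemma \ref{lemma:topological}, $\varphi$ is a homeomorphism of the annulus onto $U\setminus h^{-1}(x_0)$, but $h$ restricted to $U\setminus h^{-1}(x_0)$ is only proper and cell-like onto $V\setminus\{x_0\}$; it may have many other non-degenerate fibers over points of $V\setminus\{x_0\}$ (ruling these out is exactly what the lemma is trying to prove). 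Without injectivity of $f$ on the annulus, your set $Q=f(\tilde U)$ need not be a closed Jordan region, $f^{-1}(Q)$ need not equal $\tilde U$, and the images of the radial segments and outer arc need not be non-overlapping arcs --- a fiber meeting both $\tilde U$ and its complement, or meeting $\partial\tilde U$ in a disconnected set, destroys the quadrilateral structure. Since upper reciprocity quantifies only over genuine quadrilaterals, the contradiction does not follow.

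The paper's proof avoids exactly this issue by choosing the quadrilateral differently: it takes $Q(\delta)$ to be all of $\br{V_\delta}=f_\delta(\br\D)$, with four edges obtained from a partition of $\partial\D$ into equal arcs. The only injectivity needed is that of $f_\delta|_{\partial\D}$, which is supplied by Lemma \ref{lemma:boundary_homeo} (using upper reciprocity of $\br{V_\delta}$), so $Q(\delta)$ is an honest quadrilateral regardless of what $f_\delta$ does in the interior. The point $x_0$ then sits in the interior of $Q(\delta)$ rather than on its boundary, and both $\Mod\Gamma(Q(\delta))$ and $\Mod\Gamma^*(Q(\delta))$ blow up because admissible functions may be taken to vanish on the collapsed disk $\br B(0,1-\delta)$, whose complement in $\br\D$ has area tending to $0$; this restricted-modulus estimate is transferred to $X$ via Lemma \ref{lemma:extension}. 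If you want to salvage your ``Pac-Man'' quadrilateral with $x_0$ as a shared vertex of two adjacent edges, you would first have to establish the interior injectivity you assumed --- which is circular --- so you should instead adopt the paper's choice of quadrilateral.
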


\begin{proof}
Let $h\colon \br \D\to X$ be a weakly $K$-quasiconformal map for some $K\geq 1$. By the invariance of domain theorem, it suffices to show that $h^{-1}(x_0)$ is a singleton for each $x_0\in \Int(X)$. Suppose that $h^{-1}(x_0)$ is a non-degenerate continuum for some $x_0\in \Int(X)$. By Lemma \ref{lemma:topological}, for a sequence of arbitrarily small $\delta>0$ there exists a Jordan region $V_{\delta} \subset \br{V_{\delta}}\subset \Int(X)$ containing $x_0$ and a weakly $K$-quasiconformal map $f_{\delta}\colon \br \D \to \br {V_{\delta}}$ such that $f_{\delta}^{-1}(\partial V_{\delta})=\partial \D$ and $f^{-1}(x_0)= \br B(0,1-\delta)$. By Lemma \ref{lemma:boundary_homeo}, since $\br {V_{\delta}}$ is upper reciprocal, $f|_{\partial \D}: \partial \D\to \partial V_{\delta}$ is a homeomorphism. 

Consider a partition of $\partial \D$ into four non-overlapping arcs of equal length that form a quadrilateral $Q$ with edges $\zeta_1,\dots,\zeta_4$. We claim that
\begin{align}\label{theorem:interior:quadrilateral}
    \Mod(\Gamma(Q)| \br \D \setminus \br B(0,1-\delta))= \Mod(\Gamma^*(Q)| \br \D \setminus \br B(0,1-\delta)) \to \infty
\end{align}
as $\delta\to 0$. The equality follows immediately by symmetry. For the limiting statement, let $\rho$ be an admissible function for $\Gamma(Q)$ that vanishes on $\br B(0,1-\delta)$. Without loss of generality, the edge $\zeta_1$ (resp.\ $\zeta_3$) of $Q$ lies in the right (resp.\ left) half-plane and is symmetric with respect to the $x$-axis. For $t\in [-a,a]$, where $a=\sqrt{2}/2$, the path $\gamma_t$ that is the intersection of the horizontal line $y=t$ with the disk $\br \D$ joins $\zeta_1$ and $\zeta_3$. Thus, 
$$\int_{\gamma_t} \rho\, ds \geq 1$$
for all $t\in [-a,a]$. Integrating, gives
\begin{align*}
    2a\leq \int_{\br \D \setminus \br B(0,1-\delta)} \rho \, d\mathcal H^2\leq \left( \int \rho^2\, d\mathcal H^2 \right)^{1/2} ( \pi - \pi(1-\delta)^2)^{1/2}.
\end{align*}
Therefore,
\begin{align*}
    \Mod(\Gamma(Q)| \br \D\setminus \br B(0,1-\delta)) \geq 2\pi^{-1}\delta^{-1}(2-\delta)^{-1}.
\end{align*}
This shows the claim.

Since $f_{\delta}$ is a homeomorphism on $\partial \D$, the image of $Q$ determines a quadrilateral $Q(\delta)$ in $X$ with $\Gamma(Q(\delta)) \supset f_{\delta}(\Gamma(Q))$. By the weak quasiconformality of $f_{\delta}$ and Lemma \ref{lemma:extension}, we have
\begin{align*}
    \Mod \Gamma(Q(\delta)) \geq K\Mod(\Gamma(Q)| \br \D \setminus \br B(0,1-\delta)) 
\end{align*}
which converges to $\infty$ as $\delta\to 0$ by \eqref{theorem:interior:quadrilateral}. Similarly $\Mod \Gamma^*(Q(\delta)) \to \infty$ as $\delta\to 0$. This contradicts the upper reciprocity of $\Int(X)$.
\end{proof}

\begin{cor}\label{corollary:interior}
Let $X$ be a metric surface of finite Hausdorff $2$-measure that is homeomorphic to a topological closed disk such that $\Int(X)$ is upper reciprocal. Then there exists a weakly quasiconformal map $h\colon \br \D \to X$ such that $h|_{\D}:\D \to \Int(X)$ is a quasiconformal homeomorphism.  
\end{cor}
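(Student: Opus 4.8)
The plan is to start from the weakly quasiconformal parametrization of $X$ supplied by Theorem~\ref{thm:one-sided_qc} and to exploit the upper reciprocity of $\Int(X)$ twice: once via Lemma~\ref{theorem:interior}, to force the parametrization to be injective over the interior, and once via Proposition~\ref{proposition:meier:wenger}, applied locally, to upgrade weak quasiconformality to genuine quasiconformality over the interior.

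\smallskip

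\emph{Step 1: a weakly quasiconformal homeomorphism onto $\Int(X)$.} By Theorem~\ref{thm:one-sided_qc} there is a weakly $K$-quasiconformal map $h_0\colon \br{\mathbb D}\to X$. It is monotone (hence cell-like) between compact surfaces homeomorphic to a closed disk, so by the last part of Theorem~\ref{theorem:monotone_approximation} we have $h_0^{-1}(\Int(X))\subset\Int(\br{\mathbb D})=\mathbb D$. Since $\Int(X)$ is upper reciprocal, Lemma~\ref{theorem:interior} shows that $h_0$ restricts to a homeomorphism of $U\coloneqq h_0^{-1}(\Int(X))$ onto $\Int(X)$. The set $U$ is open in $\mathbb D$ and homeomorphic to $\Int(X)\cong\mathbb D$, hence simply connected; as $U\subsetneq\mathbb C$, the Riemann mapping theorem gives a conformal homeomorphism $\phi\colon\mathbb D\to U$. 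Put $g\coloneqq h_0\circ\phi\colon\mathbb D\to\Int(X)$. Then $g$ is a homeomorphism, and it is weakly $K$-quasiconformal: conformal invariance of modulus accounts for $\phi$, and the moduli of curve families lying in $U$ (resp.\ $\Int(X)$) are the same whether computed in $U$ or in $\br{\mathbb D}$ (resp.\ in $\Int(X)$ or in $X$), so the modulus inequality for $h_0$ descends to $g$.

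\smallskip

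\emph{Step 2: $g$ is quasiconformal.} This is the crux, and I would argue locally. Fix $z_0\in\mathbb D$ and $r>0$ with $\br B(z_0,r)\subset\mathbb D$, and set $\br V=g(\br B(z_0,r))$, a closed Jordan region of finite Hausdorff $2$-measure contained in $\Int(X)$. Every quadrilateral in $\br V$ is also a quadrilateral in $\Int(X)$, with the same associated curve families and the same moduli; hence $\br V$ is upper reciprocal, with reciprocity constant no larger than that of $\Int(X)$. The restriction $g|_{B(z_0,r)}$ is again weakly $K$-quasiconformal onto its (open) image, since restricting a weakly quasiconformal map to an open subdomain preserves the constant. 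Conformally identifying $B(z_0,r)$ with $\mathbb D$ and invoking Corollary~\ref{cor:extension}, we obtain a weakly $K$-quasiconformal homeomorphism $\br{\mathbb D}\to\br V$; Proposition~\ref{proposition:meier:wenger} then shows it is quasiconformal, with dilatation $K'$ depending only on $K$ and the reciprocity constant of $\Int(X)$, in particular independent of $z_0$ and $r$. Therefore $g$ and $g^{-1}$ each satisfy the local analytic condition of Theorem~\ref{theorem:definitions_qc} with constant $K'$, and that theorem promotes this to $\Mod\Gamma\le K'\Mod g(\Gamma)$ for all curve families $\Gamma$ in $\mathbb D$ and $\Mod\Delta\le K'\Mod g^{-1}(\Delta)$ for all $\Delta$ in $\Int(X)$; that is, $g\colon\mathbb D\to\Int(X)$ is a $K'$-quasiconformal homeomorphism.

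\smallskip

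\emph{Step 3: conclusion.} Applying Theorem~\ref{theorem:caratheodory} to the quasiconformal homeomorphism $g\colon\mathbb D\to\Int(X)$ produces a weakly quasiconformal extension $h\colon\br{\mathbb D}\to X$ with $h|_{\mathbb D}=g$, which is precisely the map claimed in the corollary. The bookkeeping in Steps~1 and~2 (that the modulus of a curve family inside an open subset is insensitive to the ambient space, and that restrictions preserve weak quasiconformality) is routine; the one point I expect to need genuine care is verifying that upper reciprocity is inherited by the subregions $\br V$ together with uniformity of the dilatation $K'$ extracted from Proposition~\ref{proposition:meier:wenger}. Once these are in place, the argument is a direct assembly of the cited results.
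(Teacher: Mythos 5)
Your proposal is correct and follows essentially the same route as the paper's proof: Theorem \ref{thm:one-sided_qc} plus Lemma \ref{theorem:interior} to get a weakly quasiconformal homeomorphism onto $\Int(X)$ after a Riemann map, then Proposition \ref{proposition:meier:wenger} applied on compact subdisks combined with the locality in Theorem \ref{theorem:definitions_qc} to upgrade to quasiconformality, and finally Theorem \ref{theorem:caratheodory} for the extension. The only differences are cosmetic (arbitrary subdisks $\br B(z_0,r)$ instead of concentric ones, and more explicit bookkeeping of the routine facts you flag).
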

\begin{proof}By Theorem \ref{thm:one-sided_qc} there exists a weakly quasiconformal map $h\colon \br \D \to X$. By Lemma \ref{theorem:interior}, $h|_{h^{-1}(\Int(X))}$ is a homeomorphism onto $\Int(X)$. In particular, $h^{-1}(\Int(X))$ is simply connected and by the Riemann mapping theorem there exists a conformal map $\varphi\colon \D \to h^{-1}(\Int(X))$. The composition $f=h\circ \varphi \colon \D\to \Int(X)$ is a weakly quasiconformal homeomorphism. By Proposition \ref{proposition:meier:wenger}, since $\Int(X)$ is upper reciprocal, $f$ is $K$-quasiconformal in each compact disk $\br B(0,r)$, $0<r<1$, for some $K\geq 1$. By the locality of quasiconformality, as given by Theorem \ref{theorem:definitions_qc}, we see that $f\colon \D \to \Int(X)$ is a $K$-quasiconformal homeomorphism. By Theorem \ref{theorem:caratheodory}, $f$ extends to a weakly $K$-quasiconformal map from $\br \D $ onto $X$.
\end{proof}

We are finally ready for the proofs of \Cref{theorem:reciprocal} and \Cref{theorem:reciprocal:interior}.

\begin{proof}[Proof of Theorem \ref{theorem:reciprocal}]
Suppose that $X$ is upper reciprocal. It suffices to show that each closed Jordan region $Y\subset X$ is quasiconformally equivalent to $\br \D$. By Corollary \ref{corollary:interior}, there exists a weakly quasiconformal map $h\colon \br \D \to Y$ such that $h|_{\D}\colon \D \to \Int(Y)$ is a homeomorphism. By Lemma \ref{lemma:boundary_homeo}, $h\colon \br \D \to Y$ is a homeomorphism. By Proposition \ref{proposition:meier:wenger}, $h$ is quasiconformal, as desired.
\end{proof}

\begin{proof}[Proof of Theorem \ref{theorem:reciprocal:interior}]
Suppose that $\Int(X)$ is upper reciprocal and \eqref{reciprocality:3} holds at each point of $\partial X$. By Theorem \ref{theorem:reciprocal}, $\Int(X)$ is reciprocal and  \eqref{reciprocality:3} holds at each point of $X$. As in the above proof, it suffices to show that each closed Jordan region $Y\subset X$ is quasiconformally equivalent to $\br \D$. By Corollary \ref{corollary:interior}, there exists a weakly quasiconformal map $h\colon \br \D \to Y$ such that $h|_{\D}\colon \D \to \Int(Y)$ is a quasiconformal homeomorphism; alternatively, since $\Int(Y)$ is reciprocal, one can invoke directly Rajala's uniformization theorem, Theorem \ref{thm:rajala}, in combination with Theorem \ref{theorem:caratheodory} to obtain the map $h$. A result of Ikonen \cite[Prop.\ 2.1]{Iko:21} now asserts that if $h|_{\D}\colon \D \to \Int(Y)$ is a quasiconformal homeomorphism and  \eqref{reciprocality:3} holds at each point of $\partial Y$, then $h$ extends to a quasiconformal homeomorphism from $\br \D$ onto $Y$, as desired. 
\end{proof}

\bigskip

\section{Examples}\label{sec:examples}

\begin{example}[Approximating the unit disk with cracked surfaces]\label{example:crack}
Let $X$ be the open unit disk equipped with the Euclidean metric. We present examples of sequences of metric surfaces $\{X_n\}_{n\in \N}$ converging to $X$ for which all the conclusions of Theorem \ref{thm:extended_polyhedral_approximation} are satisfied except for the approximately isometric retractions in \ref{item:main_3}. In each case, the failure of condition \ref{item:main_3} prevents the sequence of uniformizing conformal maps from $\mathbb{D}$ onto $X_n$ from converging to a conformal map onto $X$. 

As the first example, we let $X_n=\D \setminus \{ re^{i\theta}: |\theta|\leq \frac{\pi}{2n}, 1/2\leq r\leq 1\}$ for all $n\in \N$ and equip $X_n$ with the ambient Euclidean metric. Clearly \ref{item:main_3} fails for the sequence $X_n$, $n\in \N$, while the other conclusions of Theorem \ref{thm:extended_polyhedral_approximation} remain true. For this reason, the sequence of conformal maps $h_n\colon  \D \to X_n$, normalized so that $h_n(0)=0$ and $h_n'(0)>0$, does not {sub}converge to a (weakly) conformal map from $ \D$ onto $X$, as it would in the presence of condition \ref{item:main_3}, but rather  to a conformal map from $\D$ onto a slit disk (e.g.\ by Carath\'eodory's kernel convergence theorem \cite[Chapter I, Theorem 1.8]{Pom:92}). 

For our second example, we let $S(\varphi,n)=\{re^{i\theta}: |\theta -\varphi|\leq \frac{\pi}{2n} , 1/2\leq r\leq 1\}$ and $Y_n = \bigcup_{k=1}^n S(2k\pi/n,n)$. Define $X_n$ to be the surface $\mathbb{D} \setminus Y_n$, again equipped with the ambient Euclidean metric.  Although $X_n$ converges to $\D$ in the Gromov--Hausdorff sense, the sequence of conformal maps $h_n \colon \D \to X_n$ such that $h_n(0)=0$ subconverges locally uniformly in $\D$ but not uniformly to a conformal map from $\D$ onto $B(0,1/2)$; this can be justified using Carath\'eodory's kernel convergence theorem.

By enlarging the cracks in the previous example, one can even have that the areas of $X_n$ tend to $0$ and the conformal maps $h_n$ converge to a constant map. 
\end{example}

\smallskip

\begin{example}[Areas need not converge]\label{example:area}
We present an example showing that in Theorem \ref{thm:extended_polyhedral_approximation}  the areas of the surfaces $X_n$ do not necessarily converge to the area of $X$. Thus, we cannot expect that conclusion \ref{item:main_2} can be strengthened to convergence or that the constant $K$ can be equal to $1$.

The proof  relies on the next theorem, which is a variant of the classical Besicovitch inequality and is proved exactly in the same way; see  \cite[Thm.\ 13.11]{Pet:20} or \cite[Sect.\ 5.6]{BBI:01}. 

\begin{thm}\label{thm:besicovitch}
Let $Y$ be a metric space homeomorphic to the unit square $[0,1]^2$ such that the metric of $Y$ outside finitely many points is locally isometric to a Riemannian metric. If  the distances between opposite sides of $Y$ are at least $1$, then $\mathcal H^2(Y)\geq 1$.
\end{thm}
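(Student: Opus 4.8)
The plan is to transcribe the classical level-set proof of the Besicovitch inequality, the key point being that away from the finitely many singular points $Y$ carries a genuine Riemannian metric, so that the coarea formula holds with \emph{equality} rather than with the lossy constant $4/\pi$ of Lemma~\ref{lemm:coarea} (which by itself would only give $\mathcal H^2(Y)\geq \pi/4$). We may assume $\mathcal H^2(Y)<\infty$. Identify $\partial Y$ with the boundary of $[0,1]^2$ and its four distinguished arcs with the four sides, and write the two pairs of opposite arcs as $S_1,S_1'$ and $S_2,S_2'$, so that by hypothesis $\dist_{d_Y}(S_1,S_1')\geq 1$ and $\dist_{d_Y}(S_2,S_2')\geq 1$. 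Let $F\subset Y$ be the finite exceptional set, and define the $1$-Lipschitz function $u\colon Y\to[0,\infty)$ by $u(x)=\dist_{d_Y}(x,S_1)$, so that $u^{-1}(0)=S_1$ and $u\geq 1$ on $S_1'$; by Lemma~\ref{lemm:coarea} almost every level set $u^{-1}(t)$ has finite Hausdorff $1$-measure.

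The first step bounds the size of the level sets. Fix $x_0\in S_1'$. Since $Y$ is homeomorphic to a closed disk, Lemma~\ref{lemma:separation} applied to $u$ produces, for a.e.\ $r\in(0,1)$, a continuum $E_r\subset u^{-1}(r)$ separating $S_1$ from $x_0$. As $u=0$ on $S_1$ and $u\geq 1>r$ on $S_1'$, the continuum $E_r$ is disjoint from $S_1\cup S_1'$, and a short topological argument forces $E_r$ to meet both $S_2$ and $S_2'$: otherwise $S_1\cup S_2'\cup S_1'$ (respectively $S_1\cup S_2\cup S_1'$) would be a connected subset of $Y\setminus E_r$ containing $S_1$ and $x_0$. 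Picking $p\in E_r\cap S_2$ and $q\in E_r\cap S_2'$, the map $z\mapsto d_Y(z,p)$ is $1$-Lipschitz and sends the continuum $E_r$ onto an interval containing $0$ and $d_Y(p,q)$, so
\[
\mathcal H^1(u^{-1}(r))\geq \mathcal H^1(E_r)\geq d_Y(p,q)\geq \dist_{d_Y}(S_2,S_2')\geq 1
\]
for a.e.\ $r\in(0,1)$.

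The second step is the sharp coarea estimate. Away from $F$ the metric $d_Y$ is locally the Riemannian distance of a smooth Riemannian metric $g$, so $u$ is $1$-Lipschitz for $g$ and the smooth coarea formula for Lipschitz functions applies on the Riemannian surface $Y\setminus F$; since $F$ is finite it is $\mathcal H^2$-null and meets every level set in $\mathcal H^1$-measure zero. Hence
\[
1\leq \int_0^1\mathcal H^1(u^{-1}(t))\,dt\leq \int_{\R}\mathcal H^1(u^{-1}(t))\,dt=\int_{Y\setminus F}|\nabla u|\,d\mathcal H^2\leq \mathcal H^2(Y),
\]
the last inequality because $|\nabla u|\leq 1$ almost everywhere. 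This is the claimed bound.

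The only genuine obstacle is the bookkeeping around the singular set: one must check that $u$ is Lipschitz for $g$ near each point of $F$ (immediate, since $d_Y$ dominates every local Riemannian distance and $F$ is finite) and that passing between $Y$ and $Y\setminus F$ alters neither $\mathcal H^2(Y)$ nor the $\mathcal H^1$ of the level sets. The planar separation input used in the first step is classical --- it is the two-dimensional instance of the statement that a closed set separating one pair of opposite faces of a cube meets the complementary faces --- and, as the statement of the theorem indicates, the whole argument is the standard one; it suffices to follow \cite[Thm.\ 13.11]{Pet:20} or \cite[Sect.\ 5.6]{BBI:01}.
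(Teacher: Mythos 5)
Your proof is correct. Note that the paper does not actually write out an argument for this statement: it only remarks that the theorem "is proved exactly in the same way" as the classical Besicovitch inequality and points to \cite[Thm.\ 13.11]{Pet:20} and \cite[Sect.\ 5.6]{BBI:01}. Those references package the argument via the map $x\mapsto(\dist(x,S_1),\dist(x,S_2))$ into $\R^2$, a degree/topological argument showing the image contains the unit square, and Hadamard's inequality for the Jacobian together with the area formula. You instead use a single distance function, the paper's Lemma \ref{lemma:separation} to extract a separating continuum in almost every level set (whence $\mathcal H^1(u^{-1}(t))\geq 1$ via projection onto the distance from a point of $S_2$), and the Riemannian coarea formula with constant $1$ on $Y\setminus F$. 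The two routes are close cousins — the coarea formula is the sliced version of the area formula for the two-coordinate map — and both are standard; your version has the small advantage of reusing machinery already set up in the paper (Lemmas \ref{lemm:coarea} and \ref{lemma:separation}) and of isolating exactly where the sharp constant comes from, namely that the lossy factor $4/\pi$ in the metric-space coarea inequality is replaced by equality on the Riemannian part. The bookkeeping you flag at the end (that $F$ is $\mathcal H^2$-null, meets each level set in a finite set, and that $\partial Y\setminus F$ has $\sigma$-finite $\mathcal H^1$ so that a.e.\ level set meets $\partial Y$ in an $\mathcal H^1$-null set) is indeed all that remains, and it goes through.
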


Let $X$ be the closed unit square in $\R^2$ with the $\ell^\infty$ metric, and let $\{X_n\}_{n=1}^\infty$ be a sequence of polyhedral surfaces homeomorphic to $X$ that are equipped with a metric that is locally isometric to the polyhedral length metric. Let $\varepsilon_n>0$ be a sequence converging to $0$. Suppose that there exist $\varepsilon_n$-isometric sequences $f_n\colon X_n\to X$ and $R_n\colon X\to f_n(X_n)$ such that each $f_n$ is a topological embedding and each $R_n$ is a retraction. We claim that
$$ \liminf_{n\to\infty} \mathcal H^2(X_n) \geq 1 = \frac{4}{\pi} \mathcal H^2(X).$$
See \cite[Lemma 6]{Kir:94} or \cite[pp.~2--3]{EBC:21} for the equality on the right-hand side.

For each large $n\in \N$, the retraction $R_n$ induces a subdivision of $f_n(\partial X_n)$, and thus of $\partial X_n$, into four non-overlapping arcs, corresponding to the sides of the square $X$. The distances of opposite sides of $f_n(\partial X_n)$ is at least $1-\varepsilon_n$. Thus, the distance of opposite sides of $X_n$ is at least $1-2\varepsilon_n$. The metric of $X_n$ is locally isometric to a flat Riemannian metric outside the vertices. Therefore, if we rescale the metric of $X_n$ by $(1-2\varepsilon_n)^{-1}$ and apply Theorem \ref{thm:besicovitch}, we obtain $$\mathcal H^2(X_n) \geq (1-2\varepsilon_n)^2.$$
Finally, we let $n\to\infty$ to obtain the desired conclusion.
\end{example}

\smallskip

\begin{example}[Condition \eqref{ireciprocality:3} is strictly weaker than reciprocity]\label{example:rec3}
We show that there exists a metric surface $X$ of locally finite Hausdorff $2$-measure that satisfies \eqref{ireciprocality:3} but is not reciprocal. Moreover, the constructed surface can be written as the union of two reciprocal surfaces, as claimed in Proposition \ref{prop:example:rec3}. To see the later, we note that by construction the surface $X$ is homeomorphic to $\C$ and is locally isometric to a planar domain outside a Cantor set $E$. Consider a Jordan curve $J$ in $X$ such that $E\subset J$; the existence of $J$ follows from the tameness of planar Cantor sets \cite[Sect.\ 13, Thm.\ 7, p.~93]{Moi:77}. Then $J$ separates $X$ into two surfaces $X_1,X_2$ such that $J$ is their common boundary. The interior of each of these surfaces is locally isometric to a planar domain so it is reciprocal. By Theorem \ref{theorem:reciprocal:interior}, both $X_1$ and $X_2$ are reciprocal.

Ahlfors--Beurling \cite[Thm.\ 16]{AB:50} construct a Cantor set $E\subset \C$ of positive area with the property that for each point $a\in E$ there exists a sequence of nested topological annuli $A_n\subset \C\setminus E$, $n\in \N$, with disjoint closures, each surrounding $a$, with the property that 
\begin{align}\label{example:reciprocal:divergence}
    \sum_{n=1}^\infty \frac{1}{M_n} =\infty,
\end{align}
where $M_n$ is the modulus of the family of curves  joining the boundary components of $A_n$ in $\br {A_n}$. Moreover, by the construction, the annuli $A_n$ converge to the point $a$; this is also implied by the divergence of the sum above.

For $x,y\in \C$, we let \[d(x,y)= \inf_{\gamma}\int_{\gamma} \chi_{\C\setminus E}\, ds,\] where the infimum is taken over all rectifiable paths joining $x,y$. By \cite[Prop.\ 8.1]{NR:21}, $(\C,d)$ is a length space with locally finite Hausdorff $2$-measure, homeomorphic to $\C$. We denote by $X$ this metric space. Since $E$ has positive Lebesgue measure, $X$ is not reciprocal; this can be shown by following the argument of \cite[Example 2.1]{Raj:17}.

We will show that for each point of $X$, condition \eqref{ireciprocality:3} is true. This is trivially true for points of $X\setminus E$, since the identity map from $(\C\setminus E,|\cdot |)$ to $(X\setminus E,d)$ is locally isometric and thus preserves modulus. 

Next, fix a point $a\in E$ and consider the corresponding annuli $A_n$, $n\in \N$, as above. We fix $R>0$. Since the annuli $A_n$ converge to $a$, by discarding finitely many annuli, we may assume that $A_n\subset B_d(a,R)$ for all $n\in \N$. Next, we fix $N\in \N$. Then, for all sufficiently small $r>0$ the closed ball $\br{B}_d(a,r)$ is disjoint from $A_N$ and is surrounded by $A_N$.

For $i\in \{1,\dots,N\}$, let $\rho_i$ be an admissible function for the modulus of curves joining the boundary components of $A_i$, such that 
\begin{align}\label{example:reciprocal:modulus}
    \int \rho_i^2 \, d\mathcal H^2_{|\cdot |}\leq 2M_i.
\end{align}
We may take $\rho_i$ to be supported in $\br{A_i}$, so the functions $\rho_i$, $i\in \{1,\dots,N\}$, have disjoint supports. Now, we define 
$$\rho= \sum_{i=1}^N c_i\rho_i,$$
where $c_i= M_i^{-1} (\sum_{j=1}^N M_j^{-1})^{-1}$. 

Each curve $\gamma$ joining  $X\setminus B_d(a,R)$ to $\br {B}_d(a,r)$ has disjoint subpaths $\gamma_i$, $i\in \{1,\dots,N\}$, such that $|\gamma_i|\subset \br {A_i}$ and $\gamma_i$ joins the boundary components of $A_i$. Therefore,
$$\int_{\gamma}\rho \, ds \geq \sum_{i=1}^N c_i \int_{\gamma_i} \rho_i\, ds \geq \sum_{i=1}^N c_i =1.$$
This shows that $\rho$ is admissible for $\Gamma( \br {B}_d(a,r), X\setminus B_d(a,R);X )$. Therefore, by \eqref{example:reciprocal:modulus}, we have
\begin{align*}
  \Mod\Gamma( \br {B}_d(a,r), X\setminus B_d(a,R);X ) &\leq \int \rho^2\, d\mathcal H^2_d= \sum_{i=1}^N c_i^2 \int \rho_i^2\, d\mathcal H^2_{|\cdot|}  \\
  &\leq 2\sum_{i=1}^N c_i^2 M_i=2 \left(\sum_{j=1}^N \frac{1}{M_j}\right)^{-1}.
\end{align*}
The latter converges to $0$ as $N\to\infty$ by \eqref{example:reciprocal:divergence}. Thus, letting first $r\to 0$ and then $N\to \infty$ shows that \eqref{ireciprocality:3} holds at the point $a$.
\end{example}

\smallskip

\begin{example}[Condition \eqref{ireciprocality:3} can fail on a continuum]\label{example:continuum}
We show that there exists a metric surface $X$ of locally finite Hausdorff $2$-measure that is homeomorphic to $\C$ such that \eqref{ireciprocality:3} fails for all points in a non-degenerate continuum $E\subset X$. 

Let $C\subset [0,1]$ be a Cantor set containing the points $0,1$. For each component $J$ of $[0,1]\setminus C$ consider a positive number $\eta(J)$ and two vertical line segments of length $2\eta(J)$ passing through the endpoints of $J$ and symmetric with respect to the $x$-axis. Let $H(J)$ be the union of these two line segments with $J\times \{0\}$. We choose the numbers $\eta(J)$ so that for each $\delta>0$ there are only finitely many components $J$ of $[0,1]\setminus C$ with $\eta(J)>\delta$; later we will make a specific choice. We now define $F= (C\times \{0\})\cup \bigcup_{J} H(J)$ and observe that $\mathcal H^2_{|\cdot |}(F)=0$; see Figure \ref{fig:f_set}.

\begin{figure}
    \centering
    \begin{tikzpicture}
           \node () at (0,0) {\includegraphics[width=.5\textwidth]{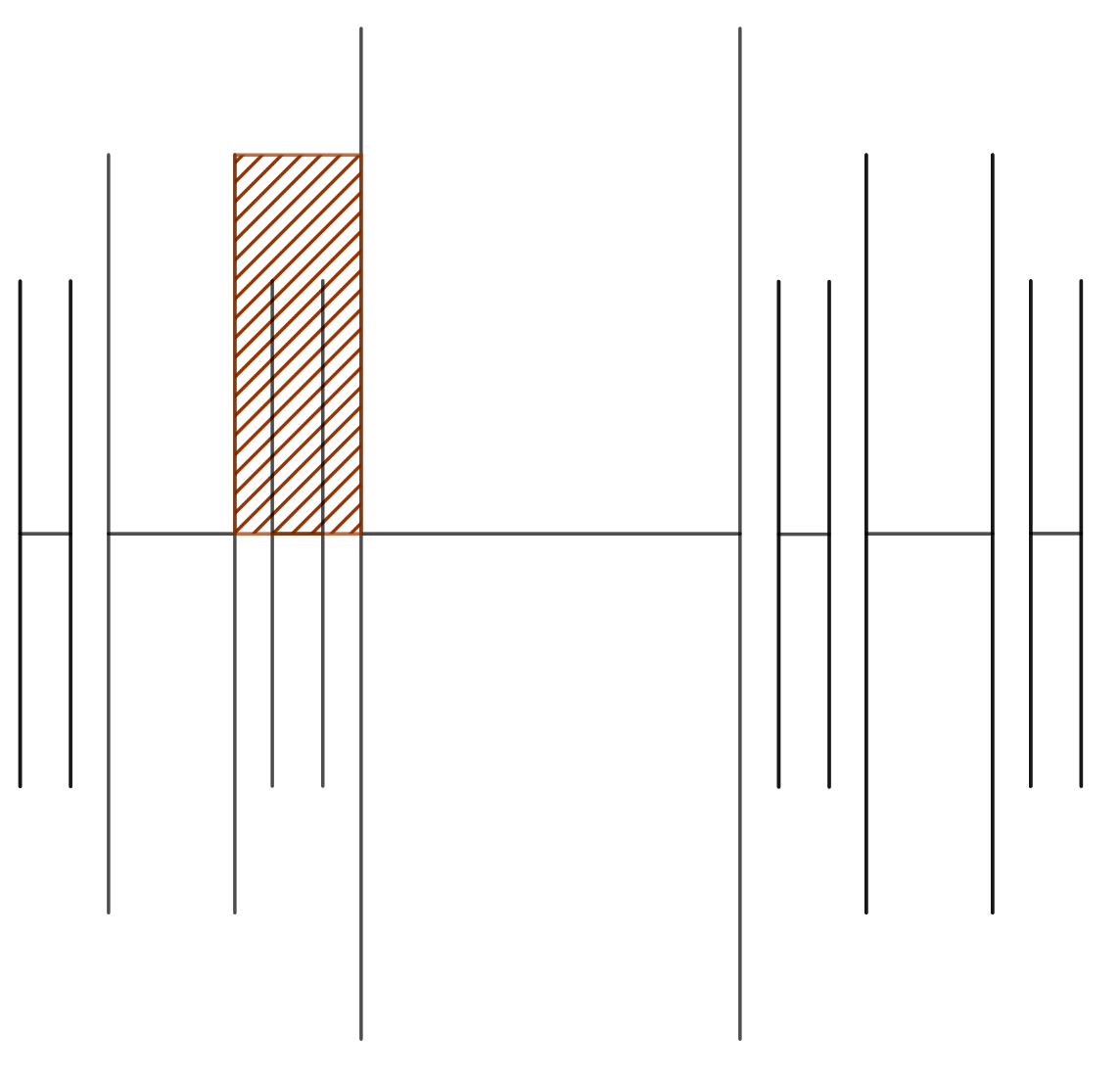}};
           \node[anchor=south] () at (0,0) {$H(J_1)$};
           \node[anchor=south] () at (-2.2,0) {$\scriptscriptstyle{H(J_2)}$};
           \node[anchor=south] () at (-1.5,2.2) {$R_1$};
    \end{tikzpicture}
    \caption{The set $F$.}
    \label{fig:f_set}
\end{figure}

We define an equivalence relation $R$ on $\C$ that is trivial outside $\bigcup_{J}H(J)$ and collapses each $H$-shape $H(J)$ to a point. In particular, each equivalence class does not separate the plane. It is easy to see that the relation $R$ is upper semi-continuous (that is, the limit points of every two sequences of equivalent points are equivalent). If $Y=\C/R$ is the quotient space and $h_Y\colon \C \to Y$ is the natural projection, then $Y$ is homeomorphic to $\C$ by Moore's decomposition theorem \cite[p.~3]{Dav:86}, and $h_Y$ is continuous, surjective, and monotone. The set $h_Y(F)$ is a non-degenerate continuum in $Y$ and each point of $h_Y(F)$ has a preimage that is either an $H$-shape or a point of $C\times \{0\}$. We will realize this topological model with a metric. 

We require that the Cantor set $C$ has the additional property that $\mathcal H^1( C\cap I) >0$ for each open interval $I\subset \R$ intersecting $C$; later we will specify even further the Cantor set. Define the quotient pseudometric  $d_R$ on $\C$ by 
$$d_R(x,y)=\inf \left\{ \sum_{i=1}^k |p_i-q_i|: p_1=x, \,\, q_k=y, \,\, k\in \N \right\},$$
where the infimum is taken over all choices of $\{p_i\}_{i=1}^k$ and $\{q_i\}_{i=1}^k$ such that $q_i\sim_R p_{i+1}$ for $i\in \{1,\dots,k-1\}$. Note that $d_R(x,y)=0$ if $x\sim_R y$ and $d_R(x,y)>0$ if $x\notin F$ and $y\neq x$. 

We will show that the equivalence relation $d_R=0$ coincides with $R$. For this, it suffices to show that if $x,y\in F$ and $x\not\sim_R y$, then $d_R(x,y)>0$.  For a point $z\in \C$, denote by $\widetilde z$ its projection onto the $x$-axis. If $x\not\sim_R y$ and $x,y\in F$, then $\widetilde x\neq \widetilde y$. Without loss of generality, suppose that $\widetilde x<\widetilde y$. Let $\{p_i\}_{i=1}^k$ and $\{q_i\}_{i=1}^k$ be points such that $p_1=x$, $q_k=y$, and $q_i\sim_R p_{i+1}$ for $i\in \{1,\dots,k-1\}$. Note that the polygonal path joining $p_1$ to $q_1$ to $p_2$ to $q_2$ to $\dots$ to $q_k$ projects to a line segment in the $x$-axis that covers the interval $[\widetilde x,\widetilde y]$. Moreover, since $q_i\sim_R p_{i+1}$, we see that $q_i$ and $p_{i+1}$ lie in the same $H$-shape or $q_i=p_{i+1}$. Hence, the projection of the segment from $q_i$ to $p_{i+1}$ intersects $C$ in at most two points. We conclude that the intervals from $\widetilde p_i$ to $\widetilde q_i$, $i\in \{1,\dots,k\}$, cover the set $C\cap [\widetilde x,\widetilde y]$ except for finitely many points. Therefore,
$$\sum_{i=1}^k |p_i-q_i| \geq \sum_{i=1}^k |\widetilde p_i-\widetilde q_i| \geq \mathcal H^1(C\cap [\widetilde x,\widetilde y]).$$
Note that the latter is positive, since $x\not\sim_R y$ and in particular they do not lie in the same $H$-shape. It follows that
$$d_R(x,y)\geq \mathcal H^1(C\cap [\widetilde x,\widetilde y])>0.$$
This shows the claim. In fact, the reverse inequality also holds. That is, if $x,y\in F$, then 
\begin{align*}
    d_R(x,y)=\mathcal H^1(C\cap [\widetilde x,\widetilde y]).
\end{align*}
This can be seen by connecting the projections $\widetilde x$ and $\widetilde y$ with a specific polygonal path contained in $[0,1]\times \{0\}$.

Consider the metric space $X= \C/d_R$ with metric $d=d_R$ and let $h\colon \C\to X$ be the projection map. Since the equivalence relation $d_R=0$ coincides with $R$, we conclude that $X$ is homeomorphic to $Y=\C/R$ and thus it is homeomorphic to $\C$; see \cite[Exercise 3.1.13, p.~63]{BBI:01}. Observe that $h$ is $1$-Lipschitz continuous. Thus, $X$ has locally finite Hausdorff $2$-measure. Moreover, $h$ is surjective, proper, and cell-like.  We claim that $h$ is weakly $1$-quasiconformal. For this, by Theorem \ref{theorem:definitions_qc} it suffices to see that the function $g=1$ is an upper gradient of $h$ because $h$ is $1$-Lipschitz, and has the property that 
\begin{align*}
    \int_{h^{-1}(A)} g^2\, d\mathcal H^2_{|\cdot |} \leq \mathcal H^2_{d}(A)  
\end{align*}
for each Borel set $A\subset X$. {The latter, in fact, holds trivially with equality, because  $h$ is locally isometric outside $F$ and  $\mathcal H^2_{|\cdot|}(F)=\mathcal H^2_{d}(h(F))=0$} by the $1$-Lipschitz property of $h$. 

Finally, we verify that \eqref{ireciprocality:3} fails at each point of the set $E=h(F)$. Since $h$ is weakly quasiconformal, this is immediate for points whose preimage is a $H$-shape, and in particular has positive diameter. We have to argue for the points whose preimage is a singleton in $C\times \{0\}$. To achieve this, we will give a specific construction of the Cantor set $C$ and a specific choice of the numbers $\eta(J)$.

We construct the fat Cantor set $C$ in the standard way. Namely, we call $I_0=[0,1]$ an interval of level $0$ and  once the intervals of level $n-1$ have been defined, we remove a middle interval of relative length $a_n\in (0,1)$ from each interval of level $n-1$ and obtain $2^{n}$ intervals of level $n$. The Cantor set $C$ is defined as the intersection in $n\in \N\cup \{0\}$ of the unions of all intervals of level $n$. If $\prod_{n=1}^\infty(1-a_n)>0$, or equivalently if $\sum_{n=1}^\infty a_n<\infty$, then the Cantor set $C$ has positive measure. We choose the sequence $\{a_n\}_{n\in \N}$ arbitrarily so that $C$ has positive measure. If $I_{n-1}$ is an interval of level $n-1$, then $\mathcal H^1(I_{n-1})\simeq 2^{-n+1}\simeq 2^{-n}$. If $J_{n}$ is a middle interval removed from $I_{n-1}$ at the next stage, we define $\eta(J_n)=1/n$. To simplify the notation and avoid taking product sets, we consider the real line $\R$ as a subset of the plane $\C$, identified with $\R\times \{0\}$. 

\begin{figure}
    \centering
    \begin{tikzpicture}
           \node () at (0,0) {\includegraphics[width=.7\textwidth]{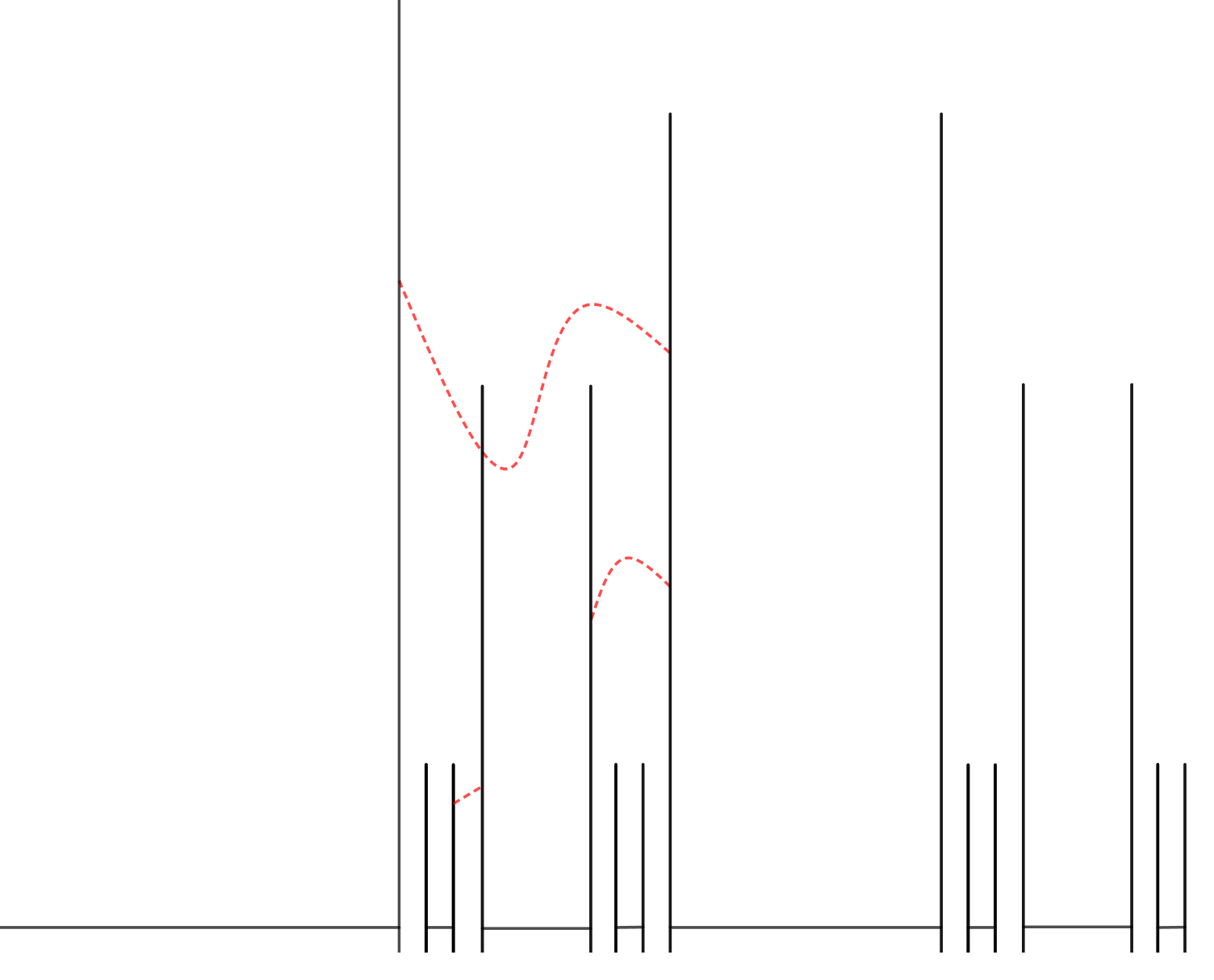}};
           \node[anchor=south] () at (-3,-3.9) {$H(J_1)$};
           \node[anchor=south] () at (1.5,-3.9) {$H(J_2)$};
           \node[anchor=south] () at (-0.5,-3.9) {${H(J_3)}$};
           \node[anchor=north] (J) at (-1.8,-4) {$H(J_4)$};
           \draw [->] (J) edge (-1.3,-3.4);
           \node[anchor=south] () at (0,1.5) {$\gamma_1$};
           \node[anchor=south] () at (0.2,-.4){$\gamma_2$};
           \node[anchor=south] () at (-0.7,-2.3) {$\gamma_3$};
    \end{tikzpicture}
    \caption{A depiction of the paths $\gamma_n$ in the plane. In fact, the paths are taken in the space $X$.}
    \label{fig:paths}
\end{figure}

Let $x_0\in C$ and for each $n\in \N\cup \{0\}$ consider the interval $I_n$ of level $n$ containing $x_0$, so that $\{x_0\}=\bigcap_{n=0}^\infty I_n$. Denote by $J_n$ the middle interval removed from $I_{n-1}$, $n\in \N$. For $n\in \N$ consider the rectangle $R_n$ in the upper half-plane with height $\eta(J_{n+1})$, whose vertical sides are contained in $H(J_n)$ and $H(J_{n+1})$, and one of whose horizontal sides coincides with a remaining interval of level $n+1$; this interval might or might not be $I_{n+1}$. See Figure \ref{fig:f_set} for an illustration.  Note that the modulus in $\C$ of the family of curves joining the vertical sides of $R_n$ is $M_n=\frac{\eta(J_{n+1})}{\mathcal H^1(I_{n+1})}\simeq n^{-1}2^n$. By the weak $1$-quasiconformality of $h$, the modulus of curves in $X$ joining the points $h(H(J_n))$ and $h(H(J_{n+1}))$ is at least $M_n$. Observe that the sequence of points $h(H(J_n))$, $n\in \N$, converges to $h(x_0)$ as $n\to\infty$.

Suppose that $x_0$ is not an endpoint of a complementary interval of the Cantor set $C$. We fix a small radius $R>0$ so that $B_d(h(x_0),R)$ does not contain the point $h(H(J_1))$. Let $r<R$ be arbitrary and fix $m\in \N$ such that $h(H(J_{m+1}))\in B_d(h(x_0),r)$. Let $\rho\in L^2(X)$ be an admissible function for $\Gamma( \br B_d( h(x_0),r), X\setminus B_d(h(x_0),R);X )$. For each $n\in \N$ we may find a path $\gamma_n$ connecting $h(H(J_n))$ to $h(H(J_{n+1}))$ with 
$$\int_{\gamma_n} \rho\, ds \leq 2 M_n^{-1/2} \|\rho\|_{L^2(X)}.$$
See Figure \ref{fig:paths} for a depiction. The concatenation of $\gamma_1,\dots,\gamma_m$ gives a rectifiable path joining the point $h(H(J_1))$ to $h(H(J_{m+1}))$ and thus lying in $\Gamma( \br B_d( h(x_0),r), X\setminus B_d(h(x_0),R);X )$. Therefore, by the admissibility of $\rho$, we have
$$1\leq \sum_{n=1}^m \int_{\gamma_n} \rho\, ds \leq  2 \sum_{n=1}^m M_n^{-1/2 } \|\rho\|_{L^2(X)} \simeq \|\rho\|_{L^2(X)} \sum_{n=1}^m \sqrt{n}2^{-n/2}\lesssim \|\rho\|_{L^2(X)}.$$
This implies that 
$$\Mod \Gamma( \br B_d( h(x_0),r), X\setminus B_d(h(x_0),R);X ) \gtrsim 1 $$
for all sufficiently small $r>0$. This completes the proof. 
\end{example}

\bigskip

\bibliographystyle{abbrv}
\bibliography{bibliography}

\begin{thebibliography}{10}

\bibitem{AB:50}
L.~Ahlfors and A.~Beurling.
\newblock Conformal invariants and function-theoretic null-sets.
\newblock {\em Acta Math.}, 83:101--129, 1950.

\bibitem{AZ:67}
A.~D. Aleksandrov and V.~A. Zalgaller.
\newblock {\em Intrinsic geometry of surfaces}, volume~15 of {\em Translations
  of Mathematical Monographs}.
\newblock American Mathematical Society, Providence, RI, 1967.
\newblock Translated from the Russian by J. M. Danskin.

\bibitem{Ball:97}
K.~Ball.
\newblock An elementary introduction to modern convex geometry.
\newblock In {\em Flavors of geometry}, volume~31 of {\em Math. Sci. Res. Inst.
  Publ.}, pages 1--58. Cambridge Univ. Press, Cambridge, 1997.

\bibitem{BK:02}
M.~Bonk and B.~Kleiner.
\newblock Quasisymmetric parametrizations of two-dimensional metric spheres.
\newblock {\em Invent. Math.}, 150(1):127--183, 2002.

\bibitem{BM:17}
M.~Bonk and D.~Meyer.
\newblock {\em Expanding {T}hurston maps}, volume 225 of {\em Mathematical
  Surveys and Monographs}.
\newblock American Mathematical Society, Providence, RI, 2017.

\bibitem{BH:99}
M.~R. Bridson and A.~Haefliger.
\newblock {\em Metric spaces of non-positive curvature}, volume 319 of {\em
  Grundlehren der Mathematischen Wissenschaften}.
\newblock Springer-Verlag, Berlin, 1999.

\bibitem{BBI:01}
D.~Burago, Y.~Burago, and S.~Ivanov.
\newblock {\em A course in metric geometry}, volume~33 of {\em Graduate Studies
  in Mathematics}.
\newblock American Mathematical Society, Providence, RI, 2001.

\bibitem{Can:94}
J.~W. Cannon.
\newblock The combinatorial {R}iemann mapping theorem.
\newblock {\em Acta Math.}, 173(2):155--234, 1994.

\bibitem{CR:21}
P.~Creutz and M.~Romney.
\newblock Triangulating metric surfaces.
\newblock {\em Proc. Lond. Math. Soc. (3)}, 2022.
\newblock To appear.

\bibitem{Dav:86}
R.~J. Daverman.
\newblock {\em Decompositions of manifolds}, volume 124 of {\em Pure and
  Applied Mathematics}.
\newblock Academic Press, Inc., Orlando, FL, 1986.

\bibitem{EBC:21}
S.~Eriksson-Bique and P.~Poggi-Corradini.
\newblock On the sharp lower bound for duality of modulus.
\newblock {\em Proc. Amer. Math. Soc.}, 150:2955--2968, 2022.

\bibitem{EH:21}
B.~Esmayli and P.~Haj{\l}asz.
\newblock The coarea inequality.
\newblock {\em Ann. Fenn. Math.}, 46(2):965--991, 2021.

\bibitem{Fed:69}
H.~Federer.
\newblock {\em Geometric measure theory}, volume 153 of {\em Grundlehren der
  Mathematischen Wissenschaften}.
\newblock Springer-Verlag New York Inc., New York, 1969.

\bibitem{FW:21}
M.~Fitzi and S.~Wenger.
\newblock Area minimizing surfaces of bounded genus in metric spaces.
\newblock {\em J. Reine Angew. Math.}, 770:87--112, 2021.

\bibitem{GW:20}
C.-Y. Guo and S.~Wenger.
\newblock Area minimizing discs in locally non-compact metric spaces.
\newblock {\em Comm. Anal. Geom.}, 28(1):89--112, 2020.

\bibitem{Hei:01}
J.~Heinonen.
\newblock {\em Lectures on analysis on metric spaces}.
\newblock Universitext. Springer-Verlag, New York, 2001.

\bibitem{Hei:05}
J.~Heinonen.
\newblock {\em Lectures on {L}ipschitz analysis}, volume 100 of {\em Report.
  University of Jyv\"{a}skyl\"{a} Department of Mathematics and Statistics}.
\newblock University of Jyv\"{a}skyl\"{a}, Jyv\"{a}skyl\"{a}, 2005.

\bibitem{HKST:15}
J.~Heinonen, P.~Koskela, N.~Shanmugalingam, and J.~T. Tyson.
\newblock {\em Sobolev spaces on metric measure spaces: An approach based on
  upper gradients}, volume~27 of {\em New Mathematical Monographs}.
\newblock Cambridge University Press, Cambridge, 2015.

\bibitem{Hub:06}
J.~H. Hubbard.
\newblock {\em Teichm\"{u}ller theory and applications to geometry, topology,
  and dynamics. {V}ol. 1}.
\newblock Matrix Editions, Ithaca, NY, 2006.

\bibitem{Iko:21}
T.~Ikonen.
\newblock Quasiconformal {J}ordan domains.
\newblock {\em Anal. Geom. Metr. Spaces}, 9(1):167--185, 2021.

\bibitem{Iko:19}
T.~Ikonen.
\newblock Uniformization of metric surfaces using isothermal coordinates.
\newblock {\em Ann. Fenn. Math.}, 47(1):155--180, 2022.

\bibitem{IR:20}
T.~{Ikonen} and M.~{Romney}.
\newblock {Quasiconformal geometry and removable sets for conformal mappings}.
\newblock {\em J. Anal. Math.}, 2022.
\newblock To appear.

\bibitem{Kir:94}
B.~Kirchheim.
\newblock Rectifiable metric spaces: local structure and regularity of the
  {H}ausdorff measure.
\newblock {\em Proc. Amer. Math. Soc.}, 121(1):113--123, 1994.

\bibitem{Lee:11}
J.~M. Lee.
\newblock {\em Introduction to topological manifolds}, volume 202 of {\em
  Graduate Texts in Mathematics}.
\newblock Springer, New York, second edition, 2011.

\bibitem{LV:73}
O.~Lehto and K.~I. Virtanen.
\newblock {\em Quasiconformal mappings in the plane}, volume 126 of {\em
  Grundlehren der Mathematischen Wissenschaften}.
\newblock Springer-Verlag, New York-Heidelberg, second edition, 1973.

\bibitem{LYZ:05}
E.~Lutwak, D.~Yang, and G.~Zhang.
\newblock {$L_p$} {J}ohn ellipsoids.
\newblock {\em Proc. London Math. Soc. (3)}, 90(2):497--520, 2005.

\bibitem{LW:17}
A.~Lytchak and S.~Wenger.
\newblock Area minimizing discs in metric spaces.
\newblock {\em Arch. Ration. Mech. Anal.}, 223(3):1123--1182, 2017.

\bibitem{LW:17b}
A.~Lytchak and S.~Wenger.
\newblock Energy and area minimizers in metric spaces.
\newblock {\em Adv. Calc. Var.}, 10(4):407--421, 2017.

\bibitem{LW:18a}
A.~Lytchak and S.~Wenger.
\newblock Intrinsic structure of minimal discs in metric spaces.
\newblock {\em Geom. Topol.}, 22(1):591--644, 2018.

\bibitem{LW:20}
A.~Lytchak and S.~Wenger.
\newblock Canonical parameterizations of metric disks.
\newblock {\em Duke Math. J.}, 169(4):761--797, 2020.

\bibitem{MSZ:03}
J.~Mal\'{y}, D.~Swanson, and W.~P. Ziemer.
\newblock The co-area formula for {S}obolev mappings.
\newblock {\em Trans. Amer. Math. Soc.}, 355(2):477--492, 2003.

\bibitem{Mar:19}
D.~E. Marshall.
\newblock {\em Complex {A}nalysis}.
\newblock Cambridge Mathematical Textbooks. Cambridge University Press,
  Cambridge, 2019.

\bibitem{MW:21}
D.~Meier and S.~Wenger.
\newblock Quasiconformal almost parametrizations of metric surfaces.
\newblock 2021.
\newblock Preprint arXiv:2106.01256.

\bibitem{Mey:10}
D.~Meyer.
\newblock Snowballs are quasiballs.
\newblock {\em Trans. Amer. Math. Soc.}, 362(3):1247--1300, 2010.

\bibitem{Moi:77}
E.~E. Moise.
\newblock {\em Geometric topology in dimensions {$2$} and {$3$}}, volume~47 of
  {\em Graduate Texts in Mathematics}.
\newblock Springer-Verlag, New York-Heidelberg, 1977.

\bibitem{Nt:monotone}
D.~Ntalampekos.
\newblock Monotone {S}obolev functions in planar domains: level sets and smooth
  approximation.
\newblock {\em Arch. Ration. Mech. Anal.}, 238(3):1199--1230, 2020.

\bibitem{NR:21}
D.~Ntalampekos and M.~Romney.
\newblock Polyhedral approximation of metric surfaces and applications to
  uniformization.
\newblock {\em Duke Math. J.}, 2022.
\newblock To appear.

\bibitem{Pet:20}
A.~Petrunin.
\newblock Lectures on metric geometry.
\newblock 2020.
\newblock https://anton-petrunin.github.io/metric-geometry/tex/lectures.pdf.
  Accessed 05-31-2022.

\bibitem{Pom:92}
C.~Pommerenke.
\newblock {\em Boundary behaviour of conformal maps}, volume 299 of {\em
  Grundlehren der Mathematischen Wissenschaften}.
\newblock Springer-Verlag, Berlin, 1992.

\bibitem{Pom02}
C.~Pommerenke.
\newblock Conformal maps at the boundary.
\newblock In {\em Handbook of complex analysis: geometric function theory,
  {V}ol. 1}, pages 37--74. North-Holland, Amsterdam, 2002.

\bibitem{Raj:17}
K.~Rajala.
\newblock Uniformization of two-dimensional metric surfaces.
\newblock {\em Invent. Math.}, 207(3):1301--1375, 2017.

\bibitem{RRR:19}
K.~Rajala, M.~Rasimus, and M.~Romney.
\newblock Uniformization with infinitesimally metric measures.
\newblock {\em J. Geom. Anal.}, 31(11):11445--11470, 2021.

\bibitem{RR:19}
K.~Rajala and M.~Romney.
\newblock Reciprocal lower bound on modulus of curve families in metric
  surfaces.
\newblock {\em Ann. Acad. Sci. Fenn. Math.}, 44(2):681--692, 2019.

\bibitem{Sem:96}
S.~Semmes.
\newblock Finding curves on general spaces through quantitative topology, with
  applications to {S}obolev and {P}oincar\'{e} inequalities.
\newblock {\em Selecta Math. (N.S.)}, 2(2):155--295, 1996.

\bibitem{Sieb:72}
L.~C. Siebenmann.
\newblock Approximating cellular maps by homeomorphisms.
\newblock {\em Topology}, 11:271--294, 1972.

\bibitem{Why:58}
G.~T. Whyburn.
\newblock {\em Topological analysis}.
\newblock Princeton Mathematical Series. No. 23. Princeton University Press,
  Princeton, N. J., 1958.

\bibitem{Wil:10}
K.~Wildrick.
\newblock Quasisymmetric structures on surfaces.
\newblock {\em Trans. Amer. Math. Soc.}, 362(2):623--659, 2010.

\bibitem{Wil:70}
S.~Willard.
\newblock {\em General topology}.
\newblock Dover Publications, Inc., Mineola, NY, 2004.
\newblock Reprint of the 1970 original [Addison-Wesley, Reading, MA].

\bibitem{Wil:12}
M.~Williams.
\newblock Geometric and analytic quasiconformality in metric measure spaces.
\newblock {\em Proc. Amer. Math. Soc.}, 140(4):1251--1266, 2012.

\bibitem{You:48}
J.~W.~T. Youngs.
\newblock Homeomorphic approximations to monotone mappings.
\newblock {\em Duke Math. J.}, 15:87--94, 1948.

\end{thebibliography}

\end{document}